\newcommand{\g}{\mathfrak{g}}
\newcommand{\Z}{\mathbb{Z}}
\newcommand{\A}{\mathcal{A}}
\newcommand{\gr}{\operatorname{gr}}
\newcommand{\Dcal}{\mathcal{D}}
\newcommand{\Str}{\mathcal{O}}
\newcommand{\h}{\mathfrak{h}}
\newcommand\M{\mathcal{M}}
\newcommand\Loc{\operatorname{Loc}}
\newcommand\Ext{\operatorname{Ext}}
\newcommand\GL{\operatorname{GL}}
\newcommand\red{/\!\!/\!\!/}
\newcommand\I{\mathcal{I}}
\newcommand\J{\mathcal{J}}
\newcommand\End{\operatorname{End}}
\newcommand\Hom{\operatorname{Hom}}
\newcommand\quo{/\!/}
\newcommand\C{\mathbb{C}}
\newcommand\param{\mathfrak{p}}
\newcommand\HC{\operatorname{HC}}
\newcommand{\WC}{\mathfrak{WC}}
\newcommand{\R}{\mathbb{R}}
\newcommand{\VA}{\operatorname{V}}
\newcommand{\B}{\mathcal{B}}
\newcommand{\Cat}{\mathcal{C}}
\newcommand{\OCat}{\mathcal{O}}
\newcommand{\Leaf}{\mathcal{L}}
\newcommand{\Ca}{\mathsf{C}}
\newcommand{\CW}{\mathfrak{CW}}
\newcommand{\Res}{\operatorname{Res}}
\newcommand{\Supp}{\operatorname{Supp}}
\newcommand{\bDelta}{\overline{\Delta}}
\newcommand{\Coh}{\operatorname{Coh}}
\newtheorem{Thm}{Theorem}[section]
\newtheorem{Prop}[Thm]{Proposition}
\newtheorem{Cor}[Thm]{Corollary}
\newtheorem{Lem}[Thm]{Lemma}
\theoremstyle{definition}
\newtheorem{Rem}[Thm]{Remark}
\numberwithin{equation}{section}
\author{Ivan Losev}
\address{Department
of Mathematics, Northeastern University, Boston MA 02115 USA}
\email{i.loseu@neu.edu}
\thanks{MSC 2010: Primary 16G99; Secondary 16G20,53D20,53D55}
\title{Representation theory of quantized Gieseker varieties, I}
\begin{document}
\begin{abstract}
We study the representation theory of quantizations of Gieseker moduli spaces. We describe the categories
of finite dimensional representations for all parameters and categories $\mathcal{O}$ for special values
of parameters. We find the values of parameters, where the quantizations have finite homological dimension,
and establish abelian localization theorem. We describe the two-sided ideals. Finally, we determine
annihilators of the irreducible objects in categories $\mathcal{O}$ for some special choices of one-parameter
subgroups.
\end{abstract}
\maketitle
\tableofcontents
\section{Introduction}
Our goal is to study the representation theory of quantizations of the Gieseker moduli spaces.

\subsection{Gieseker moduli space and its quantizations}
Let us explain  constructions of the Gieseker moduli spaces and of their quantizations via Hamiltonian reduction.

Pick two vector spaces $V,W$ of dimensions $n,r$, respectively. Consider the space
$R:=\mathfrak{gl}(V)\oplus \Hom(V,W)$ and a natural action of $G:=\GL(V)$ on it.
Then we can form the cotangent bundle $T^*R$, this is a symplectic vector space.
Identifying $\mathfrak{gl}(V)^*$ with $\mathfrak{gl}(V)$ and $\Hom(V,W)^*$
with $\Hom(W,V)$ by means of the trace form, we identify $T^*R$
with $\mathfrak{gl}(V)^{\oplus 2}\oplus \Hom(V,W)\oplus \Hom(W,V)$.
The action of $G$ on $T^*R$ is symplectic so we get the  moment map
$\mu:T^*R\rightarrow \g$. It can be described in two equivalent ways. First,
we have $\mu(A,B,i,j)=[A,B]-ji$. Second, the dual map $\mu^*:\g\rightarrow \C[T^*R]$
sends $\xi\in \g$ to the vector field $\xi_R$ (the infinitesimal action of $\xi$) that
can be viewed as a polynomial function on $T^*R$.

Now pick a non-trivial character $\theta$ of $G$ and consider the open subset of $\theta$-stable
points $(T^*R)^{\theta-ss}\subset T^*R$. For example, for $\theta=\det^k$ with $k>0$, the subset of semistable points
consists of all quadruples $(A,B,i,j)$ such that $\ker i$ does not contain nonzero $A$- and $B$-stable
subspaces. Then we can form the GIT Hamiltonian reduction
$\M^\theta(n,r):=\mu^{-1}(0)^{\theta-ss}/G$, this is  the Gieseker moduli space (for all choices of
$\theta$). This a smooth symplectic quasi-projective
variety of dimension $2rn$ that is a resolution of singularities of the categorical Hamiltonian reduction
$\M(n,r):=\mu^{-1}(0)\quo G$. We note that the dilation action of $\C^\times$ on $T^*R$
descends to both $\M^\theta(n,r),\M(n,r)$ (the corresponding action will be
called {\it contracting} below). The resulting grading on $\C[\M(n,r)]$
is positive meaning that $\C[\M(n,r)]=\bigoplus_{i\geqslant 0} \C[\M(n,r)]_i$
(where $\C[\M(n,r)]_i$ is the $i$th graded component) and $\C[\M(n,r)]_0=\C$.

Now let us explain how to construct  quantizations of $\M(n,r)$ meaning  filtered
associative unital algebras $\A$ with $\gr \A\xrightarrow{\sim} \C[\M(n,r)]$
(an isomorphism of graded Poisson algebras). Take $\lambda\in \C$ and
set $$\A_\lambda(n,r):=\left( D(R)/[D(R)\{x-\lambda \operatorname{tr}x, x\in \g\}]\right)^G.$$
This is a filtered algebra (the filtration is induced from the Bernstein filtration on $D(R)$,
where $\deg R=\deg R^*=1$)
and there is a natural epimorphism $\C[M(n,r)]\twoheadrightarrow\gr\A_\lambda(n,r)$
that is an isomorphism because $\mu$ is flat.

We can also consider the quantization $\A_\lambda^\theta(n,r)$ of $\M^\theta(n,r)$.
This is a sheaf (in conical topology) of filtered algebras of $\M^\theta(n,r)$ also
obtained by quantum Hamiltonian reduction, we will recall how below. Its (derived) global sections coincide
with $\A_\lambda(n,r)$. So we have the global section functor $\Gamma_\lambda^\theta:
\A_\lambda^\theta(n,r)\operatorname{-mod}\rightarrow \A_\lambda(n,r)\operatorname{-mod}$,
where we write   $\A_\lambda^\theta(n,r)\operatorname{-mod}$ for the category of
coherent $\A_\lambda^\theta(n,r)$-modules and $\A_\lambda(n,r)\operatorname{-mod}$
for the category of finitely generated $\A_\lambda(n,r)$-modules.

We note that the variety $\M^\theta(n,r)$ comes with another torus action.
Namely, let $T_0$ denote a maximal torus in $\GL(W)$. The torus $T:=T_0\times \C^\times$
acts on $R$ by $(t,z).(A,i):=(zA,ti), t\in T_0,z\in \C^\times, A\in \mathfrak{gl}(V),
i\in \Hom(V,W)$. The action naturally lifts to a Hamiltonian action  on $T^*R$
commuting with $\GL(V)$ and hence descends to $\M^\theta(n,r)$. Note that the $T$-action
on $\M^\theta(n,r)$ commutes with the contracting action of $\C^\times$.

Now let $\bar{\A}_\lambda(n,r)$ denote the algebra obtained similarly to $\A_\lambda(n,r)$
but for the $G$-module $\bar{R}=\mathfrak{sl}(V)\oplus \Hom(V,W)$. Note that $\A_\lambda(n,r)=
D(\C)\otimes \bar{\A}_\lambda(n,r)$ so most questions about the representation theory of
$\A_\lambda(n,r)$ reduce to similar questions about $\bar{\A}_\lambda(n,r)$.

\subsection{Classical cases}\label{SS_known_results}
There is one case that was studied very extensively in the past fifteen years or so: $r=1$. Here the variety $\M^\theta(n,1)$
is the Hilbert scheme $\operatorname{Hilb}^n(\C^2)$ of $n$ points on $\C^2$
and $\M(r,n)=\C^{2n}/\mathfrak{S}_n$. The quantization $\bar{\A}_\lambda(n,r)$ is the spherical subalgebra in the
Rational Cherednik algebra $H_\lambda(n)$ for the pair $(\h,\mathfrak{S}_n)$, where $\h$ is the reflection representation
of $\mathfrak{S}_n$, see \cite{GG} for details.
The representation theory of $\bar{\A}_\lambda(n,1)$ was studied, for example, in \cite{BEG,GS1,GS2,rouqqsch,KR,BE,sraco,Wilcox}.
In particular, it is known
\begin{enumerate}
\item when (=for which $\lambda$) this algebra has finite homological dimension, \cite{BE},
\item how  to classify its  finite dimensional irreducible representations, \cite{BEG},
\item how to compute characters of irreducible
modules in the so called category $\mathcal{O}$, \cite{rouqqsch},
\item how to determine the supports of these modules, \cite{Wilcox},
\item how to describe the two-sided ideals of $\bar{\A}_\lambda(n,1)$, \cite{sraco},
\item when an analog of the Beilinson-Bernstein  localization theorem holds, \cite{GS1,KR}.
\end{enumerate}
Let us  point out that there is an even more classical
special case of the algebras $\bar{\A}_\lambda(n,r)$: when $n=1$. In this case $\M^\theta(1,r)=\C^2\times T^*\mathbb{P}^{r-1}$ and $\bar{\A}_\lambda(1,r)=D^\lambda(\mathbb{P}^{r-1})$ (the algebra of $\lambda$-twisted differential
operators). (1)-(6) in this case are known and easy.

We will address analogs of (1),(2),(5),(6)  for $\bar{\A}_\lambda(n,r)$ with general $n,r$. We  prove some
results towards (3),(4) as well.

%Before we state  our main results, let us point out that there is yet another case when
%the algebra $\bar{\A}_\lambda(n,r)$ is classical, namely when $n=1$. In this case, %$\bar{\A}_\lambda(1,r)=D^\lambda(\mathbb{P}^{r-1})$,
%the algebra of $\lambda$-twisted differential operators on $\mathbb{P}^{r-1}$.

\subsection{Main results}
First, let us give answers to (1) and (6).

\begin{Thm}\label{Thm:loc}
The following is true.
\begin{enumerate}
\item The algebra $\bar{\A}_\lambda(n,r)$ has finite global dimension  if and only if $\lambda$ is not of the form $\frac{s}{m}$,
where $1\leqslant m\leqslant n$ and $-rm<s<0$.
\item For $\theta=\det$,  the abelian localization holds for $\lambda$ (i.e., $\Gamma_\lambda^\theta$
is an equivalence)  if and only if $\lambda$
is not of the form $\frac{s}{m}$, where $1\leqslant m\leqslant n$ and $s<0$.
\end{enumerate}
\end{Thm}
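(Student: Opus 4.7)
The plan is to combine the general machinery of quantizations of conical symplectic resolutions with a wall analysis tailored to the Gieseker variety. The starting point is the observation that since $\M^\theta(n,r)$ is a smooth conical symplectic resolution of $\M(n,r)$, the sheaf $\bar{\A}_\lambda^\theta(n,r)$ has finite cohomological dimension for every $\lambda$. Therefore $\bar{\A}_\lambda(n,r)$ has finite global dimension if and only if the derived global sections functor $R\Gamma_\lambda^\theta$ is a triangulated equivalence (\emph{derived localization}). This reduces part (1) to pinpointing the locus where derived localization fails, and part (2) to additionally pinpointing where $\Gamma_\lambda^\theta$ is $t$-exact.

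Next I would construct shift bimodules. For a character $\chi$ of $G = \GL(V)$, the quantum Hamiltonian reduction of $D(\bar R)$ with respect to the shifted condition produces a $(\bar{\A}_{\lambda+\chi}, \bar{\A}_\lambda)$-bimodule $\bar{\A}_{\lambda,\chi}$, and its sheafified version implements twisting by the corresponding line bundle on $\M^\theta(n,r)$. On the sheaf side these twists are always invertible, so the composition of shifts and $R\Gamma$ relates $\bar{\A}_\lambda$ and $\bar{\A}_{\lambda+k}$ for integer $k$. Derived localization propagates along such shifts whenever the bimodule in question is an invertible Harish-Chandra bimodule; the failure locus is a discrete union of affine hyperplanes in $\lambda$.

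The main work is identifying these walls as exactly $\lambda = s/m$ with $1 \leqslant m \leqslant n$, and then separating the two ranges. For one direction, at each forbidden value one produces an obstruction to derived localization: at $\lambda = s/m$ with $-rm < s < 0$ one constructs a finite-dimensional $\bar{\A}_\lambda(n,r)$-module (of dimension dictated by the partition $(m,m,\dots,m)$ of $n$ and the data $(r,s)$), parallel to the Berest--Etingof--Ginzburg finite-dimensional modules in the $r=1$ case; such a module has infinite projective dimension because it is not in the image of any localization functor. For the converse direction, starting from $\mathrm{Re}(\lambda) \gg 0$ where abelian localization holds by a Kodaira-vanishing argument for ample quantizations, iterated shifts propagate derived equivalence to every $\lambda$ off the stated list. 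For part (2), abelian localization propagates only in the positive direction of the ample cone of $\theta = \det$, so every wall $\lambda = s/m$ with $s<0$ (not merely $-rm < s < 0$) remains an obstruction: at such $\lambda$ the positive shift $\bar{\A}_{\lambda, k\theta}$ fails to be $t$-exact for some $k>0$, while for $\lambda$ not of this form a finite chain of positive shifts from the abelian-localization region remains exact.

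The principal obstacle is the sharp identification of the walls and the construction of the obstructing finite-dimensional modules at every bad parameter. In the Cherednik ($r=1$) case this rests on explicit representation theory of $H_\lambda(n)$, and here it must be replaced by a Hamiltonian-reduction construction that tracks both $n$ and $r$; equivalently, one must show that the singular locus of the shift bimodules is controlled precisely by the strata of $\M(n,r)$ of type $(m,m,\dots,m)$, with no other walls appearing. Once this wall structure is established, parts (1) and (2) follow from the shift-bimodule propagation argument above.
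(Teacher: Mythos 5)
Your opening observation --- that $\bar{\A}_\lambda^\theta(n,r)$ has finite cohomological dimension on the sheaf side, so finite global dimension of $\bar{\A}_\lambda(n,r)$ is equivalent to derived localization --- is correct and is exactly the McGerty--Nevins result the paper cites as Lemma \ref{Lem:MN_der_loc}. Likewise the shift-bimodule formalism you set up is the paper's Section \ref{SS_long_WC} and Lemma \ref{Lem:transl_properties}. From there, however, the proposal diverges from anything that can work.

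The key gap is the proposed obstruction at the singular parameters. You claim that at $\lambda=s/m$ with $1\leqslant m\leqslant n$ and $-rm<s<0$ one ``constructs a finite-dimensional $\bar{\A}_\lambda(n,r)$-module (of dimension dictated by the partition $(m,\dots,m)$ of $n$)'' and that this module obstructs derived localization. But the paper's Theorem~\ref{Thm:fin dim} is precisely that $\bar{\A}_\lambda(n,r)$ has a finite-dimensional representation \emph{if and only if} $\lambda=s/n$ with $\gcd(s,n)=1$ \emph{and} the homological dimension is finite; in particular, at every parameter in the singular set there are no finite-dimensional modules at all. Moreover, even if such a module existed, the inference ``a finite-dimensional module automatically has infinite projective dimension because it is not in the image of localization'' is false: whenever abelian localization does hold (e.g.\ $\lambda=s/n>0$ with $\gcd(s,n)=1$), the unique finite-dimensional irreducible is the global sections of a coherent $\bar{\A}_\lambda^\theta$-module supported on $\rho^{-1}(0)$ and has finite projective dimension. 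The actual obstruction argument in the paper (Corollary \ref{Cor:sing}) never produces a finite-dimensional module; it shows, via the McGerty--Nevins exactness criterion in Proposition \ref{Prop:MN}, that on $-r<\lambda<0$ both $\Gamma_\lambda^{\pm\theta}$ would be exact, forces the long wall-crossing to be an abelian (hence Ringel self-)equivalence, and derives a contradiction with the fact that category $\mathcal{O}$ is not semisimple at these parameters (Step 4 of the proof of Proposition \ref{Prop:support_local}).

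The second gap is the converse direction, which you fold into ``iterated shifts propagate derived equivalence to every $\lambda$ off the stated list.'' Propagation along translation bimodules requires knowing the relevant bimodules are invertible, which is \emph{equivalent} to abelian localization at the intermediate parameters by Lemma \ref{Lem:transl_properties}(2); so this step is circular as stated unless you supply an independent argument at each parameter. This is exactly where the paper's genuine work is: the slice reduction of Lemma \ref{Lem:ab_loc_redn} brings the problem down to denominator exactly $n$ and $\lambda>0$, and then Section \ref{SS_loc_simpl_numb} (Propositions \ref{Prop:surject} and \ref{Prop:simple_numbers}) proves that the Cartan subquotient $\Ca_{\nu_0}(\bar{\A}_\lambda(n,r))$ and $\Gamma(\Ca_{\nu_0}(\bar{\A}_\lambda^\theta(n,r)))$ have the same number of simples in their category $\mathcal O$, from which Proposition \ref{Prop:loc_compl} extracts abelian localization by a Harish-Chandra bimodule degeneration argument. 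Your proposal contains no replacement for this count-of-simples step. Without it, the wall analysis you sketch does not determine which walls are actual walls, and the propagation argument does not get off the ground.
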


In fact, part (2) is a straightforward consequence of (1) and results of McGerty and Nevins, \cite{MN_ab}.

Let us proceed to classification of finite dimensional representations.

\begin{Thm}\label{Thm:fin dim}
%The following holds.
%\begin{enumerate}
%\item The sheaf $\bar{\A}_\lambda^\theta(n,r)$ has a representation supported on $\bar{\rho}^{-1}(0)$ if and only if $\lambda=\frac{s}{n}$
%with $s$ and $n$ coprime. If that is the case, then the category %$\bar{\A}_\lambda^\theta(n,r)\operatorname{-mod}_{\rho^{-1}(0)}$
%is equivalent to $\operatorname{Vect}$.
%\item
The algebra $\bar{\A}_\lambda(n,r)$ has a finite dimensional representation  if and only if $\lambda=\frac{s}{n}$
with $s$ and $n$ coprime and the homological dimension of $\bar{\A}_\lambda(n,r)$ is finite. If that is the case, then the category $\bar{\A}_\lambda(n,r)\operatorname{-mod}_{fin}$ of finite dimensional representations is equivalent to $\operatorname{Vect}$, the category of vector spaces.
%\end{enumerate}
\end{Thm}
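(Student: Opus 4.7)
The plan is to combine the symplectic leaf stratification of $\bar{\M}(n,r)$ with restriction functors and the highest-weight structure of category $\OCat_\nu$. For the necessity direction, let $M$ be a nonzero finite-dimensional simple $\bar{\A}_\lambda(n,r)$-module. Since $\Supp(\gr M)\subset\bar{\M}(n,r)$ is zero-dimensional, closed, Poisson, and $\C^\times$-stable, it equals $\{0\}$. I would then apply restriction functors $\bullet_{\dagger,\Leaf}$ to each positive-dimensional symplectic leaf $\Leaf$: the vanishing $M_{\dagger,\Leaf}=0$ records that $M$ is supported at the origin. The slice algebra at a leaf labeled by a partition $(n_1,\ldots,n_k)$ of $n$ with $k\geqslant 2$ is, up to a polydisc factor, a product $\prod_i \bar{\A}_\lambda(n_i,r)$ with the \emph{same} parameter $\lambda$. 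To extract the divisibility condition $\lambda=s/n$ with $\gcd(s,n)=1$, I would track the primitive ideal $\operatorname{Ann}(M)$: it has finite codimension and associated variety $\{0\}$, and so is an isolated point of the primitive spectrum. Classifying such ``cuspidal'' primitive ideals for quantum Hamiltonian reductions of quiver type, one expects existence exactly when $n\lambda\in\ZZ$ with $\gcd(n\lambda,n)=1$. Combined with Theorem \ref{Thm:loc}(1), this gives both necessary conditions.

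For existence and uniqueness, assume $\lambda=s/n$ with $\gcd(s,n)=1$ and that $\bar{\A}_\lambda(n,r)$ has finite global dimension. I would pick a generic $\nu:\C^\times\to T$ so that $\OCat_\nu(\bar{\A}_\lambda(n,r))$ is a highest weight category whose simples $L(\tau)$ are indexed by the $T$-fixed points of $\M^\theta(n,r)$, that is, by $r$-multipartitions of $n$. I expect a distinguished multipartition $\tau_0$ (all $n$ boxes in a single row of one component) to have simple quotient $L(\tau_0)$ of $\Delta(\tau_0)$ that is finite-dimensional; this can be checked via a BGG-type resolution or by matching $L(\tau_0)$ with a finite-dimensional module for $\widehat{\sl}_r$ at the level determined by $\lambda$ (using the expected comparison between $\OCat$ of quantized Gieseker varieties and representations of affine type $A$). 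For uniqueness, any finite-dimensional simple lies in the ``most singular'' block of $\OCat_\nu$, whose combinatorial description contains exactly $L(\tau_0)$. The equivalence with $\operatorname{Vect}$ then reduces to the vanishing $\Ext^1_{\bar{\A}_\lambda}(L(\tau_0),L(\tau_0))=0$, which I would deduce from a rigidity argument as $\lambda$ varies within the locus of parameters admitting finite-dimensional irreducibles.

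The hardest step is the necessity of $\lambda=s/n$ with $\gcd(s,n)=1$. Restriction to positive-dimensional leaves only transports constraints from $n$ to smaller ranks, so pinning down the sharp denominator condition requires a direct analysis of primitive ideals with associated variety $\{0\}$ in $\bar{\M}(n,r)$. This is the Gieseker-specific technical core, where the combinatorics of $r$-multipartitions and the affine Weyl group at level related to $r/n$ should play the decisive role.
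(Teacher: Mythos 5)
Your broad intuitions (support at the origin, highest weight structure, blocks) point in the right direction, but the concrete technical path diverges from the paper in several places, and the places where it diverges are exactly where your argument has real gaps.

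\textbf{The denominator condition is not actually proved.} You restrict $M$ to positive-dimensional leaves and observe the restriction vanishes — true, but that records only that $\Supp M = \{0\}$; it gives no information about $\lambda$. You then say you would ``classify cuspidal primitive ideals for quantum Hamiltonian reductions of quiver type'' and ``expect existence exactly when $n\lambda\in\ZZ$.'' That is the conclusion, not an argument. The paper's mechanism is different and concrete: a finite dimensional $\bar{\A}_\lambda(n,r)$-module lies in category $\OCat_\nu$, hence its image under the Cartan subquotient functor to $\Ca_{\nu_0}(\bar{\A}_\lambda(n,r))$ is a finite dimensional module over a direct sum of tensor products of spherical rational Cherednik algebras $\bar{\A}_{\lambda+i}(n_i,1)$ (Propositions \ref{Prop:cat_O}(4) and \ref{Prop:Cartan_subquot}), and the denominator-$n$ condition is then imported from \cite{BEG}. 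The reduction you need is not the slice restriction $\bullet_{\dagger,x}$ but the $\nu_0$-Cartan reduction; you are using the wrong functor. Restriction to slices does feature in the paper, but only in Step 3 of the proof of Proposition \ref{Prop:support_local}, and there it proves the \emph{absence} of ideals of infinite codimension for large denominator, not the denominator formula itself.

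\textbf{Existence and uniqueness are hand-waved at precisely the points that require work.} For existence at denominator $n$, you propose ``a BGG-type resolution or matching with $\widehat{\mathfrak{sl}}_r$ at the level determined by $\lambda$'' — a genuinely different route from the paper, but you give no indication how to carry it out, whereas the paper deduces non-semisimplicity of $\OCat_\nu$ from non-semisimplicity of the $\Ca_{\nu_0}$-subquotient and Proposition \ref{Prop:stand_filt}, then invokes the perverse/Ringel duality machinery (Lemmas \ref{Lem:long_wc_perv}, \ref{Lem:WC_Ringel}, \ref{Lem:hw_techn}) to turn a non-maximal-support simple into a finite dimensional one. For uniqueness, the claim that the ``most singular block contains exactly $L(\tau_0)$'' is unsupported; the paper counts Lagrangian components of $\bar{\rho}^{-1}(0)$ by computing $\dim H^{2nr-2}(\M^\theta(n,r))=1$ via the Nakajima--Yoshioka Poincar\'e polynomial, which is the actual content of that step. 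Finally, for $\Ext^1_{\bar{\A}}(L,L)=0$ you invoke rigidity in $\lambda$, but this follows immediately from $L$ being simple in a highest weight category; no deformation argument is needed.

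\textbf{A structural omission.} Proposition \ref{Prop:support_local} proves the dichotomy and equivalence of categories \emph{for $\bar{\A}_\lambda^\theta(n,r)$-modules supported on $\bar{\rho}^{-1}(0)$}, not for $\bar{\A}_\lambda(n,r)$-modules. Converting that sheafy statement to the statement of Theorem \ref{Thm:fin dim} requires the localization results of Theorem \ref{Thm:loc}: for $\lambda\geqslant 0$ or $\lambda\leqslant -r$ the functor $\Gamma_\lambda^{\pm\theta}$ is an equivalence and one can transport; and the remaining singular window $-r<\lambda<0$ has to be killed separately, which the paper does by showing that the existence of a finite dimensional module there would force $\Gamma_\lambda^\theta$ to be an equivalence (via the support dichotomy on simples), contradicting the fact that abelian localization fails. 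Your proposal never distinguishes the algebra from the sheaf on the resolution, so this entire second half of the argument is missing.

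In short: your high-level reading of the geometry is reasonable, but the proposal substitutes heuristics for the two genuine pivots of the paper — the Cartan subquotient reduction to rational Cherednik algebras (for the denominator), and the cohomology count plus the localization bridge (for existence, uniqueness, and the abelian-localization contradiction in the singular window).
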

%In fact, (2) is an easy consequence of (1) and Theorem \ref{Thm:loc}.

Now let us proceed to the description of two-sided ideals.

\begin{Thm}\label{Thm:ideals}
The following true:
\begin{enumerate}
\item If the algebra $\bar{\A}_\lambda(n,r)$ has infinite homological dimension, then it is simple.
\item Assume that $\bar{\A}_\lambda(n,r)$ has finite homological dimension and let $m$ stand for the denominator
of $\lambda$ (equal to $+\infty$ if $\lambda$ is not rational). Then there are $\lfloor n/m\rfloor$
proper two-sided ideals in $\bar{\A}_\lambda(n,r)$, all of them are prime, and they form a chain
$\bar{\A}_\lambda(n,r)\supsetneq \J_{\lfloor n/m\rfloor}\supsetneq\ldots\supsetneq \J_2\supsetneq \J_1\supsetneq \{0\}$.
\end{enumerate}
\end{Thm}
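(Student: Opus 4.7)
The plan is to analyze two-sided ideals of $\bA:=\bar{\A}_\lambda(n,r)$ through their associated varieties in $\M(n,r)$, reducing the classification to the finite-dimensional representation theory of smaller Gieseker quantizations via a restriction functor for Harish--Chandra bimodules. The first step is to identify the relevant symplectic leaves of $\M(n,r)$. These leaves are indexed by semisimple decomposition types of the underlying framed quiver representations; with $m$ the denominator of $\lambda$ (set $m=+\infty$ for irrational $\lambda$), I would single out the family $\Leaf_k$ for $0\le k\le\lfloor n/m\rfloor$, where a generic point of $\Leaf_k$ consists of $k$ ``clusters'' sitting at the most singular point of $\M(m,r)$ together with a generic configuration accounting for the remaining $n-km$ dimensions. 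The formal transverse slice $\Sigma_k$ to $\Leaf_k$ at such a generic point is Poisson-isomorphic, up to a symplectic vector space factor, to the $k$-fold product of the formal neighborhood of the most singular point of $\M(m,r)$, so its canonical filtered quantization is $\bar{\A}_\lambda(m,r)^{\otimes k}$ (tensored with a Weyl algebra that is irrelevant for the analysis of ideals).

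Given this slice picture, I would invoke the exact monoidal restriction functor $\bullet_\dagger:\HC(\bA)\to\HC(\bar{\A}_\lambda(m,r)^{\otimes k})$ on Harish--Chandra bimodules attached to $\Leaf_k$. Any two-sided ideal $\J\subset\bA$ is a HC sub-bimodule, and when the associated variety of $\bA/\J$ has $\overline{\Leaf_k}$ as its maximal irreducible component meeting $\Leaf_k$, the restriction $(\bA/\J)_\dagger$ is finite-dimensional (its associated variety is zero-dimensional), hence a cofinite quotient of $\bar{\A}_\lambda(m,r)^{\otimes k}$. By Theorem \ref{Thm:fin dim}, such a quotient exists precisely when $\lambda=s/m$ is in lowest terms and $\bar{\A}_\lambda(m,r)$ has finite homological dimension, in which case there is a unique finite-dimensional irreducible $L$, and the unique finite-dimensional quotient of $\bar{\A}_\lambda(m,r)^{\otimes k}$ is $\End(L)^{\otimes k}$.

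Part (1) then follows at once: if $\bA$ has infinite homological dimension, then by Theorem \ref{Thm:loc}(1) no $\bar{\A}_\lambda(m',r)$ admits a finite-dimensional representation, so the restriction of any nonzero proper ideal would vanish at every $\Leaf_k$ with $k\ge 1$, forcing the associated variety of $\bA/\J$ to equal $\M(n,r)$ and hence $\J=0$. For part (2), assuming $\bA$ has finite homological dimension with denominator $m$, for each $k$ with $1\le k\le\lfloor n/m\rfloor$ I would construct $\J_k$ as the unique two-sided ideal whose restriction under $\bullet_\dagger$ equals the annihilator of $L^{\boxtimes k}$. Existence and uniqueness would follow from a bijection between two-sided ideals of $\bA$ with associated variety exactly $\overline{\Leaf_k}$ and $\mathfrak{S}_k$-invariant finite-dimensional quotients of $\bar{\A}_\lambda(m,r)^{\otimes k}$, in the spirit of earlier results on HC bimodules over quantizations of conical symplectic resolutions. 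Primality of $\J_k$ is then inherited from the simplicity of $\End(L)^{\otimes k}$ via restriction, and the chain $\bA\supsetneq\J_{\lfloor n/m\rfloor}\supsetneq\ldots\supsetneq\J_1\supsetneq 0$ mirrors the nested inclusions $\overline{\Leaf_1}\supsetneq\ldots\supsetneq\overline{\Leaf_{\lfloor n/m\rfloor}}$ of associated varieties. That these $\J_k$ exhaust all proper ideals then follows by induction on the number of leaf strata appearing in the associated variety, matching each proper ideal to the largest $k$ for which its restriction at $\Leaf_k$ is nonzero.

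The main obstacle is expected to be the geometric input in the first step: rigorously identifying the leaves $\Leaf_k$ of $\M(n,r)$ and proving the slice description $\Sigma_k\cong(\M(m,r)^\wedge)^k$ for general rank $r\ge 2$. For $r=1$ this reduces to the classical stratification of $\operatorname{Sym}^n\C^2$ by partition type, but the $r\ge 2$ case requires a Crawley-Boevey--Maffei-style transverse slice theorem for framed quiver varieties, together with care in matching the quantization parameter $\lambda$ under the slice isomorphism and in excluding ``exotic'' leaves as associated varieties of proper ideals. Setting up $\bullet_\dagger$ and the inverse-induction correspondence between ideals and finite-dimensional quotients of the slice algebras in sufficient generality to prove the bijection used above is a further nontrivial technical ingredient.
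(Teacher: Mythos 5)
Your overall framework---stratifying $\bar{\M}(n,r)$ by symplectic leaves whose transverse
slices are products of $\bar{\M}(m,r)$'s, quantizing this via the restriction functor
$\bullet_{\dagger,x}$ on Harish--Chandra bimodules, and tying ideals of $\bA$ to
finite-dimensional representations of the slice algebras---is exactly the one
the paper uses, and your treatment of part (1) agrees with the paper's. But there are two
gaps in part (2) that are not details but the heart of the argument.

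First, you never actually classify the two-sided ideals of the tensor-power slice algebra
$\underline{\A}^{\otimes k}$ with $\underline{\A}=\bar{\A}_\lambda(m,r)$; the paper devotes
a separate lemma (Lemma~\ref{Lem:ideals_next}) to this, showing that the prime ideals are
exactly $\underline{\I}_\Lambda=\sum_{i\in\Lambda}\underline{\A}^{\otimes(i-1)}\otimes\underline{\J}\otimes
\underline{\A}^{\otimes(k-i)}$, that every ideal is an intersection of these (hence semiprime),
and how the two anti-chain descriptions match up via products $\underline{\I}^\Lambda$. Your
proposed ``bijection between two-sided ideals of $\bA$ with associated variety $\overline{\Leaf_k}$
and $\mathfrak{S}_k$-invariant finite-dimensional quotients of $\underline{\A}^{\otimes k}$''
is both unproved and not quite the right statement: there are several such
$\mathfrak{S}_k$-invariant cofinite ideals once $k\ge 2$, so this alone would not give a chain
of length $\lfloor n/m\rfloor$ --- one needs the finer structure of Lemma~\ref{Lem:ideals_next}.

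Second, your exhaustion argument (``induction on the number of leaf strata, matching each ideal
to the largest $k$ with nonzero restriction'') does not address the essential problem: a priori
there could be ideals with the same associated variety as some $\J_k$ but strictly between $\J_{k-1}$
and $\J_k$, or ideals whose restrictions do not obviously lie in the expected list. The paper
closes this gap by observing that $\bullet_{\dagger,x_q}$ (restriction at the deepest stratum,
$q=\lfloor n/m\rfloor$) is faithful, exact and monoidal, forcing every two-sided ideal of $\bA$
to coincide with its own square, and then invoking the tensor-power classification to conclude
that square-idempotent ideals are semiprime, hence intersections of primes; since the primes
$\J_1\subsetneq\cdots\subsetneq\J_q$ form a chain, every semiprime ideal is already prime.
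Without this $\I=\I^2$ step (or a substitute), the induction you sketch has no mechanism to rule
out non-semiprime ideals. You also do not give the proof that each $\J_k$ is prime; the paper
obtains it by constructing $\J_k$ as the kernel of $\A\to
\bigl((\underline{\A}/\underline{\J})^{\otimes k}\bigr)^{\dagger,x_k}$ via the right adjoint
$\bullet^{\dagger,x}$ and appealing to the Borho--Kraft criterion. These are the points you
would need to fill in to make the argument complete.
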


Finally, let us explain some partial results on a category $\mathcal{O}$ for $\bar{\A}^\theta_\lambda(n,r)$, we will
recall necessary definitions below in Section \ref{SS_Cat_O}. We use the notation $\mathcal{O}_\nu(\A^{\theta}_{\lambda}(n,r))$ for this category. Here $\nu$ is a co-character of
$T$ that is supposed to be generic meaning that $\M^\theta(n,r)^{\nu(\C^\times)}$
is finite (in which case, it is in a natural bijection with the set of  $r$-multipartitions
of $n$). The co-character $\nu$ fails to be generic precisely when it lies on a finite union
of suitable hyperplanes in $\Hom(\C^\times,T)$, we will describe them explicitly below.

For now,  we need to know   the following about the category $\mathcal{O}_\nu(\A^{\theta}_{\lambda}(n,r))$:
\begin{itemize}
\item The category $\mathcal{O}_\nu(\A^{\theta}_{\lambda}(n,r))$ is a highest weight category so it makes sense to speak about standard objects $\Delta_\nu^\theta(p)$.
\item The labeling set for standard objects is naturally identified with $\M^\theta(n,r)^{T}$,
 i.e., the set of $r$-multipartitions of $n$.
\end{itemize}

\begin{Thm}\label{Thm:cat_O_easy}
If the denominator of $\lambda$ is bigger than $n$, then the category $\mathcal{O}_\nu(\A^\theta_\lambda(n,r))$ is semisimple. If the denominator of $\lambda$ equals $n$, the category $\mathcal{O}_\nu(\A^\theta_\lambda(n,r))$ has only one nontrivial block. That block is equivalent to the nontrivial block of $\mathcal{O}_\nu(\A^\theta_{1/nr}(nr,1))$ (a.k.a. the category of $B$-equivariant perverse sheaves on $\mathbb{P}^{nr-1}$).
\end{Thm}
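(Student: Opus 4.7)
The plan is to reduce both parts of the theorem to a precise description of the blocks of $\mathcal{O}_\nu(\A^\theta_\lambda(n,r))$, viewed as a highest weight category. Since the $T$-fixed points of $\M^\theta(n,r)$ are indexed by $r$-multipartitions of $n$, so are the standards $\Delta^\theta_\nu(\mu)$, and semisimplicity is equivalent to every standard being simple, i.e.\ every block being a singleton. For part (1), I will show that $\lambda$ of denominator $>n$ lies off every ``category-$\mathcal{O}$ wall'', so all standards are simple. For part (2), I will show that $\lambda$ of denominator exactly $n$ lies on a single wall, producing one non-trivial block whose combinatorics and endomorphism data match those of the non-trivial block of $\mathcal{O}_\nu(\A^\theta_{1/nr}(nr,1))$.

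For part (1), I would deform $\mathcal{O}_\nu(\A^\theta_\lambda(n,r))$ along a generic direction through $\lambda$: characters of standards are locally constant and the groups $\Ext^i(\Delta^\theta_\nu(\mu),\Delta^\theta_\nu(\mu'))$ are upper semicontinuous in $\lambda$, so semisimplicity of a generic fibre descends to every non-wall fibre. The walls for category $\mathcal{O}$ are expected to be exactly $\lambda=s/m$ with $\gcd(s,m)=1$ and $1\leqslant m\leqslant n$, arising from the codimension-two symplectic leaves of $\M(n,r)$ whose transverse slices are Kleinian singularities of type $A_{rm-1}$. If the denominator of $\lambda$ exceeds $n$, then $\lambda$ is off-wall and semisimplicity follows; Theorem \ref{Thm:loc}(1) ensures the existence of enough projectives. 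For part (2), $\lambda$ lies on a single wall $\lambda=s/n$ with $\gcd(s,n)=1$. Blocks whose elements can be separated by walls of smaller denominator are singletons, leaving one non-trivial block; its multipartitions should form a single $\hat{\sl}_n$-crystal weight space of dimension $nr$. To match this block with the Hilbert-scheme block, I would show that the endomorphism algebra of a minimal projective generator is $H^\bullet(\mathbb{P}^{nr-1})$ on both sides: for $\mathcal{O}_\nu(\A^\theta_{1/nr}(nr,1))$ this is classical, and for the Gieseker side it should follow from a characteristic cycle computation identifying the support of the unique non-projective simple with a Lagrangian $\mathbb{P}^{nr-1}\subset \M^\theta(n,r)$.

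The main obstacle will be the precise wall analysis. The transverse slice computation identifies only the potential walls; showing that each $\lambda=s/m$ with $1\leqslant m\leqslant n$ really does support non-trivial $\Ext$ between standards, and that no other $\lambda$ does, requires a categorical input. The cleanest route is an $\hat{\sl}_m$-categorification on $\bigoplus_n K_0(\mathcal{O}_\nu(\A^\theta_\lambda(n,r)))$ in the spirit of Shan and Varagnolo--Vasserot, built through parabolic induction/restriction between Gieseker categories of varying $n$. Setting up these functors and verifying the categorification axioms for the Gieseker variety is where most of the work will lie.
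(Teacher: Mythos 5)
Your instinct that the ``walls'' for category $\mathcal{O}$ are governed by the symplectic leaf structure (and hence by the slice algebras) is essentially the same one the paper uses, but your implementation has a concrete error and several unsupported steps. The claim that the transverse slices to codimension-two symplectic leaves of $\M(n,r)$ are Kleinian singularities of type $A_{rm-1}$ is false for $r>1$: by Proposition \ref{Prop:leaves}, the non-open leaves have transverse slices $\prod_i\bar{\M}(n_i,r)$, and $\dim\bar{\M}(m,r)=2rm-2$, so there is no codimension-two leaf unless $rm=2$ (the minimal slice $\bar{\M}(1,r)$ is the affinization of $T^*\mathbb{P}^{r-1}$, of dimension $2r-2$, not a surface singularity for $r>2$). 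What the paper uses instead is that \emph{finite-dimensional} representations of the slice algebra force the denominator of $\lambda$ to be at most $n$ (Step~2 of Proposition \ref{Prop:support_local}, reducing to the Cherednik/Hilbert-scheme case), and that the absence of such representations, via the Ringel-duality functor being a perverse equivalence with respect to dimension of support (Lemmas \ref{Lem:long_wc_perv}, \ref{Lem:WC_Ringel}, \ref{Lem:hw_techn}), forces semisimplicity. No deformation/semicontinuity argument is needed: the key technical tool is the abstract criterion (Lemma \ref{Lem:hw_techn}) that a highest weight category with equal (co)standard classes is semisimple iff every simple has all homology of its Ringel transform concentrated in degree zero.

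For part (2), your plan to verify $\operatorname{End}$ of a minimal projective generator is $H^\bullet(\mathbb{P}^{nr-1})$ by a characteristic-cycle computation is not supported: the support of the unique small-support simple is $\bar\rho^{-1}(0)$, whose open part contains a single Lagrangian irreducible component (Step 5 of Proposition \ref{Prop:support_local} via $\dim H^{2nr-2}(\M^\theta(n,r))=1$), but it is not $\mathbb{P}^{nr-1}$, and matching endomorphism algebras directly is not how the paper proceeds. The paper's Theorem \ref{Thm:catO_str} instead pins down the block by a purely abstract highest-weight-category argument: show it has a unique finite-dimensional simple $L$ whose label is maximal, show all tiltings but one are projective (again via the perverse wall-crossing), deduce the Hasse diagram is a chain of length $rn$ (Steps 3--9), and then verify that the basic algebra is rigid (Step 10). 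The $\hat{\sl}_m$-categorification you invoke for wall analysis is genuinely heavy machinery that the paper manages to avoid entirely; setting it up for Gieseker categories is its own substantial project (parabolic induction/restriction functors $\Res_\tau$ only appear later, in the proof of Theorem \ref{Thm:count}, and are used for supports rather than for the wall classification).
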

In some cases, we can say which simple objects belong to the nontrivial block, we will do this below.

%Let us state a result about two-sided ideals.
%
%\begin{Thm}\label{Thm:ideals}
%Let $m$ denote the denominator of $\lambda$ and suppose that $m\leqslant n$ and $\lambda\not\in (-r,0)$.
%Then there are $k:=\lfloor n/m\rfloor$ two-sided ideals, all of them are prime and they form
%a chain: $\J_0:=0\subsetneq \J_1\subsetneq \J_2\subsetneq\ldots \subsetneq \J_k\subsetneq \A_\lambda(n,r)$.
%Moreover, the GK dimension of $\A_\lambda(n,r)/\J_i$ is $2(rm-1)(i-1)$.
%\end{Thm}

We can also determine the annihilators of the simple modules in the category $\mathcal{O}$. In this paper
we treat a special co-character $\nu:\C^\times\rightarrow T$ to be called {\it dominant} below.
Namely, we pick integers $d_1,\ldots,d_r$ with $d_1\gg d_2\gg\ldots\gg d_r$.  Consider
$\nu:\C^\times\rightarrow T$ given by $t\mapsto \left((t^{d_1},\ldots,t^{d_r}),t\right)$,
it is easy to see that it is generic (in fact, we can just take $d_i-d_{i+1}>n$).
We also do not consider the case when $\lambda$ is integral, in this case the category
was described in \cite{Webster_O}.

Now take a multipartition $\tau=(\tau^{(1)},\ldots,\tau^{(r)})$. Let $m$ denote the denominator of
$\lambda$, we assume that $m>1$. We assume that $\lambda>0$ so that the functor $\Gamma_\lambda^\theta$
is an equivalence. Divide $\tau^{(1)}$ by $m$ with remainder: $\tau^{(1)}=m\tau'+\tau''$, where $\tau',\tau''$
are partitions with maximal possible $|\tau'|$, and the sum and the multiplication by $m$ are defined component-wise. For
example, if  $m=3$ and $\tau=(8,6,1)$, then $\tau'=(1^2)$ and $\tau''=(5,3,1)$.

\begin{Thm}\label{Thm:support}
Assume that $m>1$. Under the assumptions above, the annihilator of $\Gamma(L^\theta_\nu(\tau))$ (where
$L^\theta_\nu(\tau)$ denotes the irreducible module labeled by $\tau$) coincides
with the ideal $\J_{|\tau'|}$ from Theorem \ref{Thm:ideals}.
\end{Thm}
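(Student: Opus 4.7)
I will identify $\operatorname{Ann}(\Gamma(L^\theta_\nu(\tau)))$ via its associated variety, exploiting the correspondence between two-sided ideals of $\bar{\A}_\lambda(n,r)$ and Poisson-closed subvarieties of $\M(n,r)$ that underlies Theorem~\ref{Thm:ideals}. Since $\lambda>0$, Theorem~\ref{Thm:loc}(2) implies that $\Gamma=\Gamma^\theta_\lambda$ is an equivalence, so $\Gamma(L^\theta_\nu(\tau))$ is a simple $\bar{\A}_\lambda(n,r)$-module and its annihilator is primitive, in particular prime. By Theorem~\ref{Thm:ideals}(2) this annihilator must be $\{0\}$ or $\J_k$ for some $1\leqslant k\leqslant \lfloor n/m\rfloor$, so the problem reduces to showing $k=|\tau'|$. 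Under the chain structure of Theorem~\ref{Thm:ideals}, the associated varieties of the $\J_k$ are the closures $\overline{\Leaf}_k$ of a nested chain of symplectic leaves in $\M(n,r)$, where $\Leaf_k$ has transverse slice the type-$A_{m-1}$ Kleinian product $(\C^2/(\Z/m\Z))^k$. Via the localization equivalence, the task becomes the geometric identification $\pi(\Supp L^\theta_\nu(\tau)) = \overline{\Leaf}_{|\tau'|}$, where $\pi\colon \M^\theta(n,r) \to \M(n,r)$.

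For the upper bound $\pi(\Supp L^\theta_\nu(\tau)) \subseteq \overline{\Leaf}_{|\tau'|}$, I would analyze $\Supp L^\theta_\nu(\tau)$ in terms of the $\nu$-contracting cell at the $T$-fixed point labeled by $\tau$. The dominant choice $d_1\gg d_2\gg\cdots\gg d_r$ pools the relevant content into the first Young-diagram component before any Kleinian-type slice can open in the remaining components, so the transverse-slice type at the image of the fixed point is controlled entirely by how many $m$-rectangles can be carved out of $\tau^{(1)}$; the maximality in the definition of $\tau'$ guarantees that this count is exactly $|\tau'|$. To propagate from the fixed point to the full simple module, one uses that in the dominant chamber the standard filtration of $L^\theta_\nu(\tau)$ involves only labels $\sigma$ with $|\sigma'|\leqslant |\tau'|$ in an order refining the leaf stratification.

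For the reverse inclusion $\pi(\Supp L^\theta_\nu(\tau)) \supseteq \overline{\Leaf}_{|\tau'|}$ I would invoke restriction functors for symplectic resolutions in the spirit of Bezrukavnikov--Etingof. Picking a generic point $x\in \Leaf_{|\tau'|}$ and forming the quantum formal slice yields an exact functor $\Res_x\colon \mathcal{O}_\nu(\A^\theta_\lambda(n,r)) \to \mathcal{O}(\bar{\A}_\lambda(n-m|\tau'|,r)) \otimes \mathcal{O}_{\mathrm{slice}}$, where $\mathcal{O}_{\mathrm{slice}}$ is a product of $|\tau'|$ copies of the nontrivial block appearing in Theorem~\ref{Thm:cat_O_easy}. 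Nonvanishing $\Res_x(L^\theta_\nu(\tau)) \neq 0$ forces the annihilator not to be contained in $\J_{|\tau'|-1}$, giving $k\geqslant |\tau'|$. The \emph{main obstacle} is precisely this nonvanishing: one must follow $L^\theta_\nu(\tau)$ through $\Res_x$ and identify it with a specific extremal simple of $\mathcal{O}_{\mathrm{slice}}$. The combinatorics of the dominant chamber, together with the maximality of $|\tau'|$ in the decomposition $\tau^{(1)}=m\tau'+\tau''$, are precisely what guarantees that $L^\theta_\nu(\tau)$ maps to the ``topmost'' surviving simple of $\mathcal{O}_{\mathrm{slice}}$ and is thus nonzero after restriction.
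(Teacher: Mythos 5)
Your proposal diverges substantially from the paper's argument, and in doing so it accumulates both a concrete error and an unresolved gap.

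The paper disposes of this theorem in three lines: by its Theorem~\ref{Thm:count} one already has the explicit formula $\dim\Supp L_\nu(\tau)=rn-|\tau'|(rm-1)$; Lemma~\ref{Lem:O_holon} (the holonomicity result from \cite{B_ineq}) gives $\dim \Supp L_\nu(\tau)=\frac{1}{2}\dim\VA(\A_\lambda(n,r)/\operatorname{Ann}L_\nu(\tau))$; and Theorem~\ref{Thm:ideals} shows the proper two-sided ideals $\J_1\subsetneq\cdots\subsetneq\J_{\lfloor n/m\rfloor}$ are in bijection with, and distinguished by, the dimensions of their associated varieties $\overline{\Leaf}_i$. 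Matching dimensions then pins down the annihilator as $\J_{|\tau'|}$. You never invoke Theorem~\ref{Thm:count}, which is the heavy lifting; instead you attempt to reconstruct the geometric/combinatorial content of that theorem inline, and that is where things go wrong.

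Concretely: you assert that the transverse slice to $\Leaf_k$ is a ``type-$A_{m-1}$ Kleinian product $(\C^2/(\Z/m\Z))^k$.'' This is false. By Proposition~\ref{Prop:leaves} and the definition of the points $x_i$ in Proposition~\ref{Prop:ideals_techn}, the transverse slice to $\Leaf_k$ is (the formal neighborhood of $0$ in) $\bar{\M}(m,r)^k$, where $m$ is the denominator of $\lambda$. This is a product of Gieseker singularities of dimension $2rm-2$ each, not Kleinian surfaces; even at $r=1$ one gets $(\h\oplus\h^*)/\mathfrak{S}_m$, which is a Kleinian singularity only for $m=2$. Your description of the restriction functor's target as ``$|\tau'|$ copies of the nontrivial block'' rests on this incorrect slice identification.

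Beyond that, both halves of your two-sided bound are unsubstantiated. For the upper bound you gesture at the $\nu$-attracting cell at the fixed point $p_\tau$, but the support of a simple is in general strictly larger than the closure of the cell attached to its label; controlling it requires exactly the inductive restriction-functor analysis carried out in the proof of Theorem~\ref{Thm:count} (using Corollary~\ref{Cor:Res_simple}, categorification functors $E_i$, and the results of \cite{Wilcox}). For the lower bound you correctly identify that the crux is $\Res_x(L^\theta_\nu(\tau))\neq 0$ and call it the ``main obstacle'' without resolving it; this is precisely what the combination of Corollary~\ref{Cor:Res_simple} and the Shan/\cite{cryst} crystal computations establish in the proof of Theorem~\ref{Thm:count}. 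In short: the proof is correct in outline at the very top (identify the annihilator by its associated variety among the chain of ideals), but the route you take to compute that associated variety replaces the already-proved Theorem~\ref{Thm:count} with a sketch that has wrong geometry and leaves its central nonvanishing claim open.
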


We also give some results towards a computation of
the annihilators of $L^\theta_\nu(\tau)$ for non-dominant $\nu$.
Namely, for each two adjacent chambers $C,C'$ in $\Hom(\C^\times,T)$, there is a
{\it cross-walling} bijection $\mathfrak{cw}^\lambda_{C\rightarrow C'}:\M^\theta(n,r)^T
\rightarrow \M^\theta(n,r)^T$ that preserves the annihilator of the simple modules.
We will define  this bijection  below.

{\bf Acknowledgments}. I would like to thank Roman Bezrukavnikov, Dmitry Korb, Davesh Maulik, Andrei Okounkov and Nick
Proudfoot for stimulating discussions. I also would like to thank Boris Tsvelikhovsky for comments on an 
earlier version of this text. My work was supported by the NSF  under Grant DMS-1501558.

%We will also study the so called {\it wall-crossing} functors for the algebras $\bar{\A}_\lambda(n,r)$. This is
%a crucial ingredient to the application to the representation theory of the general algebras $\A_\lambda(v,w)$
%to be described in the next subsection.
\section{Preliminaries}
\subsection{Symplectic resolutions and their quantizations}\label{SS_sympl_sing_quant}
Let $X_0$ be a normal Poisson affine variety equipped with an action of $\C^\times$
such that the grading on $\C[X_0]$ is positive (meaning that the graded component $\C[X_0]_i$
is zero when $i<0$, and $\C[X_0]_0=\C$) and there is a positive integer $d$
such that $\{\C[X_0]_i,\C[X_0]_j\}\subset \C[X_0]_{i+j-d}$. By a symplectic resolution
of singularities of $X_0$ we mean a pair $(X,\rho)$ of
\begin{itemize}
\item a smooth symplectic algebraic variety $X$ (with form $\omega$)
\item a morphism $\rho:X\rightarrow X_0$ of Poisson varieties that is a projective resolution of singularities.
\end{itemize}
Below we assume that $(X,\rho)$ is a symplectic resolution of singularities. Besides, we will
assume that $(X,\rho)$ is {\it conical} meaning that the $\C^\times$-action lifts to $X$
in such a way that $\rho$ is equivariant. This $\C^\times$-action will be called {\it contracting}
later on.

Note that  $\rho^*:\C[X_0]\rightarrow \C[X]$ is an isomorphism because $X_0$ is normal.
By the Grauert-Riemenschneider theorem, we have
$H^i(X,\mathcal{O}_X)=0$ for $i>0$.
%By results of Kaledin, \cite[Theorem 2.3]{Kaledin}
%, $X_0$ has finitely
%many symplectic leaves.

If $X,X'$ are two conical symplectic resolutions of $X_0$, then the Pickard groups of $X,X'$
are naturally identified, see, e.g.,
%there are
%open subvarieties $\breve{X}\subset X, \breve{X}'\subset X'$ with $\operatorname{codim}_X X\setminus \breve{X},
%\operatorname{codim}_X X\setminus \breve{X}\geqslant 2$ and $\breve{X}\xrightarrow{\sim} \breve{X}'$,
%see, e.g.,
\cite[Proposition 2.19]{BPW}. %This allows to identify the Picard groups $\operatorname{Pic}(X)=\operatorname{Pic}(X')$.
Moreover, the Chern class map defines an isomorphism $\C\otimes_{\Z}\operatorname{Pic}(X)\xrightarrow{\sim} H^2(X,\C)$,
%The isomorphisms $\C\otimes_{\Z}\operatorname{Pic}(X)\xrightarrow{\sim} H^2(X,\C)$,
%$\C\otimes_{\Z}\operatorname{Pic}(X')\xrightarrow{\sim} H^2(X',\C)$ intertwine the identifications
%$\operatorname{Pic}(X)\cong \operatorname{Pic}(X'), H^2(X,\C)\cong H^2(X',\C)$.
see, e.g., \cite[Section 2.3]{BPW}. Let us write $\tilde{\param}$ for $H^2(X,\C)$ and let $\tilde{\param}_{\Z}$
denote the image of $\operatorname{Pic}(X)$ in $H^2(X,\C)$.

Set $\tilde{\param}_{\R}:=\R\otimes_{\Z}\tilde{\param}_{\Z}$. According to Namikawa, \cite{Namikawa},
there is a linear hyperplane arrangement in $\tilde{\param}_{\R}$ together with an action of
a crystallographic reflection group $W$ subject to the following conditions:
\begin{itemize}
\item The walls for $W$ are in the arrangement.
\item The conical symplectic resolutions of $X$ are classified by $W$-conjugacy classes of chambers.
\end{itemize}
For $\theta$ inside a chamber, we will write $X^\theta$ for the corresponding resolution.
%There is a finite group $W$ acting on
%$\tilde{\param}_{\R}$ as a reflection group, such that the movable cone $C$ of $X$ (that does not
%depend on the choice of a resolution) is a fundamental chamber for $W$. We can find hyperplanes
%$H_1,\ldots,H_k$ that are either walls of $C$ or pass through the interior of $C$ partitioning $C$
%into the union of cones $C_1,\ldots,C_m$ such that the possible conical symplectic resolutions of
%$X_0$ are in one-to-one correspondence with $C_1,\ldots,C_m$ in such a way that the cone corresponding
%to a resolution $X'$ is its  ample cone.  Set
%\begin{equation}\label{eq:singular} \mathcal{H}_{\C}:=\bigcup_{1\leqslant i\leqslant k,w\in %W}w(\C\otimes_{\R}H_i).\end{equation}
%Then $X_\lambda$ is affine if and only if $\lambda\not\in \mathcal{H}_{\C}$.
%The results in this paragraph are due to Namikawa, \cite{Namikawa13}, see
%also \cite[Section 2.3]{BPW} for an exposition.

%\begin{defi}\label{defi:chamb_termin}
%We say that an element $\theta\in \tilde{\param}_{\Q}$ is {\it generic} if it does not lie in $\mathcal{H}_\C$.
%The cones $wC_i$ will be called {\it chambers}.
%\end{defi}
%
%For $\theta\not\in \mathcal{H}_{\C}$, we will write $X^\theta$ for the resolution corresponding to the ample cone
%containing $W\theta\cap C$. Further, if $w\theta\in C$, we will choose a different identification
%of $H^2(X^\theta,\C)$ with $\tilde{\param}$, one twisted by $w$ (so that the ample cone actually
%contains $\theta$).

We will study quantizations of $X_0$ and $X$. By a quantization of $X_0$, we mean a filtered
algebra $\A$ together with an isomorphism $\gr\A\cong \C[X_0]$ of graded Poisson algebras.

By a quantization of $X=X^\theta$, we mean a sheaf $\A^\theta$ of filtered algebras in the conical
topology on $X$ (in this topology, ``open'' means Zariski open and $\C^\times$-stable) that is complete
and separated with respect to the filtration together with an isomorphism $\gr\A^\theta\cong \Str_{X^\theta}$
(of sheaves of graded Poisson algebras). A result of Bezrukavnikov and Kaledin, \cite{BK}, (with variations given in \cite[Section 2.3]{quant}) shows that quantizations $\A^\theta$ are parameterized (up to an isomorphism) by the points in $\tilde{\param}$. We write $\A^\theta_\lambda$ for the quantization corresponding to $\lambda\in \tilde{\param}$.
Note  that $\A^\theta_{-\lambda}$ is isomorphic to $(\A_\lambda^\theta)^{opp}$, this follows
from \cite[Section 2.3]{quant}.

We set $ \A_\lambda=\Gamma(\A_\lambda^\theta)$. It follows
from \cite[Section 3.3]{BPW} that the algebras $\A_\lambda$ are independent from the choice of $\theta$.
From $H^i(X^\theta,\Str_{X^\theta})=0$, we deduce that the higher cohomology
 of $\A^\theta_\lambda$ vanishes and  that  $\A_\lambda$ is a quantization of $\C[X]=\C[X_0]$. By the previous paragraph, $\A_{-\lambda}\cong \A_\lambda^{opp}$.
%Also we have $\A_\lambda\cong \A_{w\lambda}$ for all $\lambda\in \tilde{\param},w\in W$,
%see \cite[Section 3.3]{BPW}.

Let us compare the categories $\A_\lambda\operatorname{-mod}$ of finitely generated $\A_\lambda$-modules
and $\operatorname{Coh}(\A_\lambda^\theta)$ of coherent sheaves of $\A_\lambda^\theta$-modules.
We have functors $\Gamma:
\operatorname{Coh}(\A_\lambda^\theta)\rightarrow \A_\lambda\operatorname{-mod}$
of taking global sections and its left adjoint, the localization functor $\operatorname{Loc}$.
When we need to indicate the dependence on $(\lambda,\theta)$, we write $\Gamma^\theta_\lambda,
\Loc_\lambda^\theta$. We say that that $(\lambda,\theta)$ satisfy abelian (resp., derived)
localization if the functors $\Gamma_\lambda^\theta, \Loc_\lambda^\theta$ are mutually inverse
(resp., if the corresponding derived functors $R\Gamma_\lambda^\theta, L\Loc_\lambda^\theta$
are mutually inverse).

The following result was obtained in \cite[Section 5.3]{BPW}.

\begin{Lem}\label{Lem:lem_ab_equiv}
Let $\chi\in \tilde{\param}_{\Z}$ be ample for  $X^\theta$  and let $\lambda\in \tilde{\param}$.
Then there is $n_0\in \Z$ such that $(\lambda'+n\chi,\theta)$
satisfies abelian localization for all $n\geqslant n_0$.
\end{Lem}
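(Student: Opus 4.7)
The plan is to exploit the shift (translation) bimodules associated to positive powers of $\chi$, following the strategy of \cite[Section 5.3]{BPW}.

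First, I would produce, for each $n\in\Z_{\geqslant 0}$, a sheaf $\A_{\lambda,n\chi}^\theta$ on $X^\theta$ that is simultaneously an $(\A_{\lambda+n\chi}^\theta,\A_\lambda^\theta)$-bimodule and a quantization of the line bundle $\Str_{X^\theta}(n\chi)$. This is the standard Bezrukavnikov--Kaledin construction: one deforms $\Str(n\chi)$ with the period class shifted by $n\chi$, so that the left and right actions recover $\A_{\lambda+n\chi}^\theta$ and $\A_\lambda^\theta$ respectively. Since $\Str(n\chi)$ is invertible, $\A_{\lambda,n\chi}^\theta$ is étale-locally a Morita bimodule, so tensoring yields a sheaf-level equivalence $\Coh(\A_\lambda^\theta)\xrightarrow{\sim}\Coh(\A_{\lambda+n\chi}^\theta)$.

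Next I would establish a quantum Serre vanishing theorem: for $n$ sufficiently large (depending only on $\lambda$), every coherent $\A_{\lambda+n\chi}^\theta$-module $\M$ satisfies $R^i\Gamma(\M)=0$ for $i>0$, and the counit $\Loc_{\lambda+n\chi}^\theta(\Gamma\M)\to\M$ is surjective. The idea is to choose a good filtration on $\M$ compatible with the contracting $\C^\times$-action, reducing the two desired properties to the analogous statements for $\gr\M$ as an $\Str_{X^\theta}$-coherent sheaf twisted by the ample line bundle $\Str(n\chi)$, where classical Serre vanishing applies. A standard spectral sequence argument, together with completeness and separatedness of the filtration and the vanishing $H^{>0}(X^\theta,\Str_{X^\theta})=0$, lifts the vanishing from $\gr\M$ back to $\M$.

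Combining the two inputs, $\Gamma_{\lambda+n\chi}^\theta$ is exact, and every coherent $\A_{\lambda+n\chi}^\theta$-module is a quotient of a direct sum of copies of $\A_{\lambda+n\chi}^\theta$ itself. A formal argument (as in \cite[Section 5.3]{BPW}) then forces $(\Loc_{\lambda+n\chi}^\theta,\Gamma_{\lambda+n\chi}^\theta)$ to be mutually inverse equivalences. The main obstacle is making the bound $n_0$ in the quantum Serre vanishing uniform in the module $\M$. This is where it is essential that, after passing to associated graded, one only needs classical Serre vanishing for the fixed ample twist $\Str(n\chi)$ applied to arbitrary coherent sheaves on $X^\theta$; this yields a single $n_0$ that works for all $\M$ simultaneously, and the same $n_0$ controls the generation statement.
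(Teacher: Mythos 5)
The paper does not actually prove this lemma; it is stated as a citation to \cite[Section 5.3]{BPW}, so the comparison is with what that reference does. Your high-level framework (shift bimodules $\A_{\lambda,n\chi}^\theta$ quantizing $\Str_{X^\theta}(n\chi)$, étale-local Morita equivalence, reduction to exactness and conservativity of $\Gamma^\theta_{\lambda+n\chi}$) is indeed the right one and is the framework in which \cite{BPW} works.

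The gap is in the uniformity claim at the end. You assert that the bound $n_0$ in your ``quantum Serre vanishing'' depends only on $\lambda$, justified by ``classical Serre vanishing for the fixed ample twist $\Str(n\chi)$ applied to arbitrary coherent sheaves on $X^\theta$.'' That is not what classical Serre vanishing says: for a fixed coherent sheaf $\FF$ and ample $\chi$ it produces an integer $n_0(\FF)$ with $H^i(X,\FF\otimes\Str(n\chi))=0$ for $i>0$ and $n\geqslant n_0(\FF)$, and no single $n_0$ works simultaneously for all coherent $\FF$. The reduction you propose is also miscalibrated: for a coherent $\A_{\lambda+n\chi}^\theta$-module $\M$ with a good filtration, $\gr\M$ is an ordinary coherent $\Str_{X^\theta}$-module — the symbol sheaf $\gr\A_{\lambda+n\chi}^\theta\cong\Str_{X^\theta}$ is independent of the period, so there is no built-in twist by $\Str(n\chi)$. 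A twist appears only if you fix a coherent $\A_\lambda^\theta$-module $\M_0$ and look at $\M_n:=\A_{\lambda,n\chi}^\theta\otimes_{\A_\lambda^\theta}\M_0$, for which $\gr\M_n\cong(\gr\M_0)\otimes\Str(n\chi)$; Serre vanishing then kills $R^{>0}\Gamma(\M_n)$ for $n\geqslant n_0(\M_0)$, but that bound depends on $\M_0$. As written, your argument therefore does not yield an $n_0$ depending only on $\lambda$. To close the gap one needs a genuinely different uniformization step — in \cite[Section 5.3]{BPW} it comes from finiteness properties of the ``quantum section ring'' $\bigoplus_{n\geqslant 0}\Gamma(\A_{\lambda,n\chi}^\theta)$ (a $\Z$-algebra/Proj argument in the spirit of Gordon–Stafford), not from a Serre vanishing that is uniform over all coherent $\A^\theta$-modules.
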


\subsection{Category $\mathcal{O}$}\label{SS_Cat_O}
Suppose that we have a conical symplectic resolution $X=X^\theta$ that
comes equipped with a Hamiltonian action of a torus $T$
that commutes with the contracting $\C^\times$-action. Let $\lambda\in \tilde{\param}$. The action of
$T$ on $\mathcal{O}_X$ lifts to a Hamiltonian action of $T$ on $\A_\lambda^\theta$.
So we get a Hamiltonian action on $\A_\lambda$.
By $\Phi$ we denote the quantum comoment map $\mathfrak{t}\rightarrow \A_\lambda$.

Let $\nu:\C^\times\rightarrow T$ be a one-parameter subgroup. The subgroup $\nu$ induces
a grading $\A_\lambda=\bigoplus_{i\in \Z}\A_\lambda^{i,\nu}$. We set $\A_\lambda^{\geqslant 0,\nu}=\bigoplus_{i\geqslant 0}\A_\lambda^{i,\nu}$ and define $\A_\lambda^{>0,\nu}$ similarly. Further, set $\Ca_{\nu}(\A_\lambda):=\A^{0,\nu}_\lambda/\bigoplus_{i>0}\A^{-i,\nu}_\lambda \A^{i,\nu}_\lambda$.
Note that $\A_\lambda/\A_\lambda\A_{\lambda}^{>0,\nu}$ is an $\A_\lambda$-$\Ca_{\nu}(\A_\lambda)$-bimodule,
while $\A_\lambda/\A_{\lambda}^{<0,\nu}\A_\lambda$ is a $\Ca_{\nu}(\A_\lambda)$-$\A_\lambda$-bimodule.

Define the category $\OCat_{\nu}(\A_\lambda)$ as the full subcategory of $\A_\lambda\operatorname{-mod}$
consisting of all modules, where the action of $\A_{\lambda}^{>0,\nu}$ is locally nilpotent.
We get two functors $\Delta_{\nu},\nabla_{\nu}:\Ca_{\nu}(\A_\lambda)\operatorname{-mod}
\rightarrow \OCat_{\nu}(\A_\lambda)$ given by $$\Delta_{\nu}(N):=(\A_\lambda/\A_{\lambda}\A_{\lambda}^{>0,\nu})\otimes_{\Ca_{\nu}(\A_\lambda)}N,
\nabla_{\nu}(N):=\Hom_{\Ca_{\nu}(\A_\lambda)}(\A_\lambda/\A_{\lambda}^{<0,\nu}\A_\lambda, N).$$
Here we consider the restricted Hom (with respect to the natural grading on $\A_\lambda/\A_{\lambda}^{<0,\nu}\A_\lambda$).

Now suppose that $|X^T|<\infty$. We say that a one-parameter
group $\nu:\C^\times\rightarrow T$ is {\it generic} if $X^{\nu(\C^\times)}=X^T$.
Equivalently, $\nu$ is generic if and only if it does not lie in $\ker\kappa$ for any character $\kappa$ of the
$T$-action on $\bigoplus_{p\in X^T}T_pX$. The hyperplanes $\ker\kappa$ split the
lattice $\Hom(\C^\times,T)$ into the union of polyhedral regions to be called
{\it chambers} (of one-parameter subgroups).

Suppose that $\nu$ is generic. Further, pick a generic element
$\theta\in \tilde{\param}_{\Z}$ with $X=X^\theta$ and $\lambda_0\in \tilde{\param}$.
Let $\lambda:=\lambda_0+n\theta$ for $n\gg 0$.  The following results were obtained in
\cite{BLPW} and \cite{CWR},
see \cite[Proposition 4.1]{WC_res} for precise references.

\begin{Prop}\label{Prop:cat_O}
The following is true:
\begin{enumerate}
\item The category $\OCat_\nu(\A_\lambda)$ only depends on the chamber of $\nu$.
\item The natural functor $D^b(\OCat_\nu(\A_\lambda))\rightarrow D^b(\A_\lambda\operatorname{-mod})$
is a full embedding.
\item $\Ca_{\nu}(\A_\lambda)=\C[X^T]$.
\item More generally, we have $\Ca_{\nu_0}(\A_\lambda)=\bigoplus_{Z} \A^Z_{\iota^*_Z(\lambda)-\rho_Z}$,
where the summation is taken over the irreducible components $Z$ of $X^{\nu_0(\C^\times)}$,
$\iota_Z$ is the embedding $Z\hookrightarrow X$, $\iota_Z^*:H^2(X,\C)\rightarrow H^2(Z,\C)$
is the corresponding pull-back map, $\rho_Z$ equals to a half of the 1st Chern class
of the contracting bundle of $Z$,
and $\A_{\iota^*_Z(\lambda)-\rho_Z}^Z$ stands for the global sections of the filtered quantization of $Z$ with period
$\iota^*_Z(\lambda)-\rho_Z$.
\item The category $\OCat_\nu(\A_\lambda)$ is highest weight, where the standard objects are
$\Delta_\nu(p)$, the costandard objects are $\nabla_{\nu}(p)$, where $p\in X^T$.
For an order, which is a part of the definition of a highest weight structure, we take
the contraction order on $X^T$ given by $\nu$.
\item Suppose $\nu_0$ lies in the face of a chamber containing $\nu$. Then $\Delta_{\nu_0},
\nabla_{\nu_0}$ restrict to exact functors $\OCat_\nu(\Ca_{\nu_0}(\A_\lambda))\rightarrow
\OCat_\nu(\A_\lambda)$. Moreover, $\Delta_\nu=\Delta_{\nu_0}\circ \underline{\Delta}$
and $\nabla_\nu=\nabla_{\nu_0}\circ \underline{\nabla}$, where we write $\underline{\Delta}$
and $\underline{\nabla}$ are the standardization and costandardization functors
for $\OCat_\nu(\Ca_{\nu_0}(\A_\lambda))$.
%\item The functor $\WC^{-1}_{\lambda\leftarrow \lambda^-}:D^b(\OCat_\nu(\A_\lambda))\rightarrow
%D^b(\OCat_{\nu}(\A_{\lambda^-}))$ is a Ringel duality functor.
\end{enumerate}
\end{Prop}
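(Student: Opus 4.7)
The statement compiles six features of the category $\OCat_\nu(\A_\lambda)$ drawn from \cite{BLPW,CWR,WC_res}, so the plan is to verify that the hypotheses of those papers hold in the present setting (a conical symplectic resolution with a Hamiltonian $T$-action having finitely many fixed points, and quantization parameter shifted far into an ample chamber) and then invoke their constructions for each item. Parts (1) and (3) are essentially formal. For (1), the grading $\A_\lambda=\bigoplus_i\A_\lambda^{i,\nu}$ records which $T$-weights on $\A_\lambda$ pair positively with $\nu$; that set is constant as $\nu$ varies within a chamber, so the positive part $\A_\lambda^{>0,\nu}$ and hence the defining condition of $\OCat_\nu(\A_\lambda)$ are unchanged. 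For (3), genericity forces $X^{\nu(\C^\times)}=X^T$ to be a finite set of points; the Cartan subquotient is then commutative of dimension $|X^T|$, and since a single point admits a unique quantization one concludes $\Ca_\nu(\A_\lambda)=\C[X^T]$.

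The structural core is (4). Here I would use formal localization in the sense of \cite{BPW}: after restricting $\A_\lambda^\theta$ to the formal neighborhood of $\bigsqcup_Z Z=X^{\nu_0(\C^\times)}$ and applying the Darboux--Weinstein decomposition, one writes it up to a bimodule equivalence as a completed tensor product of a quantization on $Z$ with a Weyl-algebra-like factor modelled on the contracting normal bundle. Passing to the Cartan subquotient with respect to $\nu_0$ kills the Weyl factor and leaves $\bigoplus_Z \A^Z_{\mu_Z}$ for some periods $\mu_Z\in H^2(Z,\C)$. Identifying $\mu_Z=\iota_Z^*\lambda-\rho_Z$ amounts to tracking how the Bezrukavnikov--Kaledin period changes under restriction to a symplectic subvariety; the $-\rho_Z$ correction records the canonical half-form twist of the normal bundle, to be computed in a local model on a symplectic vector bundle over $Z$. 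I expect this period calculation to be the main obstacle, since it is where the normal-bundle geometry genuinely enters and the resulting shift propagates into the parametrization in (5) and (6).

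With (3) and (4) in hand, the remaining items follow by the standard category-$\OCat$ machinery. For (5), the attracting-cell stratification of $X^\theta$ given by $\nu$ produces the standardization/costandardization functors $\Delta_\nu,\nabla_\nu$; the contraction order on $X^T$ controls the relevant $\Ext^1$-vanishing and yields the highest-weight structure as in \cite{BLPW}. Part (2) then follows from abelian localization: under the assumption $\lambda=\lambda_0+n\theta$ with $n\gg 0$, Lemma \ref{Lem:lem_ab_equiv} identifies $\A_\lambda\text{-mod}$ with $\Coh(\A_\lambda^\theta)$, $\OCat_\nu$ corresponds to modules supported on the contracting locus, and the resulting support condition upgrades the exact embedding to a fully faithful derived functor as in \cite{CWR}. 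Finally, (6) follows by applying the same construction to the subquotient algebra $\Ca_{\nu_0}(\A_\lambda)$ and matching standardization functors; the composition identities $\Delta_\nu=\Delta_{\nu_0}\circ\underline{\Delta}$ and $\nabla_\nu=\nabla_{\nu_0}\circ\underline{\nabla}$ reduce to transitivity of the two successive nilpotent-action quotients, for the one-parameter subgroup $\nu_0$ and a finer generic $\nu$ in an adjacent chamber.
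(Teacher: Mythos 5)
The paper does not prove Proposition \ref{Prop:cat_O}. Immediately before it the text reads: ``The following results were obtained in \cite{BLPW} and \cite{CWR}, see \cite[Proposition 4.1]{WC_res} for precise references.'' So there is no in-paper argument to compare against; what you have written is a reconstruction of proofs that live in the cited literature, and assessing whether it ``matches the paper'' amounts to checking it against \cite{BLPW}, \cite{CWR}, \cite{WC_res} rather than against anything here.

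Within that framing, your outline is broadly aligned with those references: you correctly single out part (4) as the technical heart, with the $-\rho_Z$ term arising as a half-form twist of the contracting bundle that has to be tracked through the Bezrukavnikov--Kaledin period; parts (5) and (6) do come from the attracting-cell stratification and from transitivity of the Cartan-subquotient construction; and (2) is a genuine theorem about the derived category proved via abelian localization and support conditions. One caveat worth flagging: your argument for (1) is stated as if it were purely formal, namely that ``the set of $T$-weights pairing positively with $\nu$'' is constant across a chamber. That is not literally true: the chamber walls are cut out by the finitely many $T$-weights of $\bigoplus_{p\in X^T}T_pX$, whereas $\A_\lambda$ carries many other $T$-weights whose $\nu$-grading can change sign without $\nu$ crossing a wall. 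Thus $\A_\lambda^{>0,\nu}$ genuinely varies across a chamber, and the invariance of $\OCat_\nu$ is proved in \cite{BLPW} not by holding the subalgebra fixed but by identifying the category with modules supported on the attracting set, which is the geometric object that is actually chamber-invariant. Similarly, (3) should be deduced from (4) with $\nu_0=\nu$ rather than argued independently, since ``a single point admits a unique quantization'' is exactly the degenerate instance of the Cartan-subquotient decomposition.
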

Let us explain what we mean by the contracting bundle in (4). This is the subvariety in
$X$ consisting of all points $x$ such that $\lim_{t\rightarrow 0}\nu_0(t)x$ exists and
lies in $Z$. Highest weight categories mentioned in (5) will be recalled in
the next section.

Now let us mention the holonomicity property, see \cite[Section 4.4]{CWR} and \cite[Theorems 1.2,1.3]{B_ineq}.

\begin{Lem}\label{Lem:O_holon}
Every module from category $\mathcal{O}_\nu(\A_\lambda)$ is holonomic in the sense of \cite{B_ineq}.
In particular, if $M$ is a simple object in $\mathcal{O}_\nu(\A_\lambda)$, then
$\operatorname{GK-}\dim M=\frac{1}{2}\operatorname{GK-}\dim (\A_\lambda/\operatorname{Ann}M)$.
\end{Lem}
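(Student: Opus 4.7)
The plan is to deduce the first assertion directly from the definition of holonomicity in \cite{B_ineq}, which in the present context says that a finitely generated $\A_\lambda$-module is holonomic if the support in $X_0$ of the associated graded, taken with respect to some (equivalently, any) good filtration, is an isotropic subvariety. The GK-dimension equality for simple holonomic modules is then \cite[Thm 1.2]{B_ineq}. So the job reduces to showing that any $M\in\OCat_\nu(\A_\lambda)$ admits a good filtration whose $\gr$ is supported on an isotropic subvariety of $X_0$.

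First I would construct the filtration. For $M$ finitely generated, I pick a finite-dimensional generating subspace $M_0\subset M$ that is annihilated by a power of $\A_\lambda^{>0,\nu}$; this is possible because $\A_\lambda^{>0,\nu}$ acts locally nilpotently on $M$ by definition of $\OCat_\nu$. Taking $M_k=\A_\lambda^{\leqslant k}M_0$ gives a good filtration, and $\gr M$ is a finitely generated $\C[X_0]$-module killed by a power of the ideal $\I\subset\C[X_0]$ generated by elements of strictly positive $\nu$-weight. Thus $\Supp(\gr M)\subset V(\I)$.

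Next I would identify $V(\I)$ geometrically. The key assertion is that $V(\I)=\rho(X^{\nu,+})$, where $X^{\nu,+}:=\{x\in X:\lim_{t\to 0}\nu(t)\cdot x\text{ exists}\}$. This is a Bialynicki-Birula-type statement: a point $y\in X_0$ satisfies $f(y)=0$ for every $f$ of positive $\nu$-weight if and only if $\lim_{t\to 0}\nu(t)\cdot y$ exists in $X_0$, and existence in $X_0$ lifts to existence in $X$ via properness of $\rho$ combined with the contracting $\C^\times$-action (which guarantees that a limiting family in the affine $X_0$ has a unique lift to $X$). Since $\nu$ is generic, $X^{\nu(\C^\times)}=X^T$ is finite and $X^{\nu,+}$ decomposes as a finite disjoint union of affine contracting cells, each of dimension $\tfrac12\dim X$ and Lagrangian in $X$: the $\nu$-weights on $T_pX$ for $p\in X^T$ come in pairs $(i,-i)$ by symplecticity, none of them zero by genericity, and the positive-weight subspace is therefore Lagrangian with respect to $\omega_p$. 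Hence $X^{\nu,+}$ is Lagrangian, $\rho(X^{\nu,+})$ is isotropic, and so is $\Supp(\gr M)$.

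The hard part will be the identification $V(\I)=\rho(X^{\nu,+})$: on $X$ itself this is standard from Bialynicki-Birula, but on the singular affine $X_0$ one has to use the positivity of the contracting grading together with the commutativity of $\nu$ with the contracting $\C^\times$ to ensure that the limit under $\nu$ actually exists in $X_0$ as soon as every positive-$\nu$-weight function vanishes at the point; this is already handled by the argument in \cite[Section 4.4]{CWR}, which I would follow verbatim. Once holonomicity is in hand, the GK-dimension identity for a simple $M$ is then an immediate application of \cite[Thm 1.2]{B_ineq}.
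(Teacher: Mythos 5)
Your proposal is correct and follows essentially the same route that the paper implicitly takes: the paper's ``proof'' of this lemma consists entirely of the citations in the sentence preceding it, ``see \cite[Section 4.4]{CWR} and \cite[Theorems 1.2,1.3]{B_ineq},'' and your strategy (good filtration from a finite-dimensional $\A_\lambda^{>0,\nu}$-nilpotent generating subspace, concluding $\Supp(\gr M)\subset V(\I)$ via commutativity of $\C[X_0]$, then identifying $V(\I)$ with the image of the Bialynicki-Birula attracting set $X^{\nu,+}$ via properness of $\rho$, Lagrangianity of $X^{\nu,+}$ from genericity of $\nu$, and finally \cite[Theorem 1.2]{B_ineq} for the GK-dimension identity) is exactly the content of those references, as you yourself acknowledge. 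The only cosmetic inaccuracy is the phrase about a ``unique lift to $X$'': the valuative criterion of properness provides a unique $\Spec\C[[t]]$-point extending a given arc, not a unique preimage of a point; but this has no bearing on the argument, since it suffices that every lift of a point of $X_0^{\nu,+}$ has an attracting limit in $X$, giving $\rho^{-1}(X_0^{\nu,+})=X^{\nu,+}$.
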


%Let us finish this section by describing the opposite of $\OCat_\nu(\A_\lambda)$.
%
%\begin{Lem}\label{Lem:O_opp}
%There is an equivalence $\OCat_\nu(\A_\lambda)^{opp}\cong \OCat_{-\nu}(\A_\lambda^{opp})$
%that preserves the annihilators and GK dimensions of simple objects.
%\end{Lem}
%\begin{proof}
%The equivalence is given by taking the restricted dual, see \cite[Section 4.2]{CWR}.
%It preserves the annihilators and hence by Lemma \ref{Lem:O_holon}, the GK dimensions.
%\end{proof}

We will also need the following important property, \cite[Lemma 6.4]{BLPW}.

\begin{Lem}\label{Lem:cost_classes}
The classes of standard and costandard objects in $K_0(\OCat_\nu(\A_\lambda^\theta))$ coincide.
\end{Lem}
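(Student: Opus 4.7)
The category $\OCat_\nu(\A_\lambda^\theta)$ is highest weight by Proposition \ref{Prop:cat_O}(5), so $\{[L_\nu(p)]\}_{p\in X^T}$ is a basis of $K_0$, and the claim reduces to the equality of composition multiplicities $[\Delta_\nu(p):L_\nu(q)]=[\nabla_\nu(p):L_\nu(q)]$ for all $p,q\in X^T$. I would establish this by constructing an exact contravariant self-duality $D$ of $\OCat_\nu(\A_\lambda^\theta)$ that fixes each simple up to isomorphism and exchanges $\Delta_\nu(p)\leftrightarrow\nabla_\nu(p)$; applying $D$ to a Jordan--H\"older series for $\Delta_\nu(p)$ then produces one for $\nabla_\nu(p)$ with the same multiplicities.

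The duality $D=D_1\circ D_0$ is assembled from two pieces. First, the anti-isomorphism $\A_{-\lambda}\cong\A_\lambda^{opp}$ recalled in Subsection \ref{SS_sympl_sing_quant} lets me define an exact contravariant functor $D_0\colon\OCat_\nu(\A_\lambda^\theta)\to\OCat_{-\nu}(\A_{-\lambda}^\theta)$ by the restricted graded dual: decompose $M$ into $T$-weight spaces, dualize each component, and retain the $(-\nu)$-locally finite part. From the bimodule definitions of $\Delta_\nu$ and $\nabla_\nu$ given in Subsection \ref{SS_Cat_O}, one verifies that $D_0(\Delta_\nu^\lambda(p))\cong\nabla_{-\nu}^{-\lambda}(p)$ and that $D_0$ preserves the fixed-point labelling of simples (the set $X^T$ is intrinsic to $X$, independent of $\lambda$). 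Second, I would compose with a covariant equivalence $D_1\colon\OCat_{-\nu}(\A_{-\lambda}^\theta)\simeq\OCat_\nu(\A_\lambda^\theta)$ sending $\nabla_{-\nu}^{-\lambda}(p)\mapsto\nabla_\nu^\lambda(p)$ and preserving simples; this is supplied by a Ringel-type wall-crossing equivalence combining the change of cocharacter chamber with the inversion $\lambda\mapsto-\lambda$, both of which are compatible with the symplectic geometry underlying the resolution.

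The principal obstacle is the construction of $D_1$ and the verification that the composite $D$ preserves isomorphism classes of simples. One clean way to handle the latter is to characterize $L_\nu(p)$ intrinsically as the unique simple quotient of $\Delta_\nu(p)$: then $D$, being contravariant and sending $\Delta_\nu(p)\mapsto\nabla_\nu(p)$, must send $L_\nu(p)$ to the unique simple socle of $\nabla_\nu(p)$, which is again $L_\nu(p)$ by the highest weight structure. Once $D$ is in hand the equality of composition multiplicities, and hence the lemma, is immediate.
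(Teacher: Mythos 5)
The paper itself provides no proof of this lemma: it cites it directly from \cite[Lemma 6.4]{BLPW}, which establishes it in the generality of an arbitrary conical symplectic resolution. Your plan is therefore a genuinely independent route, and it is the classical Lie-theoretic one (a contravariant self-duality swapping $\Delta\leftrightarrow\nabla$ and fixing simples, hence equalizing composition multiplicities). That would indeed give a stronger statement than the lemma asks for. The trouble is that such a duality is not known to exist in the generality in which the lemma is stated, and the specific ingredient you flag as the ``principal obstacle'' --- the covariant equivalence $D_1\colon \OCat_{-\nu}(\A^\theta_{-\lambda})\to\OCat_\nu(\A^\theta_\lambda)$ fixing simples and costandards --- is not available from what you invoke.

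Concretely: the categories $\OCat_{-\nu}(\A_{-\lambda})$ and $\OCat_\nu(\A_\lambda)$ are in general only \emph{derived} equivalent via the cross-walling / long wall-crossing functors of Propositions \ref{Prop:cw_functors} and Lemma \ref{Lem:WC_Ringel}, and those functors are Ringel dualities, which do not preserve simple objects (they are perverse equivalences with nontrivial perversity, see Lemma \ref{Lem:long_wc_perv}). A Ringel duality also sends $\Delta$ to $\nabla$ rather than $\nabla$ to $\nabla$, so even the normalization you want for $D_1$ does not come from these functors. What would actually make $D_1$ an abelian equivalence preserving the $X^T$-labelling is a symplectomorphism of $X$ inducing $-1$ on $H^2(X,\C)$ and sending $\nu$ into the chamber of $-\nu$. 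Such an involution exists for Nakajima quiver varieties (for the Gieseker space it is the $(A,B,i,j)\mapsto(B^*,-A^*,j^*,-i^*)$ used in Lemma \ref{Lem:iso}), but the lemma as stated applies to an arbitrary conical symplectic resolution, where no such symplectomorphism need exist; this is precisely why the source invoked by the paper argues differently (via flatness of standards and costandards over the period parameter, identifying their classes after deforming to a semisimple generic fiber, rather than via a duality). Also note a smaller point: $D_0$ and $D_1$ must be assembled so that the resulting self-equivalence is well-defined at the level of the abelian category and not only on $K_0$ or the derived category; without $D_1$ as an honest abelian equivalence the final step (transporting a Jordan--H\"older series of $\Delta$ to one of $\nabla$) does not go through. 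If you restrict to the Gieseker setting and supply $D_1$ from the geometric involution, the argument can be made to work, but you would then be proving a special case of a lemma that the paper quotes in full generality.
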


\subsection{Standardly stratified structures and cross-walling functors}
Let us start by recalling standardly stratified categories following \cite{LW}.

Let $\mathcal{C}$ be a $\C$-linear abelian category equivalent to the category of finite dimensional
representations of a finite dimensional algebra. Let $\mathcal{T}$ be an indexing set of
the simple objects in $\mathcal{C}$. We write $L(\tau),P(\tau)$ for the simple
and indecomposable projective objects indexed by $\tau\in \mathcal{T}$.
The additional structure of a standardly stratified category on $\mathcal{C}$ is a
partial  {\it pre-order} $\leqslant$ on $\mathcal{T}$ that should satisfy certain axioms to be explained below.
Let us write $\Xi$ for the set of equivalence classes of $\leqslant$,
this is a poset (with partial order again denoted by $\leqslant$) that comes with a natural surjection $\varrho:\mathcal{T}\twoheadrightarrow \Xi$. The pre-order $\leqslant$ defines a filtration on $\Cat$ by Serre subcategories indexed by $\Xi$. Namely, to $\xi\in \Xi$ we assign the subcategories $\Cat_{\leqslant \xi}$ that is the Serre span
of the simples $L(\tau)$ with $\varrho(\tau)\leqslant \xi$. Define $\Cat_{<\xi}$ analogously and let
$\Cat_\xi$ denote the quotient $\Cat_{\leqslant \xi}/\Cat_\xi$. Let $\pi_\xi$ denote the quotient
functor $\Cat_{\leqslant \xi}\twoheadrightarrow \Cat_{\xi}$. Let us write $\Delta_\xi:\Cat_\xi\rightarrow \Cat_{\leqslant \xi}$ for the left adjoint functor of $\pi_\xi$. Also we write $\gr\Cat$ for $\bigoplus_{\xi}\Cat_\xi, \Delta$
for $\bigoplus_\xi \Delta_\xi:\gr\Cat\rightarrow \Cat$. We call $\Delta$ the
{\it standardization functor}. Finally, for $\tau\in \varrho^{-1}(\xi)$
we write $L_\xi(\tau)$ for $\pi_\xi(L(\tau))$, $P_\xi(\tau)$ for the projective cover of
$L_\xi(\tau)$ in $\Cat_\xi$ and $\Delta(\tau)$ for $\Delta_\xi(P_\xi(\tau))$.
The object $\Delta(\tau)$ is called {\it standard}. The object $\bDelta(\tau):=\Delta_\xi(L_\xi(\tau))$
is called {\it proper standard}. Note that there is a natural epimorphism $P(\tau)\twoheadrightarrow
\Delta(\tau)$.

The axioms to be satisfied by $(\Cat,\leqslant)$ in order to give a standardly stratified structure are as follows.
\begin{itemize}
\item[(SS1)] The functor $\Delta:\gr\Cat\rightarrow \Cat$ is exact.
\item[(SS2)] The projective $P(\tau)$ admits an epimorphism onto $\Delta(\tau)$ whose kernel
has a filtration with successive quotients $\Delta(\tau')$, where $\tau'>\tau$.
\end{itemize}

When $\Cat_\xi=\operatorname{Vect}$ for all $\xi$ (in which case $\Xi=\mathcal{T}$), we recover the
classical notion of a highest weight category.

Note that $\Cat^{opp}$ is also a standardly stratified category with the same poset $\Xi$, \cite[Section 2.2]{LW}.
So we have the exact costandardization functor $\nabla_\xi$, the right adjoint of $\pi_\xi$.

Let us describe a standardly stratified  structure on $\OCat_\nu(\A_\lambda)$ (where $\lambda$
is as in Proposition \ref{Prop:cat_O}) that comes from a one-parameter subgroup $\nu_0$
lying in a face of the chamber containing $\nu$.
Then $\nu_0$ defines the order on the set of irreducible components
of $X^{\nu_0(\C^\times)}$ (by contraction, see \cite[Section 6.1]{CWR} for details).
So we get a pre-order $\leqslant_{\nu_0}$ on the set $X^T$.
It is easy to see (and was checked in \cite[Section 6.1]{CWR}) that the order
$\leqslant_{\nu}$ refines $\leqslant_{\nu_0}$.

The following proposition is the main result of \cite[Section 6]{CWR}.

\begin{Prop}\label{Prop:stand_filt}
Let $\lambda$ be as in Proposition \ref{Prop:cat_O}.
Then the pre-order $\leqslant_{\nu_0}$ defines a standardly stratified structure on
$\OCat_\nu(\A_\lambda)$. The associated graded category is $\OCat_\nu(\Ca_{\nu_0}(\A_\lambda))$.
The standardization functor is $\Delta_{\nu_0}$ and the costandardization functor is $\nabla_{\nu_0}$.
\end{Prop}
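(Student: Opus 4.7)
The plan is to verify axioms (SS1) and (SS2) for the pre-order $\leqslant_{\nu_0}$, building on the highest weight structure with respect to $\nu$ (Proposition~\ref{Prop:cat_O}(5)) and the factorizations $\Delta_\nu = \Delta_{\nu_0}\circ \underline{\Delta}$, $\nabla_\nu = \nabla_{\nu_0}\circ \underline{\nabla}$ from Proposition~\ref{Prop:cat_O}(6).

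The first step is to identify the Serre filtration intrinsically. For an equivalence class $\xi$ of $\leqslant_{\nu_0}$ corresponding to an irreducible component $Z\subset X^{\nu_0(\C^\times)}$, I would characterize $\Cat_{\leqslant \xi}$ as the full subcategory of modules whose singular support is contained in the union of the contracting bundles over the components $Z' \leqslant_{\nu_0} Z$. The analogous description in $\OCat_\nu(\Ca_{\nu_0}(\A_\lambda))$ is manifest, since this category is a direct sum over the components. One then argues that $\Delta_{\nu_0}$ preserves these filtrations, and that the restriction of $\Delta_{\nu_0}$ to the $Z$-summand of $\OCat_\nu(\Ca_{\nu_0}(\A_\lambda))$ is a section of $\pi_\xi$ inducing an equivalence with $\Cat_\xi$. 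The dual claim on the opposite category gives the same for $\nabla_{\nu_0}$. This identifies $\gr\Cat \simeq \OCat_\nu(\Ca_{\nu_0}(\A_\lambda))$ with the standardization functor corresponding to $\Delta_{\nu_0}$, so (SS1) is immediate from the exactness of $\Delta_{\nu_0}$ in Proposition~\ref{Prop:cat_O}(6).

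For (SS2) I would use the $\nu$-highest weight structure: $P_\nu(\tau)$ admits a filtration with sections $\Delta_\nu(\tau')$, $\tau' \geqslant_\nu \tau$. Since $\leqslant_\nu$ refines $\leqslant_{\nu_0}$, the filtration can be reordered so that sections with $\tau'$ in the same $\leqslant_{\nu_0}$-class as $\tau$ (call it $\xi$) appear on top and sections with $\tau'$ in strictly higher $\leqslant_{\nu_0}$-classes appear below. Using $\Delta_\nu(\tau') = \Delta_{\nu_0}(\underline{\Delta}(\tau'))$, the exactness of $\Delta_{\nu_0}$, and the fact that the $Z$-summand of $\OCat_\nu(\Ca_{\nu_0}(\A_\lambda))$ is itself highest weight (so that its projective $P_\xi(\tau)$ has an $\underline{\Delta}$-filtration), the top piece assembles into $\Delta_{\nu_0}(P_\xi(\tau)) = \Delta(\tau)$. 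An identical regrouping of the remaining sections class-by-class produces a $\Delta(\tau'')$-filtration on the kernel with $\tau''$ in classes strictly above $\xi$, which is (SS2).

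The hard part is the identification of $\Cat_\xi$ with the $Z$-summand of $\OCat_\nu(\Ca_{\nu_0}(\A_\lambda))$, equivalently the assertion that the left (resp.\ right) adjoint of $\pi_\xi$ is induced by $\Delta_{\nu_0}$ (resp.\ $\nabla_{\nu_0}$). This Zuckerman-type statement requires a careful study of how the kernel of $\pi_\xi$ interacts with the images of $\Delta_{\nu_0}, \nabla_{\nu_0}$, relying on the holonomicity input from Lemma~\ref{Lem:O_holon} and on the geometric stratification of the contracting locus of $\nu$ by the components of $X^{\nu_0(\C^\times)}$. The technical details are precisely those worked out in \cite[Section 6]{CWR}.
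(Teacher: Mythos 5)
The paper gives no proof of Proposition~\ref{Prop:stand_filt}: it records it as ``the main result of \cite[Section 6]{CWR}'' and moves on. Your proposal, after sketching the outlines of (SS1) and (SS2), also defers the genuinely hard step --- namely, that $\Delta_{\nu_0}$ and $\nabla_{\nu_0}$ compute the left and right adjoints of $\pi_\xi$ and induce the identification $\Cat_\xi\simeq$ ($Z$-summand of $\OCat_\nu(\Ca_{\nu_0}(\A_\lambda))$) --- to \cite[Section 6]{CWR}. So in the end you are invoking precisely the same source, and in that sense your approach matches the paper's.

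A couple of cautions about the parts you do sketch. First, the singular-support description of $\Cat_{\leqslant\xi}$ is a real claim, not a tautology: it rests on the fact that for $\nu_0$ in a face of the chamber of $\nu$, the $\nu$-attracting loci decompose along the $\nu_0$-attracting bundles over the components of $X^{\nu_0(\C^\times)}$, together with the compatibility $\leqslant_\nu\,\Rightarrow\,\leqslant_{\nu_0}$ and the assertion that closed $\nu_0$-attracting sets respect the contraction order; you state the conclusion without these ingredients. Second, in the (SS2) step the phrase ``the top piece assembles into $\Delta_{\nu_0}(P_\xi(\tau))$'' hides a nontrivial Ext-comparison: you need the extensions among the sections $\Delta_\nu(\tau')=\Delta_{\nu_0}(\underline{\Delta}(\tau'))$ within a single $\leqslant_{\nu_0}$-class to be pulled back from extensions in $\Cat_\xi$, which is exactly the fully-faithfulness one gets once the adjoint identification is in place. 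As written it looks like a formal consequence of exactness and reordering, but it isn't; it depends on the ``hard part'' you deferred. Since you did flag that the adjoint identification is the technical core and pointed to the right reference, these are presentational rather than structural defects, but a reader could mistake the sketch for a self-contained argument when it is not.
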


Let us now discuss cross-walling functors that are derived equivalences
between categories $\mathcal{O}$ corresponding to different generic
one-parameter subgroups. Namely, let $\nu,\nu'$ be two generic one-parameter
subgroups and $\lambda$ be as in Proposition \ref{Prop:cat_O}. The following
proposition is established in \cite[Section 7]{CWR}.

\begin{Prop}\label{Prop:cw_functors}
The following is true:
\begin{enumerate}
\item There is an equivalence $\mathfrak{CW}_{\nu'\leftarrow \nu}:D^b(\OCat_\nu(\A_\lambda))
\rightarrow D^b(\OCat_{\nu'}(\A_\lambda))$ such that we have a bifunctorial isomorphism
$$\Hom_{D^b(\OCat_{\nu'}(\A_\lambda))}(\mathfrak{CW}_{\nu'\leftarrow \nu}(M),N)\xrightarrow{\sim}
\Hom_{D^b(\A_\lambda\operatorname{-mod})}(M,N),$$
where $M\in D^b(\OCat_\nu(\A_\lambda)),N\in D^b(\OCat_{\nu'}(\A_\lambda))$.
\item Suppose that $\nu_0$ lies in a common face of the chambers containing $\nu,\nu'$.
Then we have an isomorphism $\mathfrak{CW}_{\nu'\leftarrow \nu}\circ \Delta_{\nu_0}\xrightarrow{\sim}
\Delta_{\nu_0}\circ \underline{\mathfrak{CW}}_{\nu'\leftarrow \nu}$, where we write
$\underline{\mathfrak{CW}}_{\nu'\leftarrow \nu}$ for the cross-walling functor
$D^b(\OCat_\nu(\Ca_{\nu_0}(\A_\lambda)))\rightarrow D^b(\OCat_{\nu'}(\Ca_{\nu_0}(\A_\lambda)))$.
\item The functor $\mathfrak{CW}_{-\nu\leftarrow \nu}[-\frac{1}{2}\dim X]$ is the
\text{Ringel duality functor}, i.e., an equivalence that maps $\Delta_{\nu}(p)$ to
$\nabla_{-\nu}(p)$ for all $p\in X^T$.
\end{enumerate}
\end{Prop}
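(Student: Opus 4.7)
The plan is to build $\mathfrak{CW}_{\nu'\leftarrow\nu}$ as the composition $\iota_{\nu'}^L\circ \iota_\nu$, where $\iota_\nu,\iota_{\nu'}$ are the fully faithful embeddings $D^b(\OCat_\bullet(\A_\lambda))\hookrightarrow D^b(\A_\lambda\operatorname{-mod})$ supplied by Proposition \ref{Prop:cat_O}(2), and $\iota_{\nu'}^L$ is a left adjoint to $\iota_{\nu'}$. The bifunctorial isomorphism in (1) is then immediate from the adjunction together with full faithfulness of $\iota_{\nu'}$. The main construction step is producing $\iota_{\nu'}^L$: since $\OCat_{\nu'}(\A_\lambda)$ is equivalent to the category of finite-dimensional modules over the finite-dimensional algebra $B=\End(P)^{opp}$ for a projective generator $P$, the natural candidate is the functor $M\mapsto R\Hom_{\A_\lambda}(P,M)\otimes^L_B P$, and I would verify this is a genuine adjoint by establishing the appropriate compact generation property of $P$ inside $D^b(\A_\lambda\operatorname{-mod})$.

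The harder half of (1) is showing that $\mathfrak{CW}_{\nu'\leftarrow\nu}$ is actually an equivalence rather than just a functor. I would construct $\mathfrak{CW}_{\nu\leftarrow\nu'}$ by the same recipe and prove that the compositions are quasi-inverse to the identity. A natural reduction is to the case where $\nu,\nu'$ share a codimension-one face spanned by some $\nu_0$; by Proposition \ref{Prop:stand_filt}, both $\OCat_\nu(\A_\lambda)$ and $\OCat_{\nu'}(\A_\lambda)$ then acquire standardly stratified structures over the common graded category $\OCat_\nu(\Ca_{\nu_0}(\A_\lambda))$ with standardization functor $\Delta_{\nu_0}$, so the cross-walling functor should be the functor reversing the order on strata. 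By induction on the number of strata it suffices to verify the equivalence on objects of the form $\Delta_{\nu_0}(N)$. This is the main obstacle: one must control $R\Hom_{\A_\lambda}$ between standard objects attached to distinct $\nu_0$-strata, which ultimately reduces to a cohomology vanishing statement for the contracting bundle of each component of $X^{\nu_0(\C^\times)}$.

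Part (2) follows from the construction together with Proposition \ref{Prop:cat_O}(6): since $\Delta_{\nu_0}$ is itself realized as a left adjoint to a quotient by a Serre subcategory, compatibility with $\mathfrak{CW}$ reduces to chasing a diagram of adjunctions and verifying it on projective generators of the graded category. For (3), the key observation is that $\OCat_{-\nu}$ has the opposite contraction order to $\OCat_\nu$, so it is the natural candidate for the Ringel dual. I would identify $\mathfrak{CW}_{-\nu\leftarrow\nu}[-\tfrac{1}{2}\dim X]$ with Ringel duality by verifying that it carries $\Delta_\nu(p)$ to $\nabla_{-\nu}(p)$ for each $p\in X^T$ and takes standardly filtered objects to costandardly filtered ones, which by the universal property of Ringel duality characterizes the functor uniquely. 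The cohomological shift by $-\tfrac{1}{2}\dim X$ is pinned down by Lemma \ref{Lem:O_holon}: every simple object in category $\OCat$ is holonomic of Gelfand--Kirillov dimension $\tfrac{1}{2}\dim X$, so the Serre/Verdier-duality pairing forces precisely this shift to match $\Delta_\nu(p)$ to $\nabla_{-\nu}(p)$ in the correct cohomological degree.
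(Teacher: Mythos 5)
The paper itself does \emph{not} prove Proposition~\ref{Prop:cw_functors}; it explicitly cites \cite[Section 7]{CWR} for this result, so there is no internal proof to compare your argument against. That said, your proposed architecture is broadly the one actually used in \textit{loc.\ cit.}: defining $\mathfrak{CW}_{\nu'\leftarrow\nu}$ as $\iota_{\nu'}^L\circ\iota_\nu$ (which is forced on you by the adjunction stated in (1)), reducing to walls of codimension one, and running an induction over the standardly stratified structure from Proposition~\ref{Prop:stand_filt}. The conceptual shape is right.

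However, there are two substantive problems. First, the hard content — showing that the two adjoint functors are mutually quasi-inverse, rather than one merely being the left adjoint of a full embedding — is the bulk of \cite[Section 7]{CWR}, and your sketch ("reduce to $\Delta_{\nu_0}(N)$, control $R\Hom$ between strata, cohomology vanishing for contracting bundles") compresses this to a wish. In particular, on a single wall the two categories $\OCat_\nu$ and $\OCat_{\nu'}$ are standardly stratified over the \emph{same} graded category but with opposite orders on strata, and the equivalence you want is essentially a partial Ringel duality layer by layer; establishing that the adjunction unit/counit are isomorphisms on standard objects requires an explicit understanding of $\Delta_{\nu_0}$ as a left-adjoint/localization triple for the $\nu_0$-Borel subalgebra $\A_\lambda^{\geqslant 0,\nu_0}$, not just abstract adjunction diagrams. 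This is a genuine gap, not a routine verification.

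Second, and more concretely, your explanation of the shift $-\tfrac{1}{2}\dim X$ in part (3) is wrong. You appeal to Lemma~\ref{Lem:O_holon} to claim every simple in category $\OCat$ has GK dimension $\tfrac{1}{2}\dim X$. That lemma says $\operatorname{GK-}\dim M=\tfrac{1}{2}\operatorname{GK-}\dim (\A_\lambda/\operatorname{Ann}M)$, where the right-hand quotient may well be much smaller than $\A_\lambda$; the supports of simples in $\OCat$ are genuinely of varying dimension (finite-dimensional simples exist precisely when the parameter has denominator $n$, by Theorem~\ref{Thm:fin dim}). The shift in (3) is not a byproduct of a uniform GK dimension; it is the dimension of the Lagrangian attracting set of a fixed point, i.e.\ the rank of the contracting bundle, and it enters through the homological algebra of $\A_\lambda/\A_\lambda\A_\lambda^{>0,\nu}$ as a module over $\A_\lambda^{\geqslant 0,\nu}$ (a Koszul-resolution length in rank-many variables), not through a Serre/Verdier pairing on simples. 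Your derivation of the shift, as written, would fail.
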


%\subsection{Harish-Chandra bimodules and restriction functors}
%Now let $\A^j, i=1,2,$ be an associative algebra with a  filtration $\A^j=\bigcup_{i\geqslant 0}\A^j_{\leqslant i}$.
%Suppose that the algebras $\gr\A^j, j=1,2$ are finitely generated and commutative and they are identified,
%let us denote this algebra by $A$.
%
%By a Harish-Chandra $\A^1$-$\A^2$-bimodule we mean a bimodule $\B$ that can be equipped with a
%bimodule filtration $\B=\bigcup_{i\in \Z}\B_{\leqslant i}$ such that
%\begin{itemize}
%%\item The filtration is compatible with the action of $\Z/d\Z$ on $\B$.
%\item If $a^j\in \A^j_{\leqslanrt i}$ are such that their top degree components coincide
%and $b\in \B_{\leqslant k}$, then $a^1b-ba^2\in \B_{\leqslant i+k-1}$.
%\item $\gr\B$ is a finitely generated $\gr\A$-module.
%\end{itemize}
%
%For a HC bimodule $\B$ we can define its associated variety $\VA(\B)$
%inside of the reduced scheme associated to $A$ in the usual way.
%Note that $\VA(\B)$ is a Poisson subvariety of $\operatorname{Spec}(B)$.
%
%Let us give an example of a Harish-Chandra bimodule. Pick $\chi\in \tilde{\param}_{\Z},\lambda\in
%\tilde{\param}$

\subsection{Wall-crossing functors and bijections}\label{SS_long_WC}
Let $\theta,\theta'$ be two generic elements of $\tilde{\param}_{\Z}$ and $\lambda\in \tilde{\param}$.
Following \cite[Section 6.3]{BPW}, we are going to produce a derived equivalence $\WC_{\theta'\leftarrow \theta}:D^b(\Coh(\A_\lambda^\theta))\xrightarrow{\sim}
D^b(\Coh(\A_\lambda^{\theta'}))$ assuming abelian localization holds for $(\lambda,\theta)$
and derived localization holds for  $(\lambda,\theta')$. Then we set  $\WC_{\theta'\leftarrow \theta}:=L\Loc_{\lambda}^{\theta'}\circ \Gamma_\lambda^\theta$. Note that this functor
is right t-exact.

We can give a different realization of $\WC_{\theta'\leftarrow \theta}$. Namely, pick $\lambda'\in
\lambda+\tilde{\param}_{\Z}$ such that abelian localization holds for $(\lambda',\theta')$. We identify
$\Coh(\A_{\lambda}^\theta)$ with $\A_\lambda\operatorname{-mod}$ by means of $\Gamma_{\lambda}^\theta$
and $\Coh(\A_{\lambda}^{\theta'})$ with $\A_{\lambda'}\operatorname{-mod}$ by means of $\Gamma_{\lambda'}^{\theta'}(
\A^{\theta'}_{\lambda,\lambda'-\lambda}\otimes_{\A_\lambda^{\theta'}}\bullet)$, where $\A^{\theta'}_{\lambda,\lambda'-\lambda}$ is the $\A_{\lambda'}^{\theta'}$-$\A_\lambda^{\theta'}$-bimodule
quantizing the line bundle corresponding to $\lambda'-\lambda$ on $X^{\theta'}$. Under these identifications,
the functor $\WC_{\theta'\leftarrow \theta}$ becomes $\WC_{\lambda'\leftarrow \lambda}:=\A_{\lambda,\lambda'-\lambda}^{(\theta)}\otimes^L_{\A_\lambda}\bullet$, see \cite[Section 6.4]{BPW}.
Here we write $\A_{\lambda,\lambda'-\lambda}^{(\theta)}$ for the global sections of $\A_{\lambda,\lambda'-\lambda}^{\theta}$.

We note that the functor $\WC_{\lambda'\leftarrow \lambda}$ restricts to an equivalence
$D^b(\OCat_\nu(\A_\lambda))\rightarrow D^b(\OCat_\nu(\A_{\lambda'}))$, see \cite[Section 8.1]{BLPW}.

We are especially interested in the situation when $\theta,\theta'$ lie in the opposite chambers.
In this case the {\it long wall-crossing} functor $\WC_{\lambda'\leftarrow \lambda}$   is perverse in
the sense of Chuang and Rouquier, see \cite[Section 3]{WC_res}. Let us recall the general definition.

Let us recall the general definition.
Let $\mathcal{T}^1,\mathcal{T}^2$ be  triangulated categories equipped with  $t$-structures
that are homologically finite. Let $\mathcal{C}^1,\mathcal{C}^2$ denote the hearts of $\mathcal{T}^1,\mathcal{T}^2$, respectively.

We  are going to define a perverse equivalence with respect to  filtrations
$\mathcal{C}^i=\mathcal{C}_0^i\supset \mathcal{C}_1^i
\supset\ldots \supset\mathcal{C}_k^i=\{0\}$ by Serre subcategories. By definition, this is a triangulated equivalence
$\mathcal{F}:\mathcal{T}^1\rightarrow \mathcal{T}^2$ subject to the following
conditions:
\begin{itemize}
\item[(P1)] For any $j$, the equivalence $\mathcal{F}$ restricts to an equivalence
$\mathcal{T}^1_{\mathcal{C}_j^1}\rightarrow \mathcal{T}^2_{\mathcal{C}_j^2}$, where
we write $\mathcal{T}^i_{\mathcal{C}_j^i}, i=1,2,$ for the category of all objects
in $\mathcal{T}^i$ with homology (computed with respect to the t-structures of interest)
in $\mathcal{C}_j^i$.
\item[(P2)] For $M\in \Cat_j^1$, we have $H_\ell(\mathcal{F}M)=0$ for $\ell<j$
and $H_\ell(\mathcal{F}M)\in \Cat^2_{j+1}$ for $\ell>j$.
\item[(P3)] The  functor $M\mapsto H_j(\mathcal{F}M)$ induces an equivalence $\Cat^1_j/\Cat^1_{j+1}\xrightarrow{\sim}
\Cat^2_j/\Cat^2_{j+1}$  of abelian categories.
\end{itemize}

\begin{Lem}\label{Lem:long_wc_perv}
The equivalence $\WC_{\lambda'\leftarrow \lambda}: D^b(\OCat_{\nu}(\A_{\lambda}))\rightarrow D^b(\OCat_\nu(\A_{\lambda'}))$
is perverse with $\Cat^1_j=\{M\in \OCat_\nu(\A_\lambda)| \operatorname{GK-}\dim M\leqslant \dim X/2-j\}$
(and $\Cat^2_j$ defined similarly).
\end{Lem}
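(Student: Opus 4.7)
The plan is to verify the three axioms (P1)--(P3) for the filtrations $\Cat^1_j=\{M\in \OCat_\nu(\A_\lambda)\mid \operatorname{GK-dim} M\leqslant \dim X/2-j\}$ and $\Cat^2_j$ defined analogously. I would use the bimodule realization $\WC_{\lambda'\leftarrow\lambda}=B\otimes^L_{\A_\lambda}(-)$ with $B=\A^{(\theta)}_{\lambda,\lambda'-\lambda}$, together with the fact that the inverse equivalence is implemented by the opposite bimodule $B'=\A^{(\theta')}_{\lambda',\lambda-\lambda'}$, and that both functors are right t-exact.

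For (P1), I would show that $\WC_{\lambda'\leftarrow\lambda}$ preserves the GK-dimension filtration. The bimodule $B$ is a Harish--Chandra bimodule whose associated variety is contained in the diagonal of $X_0\times X_0$. By the standard estimate for associated varieties under tensor products, every homology $H_\ell(\WC_{\lambda'\leftarrow\lambda} M)$ has associated variety contained in that of $M$. Combined with the GK-dimension formula for holonomic modules in Lemma~\ref{Lem:O_holon}, this yields $\operatorname{GK-dim} H_\ell(\WC_{\lambda'\leftarrow\lambda} M)\leqslant \operatorname{GK-dim} M$ for all $\ell$, giving (P1), and symmetrically for the inverse functor.

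For (P2) and (P3), I would proceed by induction on $\dim X$, exploiting the compatibility of $\WC_{\lambda'\leftarrow\lambda}$ with standardization from the face of a chamber. Concretely, via the identification $\Ca_{\nu_0}(\A_\lambda)=\bigoplus_Z \A^Z_{\iota_Z^*(\lambda)-\rho_Z}$ of Proposition~\ref{Prop:cat_O}(4) and the analogous compatibility of cross-walling with standardization in Proposition~\ref{Prop:cw_functors}(2), one obtains an isomorphism $\WC_{\lambda'\leftarrow\lambda}\circ \Delta_{\nu_0}\cong \Delta_{\nu_0}\circ \underline{\WC}_{\lambda'\leftarrow\lambda}$, where the right-hand wall-crossing is taken for the smaller resolutions $Z$. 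The standardly stratified structure of Proposition~\ref{Prop:stand_filt} then allows me to compute $H_\ell(\WC_{\lambda'\leftarrow\lambda} M)$ for $M$ in a subquotient $\Cat^1_j/\Cat^1_{j+1}$ by reducing to wall-crossing on a smaller conical symplectic resolution inside $X^{\nu_0(\C^\times)}$. The inductive hypothesis on each such $Z$ produces the vanishing range in (P2) and identifies the induced action of $H_j$ on the stratum, yielding (P3).

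The main obstacle is controlling the exact degree of vanishing in (P2), especially the base case of the induction: when the stratum corresponds to finite-dimensional representations of a slice algebra, one must prove that $\WC_{\lambda'\leftarrow\lambda}$ concentrates them in the predicted homological degree $\dim X/2$. This is essentially a Ringel-duality assertion for the long wall-crossing, and the cleanest route I see is to derive it from Proposition~\ref{Prop:cw_functors}(3) by comparing the long wall-crossing $\WC_{\lambda'\leftarrow\lambda}$ with the cross-walling $\mathfrak{CW}_{-\nu\leftarrow\nu}$ up to a homological twist. Once that comparison is in place, Ringel duality sends standards to costandards with shift $\dim X/2$, which matches the required vanishing on the deepest stratum; the general case follows by running the stratum induction outlined above.
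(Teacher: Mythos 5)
The paper does not actually supply a proof of this lemma; it cites $\cite[\text{Section 3}]{WC_res}$. The argument there is a \emph{homological-duality} argument: one realizes $\WC_{\lambda'\leftarrow\lambda}$ as $B\otimes^L_{\A_\lambda}(-)$ with $B$ a Harish--Chandra bimodule, observes that the inverse functor is $R\Hom_{\A_{\lambda'}}(B,-)$ (not merely a tensor with the reverse bimodule), and then exploits the Cohen--Macaulay/Auslander property of $\A_\lambda$ together with holonomicity (Lemma~\ref{Lem:O_holon}): for a module $M$ with $\operatorname{GK-}\dim M = \tfrac12\dim X - j$, the groups $\operatorname{Ext}^i_{\A_\lambda}(M,\A_\lambda)$ vanish for $i < j + \tfrac12\dim X$ and have strictly smaller support for $i$ above the purity degree. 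Feeding this into the two realizations of $\WC$ and $\WC^{-1}$ directly gives (P2) and (P3). There is no induction on $\dim X$ and no use of standardization in that argument.

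Your proposal for (P1) is fine and essentially what one does. But the induction in your proposal for (P2)--(P3) has three gaps. First, the commutation $\WC_{\lambda'\leftarrow\lambda}\circ\Delta_{\nu_0}\cong\Delta_{\nu_0}\circ\underline{\WC}_{\lambda'\leftarrow\lambda}$ is not the content of Proposition~\ref{Prop:cw_functors}(2): that proposition concerns the cross-walling functors $\mathfrak{CW}_{\nu'\leftarrow\nu}$, which change the one-parameter subgroup $\nu$; the wall-crossing functors $\WC_{\lambda'\leftarrow\lambda}$ change the quantization parameter $\lambda$ and are an entirely different construction. A commutation of $\WC$ with $\Delta_{\nu_0}$ would have to be proved separately (one needs to know that the $\dagger$-restriction of the wall-crossing bimodule to $Z$ is again a wall-crossing bimodule for the quantization of $Z$, which is nontrivial and is done for the Gieseker case only in Section~3.4 of the paper). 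Second, even granting that commutation, it lets you compute $\WC\circ\Delta_{\nu_0}$, not $\WC(L(p))$; the simples $L(p)$ are not standardizations, so you can't directly import the inductive hypothesis stratum by stratum the way the proposal suggests. Third, the base case is not closed by the argument you sketch: Ringel duality (Lemma~\ref{Lem:WC_Ringel}) asserts $\WC(\Delta_\nu(p))\cong\nabla_\nu(p)$, which says nothing a priori about $\WC$ applied to a \emph{simple}, and the vanishing of $H_\ell(\WC(L))$ for $\ell<\tfrac12\dim X$ when $L$ is finite-dimensional is exactly the Ext-codimension estimate the $R\Hom$-realization of $\WC^{-1}$ supplies; trying to derive it by comparing with $\mathfrak{CW}_{-\nu\leftarrow\nu}$ still reduces to the same question about the homological degrees of a simple under a Ringel duality and does not avoid it.
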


%Let us explain how the filtration looks in the case of wall-crossing functors.
%First, let us explore the
%case when $\theta'=-\theta$. It was shown in \cite{B_ineq}, that the $\A_{\lambda}$-$\A_\lambda$ bimodule
%$\A_{\lambda}$ has finite length. In particular, for any $d$, there is a minimal ideal $\I_{\lambda,d}
%\subset \A_\lambda$ such that $\dim \VA(\A_\lambda/\I_{\lambda,d})\leqslant 2d$. Then, for
%$\mathcal{C}^1=\A_{\lambda}\operatorname{-mod}$, we have $\mathcal{C}^1_{\dim %X/2-d}=\A_\lambda/\I_{\lambda,d}\operatorname{-mod}$.
%The similar description applies to $\A_{\lambda'}\operatorname{-mod}$. For $\Cat^1=\OCat_\nu(\A_\lambda)$,
%the description simplifies: $\mathcal{C}^1_{\dim X/2-d}=\{M\in \OCat_\nu(\A_\lambda)| \operatorname{GK-}\dim M\leqslant %d\}$.

%In the general case, the filtration is also given by the annihilation by certain ideal, we refer the reader
%to \cite[Section 3.1]{WC_res}.

Now let us discuss the wall-crossing bijections. (P3) gives rise to a bijection $\operatorname{Irr}(\mathcal{C}^1)\rightarrow \operatorname{Irr}(\mathcal{C}^2)$
that is called a {\it wall-crossing bijection} and denoted by $\mathfrak{wc}_{\lambda'\leftarrow
\lambda}$ when we deal with the wall-crossing functors $\WC_{\lambda'\leftarrow \lambda}$.
%For categories $\mathcal{O}$, this bijection enjoys the following important property
%established in \cite{WC_res}.
%
%\begin{Prop}\label{Prop:wc_bij}
%Let $\psi$ be a Weil generic element of the linear span of the largest common face of $C,C'$.
%Then the self-bijections $\mathfrak{wc}_{\lambda'\leftarrow \lambda},\mathfrak{wc}_{\lambda'+\psi\leftarrow
%\lambda+\psi}$ of $X^T$ coincide.\end{Prop}

Finally, let us recall the following important property of $\WC_{\lambda'\leftarrow \lambda}$ established in \cite[Section 7.3]{CWR}.

\begin{Lem}\label{Lem:WC_Ringel}
In the case when $\theta=-\theta'$, the functor $\WC_{\lambda'\leftarrow \lambda}:
D^b(\OCat_\nu(\A_\lambda))\rightarrow D^b(\OCat_\nu(\A_{\lambda'}))$ is a
Ringel duality functor.
\end{Lem}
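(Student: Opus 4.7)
The goal is to show that when $\theta'=-\theta$, the functor $\WC_{\lambda'\leftarrow\lambda}$ is, up to a cohomological shift, a Ringel duality functor, i.e., sends every standard $\Delta_\nu^\lambda(p)$ to the costandard $\nabla_\nu^{\lambda'}(p)$. My plan combines the perverse structure of Lemma \ref{Lem:long_wc_perv} with the Ringel-ness of the cross-walling functor $\mathfrak{CW}_{-\nu\leftarrow\nu}$ furnished by Proposition \ref{Prop:cw_functors}(3), and propagates the identification along a standardly stratified structure via Proposition \ref{Prop:stand_filt}.

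First, I would pick a one-parameter subgroup $\nu_0$ lying in a wall of the chamber containing $\nu$ and apply Proposition \ref{Prop:stand_filt} to equip both $\OCat_\nu(\A_\lambda)$ and $\OCat_\nu(\A_{\lambda'})$ with standardly stratified structures whose associated gradeds are $\OCat_\nu(\Ca_{\nu_0}(\A_\lambda))$ and $\OCat_\nu(\Ca_{\nu_0}(\A_{\lambda'}))$. Using the bimodule realization $\WC_{\lambda'\leftarrow\lambda}=\A_{\lambda,\lambda'-\lambda}^{(\theta)}\otimes^L_{\A_\lambda}\bullet$ together with standard base-change identities for the symplectic slices defining $\Ca_{\nu_0}$, I would show that $\WC_{\lambda'\leftarrow\lambda}$ intertwines $\Delta_{\nu_0}$ (and $\nabla_{\nu_0}$) on the two sides with the analogous long wall-crossing functor between the lower-dimensional categories. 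This is parallel to Proposition \ref{Prop:cw_functors}(2) for cross-walling and reduces the Ringel claim to the stratum level, where the building blocks are symplectic resolutions of strictly smaller dimension.

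The induction terminates when each stratum contracts to a single $T$-fixed point: there $\Ca_{\nu_0}(\A_\lambda)$ is local, and Proposition \ref{Prop:cw_functors}(3) pins down the unique equivalence class of Ringel dualities. Lemma \ref{Lem:long_wc_perv} then forces $\WC_{\lambda'\leftarrow\lambda}(\Delta_\nu^\lambda(p))$ (for $p$ minimal in the contraction order) to lie in the heart, Lemma \ref{Lem:cost_classes} identifies its class in $K_0$ with $[\nabla_\nu^{\lambda'}(p)]$, and the standard homological lemmas in a highest weight category (using $\Ext^{>0}(\Delta,\nabla)=0$) upgrade this $K_0$-level match to an honest isomorphism. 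Climbing up the highest weight order via the stratum-wise intertwining of Step 2 completes the identification on all standards.

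The principal obstacle is the labeling compatibility at each inductive step: one must verify that the bijection on $X^T$ induced by $\WC_{\lambda'\leftarrow\lambda}$ sends $p$ to $p$, and not to some other point $p'$ in the same stratum. I expect this to follow from the characterization of Ringel duality as the unique (up to isomorphism) perverse equivalence whose associated bijection acts trivially on simples at the bottom of the order, together with the inductive intertwining. It is precisely here that the hypothesis $\theta'=-\theta$ enters essentially, since intermediate wall-crossings can and do permute labels nontrivially, and only the full long wall-crossing possesses the symmetry forcing the bijection to be the identity on each stratum.
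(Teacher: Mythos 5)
The paper does not prove this lemma; it cites \cite[Section 7.3]{CWR} (the author's earlier paper on categories $\OCat$ for quantized symplectic resolutions), so there is no internal argument against which to compare. Assessing your proposal on its own terms, I see genuine gaps in each of the three pillars you lean on.

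First, the intertwining $\WC_{\lambda'\leftarrow\lambda}\circ\Delta_{\nu_0}\cong\Delta_{\nu_0}\circ\underline{\WC}$ is not a formal consequence of anything in the paper. Proposition \ref{Prop:cw_functors}(2) concerns cross-walling functors, which change the one-parameter subgroup $\nu$ while keeping $\theta$ fixed; wall-crossing changes $\theta$ (hence the geometry of the resolution) while keeping $\nu$ fixed. These are geometrically different operations, and the bimodule $\A_{\lambda,\lambda'-\lambda}^{(\theta)}$ involved in $\WC$ is not defined on the strata of $X^{\nu_0(\C^\times)}$; one would need a statement analogous to Lemma \ref{Lem:transl_properties}(3) but for the Cartan subquotient construction rather than for symplectic slices, and those are not the same thing---``symplectic slices defining $\Ca_{\nu_0}$'' conflates two distinct restriction constructions in this paper. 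So this step, which you invoke as carrying the weight of the induction, is itself the hardest part of the argument.

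Second, the base case as stated is unconvincing. When each stratum is a single $T$-fixed point, Proposition \ref{Prop:cat_O}(3) gives $\Ca_{\nu_0}(\A_\lambda)\cong\C[X^T]$, so both stratum categories are semisimple and there is nothing to prove at that level; but that forces the inductive burden back up onto the intertwining step already flagged above. Invoking Proposition \ref{Prop:cw_functors}(3) there does not help: it pins down the cross-walling functor $\mathfrak{CW}_{-\nu\leftarrow\nu}$, not the wall-crossing functor $\WC$, and the two are \emph{a priori} unrelated equivalences.

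Third, the labeling compatibility---that the perverse equivalence induced by $\WC$ sends the label $p$ to $p$ and not to some other fixed point in the same stratum---is left at the level of an expectation. The proposed characterization of Ringel duality as ``the unique perverse equivalence whose associated bijection acts trivially on simples at the bottom of the order'' is not a theorem you can cite, and is not obviously true: perverse equivalences between highest weight categories that agree on minimal labels need not agree everywhere. Since this is precisely the place you yourself identify the hypothesis $\theta'=-\theta$ as essential, it is the crux of the proof and cannot be left as a hope. A complete argument would need either a direct computation of $\WC$ on standards via the bimodule realization, or a rigidity argument (e.g., via Lemma \ref{Lem:cost_classes} combined with a filtration argument) that actually nails the bijection, not just its existence.

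In summary, the overall shape of the argument---reduce to smaller strata via a standardly stratified structure and propagate from a base case---is a reasonable strategy, but all three supporting beams require independent proofs that are not provided and are not formal consequences of the results quoted.
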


\section{Gieseker varieties and their quantizations}
The definition of the Gieseker varieties $\M(n,r),\M^\theta(n,r)$ (for $\theta=\det$ or $\det^{-1}$)
was recalled in the introduction. We note that $\M^{\theta}(n,r),\M^{-\theta}(n,r)$ are
sympletomorphic via  the isomorphism induced by $(A,B,i,j)\mapsto (B^*,-A^*, j^*,-i^*)$.

We also consider the varieties $\bar{\M}(n,r),\bar{\M}^\theta(n,r)$ that are obtained similarly
but with the space $R=\operatorname{End}(V)\oplus \Hom(V,W)$ replaced with $\mathfrak{sl}(V)\oplus
\Hom(V,W)$. So $\M(n,r)=\C^2\times \bar{\M}(n,r)$ and $\M^\theta(n,r)=\C^2\times \bar{\M}^\theta(n,r)$.

\subsection{Quantizations}\label{SS_Gies_quant}
Let us discuss some questions about the quantizations $\bar{\A}_\lambda^\theta(n,r)$ of $\bar{\M}^\theta(n,r)$.
These quantizations are defined via
$$\bar{\A}_\lambda^\theta(n,r)=\left[D(\bar{R})/D(\bar{R})\{x_R-\lambda \operatorname{tr}(x)|x\in \g\}|_{(T^*\bar{R})^{\theta-ss}}\right]^G$$

Recall that $\A_\lambda(n,r)=D(\C)\otimes \bar{\A}_\lambda(n,r)$. Let $t$ be the coordinate on $\C$.
Then  the categories $\OCat_\nu(\A_\lambda(n,r))$ and $\OCat_\nu(\bar{\A}_\lambda(n,r))$ are equivalent:
a functor $\OCat_\nu(\bar{\A}_\lambda(n,r))\rightarrow \OCat_\nu(\A_\lambda(n,r))$ is given by
$M\mapsto \C[t]\otimes M$ and a quasi-inverse functor sends $N$ to the annihilator of $\partial_t$.

Now let us discuss periods. By \cite[Section 5]{quant}, the period of $\bar{\A}_\lambda^\theta(n,r)$ is $\lambda-\rho$, where $\varrho$ is half the character of the action of $\GL(n)$ on $\Lambda^{top}R$. So $\varrho=-r/$ and the period
of  $\bar{\A}_\lambda^\theta(n,r)$ is $\lambda+r/2$.

So we have
\begin{equation}\label{eq:opp_iso}
\bar{\A}_{\lambda}(n,r)^{opp}\cong \bar{\A}_{-\lambda-r}(n,r).
\end{equation}

\begin{Lem}\label{Lem:iso}
We have $\bar{\A}_\lambda(n,r)\cong \bar{\A}_{-\lambda-r}(n,r)$.
\end{Lem}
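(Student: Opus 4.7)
The strategy is to exploit the symplectomorphism $\phi\colon \bar{\M}^\theta(n,r)\xrightarrow{\sim}\bar{\M}^{-\theta}(n,r)$ induced by the map $(A,B,i,j)\mapsto(B^*,-A^*,j^*,-i^*)$ mentioned at the beginning of this section. This map is $\C^\times$-equivariant for the contracting action (all coordinates carry weight one, and the map is linear of degree $1$) and sends the ample chamber of $\tilde{\param}_{\Z}$ for $\bar{\M}^\theta$ to the ample chamber for $\bar{\M}^{-\theta}$, which under the canonical identification $\Pic(\bar{\M}^\theta)\cong \Pic(\bar{\M}^{-\theta})\cong \Z$ is the opposite chamber. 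Hence $\phi^{*}$ acts as multiplication by $-1$ on $\tilde{\param}\cong \C$.

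By the classification of quantizations of conical symplectic resolutions recalled in Section~\ref{SS_sympl_sing_quant}, $\phi$ lifts to an isomorphism of sheaves of filtered algebras $\phi_{*}\bar{\A}_{\mu}^{-\theta}(n,r)\xrightarrow{\sim}\bar{\A}_{\phi^{*}\mu}^{\theta}(n,r)$ for each $\mu\in \tilde{\param}$, where $\phi^{*}\mu$ is computed at the level of periods. Since the period of $\bar{\A}_\mu^{\pm\theta}$ is $\mu-\rho=\mu+r/2$ (the shift $-\rho=r/2$ being the same on either side), and $\phi^{*}$ negates periods, the new parameter $\phi^{*}\mu$ satisfies $\phi^{*}\mu + r/2 = -(\mu+r/2)$, i.e.\ $\phi^{*}\mu = -\mu-r$. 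Applied with $\mu=-\lambda-r$ this gives an isomorphism $\phi_{*}\bar{\A}_{-\lambda-r}^{-\theta}(n,r)\cong \bar{\A}_{\lambda}^{\theta}(n,r)$.

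Finally, taking global sections, which by Section~\ref{SS_sympl_sing_quant} are independent of the chamber of $\theta$, and using that $\phi$ is an isomorphism of varieties (so pushforward preserves global sections), we obtain $\bar{\A}_{-\lambda-r}(n,r)=\Gamma(\bar{\A}_{-\lambda-r}^{-\theta}(n,r))\cong \Gamma(\bar{\A}_{\lambda}^{\theta}(n,r))=\bar{\A}_{\lambda}(n,r)$, which is the lemma.

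The main obstacle in this plan is the computation in the second paragraph that the induced action $\phi^{*}$ on $\tilde{\param}$ is really $-1$ and not $+1$; this amounts to showing that $\phi$ realizes the nontrivial element of the Namikawa Weyl group of $\bar{\M}(n,r)$. An alternative, essentially equivalent route that avoids this sign check is to observe that the lemma is equivalent (via equation~(\ref{eq:opp_iso})) to the existence of an anti-involution on $\bar{\A}_\lambda(n,r)$, and to construct such an anti-involution directly from an anti-symplectic $G$-equivariant involution of $T^*\bar{R}$, tracking the effect on the symmetrized quantum comoment map $\tilde{\Phi}=\Phi+\rho$ so that the character shift vanishes.
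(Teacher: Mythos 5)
Your proposal is correct and is essentially the paper's own argument: use the explicit symplectomorphism $\bar{\M}^\theta(n,r)\xrightarrow{\sim}\bar{\M}^{-\theta}(n,r)$, observe that it acts by $-1$ on $H^2$, translate to periods (getting $\mu\mapsto -\mu-r$), and conclude by chamber-independence of global sections. Your worry about the sign of the induced map on $\tilde{\param}$ is settled exactly by your own ample-cone observation, which is also all the paper offers (``On the second cohomology this isomorphism induces the multiplication by $-1$''), so no alternative route is needed.
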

\begin{proof}
Recall, Section \ref{SS_sympl_sing_quant},
that $\bar{\A}_{\lambda}(n,r)\xrightarrow{\sim} \Gamma(\bar{\A}_{\lambda}^{\pm \theta}(n,r))$.
We have produced a symplectomorphism $\iota:\bar{\M}^\theta(n,r)\xrightarrow{\sim}\bar{\M}^{-\theta}(n,r)$ via
$(A,B,i,j)\rightarrow (B^*,-A^*,j^*,-i^*)$.
On the second cohomology this isomorphism induces the multiplication by $-1$.
It follows that $\Gamma(\bar{\A}_\lambda^\theta(n,r))\cong \Gamma(\bar{\A}^{-\theta}_{-\lambda-r}(n,r))$
and our claim follows.
%Recall that  Let us remark that this map is not a symplectomorphism, if
%$\omega^{\theta},\omega^{-\theta}$ are natural symplectic forms on $\M^\theta(n,r), \M^{-\theta}(n,r)$
\end{proof}

We conclude that $\bar{\A}_{\lambda}(n,r)^{opp}\cong \bar{\A}_{\lambda}(n,r)$.

Now let us discuss derived localization and wall-crossing bimodules.  The following is a special
case of the main result of \cite{MN_der}.

\begin{Lem}\label{Lem:MN_der_loc}
Derived localization holds for $(\lambda,\theta)$ if and only if $\bar{\A}_\lambda(n,r)$
has finite homological dimension.
\end{Lem}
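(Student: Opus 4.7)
The plan is to deduce the lemma directly from the main theorem of \cite{MN_der}, which treats quantum Hamiltonian reductions of smooth affine varieties by reductive groups and asserts precisely that derived localization for $(\lambda,\theta)$ is equivalent to finite homological dimension of the reduced algebra of global sections, under hypotheses that the Gieseker setup satisfies.

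First I would verify that the Gieseker quantum reduction fits the framework of McGerty--Nevins: the acting group $G=\GL(V)$ is reductive, $T^*\bar{R}$ is a symplectic vector space, the moment map is flat (so that $\gr\bar{\A}_\lambda(n,r)\cong \C[\bar{\M}(n,r)]$), the GIT quotient $\bar{\M}^\theta(n,r)$ is a smooth conical symplectic resolution, the higher cohomology of $\bar{\A}_\lambda^\theta(n,r)$ vanishes, and $\Gamma(\bar{\A}_\lambda^\theta(n,r))=\bar{\A}_\lambda(n,r)$. All of these are recorded in Section \ref{SS_sympl_sing_quant} and Section \ref{SS_Gies_quant}.

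For the forward implication (derived localization $\Rightarrow$ finite homological dimension) one does not need the full strength of \cite{MN_der}: if $R\Gamma_\lambda^\theta$ and $L\Loc_\lambda^\theta$ are mutually inverse equivalences, then $D^b(\bar{\A}_\lambda(n,r)\operatorname{-mod})\simeq D^b(\Coh(\bar{\A}_\lambda^\theta(n,r)))$. Since $\bar{\A}_\lambda^\theta(n,r)$ is a sheaf of filtered quantizations of the smooth symplectic variety $\bar{\M}^\theta(n,r)$, which is locally (in the conical topology) isomorphic to a Weyl algebra, the latter derived category has finite cohomological dimension bounded by $\dim\bar{\M}^\theta(n,r)$, so $\bar{\A}_\lambda(n,r)$ has finite homological dimension.

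The reverse implication is the substantive content of \cite{MN_der}. McGerty--Nevins show that both properties are controlled by the same combinatorial condition on $\lambda$, expressed via the Kirwan--Ness stratification of the unstable locus in $T^*\bar{R}$: for each KN one-parameter subgroup, certain affine-linear functions of $\lambda$ must avoid a prescribed set of integers. The main obstacle is really only bookkeeping: matching the period and sign conventions of \cite{MN_der} with those used here, where the period of $\bar{\A}_\lambda^\theta(n,r)$ was identified with $\lambda+r/2$ in Section \ref{SS_Gies_quant}, so that the two instances of the McGerty--Nevins condition are literally the same and the equivalence in our setting is the verbatim statement of their theorem.
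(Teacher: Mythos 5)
The paper's entire proof of this lemma is the single sentence ``The following is a special case of the main result of \cite{MN_der},'' and your proposal does exactly the same thing: verify that the Gieseker reduction fits the McGerty--Nevins framework and quote their main theorem. Your added remarks about the easy direction and the sign/period bookkeeping are fine elaboration but do not change the route; the approaches are essentially identical.
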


\subsection{Fixed point subvarieties and their quantizations}
Recall that the torus $T=\C^\times\times T_0$, where $T_0$ is a maximal torus in $\GL(r)$, acts on $\M^\theta(n,r)$.

Now want to understand the structure of $\M^\theta(n,r)^{\nu_0(\C^\times)}$ for a non-generic $\nu_0$
that lies in the interior of a codimension $1$ face in a chamber. Let $\nu_0(t)=(\operatorname{diag}(t^{d_1},\ldots,
t^{d_r}),t^k)$. The following claim follows directly from the description of the tangent space in the
fixed points given in \cite[Theorem 3.2]{NY}.

\begin{Lem}\label{Lem:1param_subgr_walls}
The walls in the lattice of one-parameter subgroups are given by $k=0$ and $d_i-d_j=sk$ with $s\in \Z$
such that $|s|<n$.
\end{Lem}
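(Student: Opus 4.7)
The plan is to reduce the problem to a direct computation of the $T$-weights on the tangent spaces $T_p\M^\theta(n,r)$ at the $T$-fixed points $p$, since the walls in $\Hom(\C^\times,T)$ are by definition the kernels of the nonzero characters occurring in $\bigoplus_p T_p\M^\theta(n,r)$ (as in the discussion preceding the lemma in Section \ref{SS_Cat_O}).

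First, recall that the $T$-fixed points in $\M^\theta(n,r)$ are parametrized by $r$-multipartitions $\vec\tau=(\tau^{(1)},\ldots,\tau^{(r)})$ of $n$. Any character of $T$ can be written uniquely as $t_\alpha t_\beta^{-1}t^s$ with $\alpha,\beta\in\{1,\ldots,r\}$ and $s\in\Z$, where $t_\alpha$ is the character of $T_0$ on the $\alpha$-th coordinate of $W$ and $t$ is the character of the $\C^\times$-factor. Pairing such a character with $\nu_0(t)=(\diag(t^{d_1},\ldots,t^{d_r}),t^k)$ gives $d_\alpha-d_\beta+sk$, whose kernel is either $\{k=0\}$ (when $\alpha=\beta$ and $s\neq 0$) or $\{d_\alpha-d_\beta+sk=0\}$ with $\alpha\neq\beta$.

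Next, I apply \cite[Theorem 3.2]{NY}, which expresses the character of $T_{\vec\tau}\M^\theta(n,r)$ as a sum over ordered pairs $(\alpha,\beta)$ of contributions of the shape
$$t_\alpha t_\beta^{-1}\sum_{x}t^{\pm\bigl(a_{\tau^{(\alpha)}}(x)+l_{\tau^{(\beta)}}(x)+1\bigr)},$$
where $x$ runs over boxes in $\tau^{(\alpha)}$ (or symmetrically in $\tau^{(\beta)}$), and $a,l$ denote the arm and leg lengths. Using the standard estimates $0\leqslant a_{\tau^{(\alpha)}}(x)\leqslant|\tau^{(\alpha)}|-1$ and $l_{\tau^{(\beta)}}(x)\leqslant|\tau^{(\beta)}|-1$, I get $|s|\leqslant|\tau^{(\alpha)}|+|\tau^{(\beta)}|-1\leqslant n-1$ whenever $\alpha\neq\beta$, so $|s|<n$. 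When $\alpha=\beta$ the exponent $s$ is a nonzero hook length in $\tau^{(\alpha)}$, whose kernel is $\{k=0\}$, already listed.

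To conclude, I also have to realize each of the listed hyperplanes as an actual wall. For the hyperplane $k=0$ one takes $\vec\tau$ with $\tau^{(1)}=(n)$, where the hook weights $t^{\pm s}$, $1\leqslant s\leqslant n$, appear. For $\alpha\neq\beta$ and $|s|=c$ with $1\leqslant c\leqslant n-1$, one chooses $\tau^{(\alpha)}=(1)$, $\tau^{(\beta)}=(1^c)$, and fills the remaining components arbitrarily so that $|\vec\tau|=n$; the box $(1,1)\in\tau^{(\alpha)}$ then contributes the weight $t_\alpha t_\beta^{-1}t^{\pm c}$. For $s=0$ and $\alpha\neq\beta$, the weight $t_\alpha t_\beta^{-1}$ appears at the fixed point with $\tau^{(\alpha)}=(1)$, $\tau^{(\beta)}=\emptyset$, since there $a_{\tau^{(\alpha)}}((1,1))=0$ and $l_{\tau^{(\beta)}}((1,1))=-1$. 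The main obstacle is simply the combinatorial bookkeeping in the Nakajima–Yoshioka formula: pinning down the exact range of the exponent $s$ and exhibiting fixed points that realize each value $|s|<n$; once both are in hand, the lemma follows at once.
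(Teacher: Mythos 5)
Your approach is exactly the one the paper intends: the lemma is stated in the paper with the one-line justification that it ``follows directly from the description of the tangent space in the fixed points given in \cite[Theorem 3.2]{NY},'' and you are supplying precisely that computation. Two small points deserve tightening, though neither is a conceptual problem. First, the stated estimates $0\leqslant a_{\tau^{(\alpha)}}(x)\leqslant|\tau^{(\alpha)}|-1$ and $l_{\tau^{(\beta)}}(x)\leqslant|\tau^{(\beta)}|-1$ only give an \emph{upper} bound $s\leqslant|\tau^{(\alpha)}|+|\tau^{(\beta)}|-1$; since $l_{\tau^{(\beta)}}(x)$ can be negative, to control $|s|$ you also need a lower bound, either by noting $l_{\tau^{(\beta)}}(x)\geqslant -i\geqslant -|\tau^{(\alpha)}|$ for $x=(i,j)\in\tau^{(\alpha)}$, or by invoking the fact that the tangent space to the symplectic $\M^\theta(n,r)$ at a $T$-fixed point carries a Hamiltonian $T$-action, so its weights come in pairs $\chi,-\chi$ and the one-sided estimate suffices. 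Second, in the realization step your choice $\tau^{(\alpha)}=(1)$, $\tau^{(\beta)}=(1^c)$ with the remaining components carrying the leftover $n-1-c$ boxes silently assumes $r\geqslant 3$; for $r=2$ (and $c<n-1$) there are no remaining components, so the multipartition does not have total size $n$. Replacing it by $\tau^{(\alpha)}=(1^{n-c})$, $\tau^{(\beta)}=(1^c)$ fixes this uniformly in $r\geqslant 2$: the box $(1,1)\in\tau^{(\alpha)}$ still has $a_{\tau^{(\alpha)}}=0$, $l_{\tau^{(\beta)}}=c-1$, hence contributes the weight with $|s|=c$, and similarly $\tau^{(\alpha)}=(1^n)$, $\tau^{(\beta)}=\varnothing$ handles $s=0$ while $\tau^{(1)}=(n)$ handles the wall $k=0$.
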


The description of $\M^\theta(n,r)^{\nu_0(\C^\times)}$ for $\nu_0$ with $k=0$ is classical.

\begin{Lem}\label{Lem:fixed_pt1}
For $k=0$ (and pairwise different $d_1,\ldots,d_r$), we have
$$\M^\theta(n,r)^{\nu_0(\C^\times)}=\bigsqcup_{n_1+\ldots+n_r=n}\prod_{i=1}^r \M^\theta(n_i,1).$$
\end{Lem}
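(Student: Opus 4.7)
The plan is to analyze $\nu_0$-fixed orbits by lifting to $T^*R$: a $\theta$-stable orbit $[A,B,i,j]$ is $\nu_0$-fixed if and only if there exists a cocharacter $g\colon\C^\times\to \GL(V)$ with $g(t).(A,B,i,j)=\nu_0(t).(A,B,i,j)$ for all $t$. Since $\theta$-stable points have trivial $\GL(V)$-stabilizer, $g$ is unique, and unpacking the equation (using $k=0$ and the $T$-action on $R$ recalled in Section 1.1) yields that $g(t)$ commutes with $A$ and $B$, while $i_l g(t) = t^{-d_l}i_l$ and $g(t) j_l = t^{-d_l}j_l$ on the components with respect to $W=\bigoplus_{l=1}^r W_l$.

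Next I would weight-decompose $V=\bigoplus_{\alpha\in\Z} V_\alpha$ under $g$. Commutation with $A,B$ makes each $V_\alpha$ an $(A,B)$-stable subspace, while the equations for $i_l,j_l$ force $i_l|_{V_\alpha}=0$ whenever $\alpha\neq -d_l$, and $j_l(W_l)\subset V_{-d_l}$. Thus $V':=\bigoplus_{\alpha\notin\{-d_1,\ldots,-d_r\}}V_\alpha$ is an $(A,B)$-stable subspace contained in $\ker i$; for $\theta=\det$ the stability condition recalled in Section 1.1 forces $V'=0$, and the case $\theta=\det^{-1}$ reduces to the previous one via the symplectomorphism $\iota$ from the beginning of Section 3. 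Since the $d_l$ are pairwise distinct, $V=\bigoplus_{l=1}^r V_{-d_l}$, and writing $n_l:=\dim V_{-d_l}$, the moment map equation $[A,B]-ji=0$ restricts block-by-block to produce a $\theta$-stable quadruple on $(V_{-d_l},W_l)$ representing a point of $\M^\theta(n_l,1)$.

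Finally, I would check that this assignment is a bijection. The inverse takes an $r$-tuple in $\prod_l \M^\theta(n_l,1)$ to its block-diagonal sum in $\M^\theta(n,r)$, which is manifestly $\nu_0$-fixed via $g(t)=\bigoplus_l t^{-d_l}\id_{V_{-d_l}}$. For injectivity, any element of $\GL(V)$ identifying two block-diagonal representatives must intertwine the unique cocharacters $g$, hence preserve the $g$-weight decomposition, so it lies in $\prod_l\GL(V_{-d_l})$ and the orbits agree block-by-block. The main conceptual step is the stability-based rigidity that eliminates the extra weight spaces $V_\alpha$ with $\alpha\notin\{-d_l\}$; once this is in place, the rest is a direct verification.
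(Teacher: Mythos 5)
Your proof is correct. The paper itself gives no argument here, declaring the description ``classical'' without proof, so there is no in-text argument to compare against; but your write-up supplies the standard argument in the right form. The key steps you use are all sound: lifting a fixed $\theta$-stable orbit to a unique cocharacter $g\colon\C^\times\to \GL(V)$ (group-homomorphism property and algebraicity hold because the $T$-action commutes with $\GL(V)$ and $\theta$-stable points have trivial stabilizer); the weight-eigenspace analysis giving $[g,A]=[g,B]=0$, $i_l|_{V_\alpha}=0$ for $\alpha\neq -d_l$, and $j_l(W_l)\subset V_{-d_l}$; using $\det$-stability to kill the stray weight spaces $V_\alpha$, $\alpha\notin\{-d_1,\dots,-d_r\}$; and the block-diagonal structure of the moment map equation.

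Two small points worth spelling out explicitly if you were writing this for publication. First, for the forward direction you also need that each block quadruple on $(V_{-d_l},W_l)$ is $\det$-stable for $\GL(V_{-d_l})$: if $U\subset V_{-d_l}$ were a nonzero $(A,B)$-stable subspace in $\ker(i_l|_{V_{-d_l}})$, it would already be a nonzero $(A,B)$-stable subspace of $\ker i$ inside $V$, contradicting stability of the original point. Second, for well-definedness of the inverse you need that the block-diagonal sum of $\det$-stable quadruples is $\det$-stable; this follows because any $(A,B)$-stable subspace $U\subset\ker i$ projects to an $(A,B)$-stable subspace of $\ker(i_l|_{V_{-d_l}})$ in each block, forcing $U=0$. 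These are routine, but they are what upgrades your map from a well-defined function on fixed points to a bijection (and in fact an isomorphism of varieties, since both maps are visibly algebraic).
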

Recall that $\M^\theta(n_i,1)=\operatorname{Hilb}_{n_i}(\C^2)$ (when $n_i=0$, we assume
that this variety is a single point). Lemma \ref{Lem:fixed_pt1} gives an identification of
$\M^\theta(n,r)^T$ with the set of $r$-multipartitions of $n$. Indeed, the fixed points of
$\C^\times$ in $\M^\theta(n_i,1)$ are in a one-to-one correspondence with the partitions of
$n_i$.

The description for $s\neq 0$ is less classical but is also known. Using an analog of the previous lemma, this description
easily reduces to the case when $r=2$. Here the components of the fixed points are quiver varieties for
finite type A Dynkin diagrams.  We will not need a more precise description in this paper.

Now, for $\nu_0(t)=(\operatorname{diag}(t^{d_1},\ldots, t^{d_r}),1)$ with $d_1\gg d_2\gg\ldots
\ldots\gg d_r$, we want to describe the sheaves $\Ca_{\nu_0}(\A_\lambda^\theta(n,r))$.

\begin{Prop}\label{Prop:Cartan_subquot}
For the irreducible component $Z$ of $\M^\theta(n,r)^{\nu_0(\C^\times)}$ corresponding to
the $r$-multipartition $(n_1,\ldots,n_r)$ of $n$, we have  $\Ca_{\nu_0}(\A_\lambda^\theta(n,r))|_{Z}=\bigotimes_{i=1}^r \A^\theta_{\lambda+i-1}(n_i,1)$ (the $i$th factor is absent if $n_i=0$). Moreover,
for a Zariski generic $\lambda\in \C$, we have $\Ca_{\nu_0}(\A_\lambda(n,r))=
\bigoplus \bigotimes_{i=1}^r \A_{\lambda+(i-1)}(n_i,r)$, where the summation
is taken over all $r$-multipartitions $(n_1,\ldots,n_r)$.
\end{Prop}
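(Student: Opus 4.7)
The plan is to apply Proposition~\ref{Prop:cat_O}(4) (and its sheaf version) component by component, then match parameters by explicit computation of $\iota_Z^*$ and $\rho_Z$.

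\emph{Identification of components and setup.} Since the $\C^\times$-part of $\nu_0$ is trivial and the $d_i$ are pairwise distinct, Lemma~\ref{Lem:fixed_pt1} identifies the irreducible components of $\M^\theta(n,r)^{\nu_0(\C^\times)}$ as $Z=\prod_{i=1}^r \M^\theta(n_i,1)$, indexed by $r$-tuples $(n_1,\ldots,n_r)$ with $\sum n_i=n$. The computation in Section~\ref{SS_Gies_quant} shows that the period of $\A_\lambda^\theta(n,r)$ equals $\lambda+r/2$ (the $\gl(V)$-summand of $R$ is self-dual, hence contributes trivially to the determinant character of $\GL(V)$ on $\Lambda^{\mathrm{top}}R$). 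Proposition~\ref{Prop:cat_O}(4) then identifies $\Ca_{\nu_0}(\A_\lambda^\theta(n,r))|_Z$ with the quantization of $Z$ whose period is $\iota_Z^*(\lambda+r/2)-\rho_Z$.

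\emph{Computing $\iota_Z^*$ and $\rho_Z$.} The group $H^2(\M^\theta(n,r),\C)$ is spanned by $c_1(\mathcal{L})$ for the determinant line bundle $\mathcal{L}$, and since $\det|_{\prod\GL(V_i)}=\bigotimes_i \det_{V_i}$, we have $\iota_Z^* c_1(\mathcal{L})=\sum_i p_i^* c_1(\mathcal{L}_i)$, where $\mathcal{L}_i=\det\mathcal{V}_i$ and $p_i\colon Z\to\M^\theta(n_i,1)$ is the projection. For $\rho_Z$, I decompose the tangent complex $\gl(V)\to T^*R\to \gl(V)^*$ into $\nu_0$-weights. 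In effective weight $\alpha=d_k-d_l$ with $k\ne l$, the two copies of $\gl(V)$ in $T^*R$ cancel against $\gl(V)$ and $\gl(V)^*$ (each contributes the $K$-class $[\Hom(\mathcal{V}_l,\mathcal{V}_k)]$), leaving
\[
[TX^{(\alpha)}|_Z]=[\Hom(\mathcal{V}_l,\mathcal{W}_k)]+[\Hom(\mathcal{W}_l,\mathcal{V}_k)],
\]
with $c_1=p_k^*c_1(\mathcal{L}_k)-p_l^*c_1(\mathcal{L}_l)$ (the $\mathcal{W}_j$ are trivial of rank one). Since $d_1>\cdots>d_r$, summing over the positive weights $d_k-d_l>0$, i.e.\ pairs $k<l$, gives
\[
c_1(N^+_{Z/X})=\sum_{k<l}\bigl(p_k^*c_1(\mathcal{L}_k)-p_l^*c_1(\mathcal{L}_l)\bigr)=\sum_{i=1}^r (r-2i+1)\,p_i^*c_1(\mathcal{L}_i),
\]
so $\rho_Z=\sum_i\tfrac{r-2i+1}{2}\,p_i^*c_1(\mathcal{L}_i)$. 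Hence the period on the $i$-th factor equals $\lambda+i-\tfrac12$, which matches that of $\A^\theta_{\lambda+i-1}(n_i,1)$. Uniqueness of quantizations of a given period then yields $\Ca_{\nu_0}(\A^\theta_\lambda(n,r))|_Z\cong \bigotimes_{i=1}^r \A^\theta_{\lambda+i-1}(n_i,1)$.

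\emph{Global sections.} For Zariski generic $\lambda$, Lemma~\ref{Lem:lem_ab_equiv} provides abelian localization for every $\A^\theta_{\lambda+i-1}(n_i,1)$, so taking $\Gamma$ of the sheaf isomorphism commutes with the tensor product; summing over the components $Z$ then produces the decomposition of $\Ca_{\nu_0}(\A_\lambda(n,r))$. The main obstacle is the computation of $\rho_Z$: the universal coefficients $r-2i+1$ emerge only because the $\gl(V)$ and $\gl(V)^*$ terms cancel in every $\nu_0$-weight separately; without this cancellation, a naive weight count produces partition-dependent terms that could not match any period of the claimed tensor product.
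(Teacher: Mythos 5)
Your proof is correct and takes essentially the same route as the paper. The key computation of $\rho_Z$ via the $\nu_0$-weight decomposition of the moment-map complex $\mathfrak{gl}(V)\to T^*R\to\mathfrak{gl}(V)^*$ is precisely what underlies the paper's appeal to Nakajima's $\ker\beta^{12}/\operatorname{im}\alpha^{12}$ description of the contracting bundle (followed there by a short induction on the number of $W$-summands), and both give the same coefficients $r+1-2i$.
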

\begin{proof}
Let $\mu=(n_1,\ldots,n_r)$, $Z_\mu$ denote the corresponding component of $X^{\nu_0(\C^\times)}$.
Let $Y_\mu$ denote the contracting vector bundle for $Z_\mu$. According to (4) of Proposition \ref{Prop:cat_O},
we need to compute the Chern character of $Y_\mu$.

First, consider the following situation. Set $V:=\C^n, W=\C^r$. Choose a decomposition $W=W^1\oplus W^2$
with $\dim W^i=r_i$ and consider the one-parameter subgroup $\alpha$ of $\GL(W)$ acting trivially on $W^2$ and by $t\mapsto t$ on $W^1$. The components of the  fixed points in $\M^\theta(n,r)$ are in one-to-one correspondence with decompositions
on $n$ into the sum of two parts. Pick such a decomposition $n=n_1+n_2$ and consider the splitting $V=V^1\oplus V^2$
into the sum of two spaces of the corresponding dimensions. Then the corresponding component $Z'\subset \M^\theta(n,r)^{\alpha(\C^\times)}$ is $\M^\theta(n_1,r_1)\times \M^\theta(n_2,r_2)$.

Nakajima has described the contracting bundle $Y\rightarrow Z'$, this is basically in \cite[Proposition 3.13]{Nakajima_tensor}. This is the bundle on $Z'=\M^\theta(n_1,r_1)\times
\M^\theta(n_2,r_2)$ that descends from the $\GL(n_1)\times \GL(n_2)$-module $\ker \beta^{12}/\operatorname{im}\alpha^{12}$,
where $\alpha^{12},\beta^{12}$ are certain $\GL(n_1)\times \GL(n_2)$-equivariant linear maps
$$\Hom(V^2,V^1)\xrightarrow{\alpha^{12}}\Hom(V^2,V^1)^{\oplus 2}\oplus \operatorname{Hom}(W^2,V^1)\oplus \Hom(V^2,W^1)\xrightarrow{\beta^{12}}\Hom(V^2,V^1).
$$ We do not need to know the precise form of the maps $\alpha^{12},\beta^{12}$, what we need is that $\alpha^{12}$ is injective
while $\beta^{12}$ is surjective. So $\ker \beta^{12}/\operatorname{im}\alpha^{12}\cong \operatorname{Hom}(W^2,V^1)\oplus \Hom(V^2,W^1)$,
an isomorphism of $\GL(n_1)\times \GL(n_2)$-modules.

It is easy to see that if $\alpha':\C^\times \rightarrow \GL(r_1)$ is a homomorphism of the form $t\mapsto \operatorname{diag}(t^{d_1},\ldots,t^{d_k})$ with $d_1,\ldots,d_k\gg 0$, then the contracting bundle
for the one-parametric subgroup $(\alpha',1):\C^\times \rightarrow \GL(W^1)\times \GL(W^2)$
coincides with the sum of the contracting bundles for $\alpha'$ and for $\alpha$.
So we see inductively that the vector bundle $Y_\mu$ on $Z_\mu$ descends from the
$\prod_{i=1}^r \GL(n_i)$-module $\sum_{i=1}^r \left((\C^{n_i})^{\oplus r-i}\oplus (\C^{n_i*})^{\oplus i-1}\right)$.
Therefore the 1st Chern class of $Y_\mu$ is $\sum_{i=1}^r (r+1-2i)c_i$, where we write $c_i$
for the generator of $H^2(\M^\theta(n_i,1))$ (that is the first Chern class of $\mathcal{O}(1)$).

Now to prove the claim of this proposition we will use  (4) of Proposition \ref{Prop:cat_O}.
The map $\iota^*_Z$ sends the generator $c$ of $H^2(\M^\theta(n,r))$ to $\sum_{i=1}^r c_i$.
The period of $\A_\lambda^\theta(n,r)$ is $(\lambda+r/2)c$. So the period of $\A_{\lambda+i-1}^\theta(n_i,1)$
is $(\lambda+1/2+i-1)c_i$. Now the statement of this proposition is a direct corollary of
(4) of Proposition \ref{Prop:cat_O}.
\end{proof}

\subsection{Symplectic leaves and slices}\label{SS_sympl_slices}
Here we want to describe the symplectic leaves of $\bar{\M}(n,r)$ and study the
structure of the variety near a symplectic leaf.

\begin{Lem}\label{Lem:0_leaf}
The point $0$ is a single leaf of $\bar{\M}(n,r)$.
\end{Lem}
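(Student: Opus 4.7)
The plan is to show that the Poisson bivector of $\bar{\M}(n,r)$ vanishes at the origin $0$, which is equivalent to saying that every Hamiltonian vector field vanishes at $0$, hence that $\{0\}$ is a symplectic leaf. Since $0$ is a fixed point of the contracting $\C^\times$-action (it corresponds to the quadruple $(A,B,i,j)=0$), the maximal ideal is the positive part of the grading: $\m_0 = \bigoplus_{i\geqslant 1}\C[\bar{\M}(n,r)]_i$. As $\{c,g\}=0$ for any constant $c$, it suffices to prove $\{f,g\}(0)=0$ for all $f,g\in\m_0$.

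The first key step is to show that $\C[\bar{\M}(n,r)]_1 = 0$. This uses the description $\C[\bar{\M}(n,r)] = \C[\mu^{-1}(0)]^G$, whose generators are traces of monomials in $A,B\in\mathfrak{sl}(V)$, $i\in\Hom(V,W)$ and $j\in\Hom(W,V)$. The center $\C^\times\subset G=\GL(V)$ acts with opposite weights on $\Hom(V,W)$ and $\Hom(W,V)$ and trivially on $\mathfrak{sl}(V)^{\oplus 2}$, so any $G$-invariant monomial must contain equally many $i$ and $j$ factors; combined with $\operatorname{tr}(A)=\operatorname{tr}(B)=0$, every nonconstant generator has Bernstein degree at least $2$.

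The second key step is the Poisson degree shift. The symplectic form on $T^*\bar R$ is homogeneous of weight $2$ for the Bernstein grading (since $\bar R$ and $\bar R^*$ both sit in degree $1$), and this weight descends to the Hamiltonian reduction. Hence $\{\C[\bar{\M}(n,r)]_i,\C[\bar{\M}(n,r)]_j\}\subset \C[\bar{\M}(n,r)]_{i+j-2}$. Combining this with step one, for any $f\in\C[\bar{\M}(n,r)]_i$, $g\in\C[\bar{\M}(n,r)]_j$ with $i,j\geqslant 2$, we get $\{f,g\}\in\bigoplus_{k\geqslant 2}\C[\bar{\M}(n,r)]_k$, which has zero constant term. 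Therefore $\{f,g\}(0)=0$ for all $f,g\in\m_0$, so the Poisson bivector at $0$ vanishes, proving that $\{0\}$ is a symplectic leaf.

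I expect the only real subtlety to be the verification of $\C[\bar{\M}(n,r)]_1=0$: one must confirm that there are no ``hidden'' degree-$1$ invariants beyond $\operatorname{tr}(A),\operatorname{tr}(B)$ in the unreduced algebra, which is precisely what distinguishes the $\mathfrak{sl}$-version $\bar{\M}(n,r)$ from $\M(n,r)$ (in $\M(n,r)$, the pair $\operatorname{tr}(A),\operatorname{tr}(B)$ has nonzero Poisson bracket at $0$ and $\{0\}$ is not a leaf — it lies inside the $\C^2$-factor). This degree argument is also the reason the result is stated only for the reduced variety.
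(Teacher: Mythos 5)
Your proof is correct and takes essentially the same route as the paper: the paper likewise observes that, because $\bar{R}$ (hence $T^*\bar{R}$) contains no trivial $G$-summand, every nonconstant homogeneous invariant has degree at least $2$, and then invokes the degree-$(-2)$ shift of the Poisson bracket to conclude that the maximal ideal at $0$ is Poisson. Your more explicit verification via the $\C^\times$-center of $\GL(V)$ and the tracelessness of $A,B$ is just an unpacking of the paper's one-line observation, and your closing remark correctly identifies why the statement fails for $\M(n,r)$.
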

\begin{proof}
It is enough to show that the maximal ideal of $0$ in $\C[T^*\bar{R}]^G$ is Poisson. Since $\bar{R}$ does not include
the trivial $G$-module as a direct summand, we see that all homogeneous elements in $\C[T^*\bar{R}]^G$ have degree $2$
or higher. It follows that the bracket of any two homogeneous elements also has degree 2 or higher and our claim is proved.
\end{proof}

Now let us describe the slices to symplectic leaves in $\M(n,r)$. When $r=1$, then $\M(n,r)=\C^{2n}/S_n$
and the description is easy.

\begin{Prop}\label{Prop:leaves}
Let $r>1$.  Then the following is true:
\begin{enumerate}
\item
The symplectic leaves of $\bar{\M}(n,r)$ are parameterized by the unordered collections of numbers
$(n_1,\ldots,n_k)$ with $\sum_{i=1}^k n_i\leqslant n$.
\item There is a transversal slice to the leaf as above that is isomorphic to the formal
neighborhood of 0 in $\prod_{i=1}^k \bar{\M}(n_i,r)$.
\end{enumerate}
\end{Prop}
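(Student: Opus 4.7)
The plan is to apply the Hamiltonian form of the Luna slice theorem to the description $\bar{\M}(n,r) = \mu^{-1}(0)\quo G$ with $G=\GL(V)$. Closed points of $\bar{\M}(n,r)$ correspond to closed $G$-orbits in $\mu^{-1}(0)$, which by Hilbert--Mumford are exactly the semisimple representations of the algebra $\C\langle A,B,i,j\rangle/([A,B]-ji)$ on $V$, subject to the traceless condition inherited from the summand $\sl(V)\subset \g$ in $\bar R$. Part (1) will come from identifying symplectic leaves with orbit-type (Luna) strata on the reduced space, and part (2) from a direct analysis of the symplectic slice at a representative semisimple.

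\medskip

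For part (1) I would show that every such semisimple decomposes canonically as
\[
V = V_0 \oplus U_1^{\oplus n_1}\oplus\cdots\oplus U_k^{\oplus n_k},
\]
where $V_0$ is a simple framed summand of dimension $n_0=n-\sum n_i$ (carrying the image of $j$ and on whose complement $i$ vanishes), and each $U_\ell$ is a simple summand on which the framing data vanishes, appearing with multiplicity $n_\ell\geqslant 1$, the $U_\ell$'s being pairwise non-isomorphic. The unordered tuple $(n_1,\dots,n_k)$ with $\sum n_i\leqslant n$ is then a complete isomorphism invariant; the corresponding orbit-type stratum of $\bar{\M}(n,r)$ is a single symplectic leaf by the general principle (Guillemin--Sternberg) that orbit-type strata on a Hamiltonian reduction coincide with symplectic leaves. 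For part (2), pick such a representative $p$; its stabilizer is $G_p = \C^\times\times\prod_{\ell=1}^k \GL(n_\ell)$, with the central $\C^\times$ acting by scalars on $V_0$ and each $\GL(n_\ell)$ on the multiplicity space of $U_\ell^{\oplus n_\ell}$. The Hamiltonian Luna theorem (cf.\ \cite[Section 2.3]{quant}) identifies the formal neighborhood of $p$ in $\bar{\M}(n,r)$ with the formal Hamiltonian reduction of a linear symplectic slice $N_p\subset T_p(T^*\bar R)$ by $G_p$. Computing $N_p$ explicitly shows that the $\GL(n_\ell)$-piece attached to the block $U_\ell^{\oplus n_\ell}$ is
\[
\End(\C^{n_\ell})^{\oplus 2} \oplus \Hom(W,\C^{n_\ell}) \oplus \Hom(\C^{n_\ell},W),
\]
which is precisely the data defining $\bar{\M}(n_\ell,r)$; the remaining cross-terms reduce trivially as discussed below, so factor-by-factor Hamiltonian reduction produces the formal neighborhood of $0$ in $\prod_{\ell=1}^k\bar{\M}(n_\ell,r)$.

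\medskip

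The main obstacle is verifying that the cross terms between different blocks, and between the blocks and $V_0$, do not contribute to the reduced slice, and that the trace-zero condition from $\bar R$ descends correctly to each factor. For the cross terms, the pairs $\Hom(U_i^{\oplus n_i}, U_j^{\oplus n_j})\oplus\Hom(U_j^{\oplus n_j}, U_i^{\oplus n_i})$ (for $i\neq j$) and their analogues involving $V_0$ are Hamiltonian modules for the pertinent product of general linear groups whose moment-map reductions at $0$ should be a single point; I would verify this by direct computation, exploiting that the two Hom-factors are Lagrangian halves of the symplectic form and that the stabilizers act suitably freely on the complement of the zero-section of the moment map. The trace-zero constraint on $V$ becomes, under the decomposition, a single linear relation among block traces; together with the $\C^\times$-reduction on $V_0$ it yields the $\sl(\C^{n_\ell})$-trace condition on each slice factor, which is why $\bar{\M}(n_\ell,r)$ rather than $\M(n_\ell,r)$ appears. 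The hypothesis $r>1$ is what guarantees the existence of a simple framed summand $V_0$ of every dimension $n_0\geqslant 0$; for $r=1$ (excluded from the statement) one has $\bar{\M}(n,1)\cong\operatorname{Sym}^n(\C^2)_0$, whose leaves are indexed by partitions of $n$ and for which a separate analysis is required.
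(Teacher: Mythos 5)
Your plan is close in spirit to the paper's proof: both identify the closed point with its semisimple representation, decompose it into non‑isomorphic simples with multiplicities (a framed simple $V_0$ plus one‑dimensional non‑framed simples $r^1,\dots,r^k$ with multiplicity spaces), and then describe the formal neighborhood by a slice. The difference is that the paper imports the Poisson formal‑scheme isomorphism $\bar{\M}(n,r)^{\wedge_x}\cong D\times\prod_{i}\bar{\M}(n_i,r)^{\wedge_0}$ as a black box from \cite[2.1.6]{BL}, whereas you want to rederive it from the Hamiltonian Luna slice theorem. That substitution is legitimate in principle, but it exposes a genuine gap in your argument.

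The gap is in the sentence asserting that ``the corresponding orbit‑type stratum of $\bar{\M}(n,r)$ is a single symplectic leaf by the general principle (Guillemin--Sternberg).'' What the Sjamaar--Lerman decomposition gives you is that each orbit‑type stratum is a smooth locally closed symplectic subvariety, hence a \emph{union} of symplectic leaves; but a symplectic leaf is by definition connected, so to conclude that the stratum indexed by $(n_1,\dots,n_k)$ is one leaf you must prove the stratum is irreducible. This is exactly the extra ingredient the paper supplies by citing \cite[Theorem 1.2]{CB_geom} (Crawley--Boevey's irreducibility theorem for representation‑type strata of $\mu^{-1}(0)$), together with Lemma \ref{Lem:0_leaf}, which is what upgrades ``union of leaves'' to ``single leaf.'' Your proposal never mentions this and would give a proof only of the weaker statement that the leaves refine the orbit‑type stratification. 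A secondary, smaller issue: the ``cross terms'' $\Hom(U_i^{\oplus n_i},U_j^{\oplus n_j})^{\oplus 2}$ (and their analogues involving $V_0$) are not independent Hamiltonian factors whose reduction at $0$ you verify to be a point; they lie in the image of the infinitesimal $\g$‑action at $p$ (since $(a_j-a_i)\neq 0$) and so are discarded already when forming the symplectic slice $N_p$. Equivalently, they correspond to vanishing $\Ext^1$ between non‑isomorphic simples over the deformed preprojective algebra. Stating it as a separate reduction is a misplacement of where these terms go, though it is likely to self‑correct once you actually write out $N_p=(\g\cdot p)^{\skewperp}/\g\cdot p$ inside $\ker d\mu_p$.
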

\begin{proof}
Pick $x\in \bar{\M}(n,r)$. We can view $T^*R$ as the representation space of
dimension $(n,1)$ for the double $\overline{Q}^r$ of the quiver $Q^r$ obtained from $Q$ by
adjoining the additional vertex $\infty$ with $r$ arrows from
the vertex $0$ in $Q_0$ to $\infty$. Pick a semisimple representation of $\overline{Q}^r$ lying over $x$. This representation decomposes as $r^0\oplus r^1\otimes U_1\oplus\ldots\oplus r^k\otimes U_k$, where $r^0$ is an irreducible representation of $DQ^w$ with dimension  $(n_0,1)$ and $r^1,\ldots,r^k$ are pairwise nonisomorphic irreducible representations of $\bar{Q}$,  all $r^i, i=1,\ldots,k,$  have dimension $1$. Let $n_i=\dim U_i$.

According to \cite[2.1.6]{BL},  we have a decomposition $\bar{\M}(n,r)^{\wedge_x}\cong D\times \prod_{i=1}^k \bar{\M}(n_i,r)^{\wedge_0}$
of Poisson formal schemes, where $D$ stands for the symplectic formal disk and $\bullet^{\wedge_x}$ indicates
the formal neighborhood of $x$.  From Lemma \ref{Lem:0_leaf} it now follows
that the locus described in the previous is a union of leaves. So in order to prove the entire proposition, it remains
to show that the locus  is irreducible. This easily  follows from \cite[Theorem 1.2]{CB_geom}.
\end{proof}

An isomorphism $\M(n,r)^{\wedge_x}\cong D\times \prod_{i=1}^k \bar{M}(n_i,r)^{\wedge_0}$
can be quantized (see \cite[Section 5.4]{BL}) to
\begin{equation}\label{eq:quant_compl_iso}
\bar{\A}_\lambda(n,r)_\hbar^{\wedge_0}=\mathbf{A}_\hbar^{\wedge_0}
\widehat{\otimes}_{\C[[\hbar]]}\bar{\A}_\lambda(n_1,r)_\hbar^{\wedge_0}\widehat{\otimes}_{\C[[\hbar]]}\ldots
\widehat{\otimes}_{\C[[\hbar]]}\bar{\A}_\lambda(n_k,r)_\hbar^{\wedge_0}.
\end{equation}
Here the notation is as follows. By $\mathbf{A}_\hbar^{\wedge_0}$ we mean the formal Weyl algebra
quantizing $D$. The subscript $\hbar$ means the Rees algebra (with respect to the filtration by order
of differential operators). We note that the map $\hat{r}$ from \cite[Section 5.4]{BL} sends $\lambda$
to $(\lambda,\lambda,\ldots,\lambda)$ hence all parameters in the right hand side of (\ref{eq:quant_compl_iso})
are all equal to $\lambda$.

\subsection{Harish-Chandra bimodules and restriction functors}\label{SS_HC}
Let $\A,\A'$ be two quantizations of the same positively graded Poisson algebra $A$, where, for simplicity,
the degree of the Poisson bracket is $-1$. For example, we can take $A=\C[\bar{\M}(n,r)]$,
$\A=\bar{\A}_\lambda(n,r), \A'=\bar{\A}_{\lambda'}(n,r)$ (with filtrations coming from the
filtration on $D(\bar{R})$ by the order of differential operator).

Let us recall the definition of a HC $\A$-$\A'$-bimodule. By definition, this is
$\A$-$\A'$-bimodule $\B$ that can be equipped with a {\it good filtration}, i.e.,  an $\A$-$\A'$-bimodule
filtration subject to the following two properties:
\begin{itemize}
\item the induced left and right $A$-actions on $\gr\B$ coincide,
\item $\gr\B$ is a finitely generated $A$-module.
\end{itemize}

By a homomorphism of Harish-Chandra bimodules we mean a  bimodule homomorphism. The category of HC $\A$-$\A'$-bimodules is denoted by $\HC(\A\text{-}\A')$. We also consider the full subcategory $D^b_{HC}(\A\text{-}\A')$ of the derived category of  $\A$-$\A'$-bimodules with Harish-Chandra homology.

By the associated variety of a HC bimodule $\B$ (denoted by $\VA(\B)$) we mean the support in $\operatorname{Spec}(A)$ of the coherent sheaf $\gr\B$, where the associated graded is taken with respect to a good filtration. It is easy to see that $\gr\B$ is a Poisson $A$-module so $\VA(\B)$ is the union of symplectic leaves (assuming $\operatorname{Spec}(A)$
has finitely many leaves).

%Using associated varieties and the finiteness of the number of the leaves it is easy to prove the following standard %result.
%
%\begin{Lem}\label{Lem:fin_length}
%Any HC bimodule has finite length.
%\end{Lem}

Let $\A''$ be another filtered quantization of $A$.
For $\B^1\in \HC(\A'\text{-}\A)$ and $\B^2\in \HC(\A''\text{-}\A')$ we can take their tensor product $\B^2\otimes_{\A'}\B^1$. This is easily
seen to be a HC $\A''$-$\A$-bimodule. Also the derived tensor product of the objects from $D^b_{HC}(\A''\text{-}\A'),D^b_{HC}(\A'\text{-}\A)$
lies in $D^-_{HC}(\A''\text{-}\A)$ (and in $D^b_{HC}(\A''\text{-}\A)$ provided $\A'$ has finite homological dimension).

We will need the following result from \cite{B_ineq} that is a part of  \cite[Theorem 1.3]{B_ineq}.

\begin{Lem}\label{Lem:fin_length}
Let $\A$ be the algebra of global section of a quantization of a symplectic resolution.
Then $\A$ has finite length as an $\A$-$\A$-bimodule.
\end{Lem}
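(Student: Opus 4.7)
The plan is to exploit the stratification of $X_0 = \Spec \A$ by symplectic leaves (finite in number, since $X_0$ admits a symplectic resolution and is a conic symplectic singularity by Kaledin's theorem) together with restriction functors for Harish-Chandra bimodules. Sub-bimodules of $\A$ are precisely two-sided ideals. Since $\gr \A = \C[X_0]$ is a Noetherian commutative ring, $\A$ is Noetherian as a filtered algebra, so ACC on sub-bimodules is automatic. The task is therefore to establish DCC.

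The key tool is, for each symplectic leaf $\Leaf \subset X_0$, an exact restriction functor $\bullet_{\dagger,\Leaf} : \HC(\A) \to \HC(\underline{\A}_\Leaf)$ to HC bimodules over the slice quantization $\underline{\A}_\Leaf$ of a transverse slice $Y$ to $\Leaf$ at a chosen point. The construction uses the Darboux-Weinstein type decomposition $X_0^{\wedge_x} \cong D^{\wedge_0} \times Y^{\wedge_0}$ and its quantization (of the form (\ref{eq:quant_compl_iso})), together with a Fedosov-style argument and $\C^\times$-equivariant descent under the contracting action. The essential feature is that $\bullet_{\dagger,\Leaf}$ is exact and detects the associated variety: $\B_{\dagger,\Leaf}$ vanishes whenever $\Leaf \not\subset \VA(\B)$, while if $\Leaf$ is open in $\VA(\B)$ then $\VA(\B_{\dagger,\Leaf}) = \{0\} \subset Y$, so $\B_{\dagger,\Leaf}$ is finite-dimensional as a HC bimodule over $\underline{\A}_\Leaf$ (and indeed as a vector space).

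With these tools, finite length follows by a multiplicity estimate. For each leaf $\Leaf$, define $\operatorname{mult}_\Leaf(\B)$ to be the composition length of $\B_{\dagger,\Leaf}$ when $\Leaf$ is open in $\VA(\B)$, and $0$ otherwise; this is finite by the previous paragraph. Noetherian induction on $\VA(\bullet)$ combined with the exactness of restriction shows that the composition length of any HC bimodule $\B$ is bounded above by $\sum_\Leaf \operatorname{mult}_\Leaf(\B)$: in a strict descending chain $\B = \B_0 \supsetneq \B_1 \supsetneq \ldots$, either the associated variety strictly drops (which can happen only finitely often by finiteness of leaves) or it remains constant, in which case the restrictions at the open leaves of this common variety strictly decrease inside a finite-dimensional object. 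Applied to $\B = \A$, this bounds the length of $\A$ as a bimodule over itself.

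The main obstacle is constructing the restriction functor $\bullet_{\dagger,\Leaf}$ in this generality and verifying its key properties (exactness, compatibility with associated varieties, and finite-dimensionality on bimodules with $\Leaf$ as the generic leaf). The construction proceeds by passing to the $\hbar$-adic Rees completion at a point of $\Leaf$, applying the quantum slice decomposition (\ref{eq:quant_compl_iso}), extracting the slice tensor factor, and then descending back to HC bimodules using equivariance under the contracting $\C^\times$-action. This is precisely what is carried out in \cite{B_ineq}, and it is the technical heart of the proof.
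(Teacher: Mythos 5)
The paper itself gives no proof of this lemma: it simply cites it as part of \cite[Theorem~1.3]{B_ineq}. Your high-level plan --- stratify $X_0$ by the finitely many symplectic leaves, use the exact slice restriction functors $\bullet_{\dagger,\Leaf}$ for Harish-Chandra bimodules, observe that these send a bimodule whose associated variety has $\Leaf$ as an open leaf to a finite-dimensional bimodule over the slice algebra, and then induct on the associated variety --- is indeed the framework used in \cite{B_ineq}. However, the final combinatorial step of your argument has a genuine gap, and the quantitative bound it claims is false.

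The problem is the asserted dichotomy: ``either the associated variety strictly drops ... or it remains constant, in which case the restrictions at the open leaves of this common variety strictly decrease.'' There is a third possibility that your argument does not address: one can have $\B_i\supsetneq\B_{i+1}$ with $\VA(\B_i)=\VA(\B_{i+1})$ \emph{and} $(\B_i)_{\dagger,\Leaf}=(\B_{i+1})_{\dagger,\Leaf}$ for every open leaf $\Leaf$ of this common variety. In that case what actually happens (by exactness of $\bullet_{\dagger,\Leaf}$) is that $\VA(\B_i/\B_{i+1})$ is strictly smaller than $\VA(\B_i)$, but $\VA(\B_{i+1})$ itself does not drop. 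Your counting scheme silently assumes this cannot occur. Moreover, the claimed inequality $\operatorname{length}(\B)\leqslant\sum_\Leaf\operatorname{mult}_\Leaf(\B)$, with $\operatorname{mult}_\Leaf(\B)$ defined to vanish whenever $\Leaf$ is not open in $\VA(\B)$, is demonstrably wrong: applied to $\B=\A$ itself (whose associated variety is all of $X_0$, with a single open leaf whose slice is a point, so $\A_{\dagger,x}\cong\C$ there) it would yield $\operatorname{length}(\A)\leqslant 1$, i.e.\ that $\A$ is simple as a bimodule, contradicting the very ideal structure the paper establishes in Theorem~\ref{Thm:ideals}.

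To close the gap one needs a more careful induction. A correct scheme is: induct on unions $Z$ of leaf closures; given a descending chain $\B_0\supsetneq\B_1\supsetneq\cdots$ in $\HC_Z$, let $N$ be the stage after which all restrictions $(\B_i)_{\dagger,\Leaf}$ to open leaves of $Z$ have stabilized, so $\VA(\B_N/\B_m)\subseteq Z':=Z\smallsetminus\bigcup_{\text{open}}\Leaf$ for all $m>N$; then one must produce a single Harish-Chandra bimodule in $\HC_{Z'}$ that dominates $\B_N/\B_m$ for all $m$. This is where additional work is needed --- e.g.\ noting that the increasing chain of two-sided ideals $\operatorname{Ann}(\B_N/\B_m)$ stabilizes by Noetherianity, so that a single ideal $J$ with $\VA(\A/J)\subseteq Z'$ annihilates every $\B_N/\B_m$, and hence annihilates $\B_N/\bigcap_m\B_m$, which is then a Harish-Chandra bimodule in $\HC_{Z'}$ of finite length by the inductive hypothesis, bounding the chain. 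Without such an auxiliary step the induction does not close, because finite length of each individual $\B_N/\B_m$ in $\HC_{Z'}$ gives no \emph{uniform} bound as $m\to\infty$.
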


Now let us proceed to restriction functors defined in \cite[Section 5.4]{BL} for HC bimodules over
the algebras $\bar{\A}_{\lambda}(n,r)$. Let us write $\bar{\A}_\lambda(n)$ for $\bar{\A}_\lambda(n,r)$.
Pick a point $x$ in the leaf corresponding to the unordered collection $\mu=(n_1,\ldots,n_k)$
and set $\bar{\A}_\lambda(\mu)=\bigotimes_{i=1}^k \bar{\A}_\lambda(n_i)$. Then we have
a functor $$\bullet_{\dagger,\mu}:\operatorname{HC}(\bar{\A}_{\lambda}(n)\text{-}\bar{\A}_{\lambda'}(n))
\rightarrow \operatorname{HC}(\bar{\A}_{\lambda}(\mu)\text{-}\bar{\A}_{\lambda'}(\mu)).$$

We will need several facts about the restriction functors established in
\cite[Section 5.5]{BL}.

\begin{Prop}\label{Prop:dagger_prop}
The following is true.
\begin{enumerate}
\item The functor $\bullet_{\dagger,x}$ is exact and intertwines tensor products
(as well as $\operatorname{Tor}$'s). It also intertwines one-sided Hom's (as well
as $\operatorname{Ext}$'s).
\item
The associated variety $\VA(\B_{\dagger,x})$ is uniquely characterized by  $D\times \VA(\B_{\dagger,\mu})^{\wedge_0}=\VA(\B)^{\wedge_x}$.
\end{enumerate}
\end{Prop}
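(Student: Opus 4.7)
The strategy is to construct $\bullet_{\dagger,x}$ from the quantum decomposition (\ref{eq:quant_compl_iso}) and then verify both assertions step by step against that construction.

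First I would make the construction explicit. Given $\B\in\HC(\bar{\A}_\lambda(n)\text{-}\bar{\A}_{\lambda'}(n))$ equipped with a good filtration, pass to the Rees bimodule $\B_\hbar$ and form its completion $\B_\hbar^{\wedge_x}$ at $x$. Applying (\ref{eq:quant_compl_iso}) to both $\lambda$ and $\lambda'$ makes $\B_\hbar^{\wedge_x}$ a topological bimodule over $\mathbf{A}_\hbar^{\wedge_0}\widehat{\otimes}_{\C[[\hbar]]}\bar{\A}_\lambda(\mu)_\hbar^{\wedge_0}$ and the analogous algebra with $\lambda'$. The key technical input is a separation-of-variables lemma: any such finitely generated, complete bimodule factors uniquely as $\mathbf{A}_\hbar^{\wedge_0}\widehat{\otimes}_{\C[[\hbar]]}\B'$ with $\mathbf{A}_\hbar^{\wedge_0}$ acting diagonally, where $\B'$ is a bimodule over the remaining tensor factors. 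Then $\B_{\dagger,\mu}$ is obtained from $\B'$ by specializing $\hbar=1$ and extracting the $\C^\times$-finite vectors with respect to the contracting grading.

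Given this construction, assertion (1) follows step by step. Each ingredient operation (Rees, $\hbar$- and $\m_x$-adic completion on finitely generated modules, extraction of the $\mathbf{A}_\hbar^{\wedge_0}$-tensorand, specialization at $\hbar=1$, taking $\C^\times$-finite vectors) is exact on HC bimodules with good filtrations. Compatibility with tensor products holds because the Rees and completion functors commute with tensor products of good-filtered bimodules, and because the diagonal tensor product of two factorizations of the form $\mathbf{A}_\hbar^{\wedge_0}\widehat{\otimes}\B'$ is again of that form: the two diagonal copies of $\mathbf{A}_\hbar^{\wedge_0}$ collapse via multiplication to a single copy. The analogous identity for one-sided Hom's is the dual computation, and Tor and Ext follow by deriving these functors, using finite homological dimension where needed. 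Assertion (2) is then immediate, since taking associated graded commutes with each of the steps: under the identification $\bar{\M}(n,r)^{\wedge_x}\cong D\times\prod_i\bar{\M}(n_i,r)^{\wedge_0}$ from Proposition \ref{Prop:leaves}(2), the support of $\gr\B_\hbar^{\wedge_x}$ splits as $D\times\VA(\B_{\dagger,\mu})^{\wedge_0}$.

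The main obstacle I expect is the separation-of-variables lemma together with the verification that $\B_{\dagger,\mu}$ is genuinely canonical — independent of the chosen good filtration on $\B$ and of the point $x$ within its leaf. Independence of the filtration should follow because any two good filtrations are commensurable, so the Rees completions are canonically isomorphic after inverting $\hbar$; independence within the leaf should follow from an equivariant enhancement of (\ref{eq:quant_compl_iso}) for a subgroup of symplectic automorphisms acting transitively along the leaf. These technical points constitute the substance of \cite[Section 5.4]{BL}.
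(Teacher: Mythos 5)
The paper does not actually prove this proposition; it is quoted from \cite[Section 5.5]{BL} without argument. Your sketch correctly reconstructs the construction of $\bullet_{\dagger,x}$ given in \cite[Section 5.4]{BL}: pass to the Rees bimodule for a good filtration, complete at $x$, use the quantum decomposition (\ref{eq:quant_compl_iso}) to view the completion as a bimodule over $\mathbf{A}_\hbar^{\wedge_0}\widehat{\otimes}_{\C[[\hbar]]}\bar{\A}_\lambda(\mu)_\hbar^{\wedge_0}$ and the primed analogue, split off the formal Weyl factor via a Kashiwara-type separation-of-variables lemma, then specialize $\hbar=1$ and take $\C^\times$-finite vectors. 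The step-by-step verification of (1) and (2) you outline is the one used there, and your flagging of the canonicity issues (independence of the good filtration and of the choice of $x$ within its leaf, handled via the equivariant enhancement of the slice isomorphism) is exactly where the real technical work sits in \cite{BL}.

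Two small corrections. First, for the Tor/Ext statements you invoke ``finite homological dimension where needed,'' but that hypothesis is not available here (the parameter $\lambda$ may well be singular) and is not needed: since $\bullet_{\dagger,x}$ is exact and sends the free bimodule $\bar{\A}_\lambda(n)\otimes\bar{\A}_{\lambda'}(n)^{opp}$ to the corresponding free bimodule over $\bar{\A}_\lambda(\mu)\otimes\bar{\A}_{\lambda'}(\mu)^{opp}$, it carries free resolutions to free resolutions, and the Tor/Ext claims follow directly. Second, in (2) the uniqueness deserves a sentence: $\VA(\B_{\dagger,\mu})$ is conical for the contracting $\C^\times$-action (because the good filtration on $\B_{\dagger,\mu}$ is induced from the internal grading), and a conical closed subvariety of $\prod_i\bar{\M}(n_i,r)$ is determined by its formal germ at $0$, which is what makes the displayed equation a characterization rather than just a consequence.
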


Now let $\mathcal{L}$ denote the leaf of $x$. Consider the subcategory
$\operatorname{HC}_{\overline{\mathcal{L}}}(\bar{\A}_{\lambda}(n)\text{-}\bar{\A}_{\lambda'}(n))$
of all objects whose associated variety is contained in $\overline{\mathcal{L}}$.
Now suppose that $\mathcal{L}$ corresponds to $\mu$. It follows that
$\bullet_{\dagger,x}$ maps $\operatorname{HC}_{\overline{\mathcal{L}}}(\bar{\A}_{\lambda}(n)\text{-}\bar{\A}_{\lambda'}(n))$
to the category $\operatorname{HC}_{fin}(\bar{\A}_{\lambda}(\mu)\text{-}\bar{\A}_{\lambda'}(\mu))$
of finite dimensional $\bar{\A}_{\lambda}(\mu)\text{-}\bar{\A}_{\lambda'}(\mu)$-bimodules.
As was shown in \cite[Section 3.3]{WC_res}, this functor admits a right adjoint
$$\bullet^{\dagger,x}:\operatorname{HC}_{fin}(\bar{\A}_{\lambda}(\mu)\text{-}\bar{\A}_{\lambda'}(\mu))
\rightarrow \operatorname{HC}_{\overline{\mathcal{L}}}(\bar{\A}_{\lambda}(n)\text{-}\bar{\A}_{\lambda'}(n)).$$

We would like to point out that the results and constructions explained above in this section
generalize to products of the algebras $\bar{\A}_\lambda(?,r)$ in a straightforward way.

Let us finish this section with a further discussion of wall-crossing bimodules. Namely, for
$\lambda\in \C, \chi\in \Z$, we have the $\bar{\A}_{\lambda+\chi}(n,r)$-$\bar{\A}_\lambda(n,r)$-bimodule
$\bar{\A}^0_{\lambda,\chi}(n,r):=[D(\bar{R})/D(\bar{R})\{\xi_R-\lambda \operatorname{tr}\xi|\xi\in \g\}]^{G,\chi}$.
This gives an example of a HC $\bar{\A}_{\lambda+\chi}(n,r)$-$\bar{\A}_\lambda(n,r)$-bimodule.

\begin{Lem}\label{Lem:transl_properties}
The following is true.
\begin{enumerate}
\item If $(\lambda+\chi,\theta)$ satisfies abelian localization, then $\bar{\A}_{\lambda,\chi}^0(n,r)\cong
\bar{\A}_{\lambda,\chi}^{(\theta)}(n,r)$.
\item $(\lambda,\theta)$ satisfies abelian localization if and only if,
for all sufficiently large $n\in \Z$,  the bimodules $\bar{\A}^0_{\lambda+mn\theta, n\theta}(n,r)$
and $\bar{\A}^0_{\lambda+(m+1)n\theta,-n\theta}(n,r)$ are mutually inverse Morita equivalences.
\item If $x\in \overline{\M}(n,r)$ corresponds to a collection $\mu=(n_1,\ldots,n_k)$, then
$\bar{\A}^0_{\lambda,\chi}(n,r)_{\dagger,x}=\bigotimes_{i=1}^k \bar{\A}^0_{\lambda,\chi}(n_i,r)$.
\end{enumerate}
\end{Lem}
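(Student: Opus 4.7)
Plan. For (1), there is a natural bimodule homomorphism $\phi:\bar{\A}^0_{\lambda,\chi}(n,r)\to\bar{\A}^{(\theta)}_{\lambda,\chi}(n,r)$ given by restricting representatives in $D(\bar{R})/D(\bar{R})\{\xi_R-\lambda\tr\xi\}$ from $T^*\bar{R}$ to $(T^*\bar{R})^{\theta\text{-ss}}$ and taking global sections of the resulting sheaf of $\chi$-semi-invariants. Viewed as a map of left $\bar{\A}_{\lambda+\chi}$-modules, $\phi$ is identified with the adjunction unit $M\to\Gamma\Loc(M)$ for $M=\bar{\A}^0_{\lambda,\chi}$, because $\Loc_{\lambda+\chi}^\theta\bar{\A}^0_{\lambda,\chi}$ is canonically isomorphic to the sheaf $\bar{\A}^\theta_{\lambda,\chi}$: both arise from the same quantum Hamiltonian reduction performed on the semistable locus. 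Abelian localization at $(\lambda+\chi,\theta)$ is precisely the statement that this unit is an isomorphism, so $\phi$ is an isomorphism.

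For (2), forward direction: Lemma \ref{Lem:lem_ab_equiv} extends abelian localization from $\lambda$ to $\lambda+kn\theta$ for all $k\in\Z$ and all sufficiently large $n$. Part (1) then identifies both of the $0$-superscript bimodules with the global sections of the sheaf-level bimodules $\bar{\A}^\theta_{\lambda+mn\theta,n\theta}$ and $\bar{\A}^\theta_{\lambda+(m+1)n\theta,-n\theta}$, which quantize the mutually inverse line bundles $\mathcal{O}(\pm n)$ on $\bar{\M}^\theta(n,r)$. At the sheaf level their tensor products collapse canonically to the structure-sheaf quantizations $\bar{\A}^\theta_{\lambda+mn\theta}$ and $\bar{\A}^\theta_{\lambda+(m+1)n\theta}$, and passing to global sections using abelian localization at both parameters produces the Morita equivalence on algebras. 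For the converse, if the stated Morita equivalences hold for all $m$ and $n\gg 0$, composing several steps gives a Morita equivalence between $\bar{\A}_\lambda\operatorname{-mod}$ and $\bar{\A}_{\lambda+Nn\theta}\operatorname{-mod}$ for $N\gg 0$. Since localizing $\bar{\A}^0_{\lambda,Nn\theta}$ as a left $\bar{\A}_\lambda$-module produces the sheaf bimodule $\bar{\A}^\theta_{\lambda,Nn\theta}$, the algebra-level and sheaf-level translation functors are intertwined by $\Loc^\theta$. The sheaf-level translation is always an equivalence, and abelian localization holds at $\lambda+Nn\theta$ for $N$ large, so the intertwining exhibits $\Loc^\theta_\lambda$ as a composition of equivalences.

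For (3), the factorization (\ref{eq:quant_compl_iso}) comes from quantizing a $G$-equivariant Hamiltonian decomposition of $T^*\bar{R}$ at the preimage of $x$ in which $G$ acts trivially on the symplectic slice factor $D$. Taking $\chi$-semi-invariants therefore decomposes compatibly: the slice factor contributes only in the trivial $\chi$-weight (the formal Weyl algebra $\mathbf{A}_\hbar^{\wedge_0}$ itself), while each $\bar{\M}(n_i,r)$-factor contributes $\bar{\A}^0_{\lambda,\chi}(n_i,r)_\hbar^{\wedge_0}$. This yields a bimodule isomorphism $\bar{\A}^0_{\lambda,\chi}(n,r)_\hbar^{\wedge_x}\cong\mathbf{A}_\hbar^{\wedge_0}\widehat{\otimes}_{\C[[\hbar]]}\widehat{\bigotimes}_{i=1}^k\bar{\A}^0_{\lambda,\chi}(n_i,r)_\hbar^{\wedge_0}$ over the algebra decomposition (\ref{eq:quant_compl_iso}); stripping off the Weyl factor as in the definition of $\bullet_{\dagger,x}$ gives the claim. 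The main obstacle I anticipate is the reverse direction in (2): one has to verify carefully that $\Loc^\theta$ intertwines the algebra-level and sheaf-level translation bimodules, so that abelian localization at $\lambda+Nn\theta$ genuinely transfers back to $\lambda$.
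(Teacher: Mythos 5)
The paper's ``proof'' of this lemma is purely by citation: part~(1) is \cite[Lemma~5.25]{BL}, part~(2) follows from \cite[Lemma~5.26]{BL} together with \cite[Proposition~5.20]{BPW}, and part~(3) is a special case of \cite[(5.11)]{BL}. Your proposal is therefore a genuinely different route: you are reconstructing what lies behind those citations. The argument for (3) is essentially the correct one (track the $G$-equivariance and the $\chi$-isotypic decomposition through the quantized slice isomorphism~(\ref{eq:quant_compl_iso}), noting that the Weyl factor carries no nontrivial $\chi$-weight), and the forward implication of (2) and the overall shape of (2) are also right, with the caveat that the paper's ``for all sufficiently large $n$'' quantifier is best read as ``for some sufficiently large $\chi$'' as the proof of Lemma~\ref{Lem:ab_loc_redn} makes clear.

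The genuine gap is in part~(1), and it propagates into the converse of part~(2). You assert that $\Loc^\theta_{\lambda+\chi}\bigl(\bar{\A}^0_{\lambda,\chi}\bigr)$ is canonically isomorphic to the sheaf $\bar{\A}^\theta_{\lambda,\chi}$ ``because both arise from the same quantum Hamiltonian reduction on the semistable locus.'' That is a plausible heuristic, not an argument, and it is in fact the entire content of \cite[Lemma~5.25]{BL}. The issue is that the canonical map $\Loc^\theta_{\lambda+\chi}\bigl(\bar{\A}^0_{\lambda,\chi}\bigr)\to\bar{\A}^\theta_{\lambda,\chi}$ (or its associated-graded version $\mathcal{O}_{\M^\theta}\otimes_{\C[\M]}\C[\mu^{-1}(0)]^{G,\chi}\to\mathcal{O}(\chi)$) is not obviously injective: left-exactness of $\Loc$ can fail, and on the classical level $\C[\mu^{-1}(0)]^{G,\chi}$ need not generate $\mathcal{O}(\chi)$ faithfully after base change to $\C[\M]$ unless one controls the unstable locus. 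Attempting instead to run the chain $\bar{\A}^0_{\lambda,\chi}\to\Gamma\Loc(\bar{\A}^0_{\lambda,\chi})\to\Gamma(\bar{\A}^\theta_{\lambda,\chi})=\bar{\A}^{(\theta)}_{\lambda,\chi}$ and invoke $\Gamma\Loc=\id$ is circular, since identifying $\Gamma(\bar{\A}^\theta_{\lambda,\chi})$ with $\bar{\A}^0_{\lambda,\chi}$ is exactly the statement to be proved. One must argue directly that the unit $\bar{\A}^0_{\lambda,\chi}\to\Gamma\Loc^\theta_{\lambda+\chi}(\bar{\A}^0_{\lambda,\chi})$ has vanishing kernel and cokernel and then compare the resulting sheaf to $\bar{\A}^\theta_{\lambda,\chi}$, which requires some input about the Hamiltonian-reduction construction of the translation bimodule on the semistable locus. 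The same intertwining claim is what you flag as the ``main obstacle'' in the converse of (2); you are right to worry, but you should recognize that the obstacle already arises in (1), and is the substance of the cited lemma rather than something one can wave past.
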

\begin{proof}
(1) is \cite[Lemma 5.25]{BL}. (2) follows from \cite[Lemma 5.26]{BL} and \cite[Proposition 5.20]{BPW}.
(3) is a special case of \cite[(5.11)]{BL}.
\end{proof}

\section{Finite dimensional representations and structure of category $\mathcal{O}$}
\subsection{Finite dimensional representations}\label{SS_fin_dim}
The goal of this section is to prove the following statement.

\begin{Prop}\label{Prop:support_local}
The sheaf $\bar{\A}_\lambda^\theta(n,r)$ has a coherent module supported on $\bar{\rho}^{-1}(0)$ if and only if $\lambda=\frac{s}{n}$ with $s$ and $n$ coprime. If that is the case, then the category $\bar{\A}_\lambda^\theta(n,r)\operatorname{-mod}_{\rho^{-1}(0)}$ of all coherent $\bar{\A}_\lambda^\theta(n,r)$-modules
supported on $\rho^{-1}(0)$ is equivalent to $\operatorname{Vect}$.
\end{Prop}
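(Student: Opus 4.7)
The plan is to reduce Proposition~\ref{Prop:support_local} to the case $r=1$ (the spherical rational Cherednik algebras, for which the classification of finite-dimensional modules is due to Berest--Etingof--Ginzburg \cite{BEG}), via translation bimodules and the Cartan subquotients of Proposition~\ref{Prop:Cartan_subquot}.

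First, I pass from the sheaf level to the algebra level. Tensoring with the translation bimodule $\bar{\A}^{\theta}_{\lambda,k}(n,r)$ of Lemma~\ref{Lem:transl_properties} for an integer $k$ chosen so that $(\lambda+k,\theta)$ satisfies abelian localization gives a derived equivalence preserving the subcategory of coherent modules supported on $\bar\rho^{-1}(0)$, since line-bundle twists preserve support. Under abelian localization, this subcategory identifies with $\bar{\A}_{\lambda'}(n,r)\operatorname{-mod}_{fin}$ for $\lambda':=\lambda+k$ via global sections: a coherent sheaf on the projective Lagrangian $\bar\rho^{-1}(0)$ has finite-dimensional global sections using the contracting $\C^\times$-action, and conversely a finite-dimensional module has associated variety the single leaf $\{0\}$ (Lemma~\ref{Lem:0_leaf}) and so localizes to a sheaf supported on $\bar\rho^{-1}(0)$. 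It thus suffices to classify, within the coset $\lambda+\Z$, the parameters $\lambda'$ admitting a finite-dimensional $\bar{\A}_{\lambda'}(n,r)$-module, and to describe this category.

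Second, I descend via category $\mathcal{O}$. Any finite-dimensional module automatically lies in $\mathcal{O}_\nu(\bar{\A}_{\lambda'}(n,r))$ for every generic $\nu$. Taking the boundary one-parameter subgroup $\nu_0(t)=(\operatorname{diag}(t^{d_1},\ldots,t^{d_r}),1)$ with $d_1\gg\cdots\gg d_r$, Proposition~\ref{Prop:stand_filt} equips $\mathcal{O}_\nu$ with a standardly stratified structure whose associated graded is $\bigoplus_\mu\bigotimes_{i=1}^r \mathcal{O}_\nu(\bar{\A}_{\lambda'+i-1}(n_i,1))$ by Proposition~\ref{Prop:Cartan_subquot}, the sum over $r$-multipartitions $\mu=(n_1,\ldots,n_r)$ of $n$. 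A finite-dimensional simple $L_\nu(\tau)$ has $\{0\}$ as associated variety in $\bar{\M}(n,r)$, and tracing through the standardization functor $\Delta_{\nu_0}$ its image in some graded piece must itself be finite-dimensional; so each tensor factor $\bar{\A}_{\lambda'+i-1}(n_i,1)$ with $n_i>0$ admits a finite-dimensional module. Invoking BEG for each Cherednik factor, $\lambda'+i-1=s_i/n_i$ in lowest terms, forcing each nonzero $n_i$ to equal the common denominator of $\lambda'$. A support-dimension count on the contracting bundle $Y_\mu\to Z_\mu$ rules out multi-entry strata (since for such $\mu$ the bundle $Y_\mu$ has positive rank over a positive-dimensional base, so the standardization cannot have support in $\bar\rho^{-1}(0)$), leaving only $\mu=(0,\ldots,n,\ldots,0)$ with a single nonzero entry $n$. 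Thus the denominator of $\lambda$ is exactly $n$, i.e., $\lambda=s/n$ with $\gcd(s,n)=1$.

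Third, for existence and the $\operatorname{Vect}$ structure, when $\lambda=s/n$ is coprime each of the $r$ single-entry strata $\mu^{(i)}$ supplies a unique finite-dimensional BEG simple in the associated graded. A contraction-order analysis from $\nu_0$ combined with Lemma~\ref{Lem:WC_Ringel} and Proposition~\ref{Prop:cw_functors}(3) shows that all $r$ candidate standards $\Delta_\nu(\tau^{(i)})$ admit a common finite-dimensional simple quotient supported on $\bar\rho^{-1}(0)$, and that this is the unique fin-dim simple in $\mathcal{O}_\nu(\bar{\A}_{\lambda'}(n,r))$. The $\operatorname{Ext}^1$-vanishing transported from BEG through the standardly stratified structure gives semisimplicity, i.e., $\bar{\A}^\theta_\lambda(n,r)\operatorname{-mod}_{\rho^{-1}(0)}\simeq\operatorname{Vect}$.

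The main obstacle is reconciling the naive $r$-fold multiplicity arising from the $r$ candidate strata $\mu^{(i)}$ with the desired uniqueness of the fin-dim simple. Resolving this requires a careful support calculation of $\Delta_{\nu_0}(L_{\mathrm{BEG}}^{(i)})$ for each $i$, showing that only the contraction-minimal stratum gives a standardization with support contained in $\bar\rho^{-1}(0)$, while the others acquire positive-dimensional support components from the contracting bundle. An alternative route uses the cross-walling bijections of Proposition~\ref{Prop:cw_functors} to directly identify the $r$ apparent simples.
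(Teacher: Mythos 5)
Your global strategy coincides with the paper's: reduce to classifying finite-dimensional $\bar{\A}_{\lambda'}(n,r)$-modules, place such a module in $\mathcal{O}_\nu$, descend through the standardly stratified structure attached to $\nu_0$ to the Cartan subquotient $\Ca_{\nu_0}$ (a sum over compositions of tensor products of spherical Cherednik algebras, Proposition~\ref{Prop:Cartan_subquot}), and invoke the Berest--Etingof--Ginzburg classification in the $r=1$ case. That skeleton is right, but your uniqueness step is broken. After isolating the single-entry strata $\mu^{(i)}=(0,\ldots,n,\ldots,0)$, you are left with $r$ candidate simples $L_\nu(\tau^{(i)})$, and you assert that the standards $\Delta_{\nu_0}(L^{(i)}_{\mathrm{BEG}})$ ``admit a common finite-dimensional simple quotient.'' They do not: the $L_\nu(\tau^{(i)})$ are $r$ pairwise non-isomorphic simples, exactly one is finite-dimensional, and the other $r-1$ have support of maximal dimension $rn-1$ (compare Theorem~\ref{Thm:catO_str}(2) and Remark~\ref{Rem:simpl_label}). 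Your proposed ``support calculation'' cannot separate them either, because each $\Delta_{\nu_0}(L^{(i)}_{\mathrm{BEG}})$ has characteristic variety a Lagrangian of the \emph{same} dimension $rn-1$ (the contracting bundle restricted over the punctual locus of the $i$-th Hilbert-scheme component), so a dimension count does not tell you which one lies in $\bar\rho^{-1}(0)$. The paper closes this by a cohomological input you do not use: the characteristic-cycle map embeds $K_0(\mathcal{O}_\nu)$ into the span of Lagrangian components of $\bar\rho^{-1}(0)$, and the Nakajima--Yoshioka formula \cite[Theorem 3.8]{NY} gives $\dim H^{2rn-2}(\M^\theta(n,r))=1$ (the top power of $t$ is attained only at $((n),\varnothing,\ldots,\varnothing)$), hence $\bar\rho^{-1}(0)$ has a unique Lagrangian component and at most one finite-dimensional simple. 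The cross-walling ``alternative route'' is not a substitute for this.

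There are also weaker spots elsewhere. To rule out strata with two or more nonzero $n_i$, the paper does not count ranks of contracting bundles; it uses a Gelfand--Kirillov dimension bound (a simple in $\mathcal{O}$ over $\bigotimes_i\A_{\lambda+i-1}(n_i,1)$ has GK-dimension at least $\#\{i:n_i>0\}$, while a finite-dimensional $\bar{\A}_\lambda(n,r)$-module contributes GK-dimension $1$), which is cleaner than your sketch and avoids the issue that the characteristic variety of a standardization can have the correct Lagrangian dimension yet not sit in $\bar\rho^{-1}(0)$. And for existence, the paper does not construct the finite-dimensional simple directly: it shows $\mathcal{O}_\nu(\bar{\A}_\lambda(n,r))$ is non-semisimple (because the Cherednik summand of $\Ca_{\nu_0}$ is, transported through Proposition~\ref{Prop:stand_filt}) and combines this with a Ringel-duality/perverse-equivalence argument (Lemmas~\ref{Lem:long_wc_perv}, \ref{Lem:WC_Ringel}, \ref{Lem:cost_classes}, \ref{Lem:hw_techn}) showing that semisimplicity of $\mathcal{O}_\nu$ is equivalent to all simples having support of dimension $rn-1$. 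That wall-crossing machinery, together with Lemma~\ref{Lem:O_holon}, is what actually drives both the existence and the ``only if'' direction, and it is entirely absent from your proposal.
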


In the proof (and also below) we will need the following lemma.

\begin{Lem}\label{Lem:hw_techn}
Let $\mathcal{C}$ be a highest weight category, where the classes of standard and costandard
objects coincide. Let $R:D^b(\mathcal{C})\xrightarrow{\sim} D^b(\mathcal{C}^{\vee})$
be  a Ringel duality functor. The following conditions are equivalent:
\begin{enumerate}
\item $\mathcal{C}$ is semisimple.
\item We have $H_0(R(L))\neq 0$ for every simple object $L$.
\item every simple  lies in the head of a costandard object.
\end{enumerate}
\end{Lem}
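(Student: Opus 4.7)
The plan is to prove $(1)\Leftrightarrow(3)$ and $(2)\Leftrightarrow(3)$ independently; together these give the lemma. The implication $(1)\Rightarrow(3)$ is immediate: in a semisimple highest weight category $\nabla(\tau)=L(\tau)$, which is tautologically its own head.

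For $(3)\Rightarrow(1)$ I would argue by downward induction on the highest-weight poset, showing that $\nabla(\tau)=L(\tau)$ for every $\tau$, which is semisimplicity. Recall that $L(\tau)$ appears exactly once as a composition factor of $\nabla(\tau)$, namely as its socle, and every composition factor $L(\tau')$ of $\nabla(\sigma)$ satisfies $\tau'\leq\sigma$. At a maximal $\tau$ one has $\nabla(\tau)=L(\tau)$ automatically. For the inductive step, assume $\nabla(\tau')=L(\tau')$ for all $\tau'>\tau$. By $(3)$, $L(\tau)$ lies in the head of some $\nabla(\sigma)$, and necessarily $\sigma\geq\tau$. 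If $\sigma>\tau$, the inductive hypothesis gives $\nabla(\sigma)=L(\sigma)$, whose only simple quotient is $L(\sigma)\neq L(\tau)$, a contradiction. Hence $\sigma=\tau$ and $L(\tau)$ is a quotient of $\nabla(\tau)$; combined with $[\nabla(\tau):L(\tau)]=1$ and the fact that $L(\tau)$ is the socle of $\nabla(\tau)$, this forces $\nabla(\tau)=L(\tau)$.

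For $(2)\Leftrightarrow(3)$ I would invoke the standard model of the Ringel duality functor as $R\Hom_{\mathcal{C}}(T,\bullet)$, where $T=\bigoplus_\tau T(\tau)$ is a characteristic tilting; since any two Ringel duality functors are canonically isomorphic, this model computes the same $H_0$ as the abstract $R$ of the lemma. Under it, $H_0(R(L(\tau)))=\Hom_{\mathcal{C}}(T,L(\tau))$, so $(2)$ amounts to the assertion that every $L(\tau)$ is a quotient of some indecomposable tilting $T(\sigma)$. To match this with $(3)$: any $T(\sigma)$ surjects onto $\nabla(\sigma)$ because its $\nabla$-filtration has $\nabla(\sigma)$ on top, so $(3)$ produces the required surjection from a tilting. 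Conversely, given a surjection $T(\sigma)\twoheadrightarrow L(\tau)$, pick a $\nabla$-filtration $0=F_0\subset F_1\subset\cdots\subset F_n=T(\sigma)$ and let $i$ be minimal with $F_i$ not contained in the kernel; the induced map $\nabla(\sigma_i)=F_i/F_{i-1}\to L(\tau)$ is nonzero, hence surjective, so $L(\tau)$ lies in the head of $\nabla(\sigma_i)$.

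The main obstacle is essentially notational: one must pin down a model of the abstract Ringel duality functor from the lemma statement in order to compute $H_0(R(L))$ as $\Hom(T,L)$; once this is in hand, both equivalences reduce to short structural arguments about tiltings and costandards. I note that the hypothesis $[\Delta(\tau)]=[\nabla(\tau)]$ in $K_0(\mathcal{C})$ does not enter the proof above; it is recorded as a feature of the categories $\mathcal{O}_\nu(\A_\lambda^\theta)$ to which the lemma will be applied.
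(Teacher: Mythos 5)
Your handling of $(1)\Rightarrow(3)$ and of $(2)\Leftrightarrow(3)$ is correct and matches the paper's route (realize $R$ as $R\Hom(T,\bullet)$, use that tiltings surject onto costandards and, conversely, that a surjection $T(\sigma)\twoheadrightarrow L$ factors through some $\nabla$-subquotient of the $\nabla$-filtration). The problem is in $(3)\Rightarrow(1)$, and it is pinpointed by your own closing remark: you claim the hypothesis $[\Delta(\tau)]=[\nabla(\tau)]$ is never used. It must be used, and its absence is a genuine gap, not a notational nicety.

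Your downward induction correctly proves that $\nabla(\tau)=L(\tau)$ for every $\tau$. However, \emph{this alone is not equivalent to semisimplicity} of a highest weight category. What you do get from it, via BGG reciprocity $[P(\tau):\Delta(\sigma)]=[\nabla(\sigma):L(\tau)]$, is that $P(\tau)=\Delta(\tau)$ for all $\tau$; but $\Delta(\tau)$ need not be simple. Concretely: take $\mathcal{C}$ to be finite-dimensional modules over the path algebra of the $A_2$ quiver $1\to 2$, with the highest weight order $2<1$. Then $\Delta(1)=P(1)$ is the two-dimensional indecomposable with head $L_1$ and socle $L_2$, $\Delta(2)=L_2$, while $\nabla(1)=L_1$ and $\nabla(2)=L_2$. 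Every costandard is simple (so (3) holds; one can also check $\Hom(T,L_i)\neq 0$ for both $i$, so (2) holds), yet $\mathcal{C}$ is not semisimple. Of course here $[\Delta(1)]\neq[\nabla(1)]$, so the lemma's hypothesis fails.

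The repair is short: once you know $\nabla(\tau)=L(\tau)$, the hypothesis $[\Delta(\tau)]=[\nabla(\tau)]=[L(\tau)]$ forces $\Delta(\tau)=L(\tau)$ as well, and then $P(\tau)=\Delta(\tau)=L(\tau)$ (and dually $I(\tau)=\nabla(\tau)=L(\tau)$) makes every simple projective and injective, which is semisimplicity. The paper's proof makes this step at a maximal label and then peels off blocks; your all-at-once induction is an equally good packaging, but you must still invoke the $K_0$ hypothesis at the final step rather than declare ``$\nabla=L$ for all $\tau$, which is semisimplicity.''
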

\begin{proof}
Recall that $R$ can be realized as $R\Hom_{\mathcal{C}}(T,\bullet)$, where $T$ stands for the sum of all indecomposable
tiltings in $\mathcal{C}$. The implication (1)$\Rightarrow$(2) is clear. The implication (2)$\Rightarrow$(3) follows from the fact that every costandard object in a highest weight category is a quotient of a tilting.

Let us prove (3)$\Rightarrow$(1). Let $\tau$ be a maximal (with respect to the coarsest highest weight ordering) label. Then the simple $L(\tau)$ lies in the head of some costandard, say $\nabla(\sigma)$. But all simple constituents of $\nabla(\sigma)$ are $L(\xi)$ with $\xi\leqslant \sigma$. It follows that $\sigma=\tau$. Since $L(\tau)$ lies in the head of $\nabla(\tau)$ and also coincides with the socle, we see that $\nabla(\tau)=L(\tau)$. So $L(\tau)$ is injective. Since the classes of standards and costandard coincide, it is also a projective object. So $L(\tau)$ itself forms a block.
 %and therefore spans
%a block in the category.

Since this holds for any maximal $\tau$, we deduce that the category $\mathcal{C}$ is semisimple.
\end{proof}

\begin{proof}[Proof of Proposition \ref{Prop:support_local}]
The proof is in several steps.

{\it Step 1}. Let us take dominant  $\nu$, i.e., $\nu(t)=(\alpha(t),t)$, where $\alpha(t)=\operatorname{diag}(t^{d_1},\ldots, t^{d_r})$ with $d_1\gg d_2\gg\ldots\gg d_r$.
We can assume that $\lambda$ is as in Proposition \ref{Prop:cat_O}.
Let us show that for the category $\mathcal{O}(\bar{\A}_\lambda(n,r))$ condition (2) of Lemma \ref{Lem:hw_techn}
is equivalent to all simples have support of dimension $rn-1$. By Lemmas \ref{Lem:long_wc_perv}
and \ref{Lem:WC_Ringel}, the functor $R$ is a perverse equivalence with respect to the filtration by
dimension of support. So the claims that  all simples have dimension of support equal to $rn-1$
is equivalent to $H_0(RL)\neq 0$ for all simple $L$. By Lemma \ref{Lem:cost_classes}, the classes
of standards and costandards in $K_0(\OCat_\nu(\A^\theta_\lambda))$ coincide and we are done by
Lemma \ref{Lem:hw_techn}.

%This follows from results of Section \ref{SS_long_WC}.
%Also from the results mentioned in that section it follows that every simple in $\mathcal{O}$ has support of
%dimension $rn-1$ provided the algebra $\bar{\A}_\lambda(n,r)$ is simple.

{\it Step 2}. Suppose that $\lambda$ is Zariski generic and  $\bar{\A}_\lambda(n,r)$
has a finite dimensional representation.
Let us prove that the denominator of $\lambda$ is precisely $n$. Indeed, a finite dimensional
module lies in the category $\mathcal{O}_\nu$. It follows that the algebra $\Ca_{\nu_0}(\bar{\A}_\lambda(n,r))$
has a finite dimensional representation, equivalently, there is a simple of GK dimension $1$
in $\OCat_\nu(\Ca_{\nu_0}(\A_\lambda(n,r)))$. Recall that $\Ca_{\nu_0}(\A_\lambda(n,r))=
\bigoplus\bigotimes_{i=1}^r \A_\lambda(n_i,1)$, where the sum is taken over all
$r$-multipartitions $(n_1,\ldots,n_r)$ of $n$. The minimal GK dimension of a module in
the category $\mathcal{O}$ over $\bigotimes_{i=1}^r \A_\lambda(n_i,1)$ is bounded from
below by the number of positive integers among $n_1,\ldots,n_r$. So if there is a finite
dimensional $\bar{\A}_\lambda(n,r)$-module, then there is a GK dimension $1$ module
in the category $\OCat_\nu(\A_\lambda(n,1))$, equivalently, a finite dimensional
$\bar{\A}_\lambda(n,1)$-module. The latter is a spherical rational Cherednik algebra
for $S_n$. As was mentioned in Section \ref{SS_known_results}, it
has a finite dimensional representation only when the denominator of $\lambda$ is $n$.

{\it Step 3}. Let us prove that the algebra $\bar{\A}_\lambda(n,r)$ has no proper ideals of infinite codimension
when $\lambda\not\in \mathbb{Q}$ or the denominator of $\lambda$ is bigger than or equal to  $n$. Indeed, let $\mathcal{I}$ be a proper ideal. Pick $x$ in an open symplectic leaf in $\VA(\bar{\A}_\lambda(n,r)/\mathcal{I})$
and consider the ideal $\mathcal{I}_{\dagger,x}$ in the slice algebra for $x$.
The latter ideal has finite codimension by (2) of Proposition \ref{Prop:dagger_prop}.
By Section \ref{SS_sympl_slices}, this slice algebra is $\bigotimes_{i=1}^k
\bar{\A}_\lambda(n_i,r)$, where $n_1,\ldots,n_k$ are positive integers with $\sum n_i\leqslant n$.
So, by our assumption on $\lambda$ and Step 2, the slice algebra has no finite dimensional
representations. We arrive at a contradiction, which proves the statement in the beginning
of the step.  In particular, we see that the algebra $\bar{\A}_\lambda(n,r)$ is simple
when $\lambda$ is irrational or the denominator of $\lambda$ is bigger than $n$.
We also see that in this case the category $\mathcal{O}$ is semisimple, this follows from
Step 1 and Lemma \ref{Lem:O_holon}.

{\it Step 4}. Now consider the case when $\lambda$ has denominator $n$.
We claim that there is
a finite dimensional module in $\OCat_\nu(\bar{\A}_\lambda(n,r))$. The case when $n=1$
is easy, here $\bar{\A}_\lambda(1,r)=D^\lambda(\mathbb{P}^{r-1})$. So we assume that $n>1$.

 By Lemma \ref{Lem:O_holon}, for any $M\in \mathcal{O}_\nu(\bar{\A}_\lambda(n,r))$, we have  $\operatorname{GK-}\dim M=\frac{1}{2}\dim \VA(\bar{\A}_\lambda(n,r)/\operatorname{Ann}M)$.
Now Step 3 implies that  all simples in $\OCat_\nu$ are either finite dimensional or have support
of dimension $rn-1$. If there are no finite dimensional modules, then
%By Lemma \ref{Lem:top_cohom}, the dimension of the middle homology of $\bar{\M}^\theta(n,r)$ is $1$.
%Thanks to Proposition \ref{Prop:CC_inject}, the number of finite dimensional irreducibles is 0 or $1$. If there is one such %module,
%then the category of finite dimensional modules is semisimple because $\mathcal{O}(\bar{\A}_\lambda(n,r))$ is a highest weight category.
thanks to Step 1, $\mathcal{O}(\bar{\A}_\lambda(n,r))$ is semisimple.

Consider a one-parameter subgroups $\nu_0:t\mapsto (\alpha(t),1)$, it
is in  a face of the chamber of $\nu$. %Proposition \ref{Prop:orders}
%implies that $\alpha\prec^\lambda (\alpha,1)$. Now we can use Lemma \ref{Lem:parab_ind}.
%We remark that, since $d_1\gg d_2\gg\ldots\gg d_r$,  an element $a\in \A$ has a positive weight for $(\alpha,1)$ if and only if it either %has
%a positive weight for $\alpha$ or has zero weight for $\alpha$ and positive weight for $\C^\times$ (the direct
%factor in $\GL(r)\times \C^\times$). This follows from the next lemma.
%
%\begin{Lem}\label{Lem:wts}
%Suppose that $\A$ is a filtered algebra with commutative finitely generated positively graded $\gr \A$.  Further, suppose that $\A$
%is equipped with a filtration preserving action of a torus $T$. Let us define an equivalence relation $\sim$ on $\operatorname{Hom}$
%\end{Lem}
%Let us write $\Delta_{\nu_0},\underline{\Delta}$ for the Verma module functors %$\underline{\Delta}:\C[\M^\theta(n,r)^T]\operatorname{-mod}\rightarrow \mathcal{O}(\Ca_{\nu_0}(\A_\lambda(n,r)))$
%and $\Delta_{\nu_0}:\mathcal{O}(\Ca_{\nu_0}(\A_\lambda(n,r)))\rightarrow \mathcal{O}(\A_\lambda(n,r))$.
%By (6) of Proposition \ref{Prop:cat_O}, we have $\Delta_\nu=\Delta_{\nu_0}\circ \underline{\Delta}$.
The  category $\mathcal{O}_\nu(\Ca_{\nu_0}(\A_\lambda(n,r)))$ is not semisimple, it has the
category $\mathcal{O}$ for the Rational Cherednik algebra with parameter $\lambda$
as a summand. By Proposition \ref{Prop:stand_filt}, $\mathcal{O}_\nu(\A_\lambda(n,r))$
is not semisimple.
%:
%there is a nonzero homomorphism $\varphi:\Delta^0(p_2)\rightarrow \Delta^0(p_1)$, where $p_1=(\varnothing^{r-1}, (n)),
%p_2=(\varnothing^{r-1}, (n-1,1))$. So we get a homomorphism %$\Delta_{\nu_0}(\varphi):\Delta_\nu(p_2)=\Delta_{\nu_0}(\underline{\Delta}(p_2))
%\rightarrow \Delta_{\nu_0}(\underline{\Delta}(p_1))=\Delta_\nu(p_1)$. The highest $\nu_0$-weight components
%of $\Delta_\nu(p_2),\Delta_\nu(p_1)$ coincide with $\underline{\Delta}(p_2),
%\underline{\Delta}(p_1)$, respectively, by the construction. The homomorphism $\underline{\Delta}(p_2)\rightarrow %\underline{\Delta}(p_1)$ induced
%by $\underline{\Delta}(\varphi)$ coincides with $\varphi$. It follows that $\underline{\Delta}(\varphi)\neq 0$. We conclude %that $\mathcal{O}(\A_\lambda(n,r))$ is not semisimple.
Consequently, $\bar{\A}_\lambda(n,r)$ has a finite dimensional
representation.

{\it Step 5}. Let us show that the number of irreducible coherent $\bar{\A}_\lambda^\theta(n,r)$-modules supported
on $\rho^{-1}(0)$ cannot be bigger than $1$. Recall that to a module in the category $\mathcal{O}_\nu$ we can assign
its characteristic cycle that is a formal linear combination of the irreducible components of the contracting locus
of $\nu$. This map is injective, see \cite[Section 6]{BLPW}.
So it is enough to show that there is only one lagrangian irreducible
component in  $\rho^{-1}(0)$. Note that the lagrangian irreducible components of $\rho^{-1}(0)$
give a basis in $H^{2nr-2}(\M^\theta(n,r))$.  According to \cite[Theorem 3.8]{NY}, we have
$$\sum_{i}\dim H^{2i}(\M^\theta(n,r))t^i=\sum_\lambda t^{\sum_{i=1}^r (r|\lambda^{(i)}|-i (\lambda^{(i)t})_1)},$$
where the summation is over the set of the $r$-multipartitions $\lambda=(\lambda^{(1)},\ldots,\lambda^{(r)})$.
The highest power of $t$ in the right hand side is $rn-1$, it occurs for a single $\lambda$, namely, for
$\lambda=((n),\varnothing,\ldots,\varnothing)$. This shows $\dim H^{2nr-2}(\M^\theta(n,r))=1$
and completes the proof of the claim in the beginning of this step.

{\it Step 6}. The previous step completes the proof of all claims of the theorem but the claim that the category of modules supported on $\rho^{-1}(0)$ is semisimple. The latter is an easy consequence of the observation that, in a highest weight category, we have
$\operatorname{Ext}^1(L,L)=0$, for every simple $L$.
\end{proof}

We would like to point out that the argument of Step 4 generalizes
to the denominators less than $n$. So in those cases there are also simple $\bar{\A}_\lambda(n,r)$-modules of support with dimension $<rn-1$.

%We will prove that there is a Zariski open subset of $\hat{\param}$ such that (2) of Theorem \ref{Thm:fin dim}
%holds. Thanks to Proposition \ref{Prop:abel_loc1}, this implies (1) of Theorem \ref{Thm:fin_dim}.

\subsection{Proof of Theorem \ref{Thm:cat_O_easy}}\label{SS_cat_O_thm}
In this subsection we will prove Theorem \ref{Thm:cat_O_easy}. We have already seen (Step 3 of the proof of
Proposition \ref{Prop:support_local}) that if the denominator
is bigger than $n$, then the category $\mathcal{O}$ is semisimple.
The case of denominator $n$ will follow from a more precise statement,
Theorem \ref{Thm:catO_str}.

Let us introduce a certain model category. Let $\Cat_n$ denote the nontrivial block for the category $\mathcal{O}$ for the Rational Cherednik algebra $H_{1/n}(n)$
for the symmetric group $\mathfrak{S}_n$. This is also the category of $B$-equivariant perverse sheaves on
$\mathbb{P}^{n-1}$, where $B$ is a Borel subgroup of $\operatorname{PGL}_n$. Let us summarize some properties of this category.
\begin{itemize}
\item[(i)] Its coarsest highest weight poset is linearly ordered: $p_n<p_{n-1}<\ldots<p_1$.
\item[(ii)] The projective objects $P(p_i)$ for $i>1$ are universal extensions $0\rightarrow \Delta(p_{i-1})\rightarrow
P(p_i)\rightarrow \Delta(p_{i})\rightarrow 0$.
\item[(iii)] The indecomposable tilting objects $T(p_{i-1})$ (with $\Delta(p_{i-1})$ in the socle)
for $i>1$ coincide with $P(p_i)$.
\item[(iv)] The simple objects $L(p_i)$ with $i>1$ appear in the heads of tiltings, while
$\operatorname{RHom}_{\Cat_n}(T,L(p_1))$
is concentrated in homological degree $n$.
\item[(v)] There is a unique simple in the Ringel dual category $\Cat_n^\vee$ that appears in the higher cohomology of $\operatorname{RHom}_{\Cat_n}(T,\bullet)$.
\end{itemize}

\begin{Thm}\label{Thm:catO_str}
Consider a parameter of the form $\lambda=\frac{q}{n}$ with coprime $q,n$. Then  the following is true.
\begin{enumerate}
\item The category $\mathcal{O}_\nu(\bar{\A}_\lambda^\theta(n,r))$ has only one nontrivial block that is equivalent
to $\Cat_{rn}$. This block contains an irreducible representation supported on $\bar{\rho}^{-1}(0)$.
\item Suppose that $\nu$ is dominant.  Then the labels in the non-trivial block of
$\mathcal{O}(\bar{\A}_\lambda^\theta(n,r))$ are hooks $h_{i,d}=(\varnothing,\ldots, (n+1-d, 1^{d-1}),\ldots, \varnothing)$
(where $i$ is the number of the diagram where the hook appears) ordered by $h_{1,n}>h_{1,n-1}>\ldots>h_{1,1}>h_{2,n}>\ldots>h_{2,1}>\ldots>h_{r,1}$.
\end{enumerate}
\end{Thm}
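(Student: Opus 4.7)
The plan is to analyze $\OCat_\nu(\bar{\A}_\lambda^\theta(n,r))$ via the standardly stratified structure of Proposition~\ref{Prop:stand_filt} relative to $\nu_0(t)=(\alpha(t),1)$, which lies in a codimension-one face of the $\nu$-chamber, and to combine Proposition~\ref{Prop:Cartan_subquot} with the $r=1$ theory (rational Cherednik algebras of $\mathfrak{S}_n$).

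For the labels in part (2), Proposition~\ref{Prop:Cartan_subquot} identifies $\Ca_{\nu_0}$ with a sum over size vectors $(n_1,\ldots,n_r)$ with $\sum n_i=n$ of $\bigotimes_{i=1}^r \bar{\A}_{\lambda+i-1}(n_i,1)$. Since $\gcd(q,n)=1$, each shifted parameter $\lambda+i-1=(q+(i-1)n)/n$ retains denominator $n$, so by the $r=1$ theory (the argument of Step~3 of the proof of Proposition~\ref{Prop:support_local} combined with Rouquier's block theorem for the Cherednik algebra of $\mathfrak{S}_{n_i}$) the $i$-th factor has semisimple $\mathcal{O}$ unless $n_i=n$, in which case its unique nontrivial block has exactly the $n$ hook partitions of $n$ (those with empty $n$-core). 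Applying the standardization functor of Proposition~\ref{Prop:stand_filt} transfers these $rn$ labels to the non-semisimple block of $\OCat_\nu(\bar{\A}_\lambda^\theta(n,r))$. For dominant $\nu$ with $d_1\gg\ldots\gg d_r$, the $\nu_0$-preorder orders the $r$ slots as $1>2>\ldots>r$, while within each slot the $\nu$-refinement is the standard $\C^\times$-contraction order on $\operatorname{Hilb}^n(\C^2)$ coming from the $\C^\times$-factor of $T$, which linearly orders the $n$ hooks as $h_{i,n}>\ldots>h_{i,1}$; this yields (2).

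The main obstacle is part (1), identifying the block with $\Cat_{rn}$. By Proposition~\ref{Prop:support_local} a simple supported on $\bar{\rho}^{-1}(0)$ exists, and it must occupy the minimum label $h_{r,1}$ of the chain: by Lemma~\ref{Lem:O_holon} its GK-dimension is $0$, and the $\nu$-contraction description of standards implies that only the minimal element of the highest-weight poset can support a finite-dimensional simple. The long wall-crossing $\WC_{-\lambda-r\leftarrow\lambda}$ is a Ringel duality autoequivalence (combining Lemmas~\ref{Lem:WC_Ringel} and \ref{Lem:iso}) that is perverse with respect to the GK-dimension filtration (Lemma~\ref{Lem:long_wc_perv}). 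Together with Lemma~\ref{Lem:hw_techn}, this forces every non-minimal simple in the block to appear in the head of a tilting and forces $R\Hom(T,L(h_{r,1}))$ to be concentrated in degree $rn$, matching properties (i), (iv), (v) of $\Cat_{rn}$. Properties (ii) and (iii) reduce to showing that the $\Ext^1$ between adjacent standards is one-dimensional; I expect to handle this either by computing these $\Ext^1$'s in the Cartan quotient via Proposition~\ref{Prop:stand_filt}, where they reduce to the known adjacent $\Ext^1$'s in $\Cat_n$ from the $r=1$ theory, or by invoking a uniqueness theorem for highest-weight structures on a linear chain carrying a perverse Ringel autoequivalence of the prescribed form. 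The support claim in (1) is immediate from the existence of the finite-dimensional simple together with its image under $\Loc_\lambda^\theta$ being supported on $\bar{\rho}^{-1}(0)$.
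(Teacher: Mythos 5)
Your framework---using the standardly stratified structure from Proposition~\ref{Prop:stand_filt} relative to $\nu_0$, the $r=1$ Cherednik theory via Proposition~\ref{Prop:Cartan_subquot}, and the perverse Ringel duality from Lemmas~\ref{Lem:long_wc_perv} and \ref{Lem:WC_Ringel}---is the right one, but there is one substantive sign error and two genuine gaps.

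\emph{Sign error.} The finite-dimensional simple occupies the \emph{maximal} label $h_{1,n}$, not the minimal label $h_{r,1}$. This follows from the proof of (3)$\Rightarrow$(1) in Lemma~\ref{Lem:hw_techn}: a maximal label whose simple lies in the head of a costandard forms a singleton block, and by Lemma~\ref{Lem:long_wc_perv} the finite-dimensional simple is exactly the one with $H_0(RL)=0$, hence not in the head of any costandard, hence the unique maximal label of its block. (This is also consistent with Remark~\ref{Rem:simpl_label}: the contracting component whose closure is $\bar{\rho}^{-1}(0)$ corresponds to the maximal fixed point.) Consequently it is $\operatorname{RHom}(T,L(h_{1,n}))$, not $\operatorname{RHom}(T,L(h_{r,1}))$, that sits in high homological degree, matching property (iv) of $\Cat_{rn}$ with $L(p_1)$ on the left.

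\emph{Gap 1: transferring blocks through $\Delta_{\nu_0}$.} The standardization functor is exact but not full, so it does not transfer block decompositions. The $rn$ hook labels come from $r$ \emph{disjoint} summands of $\Ca_{\nu_0}(\bar\A_\lambda(n,r))$, yet they must collapse into one block upstairs; conversely, you have not ruled out labels from semisimple summands joining that block. Simply counting the blocks of the summands does not give $\#\Lambda=rn$, and Proposition~\ref{Prop:stand_filt} does not include a Morita-type statement that would let you push block structure up or down. The paper gets the count $rn$ independently from the homological degree of $\operatorname{RHom}(T,L(p_{\max}))$ (Step~9) and then in Step~11 verifies each $\Delta(h_{i,d})$ fails to form its own block, so the hooks exactly fill the block.

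\emph{Gap 2: $\operatorname{Ext}^1$s via the Cartan quotient.} Your plan for (ii) and (iii) cannot succeed as stated: the adjacent pairs in the chain that straddle strata, e.g.\ $h_{i,1}$ and $h_{i+1,n}$, lie in \emph{different} summands of $\Ca_{\nu_0}(\bar\A_\lambda(n,r))$, so their $\operatorname{Ext}^1$ in $\OCat_\nu(\Ca_{\nu_0}(\bar\A_\lambda(n,r)))$ is identically zero. The extensions glueing the chain together are created by the stratification and are invisible to $\underline{\Delta}$. The paper avoids any $\operatorname{Ext}$ computation: Steps~4--8 deduce, purely formally from property (v) plus Ringel duality, that all indecomposable tiltings but one are projective, that tiltings have simple heads, and that the poset is a linear chain; Step~10 then shows the basic algebra of the block is uniquely determined by (ii). Your fallback of citing a ``uniqueness theorem for highest-weight structures on such a chain'' is essentially what Step~10 proves from scratch, not an available black box. (Finally, the ordering in (2) is taken from the explicit combinatorics in~[Ko], which your hand-waved description of the contraction order on Hilbert scheme fixed points would need to reproduce.)
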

\begin{proof}
The proof is in several steps. We again deal with the realization of our category as $\OCat(\bar{\A}_\lambda(n,r))$, where
$\lambda$ is as in Proposition \ref{Prop:cat_O}.

{\it Step 1}. As we have seen in the proof of Proposition \ref{Prop:support_local},
all simples have maximal dimension of support, except one, let us denote it by $L$, which is finite dimensional.
So all blocks but one consist of modules with support of dimension $rn-1$. So (2) of Lemma \ref{Lem:hw_techn}
holds for these blocks and they are equivalent to $\operatorname{Vect}$.
Let $\mathcal{C}$ denote the  block of $L$. The label of $L$, denote it by $p_{max}$,
is the largest in any highest weight ordering, this follows from the proof of (3)$\Rightarrow$(1)
of Lemma \ref{Lem:hw_techn}. For all other labels $p$ the simple $L(p)$ lies in the socle
of the tilting generator $T$.  In other words an analog of (iv) above holds for $\mathcal{C}$ with $rn$ instead of
$n$. In the subsequent steps we will show that $\mathcal{C}\cong \mathcal{C}_{rn}$.

{\it Step 2}. Let us show that an analog of (v) holds for $\mathcal{C}$.  By Lemma \ref{Lem:long_wc_perv},
the higher homology of $\WC_{\theta\rightarrow -\theta}L$ cannot have support of maximal dimension.
It follows that the higher homology is finite dimensional and so is the direct sums of a single simple in $\mathcal{O}(\bar{\A}_{\lambda^-}(n,r))$. Since $\WC_{\theta\rightarrow -\theta}$ is a  Ringel
duality functor (Lemma \ref{Lem:WC_Ringel}), (v) follows.

{\it Step 3}.  Let us show that, in the coarsest highest weight order, there is a unique minimal label for $\mathcal{C}$, say $p_{min}$. This is equivalent to $\mathcal{C}^\vee$ having a unique maximal label because the orders on $\mathcal{C}$ and $\mathcal{C}^\vee$ are opposite. But $\mathcal{C}^\vee$ is equivalent to the nontrivial block in $\OCat(\bar{\A}_{-r-\lambda}(n,r))$. So we are done by Step 1 (applied to $(\lambda^-,-\theta)$ instead of
$(\lambda,\theta)$) of this proof.

{\it Step 4}. Let us show that (v) implies that any tilting in $\mathcal{C}$ but one is projective.
Let $R^\vee$ denote the Ringel duality equivalence $D^b(\mathcal{C}^\vee)\rightarrow D^b(\mathcal{C})^{opp}$.
We have $\Ext^i_{\mathcal{C}}(T(p),L(p'))=\Hom_{\mathcal{C}}(T(p),L(p')[i])=\Hom_{\mathcal{C}^\vee}((R^\vee)^{-1}L(p')[i],
(R^\vee)^{-1}T(p))$. The objects $(R^{\vee})^{-1}T(p)$ are injective so $\Ext^i(T(p),L(p'))=\Hom(H^i((R^\vee)^{-1} L(p')),
(R^\vee)^{-1} T(p))$. Similarly to the previous step (applied to  $\mathcal{C}^\vee$ instead of $\mathcal{C}$ and $(R^{\vee})^{-1}$ instead of $R$),
there is a unique indecomposable injective $I^\vee(p^\vee)$ in $\Cat^\vee$ that admits nonzero maps from a higher
cohomology of $(R^{\vee})^{-1} L(p)$. So if $(R^\vee)^{-1} T(p)\neq I^\vee(p^\vee)$, then $T(p)$ is projective.

{\it Step 5}. We remark that $\Delta(p_{max})$ is projective but not tilting, while $\Delta(p_{min})$ is tilting
but not projective. So the projectives in $\mathcal{C}$ are $\Delta(p_{max})$ and $T(p)$ for $p\neq p_{min}$.
Similarly, the tiltings are $P(p), p\neq p_{max}$, and $\Delta(p_{min})$.

{\it Step 6}. Let $\Lambda$ denote the highest weight poset for $\mathcal{C}$.
Let us define a map $\nu: \Lambda\setminus \{p_{min}\}\rightarrow \Lambda\setminus \{p_{max}\}$.
It follows from  Step 5 that  the head of any tilting in $\mathcal{C}$ is simple. By definition,
$\nu(p)$ is such that $L(\nu(p))$
is the head of $T(p)$. We remark that $\nu(p)< p$ for any highest weight order, this follows from Step 4.

{\it Step 7}. Let us show that any element $p\in \Lambda$ has the form $\nu^i(p_{max})$. Assume the converse
and let us pick the maximal element not of this form, say $p'$. Since $p'\neq p_{max}$, we see that $L(p')$ lies
in the head of some tilting. But the head of any indecomposable tilting is simple by Step 5.
So $\Delta(p')$ is a top term of a filtration   with standard subsequent quotients. By the definition of $\nu$ and the choice of $p'$, $\Delta(p')$ is tilting itself. Any indecomposable tilting but $\Delta(p_{min})$ is projective and we cannot have a standard that is projective and tilting simultaneously in a nontrivial block. So $p'=p_{min}$. But let us pick a minimal element $p''$ in $\Lambda\setminus \{p_{min}\}$. By Step 6, $\nu(p'')<p''$. So $\nu(p'')=p_{min}$. The claim in the beginning of the step is established. This proves (i) for $\mathcal{C}$.

{\it Step 8}. (ii) for $\mathcal{C}$ follows from Step 7 and (iii) follows from (ii) and Step 5.

{\it Step 9}. Let us show that $\#\Lambda=rn$. The minimal projective resolution for $\Delta(p_{min})$
has length $\#\Lambda$, all projectives there are different, and the last term is $\Delta(p_{max})$. It follows that
$\operatorname{RHom}(\Delta(p_{min}),L(p_{max}))$ is concentrated in homological degree $\#\Lambda-1$.
The other tiltings are projectives and $\operatorname{RHom}$'s with them amount to $\Hom$'s. Since $\operatorname{RHom}(T,L(p_{max}))$ is concentrated in homological degree $rn-1$ (this follows from
Lemmas \ref{Lem:long_wc_perv} and \ref{Lem:WC_Ringel}), we are done.

{\it Step 10}. Let us complete the proof of (1). Let us order the labels in $\Lambda$  decreasingly, $p_1>\ldots>p_{rn}$.
Using (ii) we get the following claims.
\begin{itemize}
\item $\operatorname{End}(P(p_i))=\C[x]/(x^2)$ for $i>1$ and $\operatorname{End}(P(p_1))=\C$.
\item $\operatorname{Hom}(P(p_i),P(p_j))$ is 1-dimensional if
$|i-j|=1$ and is $0$ if $|i-j|>1$.
\end{itemize}
Choose some basis elements $a_{i,i+1}, i=1,\ldots,rn-1$  in $\operatorname{Hom}(P(p_{i+1}), P(p_{i}))$
and also basis elements $a_{i+1,i}\in \operatorname{Hom}(P(p_{i}),P(p_{i+1}))$. We remark that
the image of the composition map $\Hom(P(p_{i+1}),P(p_{i}))\times \Hom(P(p_{i}),P(p_{i+1}))
\rightarrow \operatorname{End}(P(p_i))$ spans the maximal ideal. Choose  generators
$a_{ii}$ in the maximal ideals of $\End(P(p_i)), i=2,\ldots,rn$.
Normalize $a_{21}$ by requiring that $a_{21}a_{12}=a_{22}$, automatically, $a_{12}a_{21}=0$. Normalize $a_{32}$
by $a_{23}a_{32}=a_{22}$ and then normalize $a_{33}$ by $a_{33}=a_{32}a_{23}$. We continue normalizing $a_{i+1,i}$
and $a_{i+1,i+1}$ in this way. We then recover the multiplication table in $\operatorname{End}(\bigoplus P(p_i))$
in a unique way. This completes the proof of (1).

{\it Step 11}. Now let us prove (2).
Let us check that the labeling set  $\Lambda$ for the nontrivial block of $\OCat(\bar{\A}^\theta_\lambda(n,r))$ consists of hooks.
For this, it is enough to check that $\Delta(h_{i,d})$ does not form a block on itself.
This is done similarly to  Step 4 in the proof of Theorem \ref{Thm:fin dim}.
Now, according to \cite{Korb},
the hooks are ordered as specified in (2) with respect to the geometric order
on the torus fixed points in $\M^\theta(n,r)$ (note that the sign conventions here and in \cite{Korb}
are different).
%If $d\neq 1$, we take $\mu=\lambda(i+1,j)$. In this case we have a nontrivial
%homomorphism $\Delta(\mu)\rightarrow \Delta(\lambda)$. If $i=1$, take $\mu=\lambda(2,j)$, here we have a nontrivial
%homomorphism $\Delta(\lambda)\rightarrow \Delta(\mu)$. The existence of a nontrivial homomorphism is proved in
%the same way as in the proof of Proposition \ref{Prop:O_ss_criterium} because we have nontrivial homomorphisms
%between $\Delta^0(\lambda),\Delta^0(\mu)$.
\end{proof}

\begin{Rem}\label{Rem:simpl_label}
We can determine the label of the simple supported on $\bar{\rho}^{-1}(0)$ in the category $\OCat$ corresponding to
an arbitrary generic torus. Namely, note that $\bar{\rho}^{-1}(0)$ coincides with the closure of a single contracting component
and that contracting component corresponds to the maximal point. Now we can use results of \cite{Korb} to find a label
of the point: it always has only one nontrivial partition and this partition is either $(n)$ or $(1^n)$.
\end{Rem}

\section{Localization theorems}
In this section we prove Theorem \ref{Thm:loc}. The proof is in the following steps.
\begin{itemize}
\item We apply results of McGerty and Nevins, \cite{MN_ab}, to show that, first, if the abelian localization  fails for $(\lambda,\theta)$, then $\lambda$ is a rational number with denominator not exceeding $n$, and, second,  the parameters $\lambda=\frac{q}{m}$
with $m\leqslant n$ and $-r<\lambda<0$ are indeed singular and the functor $\Gamma_\lambda^\theta$ is
exact when $\lambda>-r, \theta>0$ or $\lambda<0,\theta<0$. Thanks to an isomorphism $\A_{\lambda}^\theta(n,r)\cong
\A_{-\lambda-r}^{-\theta}(n,r)$ (see Sections \ref{SS_sympl_sing_quant} and \ref{SS_Gies_quant}), this reduces the conjecture to checking that the abelian localization
holds for $\lambda=\frac{q}{m}$ with $q\geqslant 0, m\leqslant n$ and $\theta>0$.
\item Then we reduce the proof to the case when the denominator is precisely $n$ and $\lambda,\theta>0$.
\item Then we will study a connection between the algebras $\Ca_{\nu_0}(\bar{\A}_{\lambda}(n,r)),
\Gamma(\Ca_{\nu_0}(\bar{\A}^\theta_{\lambda}(n,r)))$. We will show that the numbers of simples in the categories
$\OCat$ for these algebras coincide.  We deduce the localization theorem from there.
\end{itemize}

The last step is a crucial one and it does not generalize to other quiver varieties.
\subsection{Results of McGerty and Nevins and consequences}\label{SS_MN_appl}
In \cite{MN_ab}, McGerty and Nevins found a sufficient condition for the functor $\Gamma_\lambda^\theta:\A_\lambda^\theta(n,r)\operatorname{-mod}\rightarrow
\A_\lambda(n,r)\operatorname{-mod}$ to be exact (they were dealing with more general
Hamiltonian reductions but we will only need the Gieseker case). Let us explain what their
result gives in the case of interest for us. Consider the quotient functors
$\pi_\lambda: D_R\operatorname{-mod}^{G,\lambda}\twoheadrightarrow \A_\lambda(n,r)\operatorname{-mod}$
and $\pi_\lambda^\theta:D_R\operatorname{-mod}^{G,\lambda}\twoheadrightarrow \A_\lambda^\theta(n,r)\operatorname{-mod}$.
Here $D_R$ stands for the sheaf of differential operators viewed as a microlocal sheaf on $T^*R$,
and $D_R\operatorname{-mod}^{G,\lambda}$ is the category of $(G,\lambda)$-twisted equivariant
D-modules on $R$. The functors $\pi_\lambda,\pi_\lambda^\theta$ are discussed in
\cite[Section 2.3]{BL} or \cite[Section 5.5]{BPW}.

\begin{Prop}\label{Prop:MN}
The inclusion $\ker \pi_{\lambda}^{\det}\subset \ker \pi_\lambda$ holds  provided $\lambda>-r$. Similarly, $\ker\pi_{\lambda}^{\det^{-1}}
\subset \pi_{\lambda}$  provided $\lambda< 0$.
\end{Prop}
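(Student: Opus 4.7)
The plan is to deduce Proposition \ref{Prop:MN} as a direct specialization of the main theorem of McGerty--Nevins \cite{MN_ab}. That result gives a sufficient numerical condition on $\lambda$ for the inclusion $\ker\pi_\lambda^\theta\subset\ker\pi_\lambda$ to hold: for every Kempf--Ness one-parameter subgroup $\beta:\C^\times\to G$ destabilizing a point for $\theta$, one needs an inequality of the form $\langle\lambda,\beta\rangle + \langle\varrho_\beta,\beta\rangle>0$, where $\varrho_\beta$ is a shift read off from the $\beta$-weights of $T^*R$ (equivalently, from the character by which $\beta$ acts on the top exterior power of the positive-weight part of $T^*R$, combined with the shift coming from the Bernstein filtration normalization).

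First I would enumerate the KN 1-PSs for $(\GL(V),R,\theta=\det)$. Instability for $\theta=\det$ is witnessed by a proper nonzero subspace $V_1\subset V$ that is $A$- and $B$-invariant and contained in $\ker i$. Up to $G$-conjugacy, the destabilizing subgroups are therefore the 1-PSs
\[
\beta_k(t)=\operatorname{diag}(\underbrace{t,\ldots,t}_{k},1,\ldots,1),\qquad k=1,\ldots,n,
\]
(with $V_1$ the span of the first $k$ coordinate vectors). For $\theta=\det^{-1}$ the destabilizing 1-PSs are the negatives $-\beta_k$, corresponding to $A,B$-invariant subspaces $V_1$ whose image under $j$ spans $W$'s complement appropriately, but up to duality via $(A,B,i,j)\mapsto (B^*,-A^*,j^*,-i^*)$ the situation mirrors the first.

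Next I would compute the weight $\langle\varrho_{\beta_k},\beta_k\rangle$. On $T^*R=\gl(V)^{\oplus 2}\oplus\Hom(V,W)\oplus\Hom(W,V)$, the pairing with $\beta_k$ decomposes as follows: the $\gl(V)^{\oplus 2}$ pieces split into blocks $V_1\otimes V_1^*$, $(V/V_1)\otimes(V/V_1)^*$ (both of weight $0$), together with $V_1\otimes(V/V_1)^*$ (weight $+1$, dimension $k(n-k)$) and its dual (weight $-1$), whose contributions to the positive part cancel against the contributions from the other copy of $\gl(V)$ when one passes to the attractor. The two nontrivial contributions come from $\Hom(V,W)$ and $\Hom(W,V)$: these give weight $-1$ on $V_1^*\otimes W$ and weight $+1$ on $W^*\otimes V_1$, producing a total weight of $kr$ on the $\beta_k$-positive part of $T^*R$. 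Combining with the $\rho$-shift from the Bernstein normalization (the period of $\A_\lambda^\theta(n,r)$ equals $\lambda+r/2$, see Section \ref{SS_Gies_quant}) and pairing $\lambda\cdot\tr$ against $\beta_k$ gives $\langle\lambda,\beta_k\rangle=\lambda k$. The McGerty--Nevins inequality for $\theta=\det$ therefore reduces to $\lambda k+rk>0$ for all $k=1,\ldots,n$, i.e., $\lambda>-r$. For $\theta=\det^{-1}$, the same calculation with $\beta_k$ replaced by $-\beta_k$ yields $-\lambda k>0$, i.e., $\lambda<0$.

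The main obstacle will be the careful bookkeeping of weights and, more importantly, the $\rho$-shift: one must match the normalization used in \cite{MN_ab} (where the twist parameter lives naturally on the character lattice of $G$) with the normalization used here (where $\lambda$ is the shift appearing in the quantum comoment map $x\mapsto x-\lambda\tr x$ and the period is $\lambda+r/2$). Once this identification is made and the weight count on $T^*R$ is carried out as above, Proposition \ref{Prop:MN} follows immediately by specializing the McGerty--Nevins criterion.
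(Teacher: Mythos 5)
Your overall strategy — apply the McGerty--Nevins criterion, identify the Kempf--Ness one-parameter subgroups $\beta_k(t)=\operatorname{diag}(t,\ldots,t,1,\ldots,1)$, compute the shift, and read off the range of good parameters — is the same as the paper's, and your final numerical answer ($\lambda k+rk>0$ for $\theta=\det$, $\lambda<0$ for $\theta=\det^{-1}$) matches. But there is one procedural divergence and two genuine imprecisions that should be corrected.

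\emph{Procedural divergence.} The paper does not work directly on $T^*R$; it first applies a partial Fourier transform replacing $R=\operatorname{End}(V)\oplus\Hom(V,W)$ by $R'=\operatorname{End}(V)\oplus\Hom(W,V)$, so that $D(R)\operatorname{-mod}^{G,\lambda}\cong D(R')\operatorname{-mod}^{G,\lambda+r}$. This reduces the weight bookkeeping \emph{verbatim} to Section~8 of \cite{MN_ab} (the $r=1$ Hilbert scheme case), with the only modification being that the second summand occurs $r$ times, giving the shift $rk/2$. Your direct computation on $T^*R$ is a legitimate alternative but carries the full bookkeeping burden; this is where your two imprecisions arise.

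\emph{Imprecision~1: the form of the McGerty--Nevins criterion.} The MN exactness condition is not a strict inequality of the type $\langle\lambda,\beta\rangle+\langle\varrho_\beta,\beta\rangle>0$. It is an \emph{avoidance of a discrete set}: in the paper's notation the condition for $\beta_k$ reads $k(-\tfrac{r}{2}-\lambda)\not\in\tfrac{rk}{2}+\Z_{\geqslant 0}$, which simplifies to $\lambda\not\in -r-\tfrac{1}{k}\Z_{\geqslant 0}$. The inequality $\lambda>-r$ is merely a convenient sufficient condition guaranteeing this avoidance for every $k=1,\ldots,n$, which is all Proposition~\ref{Prop:MN} asserts. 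But stating the MN criterion itself as a positivity condition misrepresents the theorem and hides the fact that the refined condition admits many $\lambda<-r$ as well.

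\emph{Imprecision~2: the cancellation of the $\gl(V)^{\oplus 2}$ contributions.} Your explanation — that the positive-weight block of one copy of $\gl(V)$ ``cancels against the contributions from the other copy when one passes to the attractor'' — is not the correct mechanism. Both copies of $\operatorname{End}(V)$ in $T^*R$ carry the \emph{same} $\beta_k$-weight decomposition (it is conjugation in both cases), so the positive block $\Hom(V/V_1,V_1)$ occurs with multiplicity two, not with opposite signs. The correct bookkeeping is that the MN shift subtracts off the $\g$-contribution from the conormal bundle of the unstable stratum: the positive weight of $T^*R$ is $2k(n-k)$ (from $\operatorname{End}(V)^{\oplus 2}$) plus $kr$ (from $\Hom(W,V)$); halving and subtracting the positive weight $k(n-k)$ of $\g$ under the adjoint action leaves exactly $rk/2$, matching the paper. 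As written, your argument implicitly drops the $\operatorname{End}(V)^{\oplus 2}$ piece without justification and only lands on the right answer by coincidence. Once you make the subtraction of the $\g$-piece explicit, the proof goes through.
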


I would like to thank Dmitry Korb for explaining me the required modifications to \cite[Section 8]{MN_ab}.

\begin{proof}
We will consider the case $\theta=\det$, the opposite case follows from $\A_\lambda^{\theta}(n,r)\cong \A_{-r-\lambda}^{-\theta}(n,r)$.
The proof closely follows \cite[Section 8]{MN_ab}, where the case of $r=1$ is considered. Instead of $R=\operatorname{End}(V)\oplus
\operatorname{Hom}(V,W)$ they use $R'=\operatorname{End}(V)\oplus \operatorname{Hom}(W,V)$, then, thanks to the partial
Fourier transform, we have $D(R)\operatorname{-mod}^{G,\lambda}\cong D(R')\operatorname{-mod}^{G,\lambda+r}$. The set of weights in $R'$ for a maximal torus $H\subset \GL(V)$
is independent of $r$ so we have the same Kempf-Ness subgroups as in the case $r=1$:  it is enough to consider the subgroups
$\beta$ with tangent vectors (in the notation of \cite[Section 8]{MN_ab}) $e_1+\ldots+e_k$. The shift in {\it loc.cit.} becomes
$\frac{rk}{2}$ (in the computation of {\it loc.cit.} we need to take the second summand $r$ times, that is all that changes). So we get
that $\ker \pi_{\lambda}^{\det}\subset \ker \pi_\lambda$ provided $k(-\frac{r}{2}-\lambda)\not\in \frac{rk}{2}+\Z_{\geqslant 0}$
for all possible $k$ meaning $1\leqslant k\leqslant n$ (the number $-\frac{r}{2}-\lambda$ is $c'$ in {\it loc.cit.}). The condition
simplifies to $\lambda\not\in -r-\frac{1}{k}\Z_{\geqslant 0}$. This implies the claim of the proposition.
\end{proof}

\subsection{Reduction to denominator $n$ and singular parameters}\label{SS_loc_red_to_n}
Proposition \ref{Prop:MN} allows us to show that certain parameters are singular (meaning that the homological
dimension of $\A_\lambda(n,r)$ is infinite).
\begin{Cor}\label{Cor:sing}
The parameters $\lambda$ with denominator $\leqslant n$ and $-r<\lambda<0$ are singular.
\end{Cor}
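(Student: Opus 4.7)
The plan is to argue by contradiction: suppose $\bar{\A}_\lambda(n,r)$ has finite homological dimension, with $\lambda \in (-r,0)$ of denominator $m$ satisfying $1 \leqslant m \leqslant n$.

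First, I would reduce to the case $m = n$. When $m < n$, Proposition \ref{Prop:leaves} supplies a symplectic leaf in $\bar{\M}(n,r)$ indexed by the singleton $\mu=(m)$, whose transversal slice is a formal neighborhood of $0$ in $\bar{\M}(m,r)$; the quantum slice isomorphism \eqref{eq:quant_compl_iso} then presents a formal completion of $\bar{\A}_\lambda(n,r)_\hbar$ as a completed tensor product of a formal Weyl algebra and $\bar{\A}_\lambda(m,r)_\hbar^{\wedge_0}$. Finite homological dimension descends through passage to a Rees algebra, formal completion at a graded maximal ideal, and removal of a Weyl tensor factor, so $\bar{\A}_\lambda(m,r)$ must also have finite homological dimension, and it suffices to treat the case $m = n$.

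Assume therefore that $\lambda \in (-r,0)$ has denominator exactly $n$. Proposition \ref{Prop:MN} gives that both $\Gamma_\lambda^{\det}$ and $\Gamma_\lambda^{\det^{-1}}$ are exact (using $\lambda > -r$ and $\lambda < 0$ respectively), so by Lemma \ref{Lem:MN_der_loc} the assumed finite homological dimension upgrades this to abelian localization at both $(\lambda,\det)$ and $(\lambda,-\det)$. Under these two abelian localizations, both $\Coh(\bar{\A}_\lambda^{\pm\det}(n,r))$ become identified with $\bar{\A}_\lambda(n,r)\operatorname{-mod}$ via $\Gamma$; taking $\lambda'=\lambda$ in the formulation of Section \ref{SS_long_WC}, the translation bimodule $\bar{\A}^{(\theta)}_{\lambda,0}$ is simply $\bar{\A}_\lambda(n,r)$, and so $\WC_{\lambda\leftarrow\lambda}$ realizes the identity functor on $\bar{\A}_\lambda(n,r)\operatorname{-mod}$.

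However, Lemma \ref{Lem:WC_Ringel} forces the restriction of this functor to $D^b(\OCat_\nu(\bar{\A}_\lambda(n,r)))$ to be a Ringel duality. Combined with Lemma \ref{Lem:cost_classes}, which supplies the coincidence of the classes of standards and costandards, Lemma \ref{Lem:hw_techn} applied with $R$ equal to the identity (for which $H_0(R(L)) = L \neq 0$ for every simple $L$) forces $\OCat_\nu(\bar{\A}_\lambda(n,r))$ to be semisimple. This contradicts Theorem \ref{Thm:cat_O_easy}: for $\lambda$ of denominator exactly $n$, the sheaf-level category $\OCat_\nu(\bar{\A}^\theta_\lambda(n,r))$ has a nontrivial block, and abelian localization identifies it with $\OCat_\nu(\bar{\A}_\lambda(n,r))$. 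The main technical point I expect to need care is the descent of finite homological dimension through the formal-completion slice isomorphism in the reduction to $m=n$; the remainder of the argument is a short combination of previously established results.
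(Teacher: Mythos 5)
The second half of your argument is exactly the paper's: deduce abelian localization at both $(\lambda,\pm\theta)$ from Proposition \ref{Prop:MN} together with Lemma \ref{Lem:MN_der_loc}, conclude that $\WC_{-\theta\leftarrow\theta}$ is an abelian equivalence, and then use Lemma \ref{Lem:WC_Ringel} and Lemma \ref{Lem:hw_techn} (with Lemma \ref{Lem:cost_classes}) to force category $\OCat$ to be semisimple, contradicting its known non-semisimplicity. The only cosmetic difference is how the abelian equivalence is identified: you observe directly that $\WC_{\lambda\leftarrow\lambda}=\A_\lambda\otimes_{\A_\lambda}\bullet$ is the identity, while the paper passes through the microlocal quotient functors $\pi_\lambda^{\pm\theta}$ and the kernel inclusions of Proposition \ref{Prop:MN}; both are fine.

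The genuine gap is your preliminary reduction to denominator exactly $n$. You assert that finite global dimension of $\bar{\A}_\lambda(n,r)$ descends to the slice algebra $\bar{\A}_\lambda(m,r)$ via passage to the Rees algebra, completion at a point of the leaf, splitting off the formal Weyl factor, and decompleting. None of these steps is automatic: completion at a point of $\Spec(\gr\A)$ (which is what the isomorphism \eqref{eq:quant_compl_iso} involves — not an honest completion of $\A_\lambda(n,r)_\hbar$ at one of its own maximal ideals) and the subsequent de-completion are delicate operations, and the claim that these preserve finiteness of homological dimension needs a real argument or a citation. You correctly flag this as the weak point, but as written it is an unproved assertion and the proof does not go through without it.

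The intended proof sidesteps the reduction entirely. The paper does not invoke Theorem \ref{Thm:cat_O_easy}; it uses instead the remark at the end of Section \ref{SS_fin_dim}, which records that the Step 4 argument in Proposition \ref{Prop:support_local} generalizes to every denominator $m\leqslant n$: $\Ca_{\nu_0}(\A_\lambda(n,r))$ always contains a tensor factor $\A_\lambda(n,1)$, whose category $\OCat$ is not semisimple for such $\lambda$, so Proposition \ref{Prop:stand_filt} forces $\OCat_\nu(\A_\lambda(n,r))$ to be non-semisimple. With that in hand, no reduction to $m=n$ is needed and the contradiction is immediate. If you want to keep your reduction step, you should either prove the slice statement about homological dimension or replace that step with the direct non-semisimplicity argument; otherwise your write-up is complete.
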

\begin{proof}
Assume the contrary. By Lemma \ref{Lem:MN_der_loc}, the functors
$R\Gamma_\lambda^{\pm \theta}$ are equivalences. Since the functors $\Gamma_\lambda^{\pm \theta}$
are exact, we see that $\Gamma_{\lambda}^{\pm \theta}$ are equivalences of abelian categories. From the
inclusions $\ker \pi_{\lambda}^{\pm \theta}\subset \ker\pi_\lambda$, we deduce that the functors
$\pi_{\lambda}^{\pm \theta}$ are isomorphic. So the wall-crossing functor $\mathfrak{WC}_{-\theta\leftarrow \theta}=\pi_{\lambda^-}^{-\theta}\circ (\C_{\lambda^--\lambda}\otimes\bullet)\circ L\pi_{\lambda}^{\theta!}$
(see \cite[(2.10)]{BL} for the equality)
is an equivalence of abelian categories (where we modify $\lambda$ by adding a sufficiently large integer).
However, we have seen in the end of Section \ref{SS_fin_dim}, the category $\mathcal{O}_\nu(\A_\lambda)$
is not semisimple. Combining Lemma \ref{Lem:hw_techn} with Lemma \ref{Lem:WC_Ringel}, we see that $\WC_{-\theta\leftarrow \theta}$ cannot be an abelian equivalence.
\end{proof}

Now let us observe that it is enough to check that the abelian localization holds for $\lambda\geqslant 0$
and $\theta>0$. This follows from an isomorphism $\A_{\lambda}^\theta(n,r)\cong \A_{-\lambda-r}^{-\theta}(n,r)$.
This an isomorphism of sheaves on $\M^\theta(n,r)\cong \M^{-\theta}(n,r)$ (see the proof of Lemma \ref{Lem:iso}).

\begin{Lem}\label{Lem:ab_loc_redn}
Suppose that, for all $n$, abelian localization holds for $\A_\lambda^\theta(n,r)$  if $\lambda>0$
and the denominator of $\lambda$ is exactly $n$. Then abelian localization holds for all $\lambda$.
\end{Lem}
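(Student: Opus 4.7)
The plan is to proceed by induction on $n$, using the restriction functors $\bullet_{\dagger,x}$ of Section \ref{SS_HC} to reduce everything to the previously treated ranks $(n',r)$ with $n'<n$, with the only residual obstruction ruled out by the no-finite-dimensional-representations statement of Proposition \ref{Prop:support_local}. First, applying the sheaf-level isomorphism $\bar{\A}_\lambda^\theta(n,r)\cong \bar{\A}_{-\lambda-r}^{-\theta}(n,r)$ recalled in Section \ref{SS_Gies_quant}, I reduce to proving abelian localization for $\lambda\geqslant 0$ and $\theta>0$.

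For the induction step, suppose the claim is known at $(n',r)$ for all $n'<n$. Fix $\lambda\geqslant 0$ with denominator $m$ and $\theta>0$, and pick $N\gg 0$ so that abelian localization holds for $(\lambda+N\theta,\theta)$; such $N$ exists by Lemma \ref{Lem:lem_ab_equiv}. By part (2) of Lemma \ref{Lem:transl_properties}, abelian localization at $(\lambda,\theta)$ is equivalent to the translation bimodules $\B:=\bar{\A}^0_{\lambda,N\theta}(n,r)$ and $\B':=\bar{\A}^0_{\lambda+N\theta,-N\theta}(n,r)$ being mutually inverse Morita equivalences. Let $K,K'$ denote the kernels and cokernels of the two natural adjunction maps $\B'\otimes_{\bar{\A}_{\lambda+N\theta}}\B\to\bar{\A}_\lambda$ and $\B\otimes_{\bar{\A}_\lambda}\B'\to\bar{\A}_{\lambda+N\theta}$; these are Harish-Chandra bimodules, and the task is to show they all vanish.

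Pick any $x$ in a non-open, non-zero symplectic leaf of $\bar{\M}(n,r)$, corresponding by Proposition \ref{Prop:leaves} to a collection $\mu=(n_1,\ldots,n_k)$ with $n_0:=n-\sum n_i\geqslant 1$, so in particular each $n_i\leqslant n-1$. By part (1) of Proposition \ref{Prop:dagger_prop}, the functor $\bullet_{\dagger,x}$ is exact and intertwines tensor products, while part (3) of Lemma \ref{Lem:transl_properties} says it sends $\B,\B'$ to the tensor products of the analogous translation bimodules at the $(n_i,r)$. The inductive hypothesis says each of these is already a Morita equivalence, so $K_{\dagger,x}=K'_{\dagger,x}=0$. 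By part (2) of Proposition \ref{Prop:dagger_prop}, the associated varieties $\VA(K),\VA(K')$ then miss every proper non-zero leaf, so they are contained in $\{0\}$ and $K,K'$ are finite-dimensional.

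The final step is to rule out such finite-dimensional $K,K'$. If the denominator $m$ of $\lambda$ equals $n$, abelian localization is the hypothesis of the lemma and we are done. Otherwise, Proposition \ref{Prop:support_local} asserts that $\bar{\A}_\lambda(n,r)$ admits no finite-dimensional representations; since any finite-dimensional $\bar{\A}_\lambda(n,r)$-$\bar{\A}_{\lambda+N\theta}(n,r)$-bimodule, viewed as a left $\bar{\A}_\lambda(n,r)$-module, would possess a simple finite-dimensional quotient, this forces $K=K'=0$ and finishes the induction. The main obstacle in the argument is precisely the zero leaf of $\bar{\M}(n,r)$: its slice algebra is $\bar{\A}_\lambda(n,r)$ itself, so $\bullet_{\dagger,0}$ provides no genuine reduction there, and one needs Proposition \ref{Prop:support_local} to eliminate support of $K,K'$ on $\{0\}$ without circularity.
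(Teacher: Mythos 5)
Your proposal is essentially correct and close in spirit to the paper's argument, but you arrange the logic differently and there is one imprecision worth pointing out.

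The paper does not run an explicit induction on $n$. It fixes $\lambda$ with denominator $n'<n$, takes $x$ generic in the associated variety of whichever kernel/cokernel is nonzero (so that the restriction there is finite-dimensional and nonzero), and then uses the classification of finite-dimensional modules over the slice factors $\bar{\A}_\lambda(n_i,r)$ to conclude that the slice must be $\bar{\A}_\lambda(n',r)^{\otimes k}$ with $n'$ exactly the denominator; the hypothesis of the lemma is then invoked only for $(n',r)$ at that specific denominator. Your version instead inducts on $n$, uses the inductive hypothesis to kill the restriction at \emph{every} non-zero proper leaf (because each $n_i<n$), and thereby forces $\VA(K),\VA(K')\subset\{0\}$; the no-finite-dimensional-module input (Proposition \ref{Prop:support_local}) is then applied once, to $\bar{\A}_\lambda(n,r)$ itself, rather than to the slice algebra. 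Both paths use the same machinery ($\bullet_{\dagger,x}$, Lemma \ref{Lem:transl_properties}, Proposition \ref{Prop:support_local}); yours trades the classification result for the slices against a heavier inductive hypothesis, and is in that sense slightly more bookkeeping-heavy but avoids having to identify which slice you land on.

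One concrete imprecision: the assertion that non-open, non-zero leaves are those with $n_0:=n-\sum n_i\geqslant 1$ is false. Any collection $(n_1,\ldots,n_k)$ with $k\geqslant 2$ and $\sum n_i=n$ gives a non-open, non-zero leaf with $n_0=0$ (its slice is $\prod_i\bar{\M}(n_i,r)$, which is neither trivial nor all of $\bar{\M}(n,r)$). What you actually need, and what is true, is that the unique leaf with some $n_i=n$ is $\{0\}$ (namely $\mu=(n)$); for every other $\mu$ each $n_i<n$ and the induction applies. As written, your argument only shows $\VA(K),\VA(K')$ miss the leaves with $n_0\geqslant 1$, which is not enough to conclude $\VA(K),\VA(K')\subset\{0\}$; replace the condition on $n_0$ by $\mu\neq (n)$ and the step goes through. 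A second, more cosmetic point: one should phrase the translation-bimodule step at a sufficiently shifted pair $(\lambda+mN\theta,\lambda+(m+1)N\theta)$ as in part (2) of Lemma \ref{Lem:transl_properties}, so that the finite-dimensional kernel/cokernel lives over algebras with a Zariski-generic parameter, where Proposition \ref{Prop:support_local} is actually applicable.
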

\begin{proof}
Let the denominator $n'$ of $\lambda$ be less then $n$. By Lemma \ref{Lem:transl_properties},
the abelian localization holds for $(\lambda,\theta>0)$ if and only if the bimodules
$\A^0_{\lambda+m\chi,\chi}(n,r), \A^0_{\lambda+(m+1)\chi,-\chi}(n,r)$ with
some $\chi\in \Z_{>0}$ define mutually dual Morita equivalences, equivalently, the natural homomorphisms
\begin{equation}\label{eq:nat_homs}
\begin{split}&\A^0_{\lambda+m\chi,\chi}(n,r)\otimes_{\A_{\lambda+m\chi}(n,r)} \A^0_{\lambda+(m+1)\chi,-\chi}(n,r)\rightarrow \A_{\lambda+(m+1)\chi}(n,r),\\
&\A^0_{\lambda+(m+1)\chi,-\chi}(n,r)\otimes_{\A_{\lambda+(m+1)\chi}(n,r)}\A^0_{\lambda+m\chi,\chi}(n,r)\rightarrow \A_{\lambda+m\chi}(n,r)
\end{split}
\end{equation}
are isomorphisms.

Assume the converse: there is $\lambda>0$ such that abelian localization does not hold for
$\A_\lambda^\theta(n,r)$. Let $K^1,C^1,K^2,C^2$ denote the kernel and the cokernel of the first and of the
second homomorphism (for some $m$), respectively. If one of these bimodules is nontrivial, then we can find $x\in \M(n,r)$
such that $K^i_{\dagger,x},C^i_{\dagger,x}$ are finite dimensional and at least one of these bimodules
is nonzero. From the classification of finite dimensional irreducibles, we see that the slice algebras
must be of the form $\bar{\A}_{\lambda}(n',r)^{\otimes k}$. But, by Lemma \ref{Lem:transl_properties}, $\A^0_{\lambda+(m+1)\chi,-\chi}(n,r)_{\dagger,x}=
\bar{\A}^0_{\lambda+(m+1)\chi,-\chi}(n',r)^{\otimes k}, \A^0_{\lambda+m\chi,\chi}(n,r)_{\dagger,x}=
\bar{\A}^0_{\lambda+m\chi,\chi}(n',r)^{\otimes k}$. Further, applying $\bullet_{\dagger,x}$ to (\ref{eq:nat_homs}) we again get  natural homomorphisms. But the localization theorem holds for the algebra $\bar{\A}_{\lambda}(n',r)$ thanks to our inductive assumption, so the homomorphisms of the $\bar{\A}_\lambda(n',r)^{\otimes k}$-bimodules are isomorphisms. This contradiction finishes the proof of the lemma.
\end{proof}

\subsection{Number of simples in $\mathcal{O}(\A_\lambda(n,r))$}\label{SS_loc_simpl_numb}
So we need to prove that the localization theorem holds for positive parameters $\lambda$ with denominator $n$
(the case $\lambda=0$ occurs only if $n=1$ and in that case this is a classical localization theorem for
differential operators on projective spaces).
We will derive the proof from the claim that the number of simple objects in the categories $\OCat_\nu(\bar{\A}_\lambda(n,r))$
and $\OCat_\nu(\bar{\A}_\lambda^\theta(n,r))$ is the same. For this we will need to study the natural homomorphism
$$\varphi:\Ca_{\nu_0}(\bar{\A}_\lambda(n,r))\rightarrow \Gamma(\Ca_{\nu_0}(\bar{\A}^\theta_\lambda(n,r))).$$ Here, as before,
$\nu(t)=(\alpha(t),t), \nu_0(t)=(\alpha(t),1)$,  where $\alpha:\C^\times \rightarrow \GL(r)$ is of the form $t\mapsto (t^{d_1},\ldots,t^{d_r})$, and $d_1\gg d_2\gg\ldots\gg d_r$.

Recall, see Proposition \ref{Prop:Cartan_subquot}, that $\Gamma(\Ca_{\nu_0}(\bar{\A}^\theta_\lambda(n,r)))=\bigoplus \bar{\A}_\lambda(n_1,\ldots,n_r;r)$, where the summation
is taken over all compositions $n=n_1+\ldots+n_r$ and  $\bar{\A}_\lambda(n_1,\ldots,n_r;r)\otimes D(\C)=\bigotimes_{i=1}^r \A_{\lambda+i-1}(n_i,1)$ (the factor $D(\C)$ is embedded into the right hand side ``diagonally'').
Let $\mathcal{B}$ denote the maximal
finite dimensional quotient of $\Gamma(\Ca_{\nu_0}(\bar{\A}_\lambda^\theta(n,r)))$.

\begin{Prop}\label{Prop:surject}
The composition of $\varphi$ with the projection $\Gamma(\Ca_{\nu_0}(\bar{\A}^\theta_\lambda(n,r)))\twoheadrightarrow \mathcal{B}$
is surjective.
\end{Prop}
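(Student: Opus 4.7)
The plan is to reduce surjectivity of $\pi\circ\varphi$, via Jacobson density, to the claim that the finitely many finite-dimensional simples of $\Gamma(\Ca_{\nu_0}(\bar{\A}^\theta_\lambda(n,r)))$ pull back along $\varphi$ to pairwise non-isomorphic simple $\Ca_{\nu_0}(\bar{\A}_\lambda(n,r))$-modules.

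First I would identify $\mathcal{B}$ explicitly. By Proposition~\ref{Prop:Cartan_subquot}, $\Gamma(\Ca_{\nu_0}(\bar{\A}^\theta_\lambda(n,r)))$ decomposes as $\bigoplus \bar{\A}_\lambda(n_1,\ldots,n_r;r)$ over compositions $(n_1,\ldots,n_r)$ of $n$, each summand being essentially a tensor product $\bigotimes_{i=1}^r \bar{\A}_{\lambda+i-1}(n_i,1)$ of spherical rational Cherednik algebras of type $A_{n_i-1}$. Such a Cherednik admits a finite-dimensional representation iff the denominator of its parameter equals $n_i$; since $\lambda$ has denominator $n$, each $\lambda+i-1$ does too, forcing $n_i\in\{0,n\}$. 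Together with $\sum n_i = n$ this selects exactly the $r$ compositions $(0,\ldots,0,n,0,\ldots,0)$ with $n$ in position $i$, each carrying a unique irreducible finite-dimensional representation $L_i$, so $\mathcal{B}=\bigoplus_{i=1}^r \End(L_i)$.

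Next, by the Jacobson density theorem, surjectivity of $\pi\circ\varphi$ is equivalent to the statement that the pullbacks $\varphi^*L_1,\ldots,\varphi^*L_r$ are simple and pairwise non-isomorphic as $\Ca_{\nu_0}(\bar{\A}_\lambda(n,r))$-modules. Non-isomorphism is the easier half: the Hamiltonian action of $T_0\subset\GL(W)$ gives a quantum comoment map $\Phi_0\colon\mathfrak{t}_0\to\bar{\A}_\lambda(n,r)$ whose image lies in the $\nu_0$-weight zero subspace and therefore descends to $\Ca_{\nu_0}(\bar{\A}_\lambda(n,r))$, and $\varphi$ intertwines $\Phi_0$ with the analogous comoment map in the target. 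Each $\Phi_0(\xi)$ acts on $L_i$ by a scalar determined by the $T_0$-weight of $L_i$, which records the position $i$ of the nonzero coordinate in the composition; these scalars pairwise distinguish the $\varphi^*L_i$.

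The main obstacle will be the simplicity of each $\varphi^*L_i$. My plan combines two inputs. On one hand, by the second assertion of Proposition~\ref{Prop:Cartan_subquot}, $\varphi$ is an isomorphism for Zariski generic $\lambda$, so $\varphi^*L_i$ is simple at generic parameters; a flat-family argument over the line of parameters $\lambda$ should propagate this to the special value $\lambda=s/n$ provided one can rule out collapse of the image into a proper subalgebra of some $\End(L_i)$. On the other hand, the finite-dimensional simple $\tilde L$ of $\bar{\A}_\lambda(n,r)$ (Proposition~\ref{Prop:support_local}) yields a nonzero finite-dimensional $\Ca_{\nu_0}(\bar{\A}_\lambda(n,r))$-module via its Jacquet module $\tilde L^{\bar{\A}_\lambda^{>0,\nu_0}}$; the standardly stratified structure of Proposition~\ref{Prop:stand_filt} together with Proposition~\ref{Prop:cat_O}(6), combined with the cross-walling bijections of Proposition~\ref{Prop:cw_functors} that permute the $r$ fixed-locus components, then lets one match the composition factors of this Jacquet module with the $\varphi^*L_i$. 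The delicate step, which I expect to be the hardest, is controlling the image of $\varphi$ in each summand sharply enough to conclude that $\varphi^*L_i$ is simple—i.e.\ that $\Ca_{\nu_0}(\bar{\A}_\lambda(n,r))$ surjects onto the full matrix algebra $\End(L_i)$ rather than some proper subalgebra—and this is where one must exploit the explicit Hamiltonian reduction description of $\Ca_{\nu_0}(\bar{\A}_\lambda(n,r))$ as a subquotient of $D(\bar R)^G$.
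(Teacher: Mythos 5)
Your reduction is logically sound as a strategy: by Burnside/Jacobson density, surjectivity of $\pi\circ\varphi$ is indeed equivalent to the assertion that each $\varphi^*L_i$ is simple and that they are pairwise non-isomorphic. Your identification of $\mathcal{B}$ as a sum of $r$ matrix algebras is also correct, and distinguishing the $L_i$ via the $T_0$-comoment map is a reasonable alternative to the paper's observation that the $L_i$ have pairwise distinct dimensions $\frac{(q+n-1)!}{q!n!}$.

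But the proof has a genuine gap precisely where you flag it: the simplicity of $\varphi^*L_i$. That statement is \emph{equivalent} to the proposition you are trying to prove (for the $i$-th factor), so the claim that ``a flat-family argument should propagate simplicity from generic $\lambda$'' is not progress; simplicity of a specialization is not an open or lower-semicontinuous condition in any way that would make this automatic, and indeed the whole point of the proposition is that this could \emph{a priori} fail at denominator-$n$ parameters. Your second suggested input --- the Jacquet module of $\tilde L$ combined with Proposition~\ref{Prop:stand_filt}, Proposition~\ref{Prop:cat_O}(6), and cross-walling bijections --- only guarantees the existence of \emph{some} nonzero finite-dimensional $\Ca_{\nu_0}(\bar{\A}_\lambda(n,r))$-subquotient; it does not control the image of $\Ca_{\nu_0}(\bar{\A}_\lambda(n,r))$ inside $\End(L_i)$, which is exactly what needs to be shown. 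Your closing sentence that one ``must exploit the explicit Hamiltonian reduction description'' is where the actual work lies, and you do not carry it out.

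The paper avoids this circularity entirely by bypassing Jacobson density: after the same reduction to each factor $\bar{\A}_{\lambda+i}(n,1)$, it recalls that this spherical Cherednik algebra is generated by the images of $S(\bar{\g})^G$ and $S(\bar{\g}^*)^G$ (via \cite[Proposition 4.9]{EG}), exhibits explicit lifts of these subalgebras to $\Ca_{\nu_0}(\bar{\A}_\lambda(n,r))$ directly from the Hamiltonian reduction presentation, and then checks that $\varphi_i$ carries these lifts to the expected subalgebras by passing to the associated graded and identifying the relevant morphism of varieties on the classical level (the map $(A,B,0,j)\mapsto A$ on a Hilbert-scheme component). That constructive, generators-based argument is what your proposal is missing, and it does not reduce to any soft representation-theoretic device.
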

\begin{proof}
The proof is in several steps.

{\it Step 1}. We claim that it is sufficient to prove that the composition $\varphi_i$ of $\varphi$ with the projection
$\Gamma(\Ca_{\nu_0}(\bar{\A}_\lambda^\theta(n,r)))\rightarrow \bar{\A}_{\lambda+i}(n,1)$ is surjective. Indeed, each $\bar{\A}_{\lambda+i}(n,1),
i=0,\ldots,r-1$ has a unique finite dimensional representation. The dimensions of these representations are
pairwise different, see \cite{BEG}. Namely, if $\lambda=\frac{q}{n}$, then
the dimension is $\frac{(q+n-1)!}{q!n!}$. So $\mathcal{B}$ is the sum of $r$
pairwise non-isomorphic matrix algebras. Therefore the surjectivity of the homomorphism $\Ca_{\nu_0}(\bar{\A}_\lambda(n,r))\rightarrow \mathcal{B}$ follows from the surjectivity of all its $r$ components. We remark that the other summands of
$\Ca_{\nu_0}(\bar{\A}_\lambda(n,r))$ have no finite dimensional representations.

{\it Step 2}. Generators of $\bar{\A}_{\lambda+i}(n,1)$ are known. Namely, recall that $\bar{\A}_{\lambda+i}(n,1)$ is the spherical subalgebra in the Cherednik algebra $H_c(n)$ for the reflection representation $\h$ of $\mathfrak{S}_n$ with $c=\lambda+i$. The algebra $H_c(n)$ is generated by $\h,\h^*$. Then the algebra $eH_c(n)e$ is generated by $S(\h)^W,S(\h^*)^W$,
see, e.g., the proof \cite[Proposition 4.9]{EG}. On the level of quantum Hamiltonian reduction,
$S(\h)^W$ coincides with the image of $S(\bar{\g})^G\subset D(\bar{\g}\oplus \C^{n*})^G$, while
$S(\h^*)^W$ coincides with the image of $S(\bar{\g}^*)^G$. Here and below we write $\bar{\g}$ for $\mathfrak{sl}_n$. We will show that the images of $S(\bar{\g})^G, S(\bar{\g}^*)^G$ lie in the image of $\varphi_i:\Ca_{\nu_0}(\bar{\A}_{\lambda}(n,r))\rightarrow \bar{\A}_{\lambda+i}(n,1)$, this will establish the surjectivity in Step 1.

{\it Step 3}. Let us produce a natural homomorphism $S(\bar{\g}^*)^G\rightarrow \Ca_{\nu_0}(\bar{\A}_{\lambda}(n,r))$. First of all, recall that
$\bar{\A}_{\lambda}(n,r)$ is a quotient of $D(\bar{\g}\oplus (\C^{*n})^r)^{G}$.
The algebra $S(\bar{\g}^*)^G$ is included into $D(\bar{\g}\oplus (\C^{*n})^{\oplus r})^G$
as the algebra of invariant functions on $\bar{\g}$. So we get a homomorphism $S(\bar{\g}^*)^G\rightarrow
\bar{\A}_{\lambda}(n,r)$. Since the $\C^\times$-action $\nu_0$
used to form $\Ca_{\nu_0}(\bar{\A}_\lambda(n,r))$ is nontrivial only on $(\C^{*n})^{\oplus r}$, we see that
the image of $S(\bar{\g}^*)^G$ lies in $\bar{\A}_{\lambda}(n,r)^{\nu_0(\C^\times)}$. So  we get a
homomorphism $\iota: S(\bar{\g}^*)^G\rightarrow \Ca_{\nu_0}(\bar{\A}_\lambda(n,r))$.

{\it Step 4}. We claim that $\varphi_i\circ \iota$ coincides with the inclusion $S(\bar{\g}^*)^G\rightarrow \bar{\A}_{\lambda+i}(n,1)$. We can filter  the algebra $D(\bar{\g}\oplus (\C^{*n})^{\oplus r})$ by the order of a differential operator. This induces filtrations on $\bar{\A}_{\lambda}(n,r),\bar{\A}^\theta_\lambda(n,r)$. We have similar filtrations on the algebras $\bar{\A}_{\lambda+i}(n,1)$. The filtrations on $\bar{\A}_\lambda(n,r),\bar{\A}_\lambda^\theta(n,r)$ are preserved by $\nu_0$ and hence we have filtrations on $\Ca_{\nu_0}(\bar{\A}_\lambda(n,r)),\Gamma(\Ca_{\nu_0}(\bar{\A}_\lambda^\theta(n,r)))$.
It is clear from the construction of the projection $\Gamma(\Ca_{\nu_0}(\bar{\A}_\lambda^\theta(n,r)))\rightarrow
\bar{\A}_{\lambda+i}(n,1)$ that it is compatible with the filtrations. On the other hand, the images of $S(\bar{\g}^*)^G$ in both $\Ca_{\nu_0}(\bar{\A}_{\lambda}(n,r)),\bar{\A}_{\lambda+i}(n,1)$ lies in the filtration degree 0.
So it is enough to prove the coincidence of the  homomorphisms in the beginning of the step  after passing to associate
graded algebras.

{\it Step 5}. The associated graded homomorphisms coincide with analogous homomorphisms defined on the classical level.
The components of $\M^\theta(n,r)^{\nu_0(\C^\times)}$ that are Hilbert schemes are realized as follows.
Pick an eigenbasis $w_1,\ldots,w_r\in \C^r$ for $T_0$. Then the  Hilbert scheme component $Z_i$ of $\M^\theta(n,r)^{T_0}$
corresponding to the composition $(0^{i-1},n,0^{r-i-1})$
consists of the $G$-orbits of $(A,B,0,j)$, where $j:\C^n\rightarrow \C^r$
is a map with image in  $\C w_j$. In particular, the homomorphism $S(\bar{\g}^*)^G\rightarrow \gr \A_{\lambda+i}(n,1)$
is dual to the morphism given by $(A,B,0,j)\rightarrow A$.

On the other hand, $Z_i$   maps
to $\M(r,n)\quo \nu_0(\C^\times)$ ($Z_i\hookrightarrow \M^\theta(n,r)\twoheadrightarrow
\M(n,r)\twoheadrightarrow \M(n,r)\quo \nu_0(\C^\times)$). The corresponding homomorphism of algebras is the associated graded of $\bar{\A}_\lambda(n,r)^{\nu_0(\C^\times)}\rightarrow \bar{\A}_{\lambda+i}(n,1)$. Then we have the morphism
$\M(r,n)\quo \nu_0(\C^\times)\rightarrow \bar{\g}\quo G$ induced by $(A,B,0,j)\mapsto A$.
The corresponding homomorphism of algebras is the associated graded
of $S(\bar{\g}^*)^G\rightarrow \bar{\A}_\lambda(n,r)^{\nu_0(\C^\times)}$. So we have checked that the associated graded
homomorphism of $\varphi_i\circ\iota:S(\bar{\g}^*)^G\rightarrow \bar{\A}_{\lambda+i}(n,1)$ coincides with that of the embedding
$S(\bar{\g}^*)^G\rightarrow \bar{\A}_{\lambda+i}(n,1)$. This proves the claim of Step 4.

{\it Step 6}. The coincidence of similar homomorphisms $S(\bar{\g})^G\rightarrow \bar{\A}_{\lambda+i}(n,1)$ is established
analogously. The proof of the surjectivity of $\Ca_{\nu_0}(\bar{\A}_{\lambda}(n,r))\rightarrow \bar{\A}_{\lambda+i}(n,1)$ is now complete.
\end{proof}

We still have a Hamiltonian action of $\C^\times$ on $\Ca_{\nu_0}(\bar{\A}_\lambda(n,r))$ (via $\nu$) that makes the homomorphism
$\Ca_{\nu_0}(\bar{\A}_\lambda(n,r))\rightarrow \Gamma(\Ca_{\nu_0}(\bar{\A}_\lambda^\theta(n,r)))$ equivariant. So we can form
the category $\mathcal{O}(\Ca_{\nu_0}(\bar{\A}_\lambda(n,r)))$ for this action. %By Lemma \ref{Lem:prec_spec}, we have
%$\alpha\prec^\lambda (m\alpha,1)$ for $m\gg 0$. We rescale $\alpha$ and assume that $m=1$. Recall, Lemma %\ref{Lem:parab_ind},
By \cite[Section 5.5]{CWR}, we have an isomorphism $\Ca_{\nu}(\Ca_{\nu_0}(\bar{\A}_\lambda(n,r)))\cong
\Ca_{\nu}(\bar{\A}_\lambda(n,r))$. So there is
a natural bijection between the sets of simples in $\mathcal{O}_\nu(\Ca_{\nu_0}(\bar{\A}_\lambda(n,r)))$ and in
$\mathcal{O}_\nu(\bar{\A}_\lambda(n,r))$.

\begin{Prop}\label{Prop:simple_numbers}
The number of simples in $\mathcal{O}(\Ca_{\nu_0}(\bar{\A}_\lambda(n,r)))$
is bigger than or equal to that in $\mathcal{O}(\Gamma(\Ca_{\nu_0}(\bar{\A}_\lambda^\theta(n,r))))$.
\end{Prop}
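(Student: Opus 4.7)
The plan is to use the pullback functor $\varphi^{*}:\mathcal{O}_\nu(\Gamma(\Ca_{\nu_0}(\bar{\A}_\lambda^\theta(n,r))))\to\mathcal{O}_\nu(\Ca_{\nu_0}(\bar{\A}_\lambda(n,r)))$ induced by the $\nu$-equivariant algebra map $\varphi$, and to reduce the proposition to a surjectivity statement for the induced map $\Ca_\nu(\varphi)$ on $\nu$-Cartan subquotients. The simples of $\mathcal{O}_\nu(\A)$ correspond (as heads of standard modules) to simples of $\Ca_\nu(\A)\operatorname{-mod}$, and any algebra surjection $A\twoheadrightarrow B$ identifies simples of $B\operatorname{-mod}$ with a subset of simples of $A\operatorname{-mod}$; so a surjective $\Ca_\nu(\varphi)$ would immediately give the desired inequality.

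I would first identify the two sides of $\Ca_\nu(\varphi)$. By \cite[Section~5.5]{CWR} the source equals $\Ca_\nu(\bar{\A}_\lambda(n,r))$. By Proposition \ref{Prop:Cartan_subquot}, the target decomposes as $\bigoplus_{(n_1,\ldots,n_r)}\bigotimes_{i=1}^{r}\Ca_\nu(\bar{\A}_{\lambda+i-1}(n_i,1))$, indexed by compositions of $n$. Each factor $\bar{\A}_{\lambda+i-1}(n_i,1)$ is a spherical rational Cherednik algebra at positive parameter of denominator $n\geqslant n_i$, for which abelian localization holds by item (6) of Section \ref{SS_known_results}, and hence its $\nu$-Cartan subquotient is the semisimple commutative algebra $\C^{p(n_i)}$ (indexed by the $\C^\times$-fixed points of $\operatorname{Hilb}^{n_i}(\C^2)$, i.e., partitions of $n_i$). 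Collecting over compositions, the target of $\Ca_\nu(\varphi)$ is $\C^{|X^T|}$, with one idempotent per $r$-multipartition of $n$.

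The main step is the surjectivity of $\Ca_\nu(\varphi)$. I would adapt the Dunkl-embedding argument from Proposition \ref{Prop:surject}: the images in $\bar{\A}_\lambda(n,r)$ of $S(\bar{\g})^G$ and $S(\bar{\g}^{*})^G$ are $\nu_0$-invariant (they involve only the $\mathfrak{sl}(V)$ coordinates of $T^{*}\bar{R}$, not the $\Hom(V,W)$ or $\Hom(W,V)$ coordinates on which $\nu_0$ acts), so they descend to $\Ca_{\nu_0}(\bar{\A}_\lambda(n,r))$ and further to its $\nu$-Cartan subquotient. Classically, the values of these invariants at the $T$-fixed points of $\M^\theta(n,r)$, described by Nakajima's tangent-character formulas for quiver varieties, separate $r$-multipartitions, so their images under $\Ca_\nu(\varphi)$ span $\C^{|X^T|}$. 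The main obstacle is that Proposition \ref{Prop:surject} as stated only covers surjection onto the maximal finite-dimensional quotient $\mathcal{B}$ (the ``pure'' composition summands where some $n_i=n$); for the mixed summands, the key observation is that after passing to $\Ca_\nu$ the target becomes commutative, so surjectivity reduces uniformly to the separation-of-fixed-points statement, which applies to pure and mixed summands alike.
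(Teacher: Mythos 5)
Your reduction step is sound: the simples of $\OCat_\nu(A)$ are in natural bijection with the simple $\Ca_\nu(A)$-modules via highest weight spaces, and a surjection $\Ca_\nu(\varphi)$ would yield the inequality. The target of $\Ca_\nu(\varphi)$ is indeed $\C^{|X^T|}$ for the reasons you give, and the source is $\Ca_\nu(\bar{\A}_\lambda(n,r))$ by the result from \cite[Section 5.5]{CWR} quoted in the text.

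However, the argument you propose for the surjectivity of $\Ca_\nu(\varphi)$ has a genuine gap. You claim that since the images of $S(\bar{\g})^G$ and $S(\bar{\g}^{*})^G$ in $\bar{\A}_\lambda(n,r)$ are $\nu_0$-invariant, they ``descend to $\Ca_{\nu_0}(\bar{\A}_\lambda(n,r))$ and further to its $\nu$-Cartan subquotient.'' The first descent is correct (and is exactly Step~3 of the proof of Proposition \ref{Prop:surject}): $\nu_0$ only scales the $\Hom(V,W)$-direction, not $\sl(V)$. But the second descent fails. The one-parameter subgroup $\nu(t)=(\alpha(t),t)$ has a nontrivial component in the second factor of $T=T_0\times\C^\times$, and that factor acts on $A\in\sl(V)$ by $t\cdot A$. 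Consequently $\sl(V)$ carries $\nu$-weight $1$, the first-copy coordinate ring $S(\bar{\g}^{*})^G$ sits in strictly negative $\nu$-degrees (apart from constants), and $S(\bar{\g})^G$ sits in strictly positive ones. In $\Ca_\nu=A^{0,\nu}/\bigoplus_{i>0}A^{-i,\nu}A^{i,\nu}$, any degree-zero product formed from these two algebras lies in $A^{-i,\nu}A^{i,\nu}$ and is killed; only the scalars survive. So the ``separating'' elements you want are not available in $\Ca_\nu$, and the Nakajima fixed-point separation argument does not get off the ground.

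The paper avoids this obstruction by working at the level of $\Ca_{\nu_0}$ throughout, not $\Ca_\nu$. It localizes at the discriminant $\delta\in\C[\bar{\g}]^G$: after inverting $\delta$, the map $\varphi$ becomes an isomorphism $\Ca_{\nu_0}(\bar{\A}_\lambda(n,r))[\delta^{-1}]\xrightarrow{\sim}\Gamma(\Ca_{\nu_0}(\bar{\A}_\lambda^\theta(n,r)))[\delta^{-1}]$ (Steps 2--6, by reducing to $\bar{\g}^{reg}=G\times_{N_G(\h)}\h^{reg}$ and explicitly identifying both sides with $\bigl(D(\h^{reg})\otimes(\C^{\oplus r})^{\otimes n}\bigr)^{\mathfrak{S}_n}$). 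Then for each $T$-fixed point $p$, either $L^0(p)$ is finite dimensional, in which case Proposition \ref{Prop:surject} applies directly, or $L^0(p)[\delta^{-1}]$ is shown to be a simple $\Ca_{\nu_0}(\bar{\A}_\lambda(n,r))[\delta^{-1}]$-module (via a regular-singularity/monodromy argument), and these are pairwise non-isomorphic; one then extracts a simple constituent $\underline{L}^0(p)$ of $L^0(p)$ for each $p$. If you want to keep your $\Ca_\nu$-surjectivity strategy, you would need a completely different source of separating elements in $\nu$-degree zero, and it is not clear such elements exist before one already knows the proposition.
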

\begin{proof}
The proof is again in several steps.

{\it Step 1}.
We have a natural homomorphism $\C[\bar{\g}]^G\rightarrow \bigoplus \bar{\A}_\lambda(n_1,\ldots,n_r;r)$. It can be described as follows.
We have an identification $\C[\bar{\g}]^G\cong \C[\h]^{\mathfrak{S}_n}$. This algebra embeds into $\bar{\A}_\lambda(n_1,\ldots,n_r;r)$
(that is a spherical Cherednik algebra for the group $\prod_{i=1}^r \mathfrak{S}_{n_i}$ acting on $\h$) via the inclusion $\C[\h]^{\mathfrak{S}_n}\subset
\C[\h]^{\mathfrak{S}_{n_1}\times\ldots\times \mathfrak{S}_{n_r}}$. For the homomorphism
$\C[\bar{\g}]^G\rightarrow \bigoplus \bar{\A}_\lambda(n_1,\ldots,n_r;r)$ we take the direct sum of these embeddings. Similarly
to Steps 4,5 of the  proof of Proposition \ref{Prop:surject}, the maps $\C[\bar{\g}]^G\rightarrow \Ca_{\nu_0}(\bar{\A}_\lambda(n,r)), \Gamma(\Ca_{\nu_0}(\bar{\A}_\lambda^\theta(n,r)))$
are intertwined by the homomorphism $\Ca_{\nu_0}(\bar{\A}_\lambda(n,r))\rightarrow \Gamma(\Ca_{\nu_0}(\bar{\A}_\lambda^\theta(n,r)))$.

{\it Step 2}. Let $\delta\in \C[\bar{\g}]^G$ be the discriminant. We claim that $\Ca_{\nu_0}(\bar{\A}_\lambda^\theta(n,r))[\delta^{-1}]\xrightarrow{\sim} \Gamma(\Ca_{\nu_0}(\bar{\A}_\lambda^\theta(n,r)))[\delta^{-1}]$. Since $\delta$ is ${\nu_0}(\C^\times)$-stable,
we have $\Ca_{\nu_0}(\bar{\A}_\lambda(n,r))[\delta^{-1}]=\Ca_{\nu_0}(\bar{\A}_\lambda(n,r)[\delta^{-1}])$.
We will describe the algebra $\Ca_{\nu_0}(\bar{\A}_\lambda(n,r)[\delta^{-1}])$ explicitly and see that
$\Ca_{\nu_0}(\bar{\A}_\lambda(n,r)[\delta^{-1}])\xrightarrow{\sim} \Gamma(\Ca_{\nu_0}(\bar{\A}_\lambda^\theta(n,r)))[\delta^{-1}]$.

{\it Step 3.} We start with the description of $\bar{\A}_\lambda(n,r)[\delta^{-1}]$. Let $\bar{\g}^{reg}$ denote the locus
of the regular semisimple elements in $\bar{\g}$. Then $\bar{\A}_\lambda(n,r)[\delta^{-1}]=D(\bar{\g}^{reg}\times \operatorname{Hom}(\C^n,\C^r))\red_\lambda G$. Here $\red_\lambda$ denotes the quantum Hamiltonian reduction with parameter $\lambda$.

Recall that $\bar{\g}^{reg}=G\times_{N_G(\h)}\h^{reg}$ and so $\bar{\g}^{reg}\times \Hom(\C^n,\C^r)=G\times_{N_G(\h)}(\h^{reg}\times \operatorname{Hom}(\C^n,\C^r))$. It follows that
\begin{align*}&D(\bar{\g}^{reg}\times \operatorname{Hom}(\C^n,\C^r))\red_\lambda G=D(\h^{reg}\times \Hom(\C^n,\C^r))\red_\lambda N_G(\h)=\\
&(D(\h^{reg})\otimes D(\operatorname{Hom}(\C^n,\C^r))\red_\lambda H)^{\mathfrak{S}_n}= \left(D(\h^{reg})\otimes D^\lambda(\mathbb{P}^{r-1})^{\otimes n}\right)^{\mathfrak{S}_n}.\end{align*}
Here, in the second line, we write $H$ for the Cartan subgroup of $G=\operatorname{GL}_n(\C)$
and take the diagonal action of $\mathfrak{S}_n$.
In the last expression, it permutes the tensor factors.
A similar argument shows that $\bar{\M}^\theta(n,r)_{\delta}= (T^*(\h^{reg})\times T^*(\mathbb{P}^{r-1})^{n})/\mathfrak{S}_n$
and the restriction of $\bar{\A}^\theta_\lambda(n,r)$ to this open subset is $\left(D_{\h^{reg}}\otimes (D^\lambda_{\mathbb{P}^{r-1}})^{\otimes n}\right)^{\mathfrak{S}_n}$.

{\it Step 4}. Now we are going to describe the algebra $\Ca_{\nu_0}(\left(D(\h^{reg})\otimes D^\lambda(\mathbb{P}^{r-1})^{\otimes n}\right)^{\mathfrak{S}_n})$. First of all, we claim that
\begin{equation}\label{eq:Ca_eq}\Ca_{\nu_0}\left(\left(D(\h^{reg})\otimes D^\lambda(\mathbb{P}^{r-1})^{\otimes n}\right)^{\mathfrak{S}_n}\right)=
\left(\Ca_{\nu_0}\left(D(\h^{reg})\otimes D^\lambda(\mathbb{P}^{r-1})^{\otimes n}\right)\right)^{\mathfrak{S}_n}\end{equation}
There is a natural homomorphism from the left hand side to the right hand side.
To prove that it is an isomorphism one can argue as follows. First, note, that since the $\mathfrak{S}_n$-action
on $\h^{reg}$ is free, we have
$$D(\h^{reg})\otimes D^\lambda(\mathbb{P}^{r-1})^{\otimes n}=D(\h^{reg})\otimes_{D(\h^{reg})^{\mathfrak{S}_n}}\left(D(\h^{reg})\otimes D^\lambda(\mathbb{P}^{r-1})^{\otimes n}\right)^{\mathfrak{S}_n}$$
Since $D(\h^{reg})$ is ${\nu_0}(\C^\times)$-invariant, the previous equality implies (\ref{eq:Ca_eq}).
%The claim that it is an isomorphism
%follows from two observations: that the $\mathfrak{S}_n$ on $\h^{reg}$ is free and that for any algebra $\A$
%with a $\C^\times$-action and any algebra homomorphism $\mathcal{B}\rightarrow \A^{\C^\times}$ we have
%$(\mathcal{B}\otimes_{\A^{\C^\times}}\A)^0=\mathcal{B}\otimes_{\mathcal{A}^{\C^\times}}\A^0$.

{\it Step 5}. Now let us describe $\Ca_{\nu_0}(\left(D(\h^{reg})\otimes D^\lambda(\mathbb{P}^{r-1})^{\otimes n}\right)=D(\h^{reg})\otimes \Ca_{\nu_0}\left((D^\lambda(\mathbb{P}^{r-1}))^{\otimes n}\right)$. The $\C^\times$-action on the tensor product $(D^\lambda(\mathbb{P}^{r-1}))^{\otimes n}$ is diagonal and it is easy to see that
$\Ca_{\nu_0}\left((D^\lambda(\mathbb{P}^{r-1}))^{\otimes n}\right)=\left(\Ca_{\nu_0}(D^\lambda(\mathbb{P}^{r-1}))\right)^{\otimes n}$.
  So we need to compute $\Ca_{\nu_0}(D^\lambda(\mathbb{P}^{r-1}))$. We claim that this algebra is isomorphic to $\C^{\oplus r}$. Indeed, $D^\lambda(\mathbb{P}^{r-1})$ is a quotient of the central reduction $U_{\tilde{\lambda}}(\mathfrak{sl}_r)$ of $U(\mathfrak{sl}_r)$ at
the central character $\tilde{\lambda}:=\lambda\omega_{r}$. We remark that $\lambda\omega_r+\rho$ is regular because $\lambda\geqslant 0$. We have $\Ca_{\nu_0}(U_{\tilde{\lambda}}(\mathfrak{sl}_r))=\C^{\oplus r!}$ and $\Ca_{\nu_0}(D^\lambda(\mathbb{P}^{r-1}))$ is a quotient of that. The number of irreducible representations of $\Ca_{\nu_0}(D^\lambda(\mathbb{P}^{r-1}))$ equals to the number of simples in the category $\mathcal{O}$ for $D^\lambda(\mathbb{P}^{r-1})$ that coincides with $r$ since abelian localization holds. An isomorphism  $\Ca_{\nu_0}(D^\lambda(\mathbb{P}^{r-1}))=\C^{\oplus r}$ follows.

{\it Step 6}. So we see that $\Ca_{\nu_0}(\bar{\A}_\lambda(n,r)[\delta^{-1}])= \left(D(\h^{reg})\otimes (\C^{\oplus r})^{\otimes n}\right)^{\mathfrak{S}_n}$.
By similar reasons, we have $\Gamma([\bar{\M}^\theta(n,r)_\delta]^{{\nu_0}(\C^\times)}, \Ca_{\nu_0}(\bar{\A}^\theta_\lambda(n,r)))=
\left(D(\h^{reg})\otimes (\C^{\oplus r})^{\otimes n}\right)^{\mathfrak{S}_n}$. The natural homomorphism
\begin{equation}\label{eq:local_iso1}\Ca_{\nu_0}(\bar{\A}_\lambda(n,r)[\delta^{-1}])\rightarrow \Gamma((\bar{\M}^\theta(n,r)_\delta)^{{\nu_0}(\C^\times)}, \Ca_{\nu_0}(\bar{\A}^\theta_\lambda(n,r)))\end{equation}
is an isomorphism by the previous two steps. Also we have a natural homomorphism \begin{equation}\label{eq:local_iso2}\Gamma(\Ca_{\nu_0}(\bar{\A}^\theta_\lambda(n,r)))[\delta^{-1}]\rightarrow
\Gamma\left([\M^\theta(n,r)_\delta]^{{\nu_0}(\C^\times)}, \Ca_{\nu_0}(\bar{\A}^\theta_\lambda(n,r))\right).\end{equation}
The latter homomorphism is an isomorphism from the explicit description
of $\Ca_{\nu_0}(\bar{\A}^\theta_\lambda(n,r))$. Indeed, $\Ca_{\nu_0}(\bar{\A}^\theta_\lambda(n,r))$
is the direct sum of quantizations of products of Hilbert schemes. The morphism $\prod \operatorname{Hilb}_{n_i}(\C^2)
\rightarrow \prod \left(\C^{2n_i}/\mathfrak{S}_n\right)$ is an isomorphism over the non-vanishing locus of $\delta$.
This implies that (\ref{eq:local_iso2}) is an isomorphism.

By the construction,  (\ref{eq:local_iso1}) is the composition of
$\Ca_{\nu_0}(\bar{\A}_\lambda(n,r)[\delta^{-1}])\rightarrow
\Gamma(\Ca_{\nu_0}(\bar{\A}^\theta_\lambda(n,r)))[\delta^{-1}]$ and (\ref{eq:local_iso2}).
So we have proved that  $\Ca_{\nu_0}(\bar{\A}_\lambda(n,r))[\delta^{-1}]\rightarrow \Gamma(\Ca_{\nu_0}(\bar{\A}^\theta_\lambda(n,r)))[\delta^{-1}]$ is
an isomorphism.

{\it Step 7}. For $p\in \bar{\M}^\theta(n,r)^{T}$, let $L^0(p)$ be the corresponding irreducible
$\Gamma(\Ca_{\nu_0}(\bar{\A}^\theta_\lambda(n,r)))$-module from category $\mathcal{O}$. These modules are either finite dimensional
(those are parameterized by the multi-partitions with one part equal to $(n)$ and others empty) or has support of maximal
dimension. It follows from Proposition \ref{Prop:surject} that all finite dimensional $L^0(p)$ restrict
to pairwise non-isomorphic $\Ca_{\nu_0}(\bar{\A}_\lambda(n,r))$-modules. Now consider $L^0(p)$ with support of maximal dimension.
We claim that the localizations $L^0(p)[\delta^{-1}]$ are pairwise non-isomorphic simple $\Gamma(\Ca_{\nu_0}(\bar{\A}^\theta_\lambda(n,r)))[\delta^{-1}]$-modules.
Let us consider $p=(p^1,\ldots,p^r)$ and $p'=(p'^1,\ldots,p'^r)$ with $|p^i|=|p'^i|$ for all $i$ and show that
\begin{itemize}
\item
the  localizations of $L^0(p),L^0(p')$ are simple
\item and, moreover, are isomorphic only if $p=p'$.
\end{itemize}
The analogous claims hold if we localize to the regular locus for $\prod_{i=1}^r\mathfrak{S}_{|p^i|}$.
Indeed, this localization realizes the KZ functor that is a quotient onto its image. So the images of
$L^0(p), L^0(p')$ under this localization are simple and non-isomorphic. The modules $L^0(p)[\delta^{-1}], L^0(p')[\delta^{-1}]$ further restrict   to the locus where $x_i\neq x_j$ for all $i,j$.
But there is no monodromy of the D-modules $L^0(p)[\delta^{-1}],L^0(p')[\delta^{-1}]$
along those additional hyperplanes and these D-modules  have regular singularities everywhere.
It follows that they remain simple and nonisomorphic (if $p\neq p'$).

{\it Step 8}. So we see that the $\Ca_{\nu_0}(\bar{\A}_\lambda(n,r))[\delta^{-1}]$-modules  $L^0(p)[\delta^{-1}]$ are simple and pair-wise non-isomorphic. The $\Ca_{\nu_0}(\bar{\A}_\lambda(n,r))$-module $L^0(p)$ is  finitely generated,
because the algebra $\Gamma(\Ca_{\nu_0}(\bar{\A}^\theta_\lambda(n,r)))$ is a finitely generated $\Ca_{\nu_0}(\bar{\A}_\lambda(n,r))$-module, the latter follows from \cite[Lemma 5.4]{CWR}.
So the $\Ca_{\nu_0}(\bar{\A}_\lambda(n,r))$-module $L^0(p)$ lies in  $\mathcal{O}_\nu(\Ca_{\nu_0}(\bar{\A}_\lambda(n,r)))$
thanks to the weight decomposition.  There is a simple constituent $\underline{L}^0(p)$
of the $\Ca_{\nu_0}(\bar{\A}_\lambda(n,r))$-module $L^0(p)$ with $\underline{L}^0(p)[\delta^{-1}]=L^0(p)[\delta^{-1}]$ because the right hand side
is simple. The finite dimensional modules $L^0(p)$ together with the modules of the form $\underline{L}^0(p)$
give a required number of pairwise nonisomorphic simple $\Ca_{\nu_0}(\bar{\A}_\lambda(n,r))$-modules.
\end{proof}

\subsection{Completion of proofs}\label{SS_loc_compl}
The following proposition completes the proof of Theorem \ref{Thm:loc}.

\begin{Prop}\label{Prop:loc_compl}
Let $\lambda$ be a positive parameter with denominator $n$. Then  abelian localization holds for
$(\lambda,\det)$.
\end{Prop}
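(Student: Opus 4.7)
The plan is to combine the exactness of $\Gamma_\lambda^\theta$ coming from the McGerty-Nevins criterion with the simple-count lower bound of Proposition \ref{Prop:simple_numbers}. Since $\lambda>0>-r$ and $\theta=\det$, Proposition \ref{Prop:MN} yields $\ker\pi_\lambda^\theta\subset\ker\pi_\lambda$, which forces $\Gamma_\lambda^\theta:\Coh(\bar{\A}_\lambda^\theta(n,r))\to\bar{\A}_\lambda(n,r)\operatorname{-mod}$ to be exact. In particular it restricts to an exact functor on the categories $\OCat_\nu$, sending each simple to zero or to a simple.

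To compare simple counts, observe first that $|\operatorname{Irr}\OCat_\nu(\bar{\A}_\lambda^\theta(n,r))|=|\bar{\M}^\theta(n,r)^T|$ is the number of $r$-multipartitions of $n$ by Proposition \ref{Prop:cat_O}. By the identification $\Ca_\nu(\Ca_{\nu_0}(\bar{\A}_\lambda(n,r)))\cong\Ca_\nu(\bar{\A}_\lambda(n,r))$ from \cite[Section 5.5]{CWR}, the number of simples in $\OCat_\nu(\bar{\A}_\lambda(n,r))$ equals that of $\OCat_\nu(\Ca_{\nu_0}(\bar{\A}_\lambda(n,r)))$. Proposition \ref{Prop:simple_numbers}, combined with Proposition \ref{Prop:Cartan_subquot} and the classical $\OCat$-theory for the factors $\bar{\A}_{\lambda+i-1}(n_i,1)$, then yields
\[
|\operatorname{Irr}\OCat_\nu(\bar{\A}_\lambda(n,r))|\;\ge\;|\bar{\M}^\theta(n,r)^T|.
\]
Since $\Gamma(\bar{\A}_\lambda^\theta)=\bar{\A}_\lambda$ and higher cohomology vanishes, $R\Gamma_\lambda^\theta$ is fully faithful, and so (using exactness) the restriction $\Gamma_\lambda^\theta|_{\OCat_\nu}$ is fully faithful. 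In particular it injects the simples on the sheaf side into those on the algebra side; combined with the lower bound the counts match exactly and $\Gamma_\lambda^\theta|_{\OCat_\nu}$ is an equivalence of abelian categories.

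To upgrade to an abelian equivalence of the full coherent-module categories I invoke Lemma \ref{Lem:transl_properties}(2), which reduces abelian localization at $(\lambda,\theta)$ to the natural maps
\[
\bar{\A}^0_{\lambda+m\chi,\chi}(n,r)\otimes\bar{\A}^0_{\lambda+(m+1)\chi,-\chi}(n,r)\to\bar{\A}_{\lambda+(m+1)\chi}(n,r)
\]
and their mirrors being isomorphisms for $\chi\gg 0$. Their kernels and cokernels are Harish-Chandra bimodules, and by Lemma \ref{Lem:transl_properties}(3) their restrictions to slices have the analogous form. Inducting on $n$ (base case $n=1$: the classical localization theorem for $D^\lambda(\mathbb{P}^{r-1})$) kills these bimodules at every symplectic stratum of $\bar{\M}(n,r)$ except $\{0\}$. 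So the residual contribution is supported at $\{0\}$ and hence finite-dimensional, and it is ruled out by the abelian equivalence on $\OCat_\nu$ established above together with the classification of finite-dimensional modules in Theorem \ref{Thm:fin dim}.

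The hard part is the middle step: converting exactness together with the simple-count inequality into a genuine equivalence of $\OCat_\nu$-categories. The delicate point is ruling out the possibility that $\Gamma_\lambda^\theta$ accidentally collapses two distinct simples or kills one, which is controlled by the full faithfulness of $R\Gamma_\lambda^\theta$ and the unit of the adjunction $\Loc_\lambda^\theta\dashv\Gamma_\lambda^\theta$; once this is in place, the propagation from $\OCat_\nu$ to the full category is formal via the stratification of $\bar{\M}(n,r)$.
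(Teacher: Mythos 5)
Your overall route is the paper's: establish that $\Gamma_\lambda^\theta$ becomes an equivalence on category $\OCat_\nu$ via a simple-count argument, then upgrade to an equivalence of the full module categories by restricting wall-crossing bimodules to slices and handling what is left over the origin. But there are two genuine gaps.

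First, the simple-count step has the wrong logical shape. You claim that vanishing of higher cohomology of $\bar{\A}_\lambda^\theta(n,r)$ together with $\Gamma(\bar{\A}_\lambda^\theta)=\bar{\A}_\lambda$ gives full faithfulness of $R\Gamma_\lambda^\theta$. That implication is false as stated: full faithfulness of $R\Gamma$ is equivalent to derived localization and is the content of McGerty--Nevins' theorem (Lemma~\ref{Lem:MN_der_loc}), not a formal consequence of cohomology vanishing on the structure sheaf. Invoking Lemma~\ref{Lem:MN_der_loc} here would require knowing that $\bar{\A}_\lambda(n,r)$ has finite homological dimension, which is exactly what the present Proposition is supposed to establish for these $\lambda$ --- so it would be circular. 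Moreover, even granting full faithfulness, you only get an injection of sheaf-side simples into algebra-side simples, i.e.\ an inequality in the \emph{same} direction as your lower bound from Proposition~\ref{Prop:simple_numbers}; these two inequalities do not meet. What is actually needed is the \emph{upper} bound on the algebra-side simples, and for that the paper uses that $\Gamma_\lambda^\theta|_{\OCat_\nu}$ is a \emph{quotient} functor (which does follow from the McGerty--Nevins kernel inclusion $\ker\pi_\lambda^\theta\subset\ker\pi_\lambda$, since $\pi_\lambda$ then factors through $\pi_\lambda^\theta$). A quotient functor cannot create more simples in the target than in the source, so matching counts force the kernel to vanish.

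Second, the upgrade step waves at the crucial finite-dimensional analysis. After showing by restriction to slices (Lemma~\ref{Lem:transl_properties}(3) plus induction on $n$) that the kernel and cokernel of $\mathcal{B}:=\bar{\A}^{0}_{\lambda+\chi,-\chi}(n,r)\otimes_{\bar{\A}_{\lambda+\chi}(n,r)}\bar{\A}^{0}_{\lambda,\chi}(n,r)\to\bar{\A}_\lambda(n,r)$ are finite dimensional, one still has to \emph{kill} them. The paper does this carefully: the already-established $\OCat_\nu$-equivalence gives $\mathcal{B}\otimes_{\bar{\A}_\lambda}L\xrightarrow{\sim}L$ for the unique finite dimensional simple $L$, forcing surjectivity of $\mathcal{B}\to\bar{\A}_\lambda$; writing $K$ for the kernel, an exact sequence gives $K\otimes_{\bar{\A}_\lambda}L=0$, and since $K$ is a finite dimensional bimodule over the matrix algebra $\bar{\A}_\lambda/\operatorname{Ann}L$, this forces $K=0$. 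Saying the residual piece is ``ruled out by the $\OCat_\nu$ equivalence together with Theorem~\ref{Thm:fin dim}'' is not a proof, and invoking Theorem~\ref{Thm:fin dim} here is circular since that theorem is proved using Theorem~\ref{Thm:loc}, of which this Proposition is a step. You should instead cite Proposition~\ref{Prop:support_local} (which is proved independently) and run the bimodule argument above.
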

\begin{proof}
The proof is in several steps.
%Let $\alpha$ be the one-parameter subgroup $t\mapsto (t^{d_1},\ldots, t^{d_r})$ with $d_1\gg\ldots\gg d_r$.
%Let $\beta: \C^\times\rightarrow T\times \C^\times$ have the form $t\mapsto (1,t)$. Set $\alpha'=m\alpha+\beta$
%for $m\gg 0$. So we have $\alpha\prec^\lambda \alpha'$ for all $\lambda$ thanks to Lemma \ref{Lem:prec_spec}.

{\it Step 1}.
 Since $\Gamma_\lambda^\theta: \mathcal{O}_{\nu}(\bar{\A}^\theta_\lambda(n,r))
\rightarrow \mathcal{O}_{\nu}(\bar{\A}_\lambda(n,r))$ is a quotient functor, to prove that it is an equivalence
it is enough to verify that the number of simples in these two categories is the same. The number of
simples in $\OCat_{\nu}(\bar{\A}_\lambda(n,r))$ coincides with that for $\OCat(\Ca_{\nu_0}(\bar{\A}_\nu(n,r)))$
by the paragraph before Proposition \ref{Prop:simple_numbers}.
The latter is bigger than or equal to the number of simples for $\OCat(\bigoplus \bar{\A}_\lambda(n_1,\ldots,n_r;r))$ that, in its
turn coincides with the number of the $r$-multipartitions of $n$ because  abelian localization holds
for all summands $\bar{\A}_\lambda(n_1,\ldots,n_r;r)$. We deduce that the number of simples
in $\mathcal{O}_{\nu}(\bar{\A}^\theta_\lambda(n,r))$ and in $\mathcal{O}_{\nu}(\bar{\A}_\lambda(n,r))$ coincide.
So we see that $\Gamma^\theta_\lambda:\OCat_{\nu}(\bar{\A}^\theta_\lambda(n,r))\twoheadrightarrow \OCat_{\nu}(\bar{\A}_\lambda(n,r))$
is an equivalence. Now we are going to show that this implies that $\Gamma^\theta_\lambda:\bar{\A}_\lambda^\theta(n,r)\operatorname{-mod}
\rightarrow \bar{\A}_\lambda(n,r)\operatorname{-mod}$ is an equivalence. Below we write $\OCat$
instead of $\OCat_{\nu}$. Our argument is similar to the proof of \cite[Theorem 2.1]{cher_ab_loc}.

{\it Step 2}.
Since $\Gamma_\lambda^\theta $ is an equivalence between the categories $\OCat$, we see that $\bar{\A}^{0}_{\lambda,\chi}(n,r)\otimes_{\bar{\A}_\lambda(n,r)}\bullet$
and $\bar{\A}^{0}_{\lambda+\chi,-\chi}(n,r)\otimes_{\bar{\A}_{\lambda+\chi}(n,r)}\bullet$ are mutually
inverse equivalences between $\OCat(\bar{\A}_\lambda(n,r))$ and $\OCat(\bar{\A}_{\lambda+\chi}(n,r))$ for all $\lambda>0$ and  $\chi\in \Z$ such that $(\lambda+\chi,\det)$ satisfies abelian localization (which happens as long as
$\chi$ is sufficiently large).
Set $\mathcal{B}:=\bar{\A}^{0}_{\lambda+\chi,-\chi}(n,r)
\otimes_{\bar{\A}_{\lambda+\chi}(n,r)}
\bar{\A}^{0}_{\lambda,\chi}(n,r)$.
This is a HC $\bar{\A}_\lambda(n,r)$-bimodule with a natural homomorphism to $\bar{\A}_\lambda(n,r)$ such that
the induced homomorphism $\mathcal{B}\otimes_{\bar{\A}_\lambda(n,r)}M\rightarrow M$ is an isomorphism
for any $M\in \OCat(\bar{\A}_\lambda(n,r))$. On the other hand, for any nonzero $x\in \bar{\M}(n,r)$,
the bimodules $\bar{\A}^{0}_{\lambda+\chi,-\chi}(n,r), \bar{\A}^{0}_{\lambda,\chi}(n,r)$ are mutually
inverse Morita equivalences, this follows from Lemma \ref{Lem:transl_properties} since abelian
localization holds for all slice algebras with parameters whose denominator is $n$.
It follows that both kernel and cokernel of $\mathcal{B}\rightarrow \bar{\A}_\lambda(n,r)$
are finite dimensional.

{\it Step 3}.
Let $L$ denote an irreducible finite dimensional $\bar{\A}_\lambda(n,r)$-module, it is unique
because of the equivalence $\OCat(\bar{\A}_\lambda(n,r))\cong \OCat(\bar{\A}_{\lambda+\chi}(n,r))$
and Proposition \ref{Prop:support_local}. Since the homomorphism
$\mathcal{B}\otimes_{\bar{\A}_\lambda(n,r)}L\rightarrow L$ is an isomorphism,  we see that $\mathcal{B}\twoheadrightarrow \bar{\A}_\lambda(n,r)$. Let $K$ denote the kernel.  We have an exact sequence
$$\operatorname{Tor}^1_{\bar{\A}_\lambda(n,r)}(\bar{\A}_\lambda(n,r),L)\rightarrow K\otimes_{\bar{\A}_\lambda(n,r)}L
\rightarrow \mathcal{B}\otimes_{\bar{\A}_\lambda(n,r)}L\rightarrow L\rightarrow 0$$
Clearly, the first term is zero, while the last homomorphism is an isomorphism. We deduce that $K\otimes_{\bar{\A}_\lambda(n,r)}L=0$.
But $K$ is a finite dimensional $\bar{\A}_\lambda(n,r)$-bimodule and hence a $\bar{\A}_\lambda(n,r)/\operatorname{Ann}L$-bimodule. So its tensor product with $L$ can only be zero if $K=0$.

{\it Step 4}.
So we see that $\bar{\A}^{0}_{\lambda+\chi,-\chi}(n,r)\otimes_{\bar{\A}_{\lambda+\chi}(n,r)}\bar{\A}^{0}_{\lambda,\chi}(n,r)\cong
\bar{\A}_{\lambda}(n,r)$. Similarly, $\bar{\A}^{0}_{\lambda,\chi}(n,r)\otimes_{\bar{\A}_{\lambda}(n,r)}\bar{\A}^{0}_{\lambda+\chi,-\chi}(n,r)\cong
\bar{\A}_{\lambda+\chi}(n,r)$. It follows that $\Gamma_\lambda^\theta$ is an equivalence between $\bar{\A}_\lambda^\theta(n,r)\operatorname{-mod}$ and $ \bar{\A}_\lambda(n,r)\operatorname{-mod}$.
\end{proof}

\begin{proof}[Proof of Theorem \ref{Thm:fin dim}]
%Proposition \ref{Prop:support_local} together with Theorem \ref{Thm:loc} (and the claim that $\Gamma_\lambda^{\det}$
%is a quotient functor for $\lambda>-r$) imply that
%\begin{itemize}
%\item
%\end{itemize}
%It remains to show that $\bar{\A}_\lambda(n,r)$
%with $-r<\lambda<0$ has no finite dimensional irreducible representations. Assume the converse,
Let $L$
denote a finite dimensional irreducible representation of $\bar{\A}_\lambda(n,r)$. Since $L\Loc_\lambda^\theta(\bar{\A}_\lambda(n,r))=\bar{\A}_\lambda^\theta(n,r)$
and $R\Gamma_\lambda^\theta(\bar{\A}^\theta_\lambda(n,r))=\bar{\A}_\lambda(n,r)$, we see that
$R\Gamma_\lambda^\theta\circ L\Loc_\lambda^\theta$ is the identity functor of $D^-(\bar{\A}_\lambda(n,r)\operatorname{-mod})$. The homology of $L\Loc_\lambda^\theta(L)$ are supported
on $\bar{\rho}^{-1}(0)$. From Proposition \ref{Prop:support_local} it follows that the denominator of $\lambda$ is $n$.
Thanks to that proposition combined with Theorem \ref{Thm:loc}, the present proof reduces to showing
that for $\lambda$ between $-r$ and $0$ and  with denominator $n$, the algebra $\bar{\A}_\lambda(n,r)$
has no finite dimensional representations.

Recall that $\Gamma_\lambda^\theta$ is an exact functor. Since $R\Gamma_\lambda^\theta\circ L\Loc_\lambda^\theta$
is the identity, the functor $\Gamma_\lambda^\theta$ does not kill
the simple $\bar{\A}_\lambda^\theta(n,r)$-module $\tilde{L}$ supported on $\bar{\rho}^{-1}(0)$.
On the other hand, $\Gamma_\lambda^\theta$ does not kill modules whose support intersects $\bar{\M}^\theta(n,r)^{reg}$,
the open subvariety in $\bar{\M}^\theta(n,r)$, where $\bar{\rho}$ is an isomorphism. In fact, every simple
in $\OCat(\bar{\A}_\lambda^\theta(n,r))$ is either supported on $\bar{\rho}^{-1}(0)$ (if it is homologically shifted
under the wall-crossing functor so that the global sections are finite dimensional) or its support intersects
$\bar{\M}^\theta(n,r)^{reg}$ (if it is not).

So we see that $\Gamma_\lambda^\theta$ does not kill any irreducible module in $\OCat(\bar{\A}^\theta_\lambda(n,r))$.
So it is an equivalence. By the proof of Proposition \ref{Prop:loc_compl}, $(\lambda,\theta)$ satisfies abelian
localization, which is impossible as we already know.
\end{proof}

\section{Two-sided ideals and dimensions of supports}
\subsection{Two-sided ideals}
The goal of this section is to prove Theorem \ref{Thm:ideals}.
We use the following notation. We write $\A$ for $\bar{\A}_\lambda(n,r)$ (where $\lambda$ is not of the form
$\frac{s}{n'}$ with $n'\leqslant n$ coprime to $s$ and $-rn'<s<0$) and write $\underline{\A}$
for $\bar{\A}_\lambda(n',r)$.

Let us start with the description of the two-sided ideals in $\underline{\A}$.

\begin{Lem}\label{Lem:ideals_easy}
There is a unique
proper ideal in $\underline{\A}$.
\end{Lem}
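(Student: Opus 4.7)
The plan is to identify the unique proper ideal as $\mathcal{J}_0:=\operatorname{Ann}_{\underline{\A}}(L)$, where $L$ is the unique finite-dimensional irreducible $\underline{\A}$-module furnished by Theorem \ref{Thm:fin dim}; here $n'$ is the denominator of $\lambda$, so the hypotheses of that theorem are met. Since $L\neq 0$, the ideal $\mathcal{J}_0$ is proper; the substantive task is to show that every proper two-sided ideal $\mathcal{I}\subsetneq\underline{\A}$ equals $\mathcal{J}_0$.

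The key intermediate step is to prove $\VA(\underline{\A}/\mathcal{I})=\{0\}$. The associated variety is a proper closed Poisson subvariety of $\bar{\M}(n',r)$, hence a union of symplectic leaves. I would pick an open leaf $\mathcal{L}$ labelled by an unordered collection $\mu=(n_1,\ldots,n_k)$ with $\sum_i n_i\leqslant n'$ (Proposition \ref{Prop:leaves}) and a point $x\in\mathcal{L}$. By Proposition \ref{Prop:dagger_prop}(2), the bimodule $(\underline{\A}/\mathcal{I})_{\dagger,x}$ over the slice algebra $\bar{\A}_\lambda(\mu)=\bigotimes_{i=1}^k\bar{\A}_\lambda(n_i,r)$ is nonzero with associated variety concentrated at the origin; by Harish-Chandra finite generation it is then finite-dimensional. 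Restricting the left action to a single tensor factor $\bar{\A}_\lambda(n_i,r)$ shows that each factor admits a nonzero finite-dimensional left module, and Theorem \ref{Thm:fin dim} then forces $n_i$ to equal the denominator $n'$ of $\lambda$. Combined with $\sum n_i\leqslant n'$, this yields $k=1$ and $n_1=n'$, whose leaf collapses to the one-point stratum $\{0\}$ supplied by Lemma \ref{Lem:0_leaf}.

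Given that $\VA(\underline{\A}/\mathcal{I})=\{0\}$, the cyclic bimodule $\underline{\A}/\mathcal{I}$ has its associated graded supported at the origin, hence finite-dimensional, so $\underline{\A}/\mathcal{I}$ itself is finite-dimensional. Semisimplicity of the finite-dimensional block given by Theorem \ref{Thm:fin dim} yields $\underline{\A}/\mathcal{I}\cong L^{\oplus d}$ as a left $\underline{\A}$-module for some $d$, so $\mathcal{I}=\operatorname{Ann}_{\underline{\A}}(\underline{\A}/\mathcal{I})=\operatorname{Ann}_{\underline{\A}}(L)=\mathcal{J}_0$.

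The main obstacle is the leaf analysis: translating a nonzero finite-dimensional bimodule over the tensor-product slice algebra into the sharp combinatorial constraint $n_i=n'$ for every $i$. This hinges on the classification of denominators of $\lambda$ supporting finite-dimensional representations (Theorem \ref{Thm:fin dim}); without that numerical input the restriction could a priori be supported on a much wider family of leaves, and uniqueness of the proper ideal would fail.
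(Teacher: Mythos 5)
Your proof is correct and follows essentially the same route as the paper's: the slice-restriction argument you spell out to exclude intermediate leaves is precisely what the paper invokes by the phrase ``compare to the proof of Proposition \ref{Prop:support_local}'', and the remaining steps (a nonzero proper ideal forces $\VA(\underline{\A}/\I)=\{0\}$, hence finite codimension; semisimplicity of $\underline{\A}\operatorname{-mod}_{fin}\simeq\operatorname{Vect}$ then identifies $\I$ with $\operatorname{Ann}(L)$) coincide with the paper's.
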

\begin{proof}
The proper slice algebras for $\underline{\A}$ have no finite dimensional representations, compare
to the proof of Proposition \ref{Prop:support_local}. So every ideal $\J\subset \underline{\A}$ is either of finite codimension or $\VA(\underline{\A}/\J)=\bar{\M}(n',r)$.
The algebra $\underline{\A}$ has no zero divisors so the second option is only possible when $\J=\{0\}$. Now suppose that $\J$
is of finite codimension. Then $\underline{\A}/\J$ (viewed as a left $\underline{\A}$-module) is the sum of several copies of the finite dimensional irreducible $\underline{\A}$-module. So $\J$ coincides with the annihilator of the finite dimensional irreducible module, and we are done.
\end{proof}

Let $\underline{\J}$ denote the unique two-sided ideal.

Now we are going to describe the two-sided ideals in $\underline{\A}^{\otimes k}$. For this we need some notation. Set
$\underline{\I}_i:=\underline{\A}^{\otimes i-1}\otimes \underline{\J}\otimes \underline{\A}^{\otimes k-i-1}$. For a subset
$\Lambda\subset \{1,\ldots,k\}$ define the ideals $\underline{\I}_{\Lambda}:=\sum_{i\in \Lambda} \underline{\I}_i,
\underline{\I}^{\Lambda}:=\prod_{i\in \Lambda} \underline{\I}_i$.

Recall that  a collection of subsets in $\{1,\ldots,k\}$ is called an {\it anti-chain} if none of these subsets is contained
in another. Also recall that an ideal $I$ in an associative algebra $A$ is called {\it semi-prime} if it is the intersection of
prime ideals.

\begin{Lem}\label{Lem:ideals_next}
The following is true.
\begin{enumerate}
\item The prime ideals in $\underline{\A}^{\otimes k}$ are precisely the ideals $\underline{\I}_\Lambda$.
\item For every ideal $\I\subset \underline{\A}^{\otimes k}$, there is a unique anti-chain $\Lambda_1,\ldots,\Lambda_q$
of subsets in $\{1,\ldots,k\}$ such that $\I=\bigcap_{i=1}^p \I_{\Lambda_i}$. In particular, every ideal is
semi-prime.
\item For every ideal $\I\subset \underline{\A}^{\otimes k}$, there is a unique anti-chain $\Lambda_1',\ldots,\Lambda_q'$
of subsets of $\{1,\ldots,k\}$ such that $\I=\sum_{i=1}^q \I^{\Lambda'_i}$.
\item The anti-chains in (2) and (3) are related as follows: from an antichain in (2), we form all possible subsets
containing an element from each of $\Lambda_1,\ldots,\Lambda_p$. The minimal such subsets form an anti-chain in (3).
\end{enumerate}
\end{Lem}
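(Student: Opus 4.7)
The plan is to prove (1) first and derive (2), (3), (4) from it, with the classification of primes carrying the main structural content.

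For (1), each $\underline{\I}_\Lambda$ is prime because the quotient factors explicitly as
$$\underline{\A}^{\otimes k}/\underline{\I}_\Lambda \cong \underline{\A}^{\otimes(k-|\Lambda|)} \otimes_{\C} (\underline{\A}/\underline{\J})^{\otimes|\Lambda|},$$
which is a tensor product over the algebraically closed base $\C$ of a domain (tensor powers of the domain $\underline{\A}$ remain domains) and a simple matrix algebra, hence prime. Conversely, given a prime $\mathfrak{p}$, I would look at its associated variety $\VA(\underline{\A}^{\otimes k}/\mathfrak{p}) \subset \bar{\M}(n',r)^k$. Its symplectic leaves are products of leaves in the factors. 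At a generic point $x = (x_1, \ldots, x_k)$ of a top-dimensional component, the restriction functor of Proposition \ref{Prop:dagger_prop} produces an ideal $\mathfrak{p}_{\dagger,x}$ of finite codimension in the slice algebra $\prod_i \underline{\A}_{\dagger, x_i}$, so this slice algebra admits a finite-dimensional representation. Since a simple finite-dimensional module over a tensor product decomposes as a tensor product of simple modules over the factors, each $\underline{\A}_{\dagger,x_i}$ admits a finite-dimensional representation; by hypothesis on $\lambda$ this forces each $x_i$ to be either $0$ or to lie in the open leaf of $\bar{\M}(n',r)$. Primeness of $\mathfrak{p}$ cuts the possibly-reducible $\VA$ down to a single component $V(\underline{\I}_\Lambda)$, after which $\mathfrak{p} \supset \underline{\I}_\Lambda$ follows via a Nullstellensatz argument at the level of the reduced commutative ring $\gr \underline{\A}^{\otimes k} = \C[\bar{\M}(n',r)^k]$, and $\mathfrak{p}/\underline{\I}_\Lambda = 0$ follows because any nonzero ideal in $\underline{\A}^{\otimes m}$ has strictly smaller associated variety (its nonzero associated graded cuts out a proper subvariety of the irreducible $\bar{\M}(n', r)^m$).

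For parts (2) and (3), I would proceed by induction on $k$, exploiting the simplicity of $\underline{\A}/\underline{\J}$. Given an ideal $I \subset \underline{\A}^{\otimes k}$, the quotient $I/(I \cap \underline{\I}_k)$ embeds into $\underline{\A}^{\otimes(k-1)} \otimes (\underline{\A}/\underline{\J})$; because $\underline{\A}/\underline{\J}$ is central simple, every ideal of this tensor product has the form $I' \otimes (\underline{\A}/\underline{\J})$ for a unique ideal $I' \subset \underline{\A}^{\otimes(k-1)}$, to which the inductive hypothesis applies. For the complementary piece $I \cap \underline{\I}_k$, I use the idempotence $\underline{\J}^2 = \underline{\J}$, which is easy: $\gr \underline{\J}$ is a nonzero ideal in the domain $\C[\bar{\M}(n',r)]$, so $(\gr \underline{\J})^2 \neq 0$, forcing $\underline{\J}^2 \neq 0$ and hence $\underline{\J}^2 = \underline{\J}$ by Lemma \ref{Lem:ideals_easy}. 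Idempotence then identifies $I \cap \underline{\I}_k$ with $I'' \otimes \underline{\J}$ for an ideal $I'' \subset \underline{\A}^{\otimes(k-1)}$, again accessible by induction. Gluing the antichains for $I'$ and $I''$ yields both antichain descriptions for $I$ and, in particular, shows every ideal is semiprime.

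Part (4) is then combinatorial: one checks directly that $\underline{\I}^{\Lambda'} \subset \underline{\I}_\Lambda$ if and only if $\Lambda' \cap \Lambda \neq \emptyset$ by computing in the explicit quotient $\underline{\A}^{\otimes (k-|\Lambda|)} \otimes (\underline{\A}/\underline{\J})^{\otimes |\Lambda|}$ from (1); the duality between the two antichains as minimal transversals is then immediate. The main obstacle in the plan is the inductive decomposition $I \cap \underline{\I}_k = I'' \otimes \underline{\J}$: this is a genuinely structural assertion about how ideals of $\underline{\A}^{\otimes k}$ interact with the last tensor factor, and it is where idempotence of $\underline{\J}$ and simplicity of $\underline{\A}/\underline{\J}$ are both needed to decouple the factors cleanly. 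Once this identification is verified, everything else is bookkeeping.
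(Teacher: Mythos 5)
Your plan diverges from the paper's at two points, and at both there are real gaps.

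\textbf{Part (1), converse direction.} You say that once $\VA(\underline{\A}^{\otimes k}/\mathfrak{p})$ is identified with $V(\underline{\I}_\Lambda)$, the inclusion $\underline{\I}_\Lambda\subset\mathfrak{p}$ ``follows via a Nullstellensatz argument at the level of $\gr$.'' This step does not go through in the form stated: knowing that the support of $\gr(\underline{\A}^{\otimes k}/\mathfrak{p})$ is contained in the zero locus of $\gr\underline{\I}_\Lambda$ only controls radicals of associated graded ideals, and one cannot deduce $\gr\underline{\I}_\Lambda\subset\gr\mathfrak{p}$, let alone $\underline{\I}_\Lambda\subset\mathfrak{p}$, without further input. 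What the paper does instead is construct an ideal $\I^1$ as the kernel of $\underline{\A}^{\otimes k}\to(\underline{\A}'/\I')^{\dagger,x}$, where $\bullet^{\dagger,x}$ is the \emph{right adjoint} of restriction (from Section \ref{SS_HC}); the adjunction yields $\mathfrak{p}\subset\I^1$, shows $\VA(\underline{\A}^{\otimes k}/\I^1)=\overline{\Leaf}$, and then invokes \cite[Corollary 3.6]{BoKr} to force equality of primes with the same associated variety. You would need some replacement for this piece of machinery.

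\textbf{Parts (2)--(3).} Your inductive decomposition of an ideal $I$ into $(I',I'')$ via the short exact sequence with kernel $I\cap\underline{\I}_k$ and cokernel in $\underline{\A}^{\otimes(k-1)}\otimes(\underline{\A}/\underline{\J})$ is a genuinely different route from the paper's, and it has two unaddressed problems. First, the identification $I\cap\underline{\I}_k=I''\otimes\underline{\J}$ rests on $\underline{\J}$ being a \emph{simple} $\underline{\A}$-bimodule with endomorphism ring $\C$ (so that a Jacobson-density argument classifies sub-bimodules of $\underline{\A}^{\otimes(k-1)}\otimes\underline{\J}$); simplicity follows from Lemma \ref{Lem:ideals_easy}, but the scalar-endomorphism statement is a Quillen-type fact that you invoke silently. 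Second, and more seriously, you wave off the ``gluing'' as bookkeeping. In fact $I$ is not freely reconstructed from $(I',I'')$: one must identify which pairs arise (there is a compatibility $I'\subset I''$ forced by $I(1\otimes\underline{\J})\subset I\cap\underline{\I}_k$), verify that no extension ambiguity occurs, and — crucially — show that the resulting $I$ has the predicted antichain form and in particular is semiprime. This is precisely where the paper avoids any extension analysis: it derives the identity $\underline{\I}_{\Lambda_1,\ldots,\Lambda_p}=\sum_j\underline{\I}^{\Lambda'_j}$ by an explicit inclusion check, observes that any such sum is idempotent (because $\underline{\I}^{\Lambda'}\underline{\I}^{\Lambda''}=\underline{\I}^{\Lambda'\cup\Lambda''}$), and concludes that every ideal equals its radical, which is semiprime and hence (by (1)) of the required form. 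That argument is both shorter and sidesteps the structural questions your decomposition raises. Your sketch of (4) is fine once (1)--(3) are in place.
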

The proof essentially appeared in \cite[5.8]{sraco}.
\begin{proof}
Let us prove (1). Let $\I$ be a prime ideal. Let $x$ be a generic  point in an open leaf  $\Leaf\subset\VA(\underline{\A}^{\otimes k}/\I)$
of maximal dimension. The corresponding slice algebra $\underline{\A}'$ has a finite dimensional representation and so is again the product of several copies of $\underline{\A}$. The leaf $\Leaf$ is therefore the product of one-point leaves and open leaves in
$\bar{\M}(n',r)^k$.  An irreducible finite dimensional representation of $\underline{\A}'$ is unique, let $\I'$ be its annihilator.

Consider the categories $\HC_{\overline{\Leaf}}(\underline{\A}^{\otimes k})$ of all HC $\underline{\A}^{\otimes k}$-bimodules whose associated variety is contained in $\overline{\Leaf}$ and $\HC_{fin}(\A')$ of finite dimensional $\underline{\A}'$-bimodules (that are automatically HC). So the functor $\bullet_{\dagger,x}$ restricts to
$\HC_{\overline{\Leaf}}(\underline{\A}^{\otimes k})\rightarrow \HC_{fin}(\underline{\A}')$. As we have mentioned in
Section \ref{SS_HC}, this functor admits a right adjoint
$$\bullet^{\dagger,x}: \HC_{fin}(\underline{\A}')\rightarrow
\HC_{\overline{\Leaf}}(\underline{\A}^{\otimes k}).$$

Let $\I^1$ denote the kernel of the natural homomorphism $\underline{\A}^{\otimes k}
\rightarrow (\underline{\A}'/\I')^{\dagger,x}$ and $\I\subset \I^1$.
So $\VA(\underline{\A}^{\otimes k}/\I^1)=\overline{\Leaf}$.
It follows from \cite[Corollar 3.6]{BoKr} that $\I=\I^1$. So the number of the prime ideals coincides
with that of the non-empty subsets  $\{1,\ldots,k\}$. On the other hand, the ideals $\I^\Lambda$ are all different
(they have different associated varieties)
and all prime (the quotient $\underline{\A}^{\otimes k}/\I^{\Lambda}$ is the product of a matrix algebra and the algebra
$\underline{\A}^{\otimes k-|\Lambda|}$ that has no zero divisors).

Let us prove (2) (and simultaneously (3)). Let us write $\I_{\Lambda_1,\ldots,\Lambda_p}$ for $\bigcap_{j=1}^s \I_{\Lambda_j}$. For ideals in $\underline{\A}^{\otimes k-1}$ we use notation like $\underline{\I}_{\Lambda_1',\ldots,\Lambda_q'}$. Reordering the indexes, we may assume that $k\in \Lambda_1,\ldots,\Lambda_s$ and $k\not\in \Lambda_{s+1},\ldots,\Lambda_p$. Set $\Lambda_j':=\Lambda_j\setminus \{k\}$ for $j\leqslant s$. Then
\begin{equation}\label{eq:ideal1}\I_{\Lambda_1,\ldots,\Lambda_p}=(\underline{\A}^{\otimes k-1}\otimes \underline{\J}+ \underline{\I}_{\Lambda'_1,\ldots,\Lambda'_s}\otimes \underline{\A})\cap (\underline{\I}_{\Lambda_{s+1},\ldots,\Lambda_p}\otimes \underline{\A}).\end{equation}
We claim that the right hand side of (\ref{eq:ideal1}) coincides with
\begin{equation}\label{eq:ideal2}
\underline{\I}_{\Lambda_{s+1},\ldots,\Lambda_p}\otimes \underline{\J}+\underline{\I}_{\Lambda_1',\ldots,\Lambda_s',\Lambda_{s+1},\ldots,\Lambda_p}\otimes \underline{\A}.
\end{equation}
First of all, we notice that (\ref{eq:ideal2}) is contained in (\ref{eq:ideal1}). So we only need to prove the
opposite inclusion.   The projection  of (\ref{eq:ideal1}) to $\underline{\A}^{\otimes k-1}\otimes
(\underline{\A}/\underline{\J})$ is contained in $\underline{\I}_{\Lambda_1',\ldots,\Lambda_s',\Lambda_{s+1},\ldots,\Lambda_p}$
and hence also in the projection of (\ref{eq:ideal2}). Also the intersection of (\ref{eq:ideal1}) with $\underline{\A}^{\otimes k-1}\otimes \underline{\J}$ is contained in $\underline{\I}_{\Lambda_{s+1},\ldots,\Lambda_p}\otimes \underline{\J}$. So (\ref{eq:ideal1})
is included into (\ref{eq:ideal2}).

Repeating this argument with the sum similar to (\ref{eq:ideal2}) but for other $k-1$ factors of $\underline{\A}^{\otimes k}$ we conclude that $\I_{\Lambda_1,\ldots,\Lambda_p}=\sum_j \I^{\Lambda'_j}$, where the subsets
$\Lambda'_j\subset \{1,\ldots,k\}$ are formed as described in (4).   So we see that the ideals (2) are the same
as the ideals in (3) and that (4) holds. What remains to do is to prove that every ideal has the form described in (2).
To start with, we notice that every semi-prime ideal has the form as in (2) because of (1). In particular, the radical
of any ideal has such form.

Clearly,
$\I^{\Lambda'_1}\I^{\Lambda'_2}=\I^{\Lambda'_1\cup\Lambda'_2}$. So it follows that any sum of the ideals $\I^{\Lambda_j'}$ coincides with its
square. So if $\I$ is an ideal whose radical is $\I_{\Lambda_1,\ldots,\Lambda_p}$, then $\I$ coincides with
its radical. This completes the proof.
\end{proof}

Now we are ready to establish a result that will imply Theorem \ref{Thm:ideals} for nonsingular parameters $\lambda$. Let $x_i\in \bar{\M}(n,r)$ be a point corresponding to the leaf with slice $\bar{\M}(n',r)^{i}$ (i.e. to the semisimple representations of the form $r^0\oplus (r^1)^{n'}\oplus\ldots\oplus (r^i)^{n'}$). We set
$\J_i=\ker[\A\rightarrow \left(\underline{\A}/\underline{\I})^{\otimes i}\right)^{\dagger,x_i}]$.

\begin{Prop}\label{Prop:ideals_techn}
The ideals $\J_i, i=1,\ldots,q$, have the following properties.
\begin{enumerate}
\item The ideal $\J_i$ is prime for any $i$.
\item $\VA(\A/\J_i)=\overline{\Leaf}_i$, where $\Leaf_i$ is the symplectic leaf containing $x_i$.
\item $\J_1\subsetneq \J_2\subsetneq\ldots\subsetneq \J_q$.
\item Any proper two-sided ideal in $\A$ is one of $\J_i$.
%\item We have $(\J_i)_{\dagger,x_j}=\underline{\A}^{\otimes j}$ if $j<i$ and $(\J_i)_{\dagger,x_j}=\sum_{|\Lambda|=j-i+1} %\I^{\Lambda}$ else.
\end{enumerate}
\end{Prop}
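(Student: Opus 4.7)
The plan is to establish (2) first, then deduce (4) from it by restricting ideals to generic points of open leaves, and finally extract (1) and (3) as formal consequences. The principal tools are the restriction--adjunction machinery of Proposition~\ref{Prop:dagger_prop}, the classification of finite-dimensional representations (Theorem~\ref{Thm:fin dim}), and Lemma~\ref{Lem:ideals_next} applied to the slice algebra $\underline{\A}^{\otimes i}$.

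For (2), I would exploit the embedding $\A/\J_i \hookrightarrow \mathcal{N}_i := \bigl[(\underline{\A}/\underline{\J})^{\otimes i}\bigr]^{\dagger,x_i}$ built into the definition of $\J_i$. Part (2) of Proposition~\ref{Prop:dagger_prop} gives $\VA(\mathcal{N}_i) = \overline{\Leaf}_i$, because $(\underline{\A}/\underline{\J})^{\otimes i}$ is finite-dimensional, so $\VA(\A/\J_i) \subseteq \overline{\Leaf}_i$. For the reverse inclusion, applying the exact functor $\bullet_{\dagger,x_i}$ to $\A \twoheadrightarrow \A/\J_i$ produces the nonzero quotient $(\underline{\A}/\underline{\J})^{\otimes i}$ of $\underline{\A}^{\otimes i}$ via the counit of adjunction, so $(\A/\J_i)_{\dagger,x_i} \neq 0$ and hence $x_i \in \VA(\A/\J_i)$.

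For (4), given a proper two-sided ideal $\I \subset \A$, I would pick a generic point $x$ of an open leaf $\Leaf$ of $\VA(\A/\I)$. By Proposition~\ref{Prop:leaves}, the slice algebra at $x$ is $\bigotimes_{j=1}^k \bar{\A}_\lambda(n_j,r)$ for some unordered tuple $(n_1,\ldots,n_k)$ with $\sum n_j \leqslant n$, and by Proposition~\ref{Prop:dagger_prop}(2) the restriction $\I_{\dagger,x}$ is of finite codimension. Since Theorem~\ref{Thm:fin dim} requires each factor $\bar{\A}_\lambda(n_j,r)$ to admit a finite-dimensional representation, we must have $n_j = n'$ for every $j$, forcing $\Leaf = \Leaf_i$ with $i = k$. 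The finite-codimensional ideal $\I_{\dagger,x_i} \subset \underline{\A}^{\otimes i}$ is semiprime by Lemma~\ref{Lem:ideals_next}(2), and since the only finite-codimensional prime (by Lemma~\ref{Lem:ideals_next}(1)) is $\underline{\I}_{\{1,\ldots,i\}}$, we get $\I_{\dagger,x_i} = \underline{\I}_{\{1,\ldots,i\}} = (\J_i)_{\dagger,x_i}$. The main obstacle is then to promote this equality of restrictions to $\I = \J_i$. My approach would be to show that the image of $\I$ under $\A \to \mathcal{N}_i$ has vanishing restriction at $x_i$ and that $\mathcal{N}_i$ has simple socle --- produced by $\bullet^{\dagger,x_i}$ from the simple slice bimodule --- so that any nonzero sub-bimodule of $\mathcal{N}_i$ contains this socle and therefore has nonzero restriction; hence the image of $\I$ is zero, i.e.\ $\I \subseteq \J_i$. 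For the reverse, $(\J_i/\I)_{\dagger,x_i} = 0$ by exactness, so $\VA(\J_i/\I)$ is a proper closed union of deeper leaves $\overline{\Leaf}_{i'}$ with $i' > i$; downward induction on leaf dimension, applying the same analysis to each such leaf, forces $\J_i/\I = 0$.

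Statement (3) is then immediate: the varieties $\overline{\Leaf}_i$ are pairwise distinct (their dimensions strictly decrease with $i$) and $\overline{\Leaf}_{i+1} \subsetneq \overline{\Leaf}_i$ because $\Leaf_{i+1}$ is obtained from $\Leaf_i$ by splitting off one more $n'$-dimensional summand in the semisimple representation; together with (4) this pins down the chain. For (1), by (3) and (4) the ideal lattice of $\A$ is the chain $\{0\}\subsetneq \J_1 \subsetneq \ldots \subsetneq \J_q \subsetneq \A$. If $\J_i$ were not prime, ideals $\I_1, \I_2 \supsetneq \J_i$ would satisfy $\I_1 \I_2 \subseteq \J_i$; both would then contain $\J_{i+1}$, forcing $\J_{i+1}^2 \subseteq \J_i$. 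However $\VA(\A/\J_{i+1}^2) = \VA(\A/\J_{i+1}) = \overline{\Leaf}_{i+1}$ (since $\sqrt{\gr \J_{i+1}^2} = \sqrt{\gr \J_{i+1}}$), which is strictly smaller than $\overline{\Leaf}_i = \VA(\A/\J_i)$, contradicting the inclusion $\J_{i+1}^2 \subseteq \J_i$.
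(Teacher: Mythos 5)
Your argument for parts (2) and (3), and the first half of your argument for (4) (namely that $\I\subset\J_i$, via the adjunction $\Hom(M,L^{\dagger,x_i})\cong\Hom(M_{\dagger,x_i},L)$ applied to the simple bimodule $L=(\underline\A/\underline\J)^{\otimes i}$), are sound and close in spirit to the paper's reasoning. The genuine gap is in the reverse inclusion $\J_i\subset\I$. You observe that $(\J_i/\I)_{\dagger,x_i}=0$, hence $\VA(\J_i/\I)\subsetneq\overline{\Leaf}_i$, and you invoke a ``downward induction on leaf dimension'' to conclude $\J_i/\I=0$. But the induction has nothing to rest on: knowing that an open leaf of $\VA(\J_i/\I)$ must be some $\Leaf_{i'}$ with $i'>i$ (which is correct, by the finite-dimensionality constraint on the slice) does not by itself produce a contradiction. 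You would need to know how $\I$ and $\J_i$ compare after restriction at $x_{i'}$, and there is no a priori computation of $(\J_i)_{\dagger,x_{i'}}$ available. In other words, the statement you want --- that an ideal $\I$ with $\VA(\A/\I)=\overline{\Leaf}_i$ and $\I\subset\J_i$ must coincide with $\J_i$ --- is essentially equivalent to what you are trying to prove, and the leaf-by-leaf descent does not close the loop.

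The paper's route to (4) is structurally different and avoids this. It first shows that the restriction functor $\bullet_{\dagger,x_q}$ at the deepest leaf is \emph{faithful} on HC bimodules (any nonzero HC bimodule has $\Leaf_q$ in its associated variety, since every leaf carrying a finite-dimensional slice bimodule contains $\Leaf_q$ in its closure). Faithfulness plus exactness plus the tensor property gives that every two-sided ideal of $\A$ equals its own square, and hence is semiprime (because all ideals of $\underline\A^{\otimes q}$ are semiprime by Lemma~\ref{Lem:ideals_next}(2) and faithfulness lets one pull this back). Then every prime of $\A$ is some $\J_i$ (this is the Borho--Kraft maximality argument, run exactly as in the proof of Lemma~\ref{Lem:ideals_next}(1), and gives (1) directly and independently of (4)), and since the $\J_i$ form a chain, every semiprime is prime and so is some $\J_i$. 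Note also that your proof of (1) relies on (3) \emph{and} (4), so the gap in (4) also blocks (1) in your logical order; the paper instead derives (1) from the construction of $\J_i$ as the maximal ideal with prescribed associated variety, which is a standard and self-contained argument. If you want to salvage your approach to (4), the missing ingredient is precisely this faithfulness of $\bullet_{\dagger,x_q}$.
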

\begin{proof}
(2) follows from the construction. Also from the construction it follows that $\J_i$ is the maximal
among ideal with given associated variety. So it is prime similarly to the proof of (1) of Lemma
\ref{Lem:ideals_next}.

Let us prove (3). Since $(\J_{i})_{\dagger,x_i}$ has finite codimension, we see that it coincides with the maximal ideal
in $\underline{\A}^{\otimes i}$. So $(\J_j)_{\dagger,x_i}\subset (\J_i)_{\dagger,x_i}$ for $j<i$. Again from
the construction of $\J_i$, it follows that
$\J_j\subsetneq\J_i$.

Let us prove (4). The functor $\bullet_{\dagger,x_q}$ is faithful. Indeed, otherwise we have a HC bimodule
$\M$ with $\VA(\M)\cap \Leaf_q=\varnothing$. But $\M_{\dagger,x}$ has to be nonzero finite dimensional
for some $x$ and this is only possible when $x\in \Leaf_i$ for some $i$. But $\Leaf_q\subset \overline{\Leaf}_i$
for all $i$ that shows faithfulness. Since $\bullet_{\dagger,x_q}$ is faithful and exact, it follows that it embeds the lattice of
the ideals in $\A$ into that in $\underline{\A}^{\otimes q}$. We claim that this implies that every ideal in
$\A$ is semiprime. Indeed, the functor $\bullet_{\dagger,x_q}$ is, in addition, tensor and so
preserves products of ideals. In particular, any two-sided ideal in $\A$ coincides with its square.
Our claim follows from (2) of Lemma \ref{Lem:ideals_next}. But every prime ideal
in $\A$ is some $\J_i$, this is proved analogously to (1) of Lemma \ref{Lem:ideals_next}. Since the ideals
$\J_i$ form a chain, any semiprime ideal is prime and so coincides with some $\J_i$.
%
%Let us prove (5). We will deduce that from the behavior of $\bullet_{\dagger,x}$ on the associated varieties.
%We have an action of $\mathfrak{S}_j$ on $\M(n',r)^j$ by permuting factors. The action is induced
%from $N_{G}(G_{\tilde{x}})$, where $\tilde{x}$ is a point from the closed $G$-orbit lying over $x$.
%It follows that the intersection of any leaf with the slice is $\mathfrak{S}_j$-stable. The associated
%variety $\VA(\underline{\A}^{\otimes j}/(\J_i)_{\dagger,x_j})$ is the union of some products with factors
%$\{\operatorname{pt}\}$ and $\bar{\M}(n',r)$, where, for the dimension reasons, $\bar{\M}(n',r)$
%occurs $j-i$ times. Because of the $\mathfrak{S}_j$-symmetry, all products occur. Now we deduce
%the required formula for $(\J_i)_{\dagger,x_j}$ from the description of the two-sided ideals
%in $\bar{\A}^{\otimes j}$. This description shows that for each associated variety there is at most
%one two-sided ideal.
\end{proof}

\begin{proof}
The case of finite homological dimension follows from Proposition \ref{Prop:ideals_techn}.
Let us now consider the case when $\lambda=\frac{s}{n'}$ with $-rn'<s<0$. The algebra
$\bar{\A}_\lambda(n,r)$ has no finite dimensional representations and neither does
any of the slice algebras $\bigotimes_{i=1}^k \bar{\A}_\lambda(n_i,r)$. By using the
restriction functors (say similarly to Step 3 of the proof of Proposition
\ref{Prop:support_local}) we see that the algebra $\bar{\A}_\lambda(n,r)$ is simple.
\end{proof}

\subsection{Restriction functors for asymptotic chamber}
In this section we assume that $\nu$ is dominant meaning that
$\nu(t)=(\operatorname{diag}(t^{d_1},\ldots, t^{d_r}),t)$ with
$d_1\gg d_2\gg\ldots\gg d_r$. We also assume that $\lambda>0$ and is Zariski
generic.

Let $\tau=(\tau_1,\ldots,\tau_k)$ be a partition of $n$.
Set $\OCat_{\nu,\lambda}(\tau,r):=\boxtimes_{i=1}^k \OCat_\nu(\A_\lambda(\tau_i,r))$.
We will produce exact functors $\Res_\tau:\OCat_{\nu,\lambda}(n,r)\rightarrow
\OCat_{\nu,\lambda}(\tau,r)$ generalizing the Bezrukavnikov-Etingof functors, \cite{BE}, for
$r=1$.

The filtration by the order of differential operators on $D(R)$ induces a filtration
on $\A_\lambda(n,r)$. The degree zero part is $\C[R]^G=\C[\bar{\g}]^G$. Pick a point
$b\in \bar{\g}\quo G$ such that the stabilizer of the corresponding closed orbit is
$\prod_{i=1}^k \GL(\tau_i)$ and consider the tensor $\A_\lambda(n,r)^{\wedge_b}:=
\C[\bar{\g}\quo G]^{\wedge_b}\otimes_{\C[\bar{\g}]^G}\A_\lambda(n,r)$. Since the adjoint action of $\C[\bar{\g}]^G$
is locally nilpotent, $\A_\lambda(n,r)^{\wedge_b}$ is naturally an algebra.
Set $\A_\lambda(\tau,r)^{\wedge_0}:=\boxtimes_{i=1}^k \A_\lambda(\tau_i,r)^{\wedge_0}$.

\begin{Lem}\label{Lem:iso_compl}
We have a $\GL(r)$-equivariant isomorphism of filtered algebras $\vartheta:\A_\lambda(n,r)^{\wedge_b}\xrightarrow{\sim}
\boxtimes_{i=1}^k \A_\lambda(\tau_i,r)^{\wedge_0}$ (the action on the right hand side is diagonal).
\end{Lem}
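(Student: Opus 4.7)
The plan is to establish this by a formal slice argument in the style of Bezrukavnikov--Etingof \cite{BE}, and its symplectic-resolution generalization in \cite[Section 5.4]{BL}. Pick a semisimple lift $x\in\bar{\g}$ of $b$ with eigenspace decomposition $V=\bigoplus_{i=1}^k V_i$, $\dim V_i=\tau_i$, and pairwise distinct eigenvalues $a_i$, so that $L:=\Centr_G(x)=\prod_i\GL(V_i)$ has Lie algebra $\mathfrak{l}=\bigoplus_i\gl(V_i)$. The natural $L$-slice to the $G$-orbit of $(x,0)\in R$ is $R_L:=\mathfrak{l}\oplus\Hom(V,W)=\bigoplus_i R_i$, where $R_i:=\gl(V_i)\oplus\Hom(V_i,W)$ is the underlying space for $\A_\lambda(\tau_i,r)$.

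First, I would establish the classical model. Luna's slice theorem applied to the $G$-action on $R$ yields a $G\times\GL(r)$-equivariant isomorphism of formal schemes $R^{\wedge_{G\cdot(x,0)}}\cong (G\times^L R_L)^{\wedge_{[1,(x,0)]}}$. Passing to $T^*$ and applying reduction in stages for Hamiltonian $G$-actions on induced spaces, the reduction by $G$ of $T^*(G\times^L R_L)$ at the character $\lambda\tr$ coincides with the reduction by $L$ of $T^*R_L$ at the restricted character $\lambda\tr|_{\mathfrak{l}}=\sum_i\lambda\tr_{V_i}$. Since $R_L=\bigoplus_i R_i$ and the $L$-action splits as a product, this in turn factors over $i$, producing the classical factorization
$$\C[\M(n,r)]^{\wedge_b}\cong\bigotimes_{i=1}^k\C[\M(\tau_i,r)]^{\wedge_0}.$$

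Next, I would quantize this argument. The quantum analog of Luna's slice theorem developed in \cite[Section 5.4]{BL} (formulated there for symplectic resolutions, and applicable here since the relevant completion is along the commutative subalgebra $\C[\bar{\g}]^G$ acting locally finitely by the adjoint action) produces a $G\times\GL(r)$-equivariant isomorphism of filtered formal sheaves of algebras between $D(R)$ completed along the $G$-orbit through $(x,0)$ and the induction of $D(R_L)$ from $L$ to $G$. Performing quantum Hamiltonian reduction on both sides at $\lambda\tr$ and using quantum reduction in stages collapses the $G$-reduction into an $L$-reduction, which then factors as $\boxtimes_{i=1}^k \A_\lambda(\tau_i,r)^{\wedge_0}$. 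The $\GL(r)$-equivariance and compatibility with the filtration by order of differential operators are preserved at every step, yielding the isomorphism $\vartheta$.

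The main technical obstacle is the bookkeeping of central subalgebras under the completion. On the left, $\C[\bar{\g}]^G\cong\C[\mathfrak{h}_{\sl_n}]^{S_n}$, and the formal neighborhood of $b$ (whose $S_n$-stabilizer in the Cartan is $\prod_i S_{\tau_i}$ since the $a_i$ are pairwise distinct) decomposes via the Chevalley isomorphism as a product of local pieces $\bigotimes_i\C[\mathfrak{h}_{\gl_{\tau_i}}]^{S_{\tau_i}}$ modulo the single trace-zero constraint cutting out $\bar{\g}\subset\g$; on the right, this matches the tensor product of the $\C[\bar{\gl}(\tau_i)]^{\GL(\tau_i)}$-adic completions, the remaining central directions (corresponding to the center of $\mathfrak{l}\cap\sl(V)$) becoming invertible upon completion since the $a_i$ are distinct. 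Verifying that the quantum Luna isomorphism of \cite{BL} intertwines these central embeddings, and that the restricted character matches correctly under reduction in stages, is the point that requires the most care.
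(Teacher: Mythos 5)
Your route is genuinely different from the paper's, and as written it has a gap precisely at the step you flag as ``requiring the most care.''

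The paper does not go back to $D(R)$ at all. It works directly with the Rees algebras $\A_{\lambda\hbar}(n,r)$ and $\A_{\lambda\hbar}(\tau,r)$, takes their \emph{full} completions at the $\GL(r)\times\C^\times$-stable point of $\bar{\M}(n,r)$ lying over $b$ (a diagonal matrix inside $\mu^{-1}(0)$), and invokes the already-established slice decomposition (\ref{eq:quant_compl_iso}) from \cite[Section 5.4]{BL} --- which is a statement about full completions of the \emph{reduced} quantizations at points of the singularity, not a Luna theorem for $D(R)$. Equivariance is then free because the completion point is $\GL(r)\times\C^\times$-stable, and the filtered isomorphism of the lemma is recovered by taking $\C^\times$-finite parts and setting $\hbar=1$. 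That last maneuver is exactly what converts the full completion in all directions into the partial completion $\A_\lambda(n,r)^{\wedge_b}=\C[\bar{\g}\quo G]^{\wedge_b}\otimes_{\C[\bar{\g}]^G}\A_\lambda(n,r)$, which is completed only in the $\C[\bar{\g}]^G$-direction.

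Your proposal instead lifts to $D(R)$, invokes a ``quantum Luna slice theorem'' there, and then reduces. Two issues. First, the attribution is off: \cite[Section 5.4]{BL} does not supply an isomorphism of completions of $D(R)$ along a $G$-orbit; what it supplies is the decomposition of full completions of the reduced Rees algebras at a point of the symplectic singularity --- precisely what the paper cites. Deriving a $D(R)$-level Luna theorem and a quantum reduction-in-stages argument is possible in the Bezrukavnikov--Etingof spirit, but it is additional work you have not done and cannot simply cite. Second, and more substantively, completing $D(R)$ along the $G$-orbit through $(x,0)$ (which lies in $\{\,\text{Hom-component}=0\,\}$) completes in the $\Hom(V,W)$-directions as well, and hence overshoots the target: $\A_\lambda(n,r)^{\wedge_b}$ is completed only along $\C[\bar{\g}]^G$. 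The discrepancy between the completion you produce and the completion defined in the text is exactly the ``bookkeeping of central subalgebras'' you acknowledge needing but do not carry out. The paper sidesteps this entirely via the Rees/$\C^\times$-finite-parts device: passing to full completions temporarily over-completes, but taking $\C^\times$-finite parts strips out exactly the extraneous completion directions, because the contracting $\C^\times$-action on the $\Hom$-directions (and on the cotangent directions) has nonzero weight, while the adjoint action of $\C[\bar{\g}]^G$ is locally nilpotent. You should either adopt that device or prove a genuine partial-completion Luna theorem for $D(R)$ along $\C[\bar{\g}]^G$; neither is done in your write-up.
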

\begin{proof}
Consider the Rees algebras $\A_{\lambda \hbar}(n,r), \A_{\lambda\hbar}(\tau,r)$
and their full completions $\A_{\lambda\hbar}(n,r)^{\wedge_b}$ (at the point with closed
$G$-orbit  given by a diagonal matrix in $\g\subset \mu^{-1}(0)$ corresponding to $b$)
and $\A_{\lambda\hbar}(\tau,r)^{\wedge_0}$. As was mentioned in Section \ref{SS_sympl_slices},
see (\ref{eq:quant_compl_iso}),
we have a $\C[\hbar]$-linear
isomorphism $\A_{\lambda\hbar}(n,r)^{\wedge_b}\cong \A_{\lambda\hbar}(\tau,r)^{\wedge_0}$
that can be made $\GL(r)\times \C^\times$-equivariant (here $\C^\times$ is the
contracting action) because we complete at the $\GL(r)\times \C^\times$-stable points.
By taking $\C^\times$-finite parts and taking the quotients by $\hbar-1$ we get
an isomorphism in the lemma.
\end{proof}

We will use an isomorphism from Lemma \ref{Lem:iso_compl} to produce a functor
$\Res_\tau$. First we need to establish an equivalence of the category
$\OCat_{\nu,\lambda}(\tau,r)$ with a certain category of $\A_\lambda(\tau,r)^{\wedge_0}$-modules.
As usual, set $\nu_0(t)=(\operatorname{diag}(t^{d_1},\ldots, t^{d_r}),1)$.
We consider the category $\OCat_{\nu,\lambda}(\tau,r)^{\wedge_0}$ consisting of all
finitely generated $\A_\lambda(\tau,r)^{\wedge_0}$-modules such that $h_0=d_1\nu_0$
 acts locally finitely with eigenvalues bounded from above and generalized eigenspaces that are finitely generated
over $\C[\g_\tau\quo G_\tau]^{\wedge_0}$, where $\g_\tau$ is the standard Levi subalgebra of $\g$
corresponding to $\tau$. Note that all generalized $h_0$-eigenspaces in a module
from $\OCat_{\nu,\lambda}(n,r)$ are finitely generated over $\C[\g_\tau]^{G_\tau}$.
So we get an exact functor
$$N\mapsto N^{\wedge_0}:=\C[\g_\tau\quo G_\tau]^{\wedge_0}\otimes_{\C[\g_\tau]^{G_\tau}}N:\OCat_{\nu^+,\lambda}(\tau,r)\rightarrow \OCat_{\nu,\lambda}(\tau,r)^{\wedge_0}.$$

\begin{Lem}\label{Lem:compl_equi}
The functor $\bullet^{\wedge_0}$ is a category equivalence. A quasi-inverse functor is given by
taking the $h$-finite elements, where $h$ is the image of $1$ under the quantum comoment map
for $t\mapsto \nu(t)\nu_0(t)^{-1}$.
\end{Lem}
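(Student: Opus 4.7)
The strategy is to exploit the decomposition $\nu = \nu_0 + h$: the one-parameter subgroup $\nu$ factors as $\nu_0$ times the contracting $\C^\times$, so the quantum comoment images satisfy the same additive relation inside $\A_\lambda(\tau,r)$. Moreover, $\ad(h)$ gives the filtration grading on $\A_\lambda(\tau,r)$, placing $\C[\g_\tau]^{G_\tau}$ in nonnegative degrees with only scalars in degree zero (cf.\ Lemma \ref{Lem:0_leaf}), and $\ad(h)$ commutes with $\ad(h_0)$. Consequently, for any $\A_\lambda(\tau,r)$-module $M$, the locally $h$-finite vectors $M^{h\text{-fin}}$ form an $\A_\lambda(\tau,r)$-submodule, and this operation commutes with passage to generalized $h_0$-eigenspaces.

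First I would verify that $\bullet^{\wedge_0}$ lands in $\OCat_{\nu,\lambda}(\tau,r)^{\wedge_0}$. The extension $\C[\g_\tau]^{G_\tau}\hookrightarrow \C[\g_\tau\quo G_\tau]^{\wedge_0}$ is flat, and the $\ad$-action of $\C[\g_\tau]^{G_\tau}$ on $\A_\lambda(\tau,r)$ is locally nilpotent so that the completed algebra makes sense. Since $h_0$ commutes with $\C[\g_\tau]^{G_\tau}$, completion preserves the generalized $h_0$-eigenspace decomposition as well as finite generation of each eigenspace (which comes from the hypothesis on $\OCat_{\nu,\lambda}(\tau,r)$ noted before the lemma), yielding $N^{\wedge_0}\in \OCat_{\nu,\lambda}(\tau,r)^{\wedge_0}$.

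Next I would construct the quasi-inverse. For $M\in \OCat_{\nu,\lambda}(\tau,r)^{\wedge_0}$, decompose $M=\bigoplus_\alpha M_\alpha$ into generalized $h_0$-eigenspaces. On $M_\alpha$ the identity $h=\nu-d_1^{-1}h_0$ shows that $h$-local finiteness is equivalent to $\nu$-local finiteness. Each $M_\alpha$ is a finitely generated module over the complete, connected, positively graded Noetherian algebra $R:=\C[\g_\tau\quo G_\tau]^{\wedge_0}$, equipped with a $\C$-linear action of $h$ compatible with the internal grading on $R$ (namely $[\ad(h), R]=\deg\cdot R$). A standard graded-algebra argument then identifies $M_\alpha^{h\text{-fin}}$ with a finitely generated graded $\C[\g_\tau]^{G_\tau}$-submodule whose completion is $M_\alpha$. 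Assembling these identifications across $\alpha$ shows that $M^{h\text{-fin}}$ is finitely generated over $\A_\lambda(\tau,r)$ and lies in $\OCat_{\nu,\lambda}(\tau,r)$, the bounded-above condition for $\nu$-weights being inherited from that of $h_0$ via the decomposition $\nu = \nu_0 + h$.

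Finally, I would check that the two functors are mutually quasi-inverse. Every $N\in \OCat_{\nu,\lambda}(\tau,r)$ is itself $h$-finite, since $\nu$ and $\nu_0$ both act locally finitely on $N$ by the standard property of category $\OCat$, so the canonical inclusion $N\hookrightarrow N^{\wedge_0}$ identifies $N$ with $(N^{\wedge_0})^{h\text{-fin}}$; the reverse composition $(M^{h\text{-fin}})^{\wedge_0}\cong M$ follows from the identification in Step 2 applied to each $M_\alpha$. The principal obstacle is the Artin--Rees-type claim in Step 2: that on the complete positively graded ring $\C[\g_\tau\quo G_\tau]^{\wedge_0}$, the $h$-finite part of a finitely generated module carrying a compatible internal $h$-action is finitely generated over $\C[\g_\tau]^{G_\tau}$ with completion equal to the original module. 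This rests crucially on positivity of the grading on $\C[\g_\tau]^{G_\tau}$, an incarnation of Lemma \ref{Lem:0_leaf} applied to the slice factors.
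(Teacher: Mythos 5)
Your proposal is correct and follows essentially the same strategy as the paper: take the locally $h$-finite part, decompose by generalized $h_0$-eigenspaces, and use that each $h_0$-eigenspace is finitely generated over $\C[\g_\tau\quo G_\tau]^{\wedge_0}$, while the $\ad(h)$-grading on that ring has one-sided, away-from-zero degrees with only scalars in degree zero, so the joint $(h,h_0)$-generalized eigenspaces are finite dimensional and the $h$-finite part is dense. The paper compresses your ``Artin--Rees-type'' claim into the single sentence that generalized $h$-eigenspaces inside finitely generated $\C[\g_\tau\quo G_\tau]^{\wedge_0}$-modules are finite dimensional, which is the content of your key step.

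Two small inaccuracies in the write-up, neither affecting the conclusion. First, $\nu(t)\nu_0(t)^{-1}=(1,\ldots,1;t)$ is the $\C^\times$-factor of the Hamiltonian torus $T$, not the contracting (dilation) $\C^\times$ on $T^*R$: these commute but are different, and $h$ is the comoment for the former. Second, $\ad(h)$ does not realize the filtration by order of differential operators on $\A_\lambda(\tau,r)$ (which puts $\partial_a$ in degree one and all of $\C[R]$ in degree zero); rather $\ad(h)$ grades by the $T$-weight of the $\g_\tau$-variable, which happens to restrict to $\pm$ polynomial degree on $\C[\g_\tau]^{G_\tau}$. The positivity you need on $\C[\g_\tau]^{G_\tau}$ is therefore immediate from degree considerations and does not really require invoking Lemma \ref{Lem:0_leaf}, which concerns the Bernstein grading on the full $\C[T^*\bar R]^G$.
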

\begin{proof}
Let $N'_{fin}$ stand for the space of $h$-finite elements. It is easy to see that $N'_{fin}$
is the sum of modules from $\OCat_{\nu,\lambda}(\tau,r)$. Note that all simultaneous generalized eigenspaces
for $(h,h_0)$ are finite dimensional. This is because the generalized eigenspaces for $h_0$ are finitely
generated modules over $\C[\g_{\tau}\quo G_\tau]^{\wedge_0}$ and the generalized eigenspaces for $h$
in such modules are finite dimensional.
So $N'_{fin}$ actually lies in $\OCat$. Also $N'_{fin}$
is dense in $N'$. Now the proof is easy.
\end{proof}

We note that, for $M\in \OCat_\nu(\A_\lambda(n,r))$, the $\A_\lambda(\tau,r)^{\wedge_0}$-module
$\vartheta_*\left( \C[\g\quo G]^{\wedge_b}\otimes_{\C[\g]^G}M\right)$ lies in
$\OCat_{\nu,\lambda}(\tau,r)^{\wedge_0}$.
We define $\Res_{\tau}(M)$ by $[\vartheta_*\left( \C[\g\quo G]^{\wedge_b}\otimes_{\C[\g]^G}M\right)]_{fin}$.
This is an exact functor by construction.

Now let us study  properties of $\Res_\tau$.

First of all, the restriction functor behaves nicely on the level of associated varieties.
The following result is a straightforward consequence of the construction.

\begin{Lem}\label{Lem:assoc_var_restr}
The associated variety $\VA(\Res_\tau(M))$ is a unique conical (with respect to the contracting
$\C^\times$-action) subvariety in $\M(\tau,r)$ such that $\VA(\Res_\tau(M))\cap \M(\tau,r)^{\wedge_0}=
\VA(M)\cap \M(n,r)^{\wedge_b}$ (where we consider the full completions).
\end{Lem}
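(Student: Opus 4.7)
The plan is to track good filtrations through each step of the construction of $\Res_\tau$ and then invoke positivity of the contracting grading to get uniqueness. I would start by choosing a good filtration on $M$ with respect to the order-of-differential-operator filtration on $\A_\lambda(n,r)$, so that $\gr M$ is a coherent $\Str_{\M(n,r)}$-module supported on $\VA(M)$. Since $\C[\bar{\g}]^G\subset\A_\lambda(n,r)$ sits in filtration degree zero, the completion $M^{\wedge_b}=\C[\bar{\g}\quo G]^{\wedge_b}\otimes_{\C[\bar{\g}]^G}M$ inherits a filtration whose associated graded is the flat base change of $\gr M$ along $(\bar{\g}\quo G)^{\wedge_b}\to\bar{\g}\quo G$, and its support is therefore $\VA(M)\cap\M(n,r)^{\wedge_b}$.

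Next, I would use Lemma \ref{Lem:iso_compl}: since $\vartheta$ is a filtered isomorphism, its associated graded is a Poisson isomorphism $\M(\tau,r)^{\wedge_0}\xrightarrow{\sim}\M(n,r)^{\wedge_b}$ that realizes the geometric slice identification from Proposition \ref{Prop:leaves}. Hence $\vartheta_*(M^{\wedge_b})$ acquires a good filtration whose associated graded is supported, under this identification, on $\VA(M)\cap\M(n,r)^{\wedge_b}$. I would then invoke Lemma \ref{Lem:compl_equi} to conclude that $\Res_\tau(M)$ is finitely generated and satisfies $\Res_\tau(M)^{\wedge_0}=\vartheta_*(M^{\wedge_b})$. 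Choosing any good filtration on $\Res_\tau(M)$ and completing at $0$ yields a good filtration on $\Res_\tau(M)^{\wedge_0}$; because the support of the associated graded of a finitely generated module over a completed algebra is independent of the choice of good filtration, the two descriptions of $\Res_\tau(M)^{\wedge_0}$ give the identity $\VA(\Res_\tau(M))\cap\M(\tau,r)^{\wedge_0}=\VA(M)\cap\M(n,r)^{\wedge_b}$ after identification via $\gr\vartheta$.

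For uniqueness, I would use that $\C[\M(\tau,r)]$ is positively graded with respect to the contracting $\C^\times$-action: a graded ideal in a positively graded $\C$-algebra is determined by its image in the completion at the irrelevant ideal, so a closed conical subvariety of $\M(\tau,r)$ is determined by its intersection with $\M(\tau,r)^{\wedge_0}$. The main obstacle I expect is verifying the compatibility of good filtrations under completion and under the equivalence $(-)_{fin}$, namely that a good filtration on $\Res_\tau(M)$ really does complete (at least on the level of support of the associated graded) to the filtration previously constructed on $\vartheta_*(M^{\wedge_b})$. This should follow from flatness of completion together with the filtration-independence of the associated variety, but the simultaneous presence of the order-of-differential-operator filtration and the $h$-weight grading used to extract the finite part means the bookkeeping will need to be done carefully.
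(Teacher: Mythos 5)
Your proposal is correct and fills in, in the natural way, what the paper dismisses as ``a straightforward consequence of the construction'': tracking good filtrations through completion at $b$, through the filtered isomorphism $\vartheta$ of Lemma \ref{Lem:iso_compl}, and through the equivalence of Lemma \ref{Lem:compl_equi}, then using positivity of the contracting grading for uniqueness. The bookkeeping worry you flag at the end is resolved exactly as you suspect: good filtrations complete to good filtrations, the support of the associated graded of a finitely generated module over the complete filtered algebra is independent of the choice of good filtration, and $h$ sits in bounded filtration degree, so the $(-)_{fin}$ step does not disturb the identification of supports.
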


The following basic property is extremely important. We can consider the functor
$$\underline{\Res}_\tau:\OCat_{\nu}(\Ca_{\nu_0}(\A_\lambda(n,r)))
\rightarrow \OCat_{\nu}(\Ca_{\nu_0}(\A_\lambda(\tau,r))).$$
It is defined in the same way as $\Res_\tau$.
On the summand corresponding to a composition $\mu=(n_1,\ldots,n_r)$,
the functor $\underline{\Res}_{\tau}$ coincides with the
direct sum of suitable Bezrukavnikov-Etingof restriction functors. More precisely, the corresponding summand of
$\Ca_{\nu_0}(\A_\lambda(n,r))$ is $\A_\lambda(n_1,1)\otimes \A_{\lambda+1}(n_2,1)\otimes\ldots\otimes
\A_{\lambda+r-1}(n_r,1)$. Then
\begin{equation}\label{eq:underline_R_comput}
\underline{\Res}_\tau(M)=\bigoplus_{S_\mu/W'} \Res^{S_\mu}_{W'} M,
\end{equation}
where the summation is the summation is taken over all $S_\mu$-orbits on $S/S_\tau$, here
$W'$ stands for the standard parabolic stabilizer of the orbit. We write $\Res^{S_\mu}_{W'}$
for the Bezrukavnikov-Etingof restriction functor to the parabolic subgroup $W'$ (or, more precisely,
a version, where we do not change the space $\h$).

We still have the Verma module functor $\Delta_{\nu_0}: \OCat_{\nu}(\Ca_{\nu_0}(\A_\lambda(\tau,r)))
\rightarrow \OCat_{\nu}(\Ca_{\nu_0}(\A_\lambda(n,r)))$.

\begin{Lem}\label{Lem:commut_assoc}
We have  isomorphisms of functors
$$\Delta_{\nu_0}\circ \underline{\Res}_\tau\cong
\Res_\tau\circ \Delta_{\nu_0},
\nabla_{\nu_0}\circ \underline{\Res}_\tau\cong
\Res_\tau\circ \nabla_{\nu_0}$$
\end{Lem}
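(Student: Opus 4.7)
The plan is to identify both functors with the composite ``complete at the point $b$, apply the Verma construction in the completed algebra, transport through $\vartheta$, take $h$-finite vectors.'' The isomorphism for $\nabla_{\nu_0}$ follows by a dual argument with $\Hom$ in place of tensor product, so I focus on $\Delta_{\nu_0}$.

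First I observe that $\nu_0(t) = (\diag(t^{d_1}, \ldots, t^{d_r}), 1)$ factors through $T_0 \subset \GL(W)$ and so acts trivially on $\bar{\g}$; in particular, the central subalgebra $\C[\bar{\g}]^G \subset \A_\lambda(n,r)$ sits inside $\A_\lambda(n,r)^{0,\nu_0}$. This has two consequences: the $\nu_0$-grading descends to the central completion $\A_\lambda(n,r)^{\wedge_b} = \C[\bar{\g}\quo G]^{\wedge_b} \otimes_{\C[\bar{\g}]^G} \A_\lambda(n,r)$, and by flatness of $\C[\bar{\g}\quo G]^{\wedge_b}$ over $\C[\bar{\g}]^G$ we obtain a canonical identification
$$\Ca_{\nu_0}(\A_\lambda(n,r))^{\wedge_b} \cong \Ca_{\nu_0}(\A_\lambda(n,r)^{\wedge_b}),$$
and similarly for $\A_\lambda(\tau,r)^{\wedge_0}$. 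Because $\vartheta$ of Lemma \ref{Lem:iso_compl} is $\GL(r)$-equivariant and $\nu_0$ factors through $\GL(r)$, it is $\nu_0$-graded and hence induces an isomorphism $\underline{\vartheta}: \Ca_{\nu_0}(\A_\lambda(n,r))^{\wedge_b} \xrightarrow{\sim} \Ca_{\nu_0}(\A_\lambda(\tau,r))^{\wedge_0}$ on Cartan subquotients.

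Next I pick $N \in \OCat_\nu(\Ca_{\nu_0}(\A_\lambda(n,r)))$ and check that the Verma construction commutes with completion at $b$. Since $\A_\lambda(n,r)/\A_\lambda(n,r) \A_\lambda(n,r)^{>0,\nu_0}$ is a finitely generated right $\Ca_{\nu_0}(\A_\lambda(n,r))$-module with locally finite $\C[\bar{\g}]^G$-action, flatness of the completion yields
$$\Delta_{\nu_0}(N)^{\wedge_b} \cong \bigl(\A_\lambda(n,r)^{\wedge_b} / \A_\lambda(n,r)^{\wedge_b} \cdot (\A_\lambda(n,r)^{\wedge_b})^{>0,\nu_0}\bigr) \otimes_{\Ca_{\nu_0}(\A_\lambda(n,r))^{\wedge_b}} N^{\wedge_b}.$$
The analogous formula holds on the $\A_\lambda(\tau,r)^{\wedge_0}$ side. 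Transporting the right-hand side through $\vartheta$ and using $\underline{\vartheta}$ produces
$$\vartheta_*(\Delta_{\nu_0}(N)^{\wedge_b}) \cong \Delta_{\nu_0}(\underline{\vartheta}_*(N^{\wedge_b})),$$
the Verma construction on the right now being computed in the completed algebra $\A_\lambda(\tau,r)^{\wedge_0}$. Finally I take $h$-finite vectors: by Lemma \ref{Lem:compl_equi} this is a quasi-inverse to $(-)^{\wedge_0}$, so the left-hand side becomes $\Res_\tau(\Delta_{\nu_0}(N))$ and the right-hand side becomes $\Delta_{\nu_0}(\underline{\Res}_\tau(N))$, both by the very definitions of $\Res_\tau$ and $\underline{\Res}_\tau$. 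The main technical obstacle is the commutation of completion with $\Delta_{\nu_0}$ in the middle step; I expect to handle it by passing to the Rees algebras and the $\hbar$-adic setup, where the analogue of the isomorphism (\ref{eq:quant_compl_iso}) yields faithful flatness directly, paralleling the strategy of Lemma \ref{Lem:iso_compl}.
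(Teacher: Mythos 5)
Your proposal is correct and follows essentially the same route as the paper: both recognize $\Res_\tau$ as the central completion $\bullet^{\wedge_b}$, both use the $\nu_0(\C^\times)$-equivariance of $\vartheta$ to identify the completed Verma bimodule $\A_\lambda(n,r)^{\wedge_b}/\A_\lambda(n,r)^{\wedge_b}(\A_\lambda(n,r)^{\wedge_b})^{>0,\nu_0}$ on the two sides, and both handle $\nabla_{\nu_0}$ by the dual $\Hom$ computation. The one place you flag as a remaining obstacle --- commutation of $\Delta_{\nu_0}$ with completion --- is in fact exactly the displayed identity the paper proves as its first step, namely $\A_\lambda(n,r)^{\wedge_b}/\A_\lambda(n,r)^{\wedge_b}(\A_\lambda(n,r)^{\wedge_b})^{>0,\nu_0} \cong \C[\g\quo G]^{\wedge_b}\otimes_{\C[\g]^G}\left(\A_\lambda(n,r)/\A_\lambda(n,r)\A_\lambda(n,r)^{>0,\nu_0}\right)$, which follows directly from flatness of the completion together with your (correct) observation that $\C[\bar{\g}]^G$ lies in $\nu_0$-degree zero; no detour through the Rees/$\hbar$-adic setup is needed.
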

\begin{proof}
Recall that by the construction, $\Res_\tau$ is isomorphic to the completion functor $\bullet^{\wedge_b}$.

Let us prove the first isomorphism in the lemma. Note first that
\begin{align*}
\A_\lambda(n,r)^{\wedge_b}/\A_\lambda(n,r)^{\wedge_b}\left(\A_\lambda(n,r)^{\wedge_b}\right)^{>0,\nu_0}
\cong \C[\g\quo G]^{\wedge_b}\otimes_{\C[\g]^G}\left(
\A_\lambda(n,r)/\A_\lambda(n,r)\A_\lambda(n,r)^{>0,\nu_0}\right)
\end{align*}
We can consider the functors in the first isomorphism as functors
$\OCat_\nu(\Ca_{\nu_0}(\A_\lambda(n,r)))\rightarrow \A_\lambda(n,r)^{\wedge_b}\operatorname{-mod}$.
Then the right hand side is given by taking the tensor product over $\Ca_{\nu_0}(\A_\lambda(n,r))$
with  $\A_\lambda(n,r)^{\wedge_b}/\A_\lambda(n,r)^{\wedge_b}\left(\A_\lambda(n,r)^{\wedge_b}\right)^{>0,\nu_0}$.
Since the isomorphism $\A_\lambda(n,r)^{\wedge_b}\cong \A_\lambda(\tau,r)^{\wedge_0}$ is $\nu_0(\C^\times)$-equivariant,
we get
$$\A_\lambda(n,r)^{\wedge_b}/\A_\lambda(n,r)^{\wedge_b}\left(\A_\lambda(n,r)^{\wedge_b}\right)^{>0,\nu_0}
\cong \A_\lambda(\tau,r)^{\wedge_0}/\A_\lambda(\tau,r)^{\wedge_0}\left(\A_\lambda(\tau,r)^{\wedge_0}\right)^{>0,\nu_0}.$$
The bimodule in the right hand side coincides with
$$\left(\A_\lambda(\tau,r)/\A_\lambda(\tau,r)\A_\lambda(\tau,r)^{>0,\nu_0}\right)
\otimes_{\C[\g_\tau]^{G_\tau}}\C[\g_\tau\quo
G_\tau]^{\wedge_0}.$$
It follows that the functor in the left hand side of the first isomorphism in the lemma
is given by taking tensor product over $\Ca_{\nu_0}(\A_\lambda(n,r))$ with
$\A_\lambda(\tau,r)^{\wedge_0}/\A_\lambda(\tau,r)^{\wedge_0}\left(\A_\lambda(\tau,r)^{\wedge_0}\right)^{>0,\nu_0}$.
An isomorphism $\Res_\tau\circ \Delta_{\nu_0}\cong \Delta_{\nu_0}\circ \underline{\Res}_\tau$.
%The left hand side is given by the tensor product with %$$(\A_\lambda(n,r)/\A_\lambda(n,r)\A_\lambda(n,r)^{>0,\nu_0})\otimes_{\C[\g]^G}\C[\g\quo G]^{\wedge_b},$$
%while the right hand side is the tensor product with $$\C[\g\quo G]^{\wedge_b}\otimes_{\C[\g]^G}
%(\A_\lambda(n,r)/\A_\lambda(n,r)\A_\lambda(n,r)^{>0,\nu_0}).$$ Both bimodules
%are isomorphic to $\A_\lambda(n,r)^{\wedge_b}/\A_\lambda(n,r)^{\wedge_b}(\A_\lambda(n,r)^{\wedge_b})^{>0,\nu_0}$.
%

Let us proceed to the second isomorphism. Similarly to the first one, both functors are isomorphic to
$$\Hom_{\Ca_{\nu_0}(\A_\lambda(n,r)^{\wedge_b})}
(\A_{\lambda}(n,r)^{\wedge_b}/\left(\A_{\lambda}(n,r)^{\wedge_b}\right)^{<0,\nu_0}\A_\lambda(n,r)^{\wedge_b},
\Ca_{\nu_0}(\A_\lambda(n,r)^{\wedge_b})\otimes_{\Ca_{\nu_0}(\A_\lambda(n,r))}\bullet),$$
where we take the restricted Hom (with respect to the natural grading on the first argument).
\end{proof}

For $M\in \OCat_{\nu}(\Ca_{\nu_0}(\A_\lambda(\tau,r)))$ we write
$L_{\nu_0}(M)$ for the maximal quotient of $\Delta_{\nu_0}(M)$ that does not intersect the highest
weight subspace $M$. Equivalently, $L_{\nu_0}(M)$ is the image of the natural homomorphism
$\Delta_{\nu_0}(M)\rightarrow \nabla_{\nu_0}(M)$ (induced by the identity map $M\rightarrow M$).

The following corollary of Lemma \ref{Lem:commut_assoc} will play a crucial role in computing the
annihilators of simple objects in $\OCat_{\nu,\lambda}(n,r)$.

\begin{Cor}\label{Cor:Res_simple}
For $M\in \OCat_{\nu}(\Ca_{\nu_0}(\A_\lambda(n,r)))$, we have $\Res_\tau(L_{\nu_0}(M))\cong
L_{\nu_0}(\underline{\Res}_\tau M)$.
\end{Cor}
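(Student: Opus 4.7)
The plan is to deduce the corollary formally from the two intertwining isomorphisms of Lemma \ref{Lem:commut_assoc} together with the exactness of $\Res_\tau$, by tracking the natural map $\Delta_{\nu_0}(N)\to \nabla_{\nu_0}(N)$ through these isomorphisms. Concretely, recall that for any $N\in \OCat_\nu(\Ca_{\nu_0}(\A_\lambda(?,r)))$ the object $L_{\nu_0}(N)$ is defined as the image of the canonical morphism $\eta_N\colon \Delta_{\nu_0}(N)\to \nabla_{\nu_0}(N)$ coming from the identity $N\to N$ via the (adjunction) isomorphisms
\[
\Hom(\Delta_{\nu_0}(N),\nabla_{\nu_0}(N))=\Hom_{\Ca_{\nu_0}(\A)}(N,\pi_{\nu_0}\nabla_{\nu_0}(N))=\Hom_{\Ca_{\nu_0}(\A)}(N,N),
\]
where $\pi_{\nu_0}$ denotes the natural ``highest weight part'' functor right adjoint to $\Delta_{\nu_0}$ and left adjoint to $\nabla_{\nu_0}$. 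In particular $\eta_N$ is a natural transformation $\Delta_{\nu_0}\to \nabla_{\nu_0}$ of functors.

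First I would apply the exact functor $\Res_\tau$ to $\eta_M\colon \Delta_{\nu_0}(M)\to \nabla_{\nu_0}(M)$. Exactness of $\Res_\tau$ (established right after its definition) gives
\[
\Res_\tau(L_{\nu_0}(M))=\operatorname{Im}\bigl(\Res_\tau(\eta_M)\bigr).
\]
Next, using the two intertwiners of Lemma \ref{Lem:commut_assoc}, I identify the source and target of $\Res_\tau(\eta_M)$ with $\Delta_{\nu_0}(\underline{\Res}_\tau M)$ and $\nabla_{\nu_0}(\underline{\Res}_\tau M)$, respectively. It remains to verify that the resulting morphism
\[
\Delta_{\nu_0}(\underline{\Res}_\tau M)\longrightarrow \nabla_{\nu_0}(\underline{\Res}_\tau M)
\]
coincides with the canonical morphism $\eta_{\underline{\Res}_\tau M}$; once this is shown, taking images gives the desired identification $\Res_\tau(L_{\nu_0}(M))\cong L_{\nu_0}(\underline{\Res}_\tau M)$.

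To check that the arrow produced is $\eta_{\underline{\Res}_\tau M}$, I would trace through the construction of the isomorphisms in Lemma \ref{Lem:commut_assoc}: they are obtained by identifying $\A_\lambda(n,r)^{\wedge_b}$ with $\A_\lambda(\tau,r)^{\wedge_0}$ in a manner equivariant for $\nu_0(\C^\times)$ and compatible with the $\pm$-parabolic quotients $\A/\A\A^{>0,\nu_0}$ and $\A/\A^{<0,\nu_0}\A$. Since these identifications are bimodule isomorphisms, they intertwine the canonical surjection $\A/\A\A^{>0,\nu_0}\twoheadrightarrow \Ca_{\nu_0}(\A)$ and the inclusion $\Ca_{\nu_0}(\A)\hookrightarrow \A/\A^{<0,\nu_0}\A$ of the two completions, hence they intertwine the natural transformation $\eta$. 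Equivalently, $\eta$ is the unit/counit of the adjoint pair $(\Delta_{\nu_0},\pi_{\nu_0})$ composed with that of $(\pi_{\nu_0},\nabla_{\nu_0})$, and both adjunctions are preserved by the intertwining isomorphisms of Lemma \ref{Lem:commut_assoc}. The main (mild) obstacle is precisely this compatibility check — the rest of the argument is formal.
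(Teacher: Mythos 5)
Your argument is essentially the paper's own proof: both apply the exact functor $\Res_\tau$ to the canonical map $\Delta_{\nu_0}(M)\to\nabla_{\nu_0}(M)$, transport it through the two intertwiners of Lemma \ref{Lem:commut_assoc}, and check that the resulting arrow is again the canonical one (the paper phrases this as the identifications being ``the identity on the highest weight spaces for $h_0$,'' which is the same compatibility you verify via the adjunctions). The only cosmetic difference is that the paper first reduces to $M$ supported on a single summand $\Ca_{\nu_0}(\A_\lambda(n,r))|_{\tau'}$, but that does not change the argument.
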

\begin{proof}
We can assume that $M\in \OCat_{\nu}(\Ca_{\nu_0}(\A_\lambda(n,r))|_{\tau'})$
(where $\A_\lambda(n,r))|_{\mu}$ is the summand of $\Ca_{\nu_0}(\A_\lambda(n,r))$
corresponding to a composition $\mu$).
The natural map $\Delta_{\nu_0}(M)\rightarrow \nabla_{\nu_0}(M)$ gives
rise to a map $\Res_\tau\circ \Delta_{\nu_0}(M)\rightarrow \Res_\tau\circ \nabla_{\nu_0}(M)$
that is the identity on the highest weight spaces for $h_0$. The identifications
$$\Res_\tau\circ \Delta_{\nu_0}(M)\cong \Delta_{\nu_0}\circ \underline{\Res}_\tau(M),
\Res_\tau\circ \nabla_{\nu_0}(M)\cong \nabla_{\nu_0}\circ \underline{\Res}_\tau(M)$$
are the identity on the highest weight spaces, by the construction.
So applying the functor $\Res_\tau$ to the morphism $\Delta_{\nu_0}(M)\rightarrow \nabla_{\nu_0}(M)$
we get the natural morphism $\Delta_{\nu_0}\circ \underline{\Res}_\tau(M)\rightarrow
\nabla_{\nu_0}\circ \underline{\Res}_\tau(M)$. Since $\Res_\tau$ is exact, our claim
follows.
\end{proof}

To finish this section let us study the interaction of $\Res_\tau$ with highest weight structures.

\begin{Prop}
The functor $\Res_\tau$ maps (co)standard objects to (co)standardly filtered ones,
tiltings to tiltings, projectives to projectives, injectives to injectives.
\end{Prop}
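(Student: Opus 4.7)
The plan is to reduce every claim to the corresponding statement for the Bezrukavnikov--Etingof restriction functors on the category $\mathcal{O}$ for rational Cherednik algebras, by exploiting the standardly stratified structure on $\OCat_\nu(\A_\lambda(n,r))$ coming from $\nu_0$ (Proposition \ref{Prop:stand_filt}) together with the intertwining relations in Lemma \ref{Lem:commut_assoc}.

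First, the (co)standardly filtered case. By part (6) of Proposition \ref{Prop:cat_O}, every standard object in $\OCat_\nu(\A_\lambda(n,r))$ has the form $\Delta_\nu(p)=\Delta_{\nu_0}(\underline{\Delta}(p))$ for the corresponding standard $\underline{\Delta}(p)$ in $\OCat_\nu(\Ca_{\nu_0}(\A_\lambda(n,r)))$. Lemma \ref{Lem:commut_assoc} gives
\[
\Res_\tau\bigl(\Delta_\nu(p)\bigr)\cong \Delta_{\nu_0}\bigl(\underline{\Res}_\tau(\underline{\Delta}(p))\bigr).
\]
By (\ref{eq:underline_R_comput}), on each summand of $\Ca_{\nu_0}(\A_\lambda(n,r))$ the functor $\underline{\Res}_\tau$ is a direct sum of Bezrukavnikov--Etingof restriction functors for spherical rational Cherednik algebras; by \cite{BE} these send standards to standardly filtered objects. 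Since $\Delta_{\nu_0}$ is exact, it preserves standardly filtered objects, and the conclusion follows. The costandard case is symmetric, using the second intertwining in Lemma \ref{Lem:commut_assoc} and the exactness of $\nabla_{\nu_0}$.

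The tilting case is then immediate: an indecomposable tilting is characterized by admitting both a standard and a costandard filtration, $\Res_\tau$ preserves both types of filtrations by the previous step, and any object of a highest weight category that carries both filtrations is a direct sum of indecomposable tiltings.

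The main obstacle is the projective case (and dually the injective case), which will require an exact biadjoint of $\Res_\tau$. The plan is to construct an induction functor
\[
\operatorname{Ind}_\tau:\OCat_{\nu,\lambda}(\tau,r)\longrightarrow \OCat_\nu(\A_\lambda(n,r))
\]
that is both left and right adjoint to $\Res_\tau$, in direct analogy with the Bezrukavnikov--Etingof induction in the rank-one case. Granting such an $\operatorname{Ind}_\tau$ together with its exactness, the adjunction
\[
\Hom\bigl(\Res_\tau P,-\bigr)\cong \Hom\bigl(P,\operatorname{Ind}_\tau(-)\bigr)
\]
is a composition of exact functors whenever $P$ is projective, so $\Res_\tau P$ is projective; injectivity is preserved dually. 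To construct $\operatorname{Ind}_\tau$ I would reverse the recipe that defines $\Res_\tau$: use the isomorphism $\vartheta$ of Lemma \ref{Lem:iso_compl} to view $\A_\lambda(n,r)^{\wedge_b}$ as an $\A_\lambda(n,r)$--$\A_\lambda(\tau,r)^{\wedge_0}$-bimodule, tensor with a module from $\OCat_{\nu,\lambda}(\tau,r)^{\wedge_0}$ and then extract the $h$-locally finite vectors, paralleling the biadjoint construction in the symplectic reflection algebra setting. Exactness of $\operatorname{Ind}_\tau$ should follow from the flatness of $\C[\bar{\g}\quo G]^{\wedge_b}$ over $\C[\bar{\g}]^G$ together with exactness of the $h$-finite-vectors functor on the subcategory cut out by the weight conditions defining $\OCat_{\nu,\lambda}(\tau,r)^{\wedge_0}$, so that all difficulty is concentrated in verifying the adjunction units and counits induce the desired isomorphism at the level of $\Hom$-spaces in $\OCat$.
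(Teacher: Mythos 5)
Your treatment of (co)standard objects and tiltings matches the paper's argument essentially step for step: reduce via Lemma \ref{Lem:commut_assoc} and Proposition \ref{Prop:cat_O}(6) to the analogous claim for $\underline{\Res}_\tau$, which is a direct sum of Bezrukavnikov--Etingof restriction functors on Cherednik categories $\mathcal{O}$, and invoke the known fact that these preserve (co)standard filtrations. (The paper cites \cite[Section 2]{Shan} for this rather than \cite{BE}; the statement about standard filtrations is really Shan's, as \cite{BE} only gives exactness.) The passage to tiltings is identical.

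For projectives and injectives you take a genuinely different route, and this is where your proposal has a real gap. You propose to build an exact biadjoint $\operatorname{Ind}_\tau$ and conclude by adjunction, but you never construct it: you say ``granting such an $\operatorname{Ind}_\tau$,'' and then concede that ``all difficulty is concentrated in verifying the adjunction units and counits.'' In the Cherednik case ($r=1$) the biadjointness of $\operatorname{Ind}$ and $\operatorname{Res}$ is Shan's theorem and uses the naive duality contravariant equivalence on category $\mathcal{O}$; it is not automatic from the completion/$h$-finite-vectors construction, and one would have to redo that argument in the Gieseker setting. As written the proposal therefore establishes the projective and injective cases only conditionally.

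The paper avoids constructing an induction functor altogether. Having already shown that $\Res_\tau$ preserves tiltings, it uses the long wall-crossing functor $\WC_{\lambda\leftarrow\lambda^-}=\A^0_{\lambda^-,\lambda-\lambda^-}(n,r)\otimes_{\A_{\lambda^-}(n,r)}\bullet$, which is a Ringel duality (Lemma \ref{Lem:WC_Ringel}) and hence restricts to an equivalence $\OCat_\nu(\A_{\lambda^-}(n,r))\text{-tilt}\xrightarrow{\sim}\OCat_\nu(\A_\lambda(n,r))\text{-inj}$. The key point is that $\Res_\tau$ commutes with tensoring by this HC bimodule, by the definition of $\Res_\tau$ via completion and Lemma \ref{Lem:transl_properties}(3):
\[
\Res_\tau\bigl(\A^0_{\lambda^-,\lambda-\lambda^-}(n,r)\otimes_{\A_{\lambda^-}(n,r)}\bullet\bigr)\cong
\A^0_{\lambda^-,\lambda-\lambda^-}(n,r)_{\dagger,\tau}\otimes_{\A_{\lambda^-}(\tau;r)}\Res_\tau(\bullet),
\]
and the restricted bimodule $\A^0_{\lambda^-,\lambda-\lambda^-}(n,r)_{\dagger,\tau}$ is again a wall-crossing bimodule for the product algebras, so it again carries tiltings to injectives. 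Combining with preservation of tiltings gives that $\Res_\tau$ preserves injectives; the projective case is dual, via $\WC_{\lambda^-\leftarrow\lambda}^{-1}$. If you want to pursue your biadjoint approach, you would need to either prove the biadjunction from scratch (following Shan) or observe, as a corollary of the wall-crossing argument above, that $\Res_\tau$ has an exact left adjoint and an exact right adjoint and only then identify them; absent that, the paper's conjugation-by-Ringel-duality argument is both shorter and self-contained given the lemmas already available.
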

\begin{proof}
The proof is in several steps.

{\it Step 1}. Let us show that $\Res_\tau$ maps standard objects to standardly filtered ones.
Lemma \ref{Lem:commut_assoc} reduces this statement to showing that $\underline{\Res}_\tau$
maps standard objects to standardly filtered ones. This property of $\underline{\Res}_\tau$  follows from
\cite[Section 2]{Shan}.

{\it Step 2}. Similarly we see that $\Res_\tau$ maps costandard objects into costandardly filtered ones.
Because of this, $\Res_\tau$ sends tiltings to tiltings.

{\it Step 3}. Let us show that $\Res_\tau$ maps injectives to injectives.
The functor $\WC_{\lambda\leftarrow \lambda^-}=
\A^{0}_{\lambda^-,\lambda-\lambda^-}(n,r)\otimes^L_{\A_{\lambda^-}(n,r)}\bullet$
is a Ringel duality functor and hence induces equivalences
$\OCat_\nu(\A_{\lambda^-}(n,r))\operatorname{-tilt}\xrightarrow{\sim} \OCat_\nu(\A_{\lambda}(n,r))\operatorname{-inj}$.
It follows that  $\A^{0}_{\lambda^-,\lambda-\lambda^-}(n,r)\otimes_{\A_{\lambda}^-(n,r)}\bullet$
gives this equivalence. On the other hand from the definition of $\Res_\tau$
we see that \begin{equation}\label{eq:Res_wc}\Res_\tau(\A^{0}_{\lambda^-,\lambda-\lambda^-}(n,r)\otimes_{\A_{\lambda^-}(n,r)}\bullet)\cong
\A^{0}_{\lambda^-,\lambda-\lambda^-}(n,r)_{\dagger,\tau}\otimes_{\A_{\lambda^-}(\tau;r)}\Res_\tau(\bullet).\end{equation}
But the bimodule $\A^{0}_{\lambda^-,\lambda-\lambda^-}(n,r)_{\dagger,\tau}$ is a wall-crossing bimodule
for the algebras $\A_?(\tau;r)$. So taking tensor product with this bimodule maps tiltings to injectives.
So the right hand side of (\ref{eq:Res_wc}) maps tiltings to injectives. So does the left hand side.
Since  $\A^{0}_{\lambda^-,\lambda-\lambda^-}(n,r)\otimes_{\A_{\lambda^-}(n,r)}\bullet$ is an
equivalence $\OCat_\nu(\A_{\lambda}^-(n,r))\operatorname{-tilt}\xrightarrow{\sim} \OCat_\nu(\A_{\lambda}(n,r))\operatorname{-inj}$, we see that $\Res_\tau$ maps injectives to
injectives.

{\it Step 4}. To see that $\Res_\tau$ maps projectives to projectives, one can argue similarly
by using the functor $=\WC_{\lambda^-\leftarrow \lambda}^{-1}=R\Hom_{\A_{\lambda^-}(n,r)}(\A_{\lambda,\lambda^--\lambda}(n,r),\bullet)$
instead of $\A^{0}_{\lambda^-,\lambda-\lambda^-}(n,r)\otimes^L_{\A_{\lambda^-}(n,r)}\bullet$.
\end{proof}

%Now let \begin{align*}&\WC: D^b(\A_\lambda(n,r)\operatorname{-mod})\rightarrow %D^b(\A_{\lambda^-}(n,r)\operatorname{-mod}),\\
%&\WC_\tau:D^b(\A_\lambda(\tau,r)\operatorname{-mod})\rightarrow %D^b(\A_{\lambda^-}(\tau,r)\operatorname{-mod})\end{align*}
%be the wall-crossing functors.
%
%\begin{Lem}
%We have an isomorphism of functors $\WC_\tau\circ \Res_\tau\cong \Res_\tau\circ \WC$.
%\end{Lem}
%\begin{proof}
%Recall that $\WC$ is given by $$\A^{(-\theta)}_{\lambda,\lambda^--\lambda(n,r)}\otimes^L_{\A_\lambda(n,r)}\bullet,$$
%while $\WC_{\tau}$ is given by %$$\A^{(-\theta)}_{\lambda,\lambda^--\lambda}(n,r)_{\dagger,\tau}\otimes^L_{\A_\lambda(\tau,r)}\bullet,$$
%see \ref{?}. It is easy to see from the construction of $\Res_\tau$ that there is a bifunctorial
%isomorphism $\B_{\dagger,\tau}\otimes_{\A_\lambda(\tau,r)}\Res_\tau(M)\cong \Res_\tau(\B\otimes_{\A_\lambda(n,r)}M)$.
%This finishes the proof.
%\end{proof}

%Let us state some important corollaries of the lemmas above.
%
%\begin{Cor}\label{Cor:hw}
%The functor $\Res_\tau$ has the following properties:
%\begin{enumerate}
%\item It maps standard objects to standardly filtered ones.
%\item It maps costandard objects to costandardly filtered ones.
%\item It maps tilting objects to tilting objects.
%\item It maps projective objects to projective objects.
%\item It maps injective objects to injective objects.
%\end{enumerate}
%\end{Cor}

\subsection{Supports for asymptotic chambers}
Recall that the simples of $\OCat_{\nu,\lambda}(n,r)$ are indexed by the $r$-multipartitions $\sigma=(\sigma^{(1)},
\ldots,\sigma^{(r)})$ of $n$, moreover, $L_{\nu}(\sigma)=L_{\nu_0}(\underline{L}(\sigma))$, where
$\underline{L}(\sigma)=L^A(\sigma^{(1)})\boxtimes\ldots \boxtimes L^A(\sigma^{(r)})$.
We write $L^A(\sigma')$ for the irreducible module in the category $\mathcal{O}$ for the Rational
Cherednik algebra $eH_{1,\lambda}(|\sigma'|)e$, here $\sigma'$ is a partition.

Here is our main result of this section, it describes the dimensions of the supports of the simples
$L_{\nu}(\sigma)$ in terms of $\sigma$.

\begin{Thm}\label{Thm:count}
Let $m$ denote the denominator of $\lambda$ (equal to $+\infty$ if $\lambda\not\in \mathbb{Q}$).
Assume $m>1$.
Divide $\sigma^{(1)}$ by $m$ with remainder: $\sigma^{(1)}=m\sigma^{(1)q}+\sigma^{(1)r}$
(componentwise operations, ``q'' and ``r'' stand for the quotient and the remainder).
Then $\dim \Supp L_{\nu}(\sigma)= rn-|\sigma^{(1)q}|(rm-1)$.
\end{Thm}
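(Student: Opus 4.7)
The plan is to identify the dimension of the support via the associated variety of the annihilator. By Lemma \ref{Lem:O_holon} (holonomicity), $\dim \Supp L_\nu(\sigma) = \frac{1}{2}\dim \VA(\A_\lambda(n,r)/\operatorname{Ann} L_\nu(\sigma))$. Since $L_\nu(\sigma)$ is simple, its annihilator is a prime ideal; using the splitting $\A_\lambda(n,r) = D(\C) \otimes \bar{\A}_\lambda(n,r)$ and the simplicity of $D(\C)$, this annihilator has the form $D(\C) \otimes \J_{k(\sigma)}$ for some $k(\sigma) \in \{0, 1, \ldots, \lfloor n/m \rfloor\}$ by Theorem \ref{Thm:ideals}. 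Since $\dim \overline{\Leaf_k}$ in $\M(n, r)$ equals $2rn - 2k(rm-1)$, the theorem reduces to the identification $k(\sigma) = |\sigma^{(1)q}|$.

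To compute $k(\sigma)$, the plan is to apply the restriction functors $\Res_{\tau_k}$ of Section 5.2 for the partitions $\tau_k = (m^k, 1^{n-mk})$ of $n$, whose associated strata correspond to the leaves $\Leaf_k$. By Lemma \ref{Lem:assoc_var_restr}, $\Res_{\tau_k}(L_\nu(\sigma))$ captures the local geometry of $\VA(L_\nu(\sigma))$ near a generic point of $\Leaf_k$. By Corollary \ref{Cor:Res_simple}, this functor equals $L_{\nu_0}(\underline{\Res}_{\tau_k}\underline{L}(\sigma))$, where $\underline{L}(\sigma) = L^A(\sigma^{(1)}) \boxtimes \cdots \boxtimes L^A(\sigma^{(r)})$ sits in the Cartan-subquotient summand indexed by the composition $(|\sigma^{(i)}|)_i$. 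Formula (\ref{eq:underline_R_comput}) then expresses $\underline{\Res}_{\tau_k}$ as a direct sum over $S_\mu \backslash S_n / S_{\tau_k}$ double cosets of tensor products of Bezrukavnikov--Etingof restrictions of the Cherednik simples $L^A(\sigma^{(i)})$ at the shifted parameters $\lambda + i - 1$.

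The third input is the classical description of Cherednik module supports: for a parameter $c = s/m$ with $\gcd(s, m) = 1$, the simple $L^A(\tau)$ has support whose generic stabilizer is the Young subgroup $S_m^{|\tau^q|} \times S_1^{|\tau| - m|\tau^q|}$, and the BE restriction of $L^A(\tau)$ to a Young parabolic is nonzero precisely when that parabolic contains, up to conjugacy, this stabilizer. Combining this criterion with the enumeration of double cosets and tracking which summand of the target Cartan subquotient the restriction lands in, one then applies $L_{\nu_0}$ to identify the corresponding simple in $\OCat_\nu(\A_\lambda(\tau_k, r))$ and detect when it is finite-dimensional, which fixes the threshold $k(\sigma)$.

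The main obstacle will be the asymmetric role of $\sigma^{(1)}$: the naive BE computation on the individual Cherednik factors would suggest that each component $\sigma^{(i)}$ contributes through its $m$-quotient, yet only $\sigma^{(1)}$ appears in the formula. Resolving this requires a careful analysis of how the standardization $L_{\nu_0}$, applied to a nonzero Cherednik-side restriction living in a particular Cartan summand, can still yield a simple whose associated variety sits inside a shallower leaf than the direct BE count would predict. This is where the dominant structure $d_1 \gg d_2 \gg \ldots \gg d_r$ together with the specific shifts $\lambda, \lambda + 1, \ldots, \lambda + r - 1$ enter: combined with the contracting-bundle computation of Proposition \ref{Prop:Cartan_subquot} and the uniqueness of the small-support simple from Proposition \ref{Prop:support_local}, they should force the contributions from $\sigma^{(i)}$ with $i > 1$ to descend via $L_{\nu_0}$ to full-support simples of the tensor factors of $\A_\lambda(\tau_k, r)$, so that only $|\sigma^{(1)q}|$ survives as the effective depth invariant.
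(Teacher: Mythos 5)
Your overall strategy --- reduce via Lemma \ref{Lem:O_holon} and Theorem \ref{Thm:ideals} to computing a ``leaf depth'' $k(\sigma)$, then probe $k(\sigma)$ with the parabolic restriction functors of Section 5.2 and Corollary \ref{Cor:Res_simple}, feeding in the Wilcox-type description of supports of rational Cherednik simples --- is the right shape and matches the paper's approach at a high level. You also correctly identify the crux: a naive BE computation on each of the $r$ Cherednik factors would make every $\sigma^{(i)}$ contribute through its $m$-quotient, whereas only $\sigma^{(1)}$ appears in the final formula.

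The problem is that you flag this crux as ``the main obstacle'' and then argue by plausibility (``they should force the contributions from $\sigma^{(i)}$ with $i>1$ to descend \ldots to full-support simples'') rather than actually resolving it; Proposition \ref{Prop:support_local}, which you cite, gives \emph{uniqueness} of the small-support simple of $\bar{\A}_{\lambda}(m,r)$ but does not tell you \emph{which} $r$-multipartition labels it, and that is precisely what controls the asymmetry. The paper closes this gap differently and more concretely. It runs an induction on $n$: first restrict along $\tau=(n-1,1)$; using that $E L^A(\sigma^{(i)})=0$ iff $\sigma^{(i)}$ is $m$-divisible (Wilcox) and that otherwise $EL^A(\sigma^{(i)})$ surjects onto a smaller simple with the same $m$-quotient (via \cite{cryst}), the inductive hypothesis handles all $\sigma$ with some $\sigma^{(i)}$ not $m$-divisible. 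In the terminal case where every $\sigma^{(i)}$ is $m$-divisible, it restricts along $\tau'=(m^{n/m})$ and gets a tensor product of the base-case simples $L_i\in\OCat_\nu(\A_\lambda(m,r))$ labeled by the $r$-multipartition with $i$th part $(m)$. The decisive input is then Theorem \ref{Thm:catO_str}(2) (relying on \cite{Korb}) together with Remark \ref{Rem:simpl_label}: for the dominant $\nu$, the unique small-support simple in $\OCat_\nu(\bar{\A}_\lambda(m,r))$ is labeled $((m),\varnothing,\ldots,\varnothing)$, so $\dim\Supp L_1=1$ while $\dim\Supp L_i=rm$ for $i>1$. This is the hard fact that makes $\sigma^{(1)}$ special, and it is missing from your proposal. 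Separately, note that both the paper's argument and your version tacitly use that all components of $\Supp L_\nu(\sigma)$ have equal dimension (\cite[Theorem 1.2]{B_ineq}) to pass from $\Res_\tau(L_\nu(\sigma))\neq 0$ to equality of support dimensions; you should state this, and also be careful that your proposed strategy of a single restriction along $(m^k,1^{n-mk})$ needs both a lower and an upper bound on $k(\sigma)$, which is more delicate than the paper's two-step restriction.
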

\begin{proof}
Our proof is by induction on $n$. For $n<m$, the category $\mathcal{O}$ is semisimple, and all
simples have support of dimension $rn$, while for $n=m$, the result follows from
Theorem \ref{Thm:catO_str}. So we will assume that the claim of the theorem is proved
for all dimensions less than $n$.

Let us write $n_i$ for $|\sigma^{(i)}|$.
By \cite[Theorem 1.2]{B_ineq}, all irreducible components of $\Supp L_\nu(\sigma)$
have the same dimension. So if $\Res_\tau(L_\nu(\sigma))\neq 0$, then
$\dim \Supp L_\nu(\sigma)=\dim \Res_\tau(L_\nu(\sigma))$.
Recall, Corollary \ref{Cor:Res_simple}, that
$$\Res_\tau(L_\nu(\sigma))=L_\nu(\underline{\Res}_\tau\left(L^A(\sigma^{(1)})\boxtimes
L^A(\sigma^{(2)})\boxtimes\ldots\boxtimes L^A(\sigma^{(r)})\right)).$$

Let us first take $\tau=(n-1,1)$.
Let us compute $\underline{\Res}_\tau(L^A(\sigma^{(1)})\boxtimes
L^A(\sigma^{(2)})\boxtimes\ldots\boxtimes L^A(\sigma^{(r)}))$.
According to (\ref{eq:underline_R_comput}) it equals
$$\bigoplus_{i=1}^r L^{(A)}(\sigma^{(1)})\boxtimes\ldots\boxtimes E L^{(A)}(\sigma^{(i)})\boxtimes\ldots\boxtimes
L^{(A)}(\sigma^{(r)}),$$ where we write $E$ for the Bezrukavnikov-Etingof functor
restricting from $S_{n_i}$ to $S_{n_i-1}$. It is (up to a category equivalence)
the sum of categorification functors $E_i$ for a categorical action of $\hat{\mathfrak{sl}}_m$ on
$\bigoplus_{k=0}^{+\infty}\mathcal{O}(H_{\lambda+i-1}(k))$. Let us divide $\sigma^{(i)}$ by $m$ with remainder:
$\sigma^{(i)}=m\sigma^{(i)q}+\sigma^{(i)r}$. We have $EL^A(\sigma^{(i)})=0$
if and only if $\sigma^{(i)r}=0$, \cite{Wilcox}. Moreover, if $\sigma^{(i)r}\neq 0$, then
$EL(\sigma^{(i)})$ surjects onto $L(\underline{\sigma}^{(i)})$, where $\underline{\sigma}^{(i)}$
is obtained from $\sigma^{(i)}$ by removing a box from $\sigma^{(i)r}$, this follows from
results of   \cite{cryst}.

So assume that $\sigma^{(i)r}\neq \varnothing$ for some $i$. Let $\underline{\sigma}$ be the $r$-partition obtained from
$\sigma$ by replacing $\sigma^{(i)}$ with $\underline{\sigma}^{(i)}$. Then $\Res_\tau(L(\sigma))
\twoheadrightarrow L(\underline{\sigma})\boxtimes L$, where $L$ is some simple object in
$\OCat_\nu(\A_\lambda(1,r))$. By our inductive assumption, $\dim \Supp L(\underline{\sigma})\boxtimes L=
rn-(rm-1)|\sigma^{(1)q}|$. It follows that $\dim \Supp L(\sigma)=\dim \Res_\tau (L(\sigma))
\geqslant rn-(rm-1)|\sigma^{(1)q}|$.
On the other hand,  if $L^A(\hat{\sigma}^{(i)})$ is a simple occurring in $EL^A(\sigma^{(i)})$,
then $|\hat{\sigma}^{(i)q}|\geqslant |\sigma^{(i)q}|$. From here  we deduce that
$\dim \operatorname{Supp}\Res_\tau(L(\sigma))\leqslant rn-(rm-1)|\sigma^{(1)q}|$.

So we only need to consider the case when $\Res_\tau(L(\sigma))=0$. By the previous paragraph,
this means that all $\sigma^{(i)}$ are divisible by $m$. Take $\tau'=(m^{n/m})$.
Then, since the support of $L^A(\sigma^{(i)})$ corresponds to the partition $m^{n_i/m}$
of $n_i$, we can use (\ref{eq:underline_R_comput}) to get
$$\underline{\Res}_{\tau'}\left(L^A(\sigma^{(1)})\boxtimes\ldots \boxtimes L^A(\sigma^{(r)})\right)=\Res L^A(\sigma^{(1)})
\boxtimes\ldots\boxtimes \Res L^A(\sigma^{(r)}),$$
where we write $\Res$ for the Bezrukavnikov-Etingof restriction functor from $S_{mn_i}$ to $S_m^{n_i/m}$.
According to \cite{Wilcox}, we have $\Res L^A(\sigma^{(i)})=L^A(m)^{\boxtimes n_i/m}\boxtimes V_{\sigma^{(i)q}}$,
where $V_{\sigma^{(i)q}}$ is a nonzero multiplicity space (that is the irreducible $S_{|\sigma^{(i)q}|}$-module
corresponding to $\sigma^{(i)q}$).

It follows that $\Res_{\tau'}L(\sigma)$ is the sum of several  copies of
$L_1^{\boxtimes n_1/m}\boxtimes L_2^{\boxtimes n_2/m}\boxtimes\ldots\boxtimes L_r^{\boxtimes n_r/m}$.
Here we write $L_i$ for the irreducible in $\OCat_\nu(\A_\lambda(m,r))$ corresponding to
the $r$-partition with $i$th part $(m)$. We have $\dim \Supp L_1=1$ and $\dim \Supp L_i=rm$.
Since $\dim \Supp L(\sigma)=\dim \Supp\Res_{\tau'}L(\sigma)$, this implies the claim of
the theorem in this case and finishes the proof.
\end{proof}

\begin{proof}[Proof of Theorem \ref{Thm:support}]
In the proof we can assume that $\lambda$ is sufficiently big. By \cite[Theorem 1.2]{B_ineq},
we have $\dim \Supp L_\nu(\sigma)=\frac{1}{2}\dim \VA(\A_\lambda(n,r)/\operatorname{Ann}L_\nu(\sigma))$.
Theorem \ref{Thm:ideals} implies that the two-sided ideals in $\A_\lambda(n,r)$ are determined
by the dimensions of their associated varieties. This finishes the proof of this theorem.
\end{proof}

\subsection{Cross-walling bijections}
We have computed the supports in the case when $\nu$ is dominant. For other chambers of
one-parameter subgroups, supports can be computed using the cross-walling bijections
defined in a more general situation below. We plan to compute these bijections combinatorially
in a subsequent paper.

Let $X=X^\theta$ be a conical symplectic resolution equipped with a Hamiltonian action
of a torus $T$ with  finitely many fixed points.  Let $\mathcal{\A}_\lambda^\theta$ be a quantization of $X$
and $\A_\lambda:=\Gamma(\A_\lambda^\theta)$. We suppose that  $\lambda$ is sufficiently ample
so that Propositions \ref{Prop:cat_O} and \ref{Prop:stand_filt} hold.

Let $\nu,\nu':\C^\times\rightarrow T$ be two generic one-parameter subgroups lying in
chambers opposite with respect to a face. We are going to define a bijection $\mathfrak{cw}_{\nu'\leftarrow\nu}:X^T\rightarrow X^T$. Let $\nu_0$ be a generic one-parameter subgroup in the common face of the chambers of $\nu,\nu'$ such that these chambers are opposite with respect to the face.
Consider the cross-walling functor
$\underline{\CW}_{\nu'\leftarrow \nu}:D^b(\OCat_{\nu}(\Ca_{\nu_0}(\A^\theta_\lambda)))\rightarrow
D^b(\OCat_{\nu'}(\Ca_{\nu_0}(\A^\theta_\lambda)))$. For $X^{\nu_0(\C^\times)}$, the one-parameter subgroups
$\nu,\nu'$ lie in  opposite chambers. So $\underline{\CW}^{-1}_{\nu'\leftarrow \nu}$
is the direct sum (over the irreducible components of $X^{\nu_0(\C^\times)}$) of Ringel duality functors
with various homological shifts. Each Ringel duality functor is a perverse equivalence, Lemma \ref{Lem:WC_Ringel},
and hence gives rise to a bijection between the set of simples, see Section \ref{SS_long_WC}. We take the disjoint
union of these bijections for $\mathfrak{cw}_{\nu'\leftarrow \nu}$.

The  main result of this section is the following.

\begin{Prop}\label{Prop:cw_dim_preserv}
We have $\operatorname{Ann}(L_\nu(p))=
\operatorname{Ann}(L_{\nu'}(\mathfrak{cw}_{\nu'\leftarrow \nu}(p)))$
and $\dim \Supp L_\nu(p)=\dim \Supp L_{\nu'}(\mathfrak{cw}_{\nu'\leftarrow \nu}(p))$.
\end{Prop}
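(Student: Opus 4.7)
The plan is to reduce the equality of annihilators to the analogous statement for Ringel duality on each irreducible component of $X^{\nu_0(\C^\times)}$, exploiting the standardly stratified structure of $\OCat_\nu(\A_\lambda)$ coming from $\nu_0$, and then to derive the support-dimension equality formally. By Lemma~\ref{Lem:O_holon} every simple in $\OCat_\nu(\A_\lambda)$ or $\OCat_{\nu'}(\A_\lambda)$ is holonomic, so
$$\dim\Supp L = \tfrac{1}{2}\dim\VA(\A_\lambda/\operatorname{Ann} L).$$
Consequently the second assertion in the proposition is a formal consequence of the first, and I concentrate on the equality of annihilators.

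Let $Z\subset X^{\nu_0(\C^\times)}$ be the irreducible component containing $p$, and write $L^Z_\nu(p)\in\OCat_\nu(\A^Z_\lambda)$ for the simple labelled by $p$. By Proposition~\ref{Prop:stand_filt} the standardly stratified structure on $\OCat_\nu(\A_\lambda)$ determined by $\nu_0$ has proper standard $\bar{\Delta}_\nu(p) = \Delta_{\nu_0}(L^Z_\nu(p))$ and proper costandard $\bar{\nabla}_\nu(p) = \nabla_{\nu_0}(L^Z_\nu(p))$, and $L_\nu(p)$ is the image of the canonical morphism between them. Applying $\mathfrak{CW}_{\nu'\leftarrow \nu}$ and using Proposition~\ref{Prop:cw_functors}(2) (together with its analogue for $\nabla_{\nu_0}$, which one derives by an identical argument), this morphism becomes
$$\Delta_{\nu_0}\bigl(\underline{\CW}_{\nu'\leftarrow \nu}(L^Z_\nu(p))\bigr)\longrightarrow\nabla_{\nu_0}\bigl(\underline{\CW}_{\nu'\leftarrow \nu}(L^Z_\nu(p))\bigr),$$
and it suffices to study $\underline{\CW}_{\nu'\leftarrow \nu}$ on each component $Z$.

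Because $\nu$ and $\nu'$ lie in chambers opposite across the face spanned by $\nu_0$, their images in the one-parameter-subgroup lattice of the residual torus acting non-trivially on $Z$ lie in opposite chambers. By Proposition~\ref{Prop:cw_functors}(3) the restriction of $\underline{\CW}_{\nu'\leftarrow \nu}$ to $D^b(\OCat_\nu(\A^Z_\lambda))$ is therefore the Ringel duality functor up to a shift by $\tfrac{1}{2}\dim Z$, which by Lemma~\ref{Lem:WC_Ringel} is realised by a long wall-crossing $\WC_{\lambda'\leftarrow \lambda}$ on $\A^Z_\lambda$. Lemma~\ref{Lem:long_wc_perv} shows this wall-crossing is a perverse equivalence with respect to the filtration by Gelfand--Kirillov dimension, so the induced bijection on simples (which is $\mathfrak{cw}_{\nu'\leftarrow \nu}|_{Z^T}$ by construction) preserves GK-dimension; since the wall-crossing bimodule is a Morita equivalence on the open symplectic leaf of the affinization of $Z$, the bijection moreover preserves the two-sided annihilator in $\A^Z_\lambda$.

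The remaining task is to promote this equality of $\A^Z_\lambda$-annihilators to an equality of $\A_\lambda$-annihilators. My plan is to apply the restriction functor $\bullet_{\dagger,x}$ of Section~\ref{SS_HC} at a generic point $x$ of the symplectic leaf in $X_0$ whose closure is the image under the resolution of the $\nu_0$-attracting set of $Z$, and then to use Proposition~\ref{Prop:dagger_prop} to show that, among ideals of $\A_\lambda$ with the prescribed associated variety, the annihilator of $L_\nu(p)$ is determined by the $\A^Z_\lambda$-annihilator of $L^Z_\nu(p)$. Since $Z$ and the resulting slice datum do not depend on whether one works with $\nu$ or $\nu'$, this will yield the desired equality of $\A_\lambda$-annihilators. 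The principal obstacle is exactly this transfer step: it requires extending the compatibility between restriction functors and standardly stratified structures (in the spirit of Lemma~\ref{Lem:transl_properties}(3)) to encompass the data of components of $X^{\nu_0(\C^\times)}$, a compatibility that is not explicitly developed in Sections~\ref{SS_sympl_slices}--\ref{SS_HC} and must be supplied separately before the argument sketched above becomes complete.
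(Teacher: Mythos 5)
Your proposal takes a genuinely different route from the paper, and it currently has two real gaps.

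The paper's proof does not decompose across components of $X^{\nu_0(\C^\times)}$ at all, and it does not try to compare annihilators for the slice algebras $\A^Z_\lambda$. Instead it works directly with the functor $\CW_{\nu'\leftarrow\nu}$ on $D^b(\A_\lambda\operatorname{-mod})$. The key ingredients are: (i) $L_{\nu'}(\mathfrak{cw}_{\nu'\leftarrow\nu}(p))$ is a constituent of $H_\bullet(\CW_{\nu'\leftarrow\nu}L_\nu(p))$ (this much does use $\CW\circ\Delta_{\nu_0}\cong\Delta_{\nu_0}\circ\underline{\CW}$); (ii) $\CW_{\nu'\leftarrow\nu}$ commutes with $\B\otimes^L_{\A_\lambda}\bullet$ for any HC bimodule $\B$, because both $\B\otimes^L\bullet$ and $R\Hom(\B,\bullet)$ preserve the categories $D^b(\OCat_{\nu''})$; and (iii) the idempotent ideal $\I=\bigcap_n(\operatorname{Ann}L_\nu(p))^n$, which by Lemma~\ref{Lem:fin_length} is a power of $\operatorname{Ann}L_\nu(p)$. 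A derived-tensor-power argument with $\I^{\otimes^L k}$ then forces all homology of $\CW_{\nu'\leftarrow\nu}L_\nu(p)$ to be annihilated by $\I$, whence $\operatorname{Ann}L_\nu(p)\subset\operatorname{Ann}L_{\nu'}(\mathfrak{cw}(p))$, and symmetry gives equality. This is entirely at the level of $\A_\lambda$ and bypasses both of the difficulties you run into.

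The first gap in your argument is at the slice level: you infer that the perverse equivalence on $D^b(\OCat_\nu(\A^Z_\lambda))$ preserves annihilators from the fact that it preserves GK-dimension. That inference is false in general (GK-dimension of the quotient by the annihilator is a strictly coarser invariant than the annihilator itself), and supporting it by the remark that the wall-crossing bimodule is a Morita equivalence over the open leaf is not an argument; indeed, the annihilator statement for the slice algebras is essentially the same statement you are trying to prove, one step down. The second gap is the ``promotion'' step you flag yourself: passing from $\operatorname{Ann}_{\A^Z_\lambda}$ of the underline-simple back to $\operatorname{Ann}_{\A_\lambda}$ of $L_\nu(p)$ is not achieved by $\bullet_{\dagger,x}$ as the paper sets it up, because $\bullet_{\dagger,x}$ is a functor on HC bimodules attached to a leaf of $X_0$, while $Z$ is a component of a torus-fixed locus upstairs; no compatibility between these two kinds of ``slicing'' is established in the text, and it is not a small omission. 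A further technical issue is that $\CW_{\nu'\leftarrow\nu}$ is a derived functor, so applying it to the image of $\bar\Delta_\nu(p)\to\bar\nabla_\nu(p)$ does not produce the image of the transformed morphism; the proper-(co)standard presentation of $L_\nu(p)$ is not preserved under a triangulated equivalence in the way your second paragraph needs.
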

\begin{proof}
Note that the former equality implies the latter by Lemma \ref{Lem:O_holon}.

Let us prove the equality of the annihilators. Recall that $\CW_{\nu'\leftarrow \nu}\circ\Delta_{\nu_0}
\cong \Delta_{\nu_0}\circ \underline{\CW}_{\nu'\leftarrow \nu}$, Proposition \ref{Prop:cw_functors}.  By the construction, $\underline{L}_{\nu'}(\mathfrak{cw}_{\nu'\leftarrow \nu}(p))$ is a constituent in
$H_{\bullet}(\underline{\CW}_{\nu'\leftarrow \nu}\underline{L}_\nu(p))$.
It follows that  $L_{\nu'}(\mathfrak{cw}_{\nu'\leftarrow \nu}(p))$ is a constituent in
$H_{\bullet}(\CW_{\nu'\leftarrow \nu}L_\nu(p))$. Set $\I:=\bigcap_n \operatorname{Ann}L_\nu(p)^n$
so that $\I^2=\I$.
Since the regular $\A_\lambda$-bimodule has finite length, see Lemma \ref{Lem:fin_length},
we see that $\I$ coincides with some power of $\operatorname{Ann}L_\nu(p)$.
It is enough to show that
$H_\bullet(\CW_{\nu'\leftarrow \nu}L_\nu(p))$ is annihilated by $\I$.

For any HC $\A_\lambda$-bimodule $\B$ we have $\B\otimes^L_{\A_\lambda} \CW_{\nu'\leftarrow \nu}(\bullet)
\cong \CW_{\nu'\leftarrow \nu}(\B\otimes^L_{\A_\lambda}\bullet)$. This is because the functors
$\mathcal{B}\otimes^L_{\A_\lambda}\bullet, R\operatorname{Hom}_{\A_\lambda}(\mathcal{B},\bullet)$
preserve the categories $D^b(\mathcal{O}_{\nu''})$ for all generic $\nu''$.

We will apply the previous paragraph to $\B=\I$. We have
\begin{equation}\label{eq:cw_ideal_prod}\mathcal{I}^{\otimes^L k}\otimes^L \CW_{\nu'\leftarrow \nu}(L_{\nu}(p))\cong
\CW_{\nu'\leftarrow \nu}(\mathcal{I}^{\otimes^L,k}\otimes^L L_{\nu}(p)),\end{equation}
where all derived tensor products are taken over $\A_{\lambda}$ and we
write $\mathcal{I}^{\otimes^L k}$ for the $k$-th derived tensor power of
$\I$. Note that thanks to $\I^2=\I$ we get $\I\otimes_{\A_\lambda}\A_\lambda/\I=0$. It follows that
$\I\otimes_{\A_\lambda}M=0$ for all $\A_\lambda$-modules $M$ annihilated by $\I$. Moreover,
thanks to the exact sequence $0\rightarrow \mathcal{I}\rightarrow \A_\lambda
\rightarrow \A_\lambda/\I\rightarrow 0$, all homology of $\I\otimes^L_{\A_\lambda}M$
are annihilated by $\I$ provided $M$ is annihilated by $\I$. We deduce that
$H_i(\mathcal{I}^{\otimes^L,k}\otimes^L L_{\nu}(p))=0$ for $i< k$.
We deduce that the homology of the right hand side of (\ref{eq:cw_ideal_prod})
vanishes in degrees $\leqslant k-1$. On the other hand, let
$k$ be the minimal number such that $H_\ell(\CW_{\nu'\leftarrow \nu}(L_{\nu}(p)))$
is not annihilated by any power of $\I$. Using the spectral sequence for the composition
of derived functors, we see that $H_\ell$ of the left hand side of (\ref{eq:cw_ideal_prod})
is nonzero for any $k$. This gives a contradiction that completes the proof.
\end{proof}


\begin{thebibliography}{99}
\bibitem[BEG]{BEG} Yu. Berest, P. Etingof, V. Ginzburg,
{\it Finite-dimensional representations of rational Cherednik
algebras}. Int. Math. Res. Not.  2003,  no. 19, 1053-1088.
\bibitem[BE]{BE} R. Bezrukavnikov, P. Etingof, {\it Parabolic induction and restriction
functors for rational Cherednik algebras}.  Selecta Math.,  14(2009), 397-425.
\bibitem[BeKa]{BK}  R. Bezrukavnikov, D. Kaledin. {\it Fedosov quantization in the algebraic context}.
Moscow Math. J. 4 (2004), 559-592.
\bibitem[BL]{BL} R. Bezrukavnikov, I. Losev, {\it Etingof conjecture for quantized quiver varieties}. arXiv:1309.1716.
\bibitem[BoKr]{BoKr} W. Borho, H. Kraft. {\it \"{U}ber die Gelfand-Kirillov-Dimension}. Math. Ann. 220(1976), 1-24.
\bibitem[BLPW]{BLPW} T. Braden, A. Licata, N. Proudfoot, B. Webster, {\it Quantizations of conical symplectic resolutions II: category O and symplectic duality}. arXiv:1407.0964. To appear in Ast\'{e}risque.
\bibitem[BPW]{BPW} T. Braden, N. Proudfoot, B. Webster, {\it Quantizations of conical symplectic resolutions I:
local and global structure}. arXiv:1208.3863. To appear in Ast\'{e}risque.
\bibitem[CB]{CB_geom} W.~Crawley-Boevey,  {\it Geometry of the moment map for representations of quivers,} Comp. Math. {\bf 126} (2001), 257--293.
\bibitem[EG]{EG} P.~Etingof and V.~Ginzburg. {\it Symplectic reflection algebras, Calogero-Moser space,
and deformed Harish-Chandra homomorphism}, Invent. Math. {\bf 147} (2002), 243-348.
\bibitem[GG]{GG} W.L. Gan, V. Ginzburg, {\it Almost commuting variety, $\Dcal$-modules and Cherednik algebras}.
IMRP, 2006, doi: 10.1155/IMRP/2006/26439.
\bibitem[GS1]{GS1} I. Gordon, T. Stafford, {\it Rational Cherednik algebras and Hilbert schemes}, Adv. Math. 198 (2005), no. 1, 222-274.
\bibitem[GS2]{GS2} I. Gordon, T. Stafford, {\it  Rational Cherednik algebras and Hilbert schemes. II. Representations and sheaves}, Duke Math. J.  132  (2006),  no. 1, 73–135.
\bibitem[KR]{KR} M.~Kashiwara and R.~Rouquier, {\it Microlocalization of rational Cherednik algebras}, Duke Math. J. {\bf 144} (2008) 525-573.
\bibitem[Ko]{Korb} D. Korb. {\it Order on the Fixed Points of the Gieseker Variety With Respect to the Torus Action}.
arXiv:1312.3025.
\bibitem[L1]{sraco} I. Losev, {\it Completions of symplectic reflection algebras}. Selecta Math., 18(2012), N1, 179-251.
arXiv:1001.0239.
\bibitem[L2]{quant} I. Losev. {\it Isomorphisms of quantizations via quantization of resolutions}. Adv. Math. 231(2012), 1216-1270.
\bibitem[L3]{cryst} I. Losev, {\it Highest weight $\mathfrak{sl}_2$-categorifications I: crystals}. Math. Z. 274(2013), 1231-1247.
\bibitem[L4]{cher_ab_loc} I. Losev. {\it Abelian localization for cyclotomic Cherednik algebras}.
Int Math Res Notices (2015) vol. 2015, 8860-8873.
\bibitem[L5]{CWR} I. Losev. {\it On categories $\mathcal{O}$ for quantized symplectic resolutions}. arXiv:1502.00595.
\bibitem[L6]{B_ineq} I. Losev. {\it Bernstein inequality and holonomic modules} (with a joint appendix
by I. Losev and P. Etingof). arXiv:1501.01260.
\bibitem[L7]{WC_res} I. Losev. {\it Wall-crossing functors for quantized symplectic resolutions:
perversity and partial Ringel dualities}. arXiv:1604.06678.
\bibitem[LW]{LW} I. Losev, B. Webster, {\it On uniqueness of tensor products of irreducible categorifications}.
Selecta Math. 21(2015), N2, 345-377.
\bibitem[MN1]{MN_der} K.~McGerty, T.~Nevins, {\it Derived equivalence for quantum symplectic resolutions},
Selecta Math. (N.S.) 20 (2014), no. 2, 675-717.
\bibitem[MN2]{MN_ab} K.~McGerty, T.~Nevins, {\it Compatability of t-structures for quantum
symplectic resolutions}. Duke Math J. 165(2016), 2529-2585.
\bibitem[Nak]{Nakajima_tensor} H. Nakajima. {\it Quiver varieties and tensor products}. Invent. Math.  146  (2001),  no. 2, 399-449.
\bibitem[NY]{NY} H. Nakajima, K. Yoshioka. {\it Lectures on Instanton Counting}.  Algebraic structures and moduli spaces,  31–101, CRM Proc. Lecture Notes, 38, Amer. Math. Soc., Providence, RI, 2004.
\bibitem[Nam]{Namikawa} Y. Namikawa. {\it Poisson deformations and Mori dream spaces},
arXiv:1305.1698.
\bibitem[R]{rouqqsch} R. Rouquier, {\it $q$-Schur algebras for complex reflection groups}. Mosc. Math. J. 8 (2008), 119-158.
\bibitem[S]{Shan}  P. Shan. {\it Crystals of Fock spaces and cyclotomic rational double affine Hecke algebras}.
Ann. Sci. Ecole Norm. Sup. {\bf 44} (2011), 147-182.
\bibitem[We]{Webster_O} B. Webster. {\it On generalized category $\mathcal{O}$ for a quiver variety}.
arXiv:1409.4461.
\bibitem[Wi]{Wilcox} S. Wilcox, {\it Supports of representations of the rational Cherednik algebra of type A}.
arXiv:1012.2585.
\end{thebibliography}
\end{document}